\newtheorem{lemma}{Lemma}[section]
 \newtheorem{proposition}[lemma]{Proposition}
\newtheorem{theorem}[lemma]{Theorem}
\newtheorem{corollary}[lemma]{Corollary}
\newtheorem{question}[lemma]{Question}
\theoremstyle{definition}
\newtheorem{definition}[lemma]{Definition}
\newtheorem{example}[lemma]{Example}
\newenvironment{remark}[1]{\refstepcounter{lemma}%
\vskip 5pt \par\noindent {\bf #1\ \thelemma .}}{\vskip 5pt \par}
\newenvironment{remark*}[1]{\par \vskip 5pt \noindent 
{\bf #1.}}{\vskip 5pt \par}
\newcommand{\Aut}{\operatorname{Aut}}
\newcommand{\bh}{\ensuremath{{\mathcal B}({\mathcal H})}}
\newcommand{\cstar}{\hbox{$C^*$}}
\newcommand{\cstaralg}{$C^*$-algebra}
\providecommand{\dual}[1]{\ensuremath{#1^{\#}}}
\providecommand{\ddual}[1]{\ensuremath{#1^{\#\#}}}
\newcommand{\dist}{\operatorname{dist}}
\newcommand{\dom}{\operatorname{dom}}
\newcommand{\ds}{\displaystyle}
\newcommand{\dstext}[1]{\quad\text{#1}\quad}
\newcommand{\eps}{\ensuremath{\varepsilon}}
\newcommand{\innerprod}[1]{\left\langle #1\right\rangle}
\newcommand{\join}{\bigvee}
\newcommand{\norm}[1]{\left\|{#1}\right\|}
\newcommand{\oinv}{\text{Inv}_{\mathcal{O}}}
\providecommand{\qed}%
{\hfill \vrule height5pt width4pt depth1pt \vspace{+2.00ex}}
\newcommand{\rad}{\operatorname{Rad}}
\newcommand{\ran}{\operatorname{range}}
\newcommand{\spn}{\operatorname{span}}
\newcommand{\supp}{\operatorname{supp}}
\newcommand{\tr}{{\rm tr}}
\newcommand{\unit}[1]{#1^{(\circ)}}
\newcommand{\bbC}{{\mathbb{C}}}
\newcommand{\bbE}{{\mathbb{E}}}
\newcommand{\bbF}{{\mathbb{F}}}
\newcommand{\bbN}{{\mathbb{N}}}
\newcommand{\bbT}{{\mathbb{T}}}
\newcommand{\bbZ}{{\mathbb{Z}}}
  \newcommand{\A}{{\mathcal{A}}}
  \newcommand{\B}{{\mathcal{B}}}
  \newcommand{\C}{{\mathcal{C}}}
  \newcommand{\D}{{\mathcal{D}}}
  \newcommand{\E}{{\mathcal{E}}}
  \newcommand{\F}{{\mathcal{F}}}
  \newcommand{\G}{{\mathcal{G}}}
\renewcommand{\H}{{\mathcal{H}}}
  \newcommand{\I}{{\mathcal{I}}}
  \newcommand{\J}{{\mathcal{J}}}
  \newcommand{\K}{{\mathcal{K}}} 
\renewcommand{\L}{{\mathcal{L}}}
  \newcommand{\M}{{\mathcal{M}}}
  \newcommand{\N}{{\mathcal{N}}}
\renewcommand{\P}{{\mathcal{P}}}
\renewcommand{\S}{{\mathcal{S}}}
  \newcommand{\T}{{\mathcal{T}}}
  \newcommand{\U}{{\mathcal{U}}}
  \newcommand{\W}{{\mathcal{W}}}
  \newcommand{\X}{{\mathcal{X}}}
  \newcommand{\Y}{{\mathcal{Y}}}
\newcommand{\fB}{{\mathfrak{B}}}
\newcommand{\fC}{{\mathfrak{C}}}
\newcommand{\fF}{{\mathfrak{F}}}
\newcommand{\fI}{{\mathfrak{I}}}
\newcommand{\fJ}{{\mathfrak{J}}}
\newcommand{\fM}{{\mathfrak{M}}}
\newcommand{\fO}{{\mathfrak{O}}}
\newcommand{\fR}{{\mathfrak{R}}}
\newcommand{\fr}{{\mathfrak{r}}}
\newcommand{\fS}{{\mathfrak{S}}}
\newcommand{\fs}{{\mathfrak{s}}}
\newcommand{\fT}{{\mathfrak{T}}}
\newcommand{\fU}{{\mathfrak{U}}}
\providecommand{\cstardiag}{\text{$C^*$-diagonal}}
\newcommand{\prt}[1]{\par \textit{Statement~(\ref{#1}).}}
\providecommand{\oinv}{\text{Inv}_{\mathcal{O}}}
\newcommand{\inter}{\I}
\newcommand{\fix}[1]{\operatorname{fix} #1}
\newcommand{\ind}{\operatorname{ind}}
\newcommand{\Mod}{\text{Mod}}
\newcommand{\homeo}{h}
\newcommand{\unistex}{\fU}
\newcommand{\tet}{\lambda}
\newcommand{\vngp}{\A}
\newcommand{\dstab}{\ensuremath{\D\text{-stab}}}
\newcommand{\ha}[1]{\rho_{(#1)}}
\newcommand{\coexp}{\bbE}
\newcommand{\emb}{w}
\newcommand{\vdb}{\delta}
\newcommand{\tog}{\text{pHom}_1}
\newcommand{\phom}{pre-homomorphism}
\newcommand{\ce}{\E_c}
\newcommand{\ceo}{\E^1_c}
\newcommand{\ceoF}{\E_F^1}
\newcommand{\fRF}{\fR_F}
\newcommand{\spmatrix}[1]{\bigl(\begin{smallmatrix} #1\end{smallmatrix}\bigr)}
\newcommand{\unitspace}[1]{#1^{(\circ)}}
\newcommand{\eval}{\eta}
\newcommand{\bist}{\fB}
\begin{document}

\title{Structure for Regular Inclusions}
\author[D.R. Pitts]{David R. Pitts}

\thanks{The author is grateful for the support of the University of
  Nebraska's NSF ADVANCE grant
  \#0811250 in the completion of this paper.} 

\address{Dept. of Mathematics\\
University of Nebraska-Lincoln\\ Lincoln, NE\\ 68588-0130}
\email{dpitts2@math.unl.edu}

\dedicatory{Dedicated to the memory of William B. Arveson}    

\keywords{\cstardiag, inclusions of \cstaralg s, groupoid \cstaralg}
\subjclass[2000]{46L05, 46L30,  47L30}

\begin{abstract}
  We study pairs $(\C,\D)$ of unital \cstaralg s where $\D$ is an
abelian \cstar-subalgebra of $\C$ which is regular in $\C$ in the
sense that the span of $\{v\in\C: v\D v^*\cup v^*\D v\subseteq \D\}$
is dense in $\C$.  When $\D$ is a MASA in $\C$, we prove the existence
and uniqueness of a completely positive unital map $E$ of $\C$ into
the injective envelope $I(\D)$ of $\D$ whose restriction to $\D$ is
the identity on $\D$.  We show that the left kernel of $E$,
$\L(\C,\D)$, is the unique closed two-sided ideal of $\C$ maximal with
respect to having trivial intersection with $\D$.  When $\L(\C,\D)=0$,
we show the MASA $\D$ norms $\C$ in the sense of Pop-Sinclair-Smith.
We apply these results to significantly extend existing results in the
literature on
isometric isomorphisms of norm-closed subalgebras which lie between
$\D$ and $\C$.  

The map $E$ can be used as a substitute for a conditional expectation
in the construction of coordinates for $\C$ relative to $\D$.  We show
that coordinate constructions of Kumjian and Renault which relied upon
the existence of a faithful conditional expectation may partially be
extended to settings where no conditional expectation exists.

  As an example, we consider the situation in which $\C$ is the
reduced crossed product of a unital abelian \cstaralg\ $\D$ by an
arbitrary discrete group $\Gamma$ acting as automorphisms of $\D$.  We
characterize when the relative commutant $\D^c$ of $\D$ in $\C$ is
abelian in terms of the dynamics of the action of $\Gamma$ and show
that when $\D^c$ is abelian, $\L(\C,\D^c)=(0)$.  This setting produces
examples where no conditional expectation of $\C$ onto $\D^c$ exists.

In general, pure states of $\D$ do not extend uniquely to states on
$\C$.  However, when $\C$ is separable, and $\D$ is a regular MASA in
$\C$, we show the set of pure states on $\D$ with unique state
extensions to $\C$ is dense in $\D$.  We introduce a new class of well
behaved state extensions, the compatible states; we identify
compatible states when $\D$ is a MASA in $\C$ in terms of groups
constructed from local dynamics near an element $\rho\in\hat{\D}$.
  
A particularly nice class of regular inclusions is the class of
\cstardiag s; each pair in this class has the extension property, and
Kumjian has shown that coordinate systems for \cstardiag s are
particularly well behaved.  We show that the pair $(\C,\D)$ regularly
embeds into a \cstardiag\ precisely when the intersection of the left
kernels of the compatible states is trivial.
 \end{abstract}

\maketitle

\tableofcontents

\section{Introduction, Background and Preliminaries}
In this paper, we investigate the structure of the class of regular
inclusions.       

\begin{definition}\label{mainlots}  An \textit{inclusion} is a pair $(\C,\D)$ of
  \cstar-algebras with $\D$ abelian, and $\D\subseteq \C$.  When $\C$
  has a unit $I$, we always assume that $I\in\D$. 
For any inclusion,  let 
$$\N(\C,\D):=\{v\in \C: v^*\D v\cup v\D v^*\subseteq \D\};$$ elements
of $\N(\C,\D)$ are called \textit{normalizers}.  If $(\C_i,\D_i)$
($i=1,2$) are inclusions, and $\theta:\C_1\rightarrow\C_2$ is a
$*$-homomorphism, we will say that $\theta$ is a \textit{regular
  $*$-homomorphism} if 
$$\theta(\N(\C_1,\D_1))\subseteq \N(\C_2,\D_2).$$  When $\theta$ is
regular and one-to-one, we will say that $(\C_1,\D_1)$ \textit{regularly embeds}
into $(\C_2,\D_2)$.  

The inclusion
$(\C,\D)$ is a
\newcounter{nul}
\begin{list}{}{\usecounter{nul}%
\setlength{\labelwidth}{10em}\setlength{\leftmargin}{8.5em}}
\item[\textit{regular inclusion}] if $\spn\N(\C,\D)$ is norm-dense in
$\C$;
\item[\textit{MASA inclusion}] if $\D$ is a MASA in $\C$;
\item[\textit{EP inclusion}] if $\D$ has the extension property relative to $\C$, that
  is, every pure state $\sigma$ on $\D$ has a unique extension to a
  state on $\C$ and no pure state of $\C$ annihilates $\D$;
\item[\textit{Cartan inclusion}] if $(\C,\D)$ is a regular MASA
  inclusion
 and there exists a faithful
  conditional expectation $E:\C\rightarrow\D$;
\item[\textit{\cstar-diagonal}] if  $(\C,\D)$ is a Cartan inclusion
  and also an EP-inclusion. 
\end{list}
\end{definition}

Examples of regular inclusions are commonplace in the theory of
\cstaralg s: any MASA $\D$ in a finite dimensional \cstaralg\ $\C$
yields a \cstardiag\ $(\C,\D)$; the categogy of \cstar-diagonals and
regular $*$-monomorphisms is closed under inductive
limits~\cite[Theorem~4.23]{DonsigPittsCoSyBoIs}; when a discrete group
$\Gamma$ acts topologically freely on a compact Hausdorff space $X$,
the reduced crossed product $C(X)\rtimes_r\Gamma$ together with the
canonical embedding of $C(X)$ yields a Cartan pair $(C(X)\rtimes_r
\Gamma,C(X))$~\cite{RenaultCaSuC*Al}; when $\C$ is the \cstaralg\ of a
directed graph and $\D$ is the \cstar-subalgebra generated by the
range and source projections of the partial isometries corresponding
to the edges of the graph, the pair $(\C,\D)$ is a regular inclusion.
Other examples arise from certain constructions in the theory of
groupoid \cstaralg s or from \cstaralg s constructed from
combinatoral data.

In this paper, we present a number of structural results for regular
inclusions.  Our main results include: Theorem~\ref{uniquecpmap}, which
establishes the existence and uniqueness of a psuedo-expectation for a
regular MASA inclusion; Theorem~\ref{Eideal}, which shows that the left
kernel $\L(\C,\D))$ of the pseudo-expectation is a two-sided ideal in
$\C$ which is maximal with respect to being diagaonal disjoint;
Theorem~\ref{embedCdiag}, which characterizes when a regular inclusion
can be regularly embedded into a \cstar-diagonal;
Theorem~\ref{inctoid} which shows how constructions of Kumjian and
Renault may be used to produce a twist associated to a regular
inclusion; Theorem~\ref{norming}, which shows that for a regular MASA
inclusion with $\L(\C,\D)=(0)$, $\D$ norms $\C$ in the sense of
Pop-Sinclair-Smith; and Theorem~\ref{niceiso}, which gives conditions
on which an isometric isomorphism of a subalgebra $\A$ of $\C$
containing $\D$ can be extended to a $*$ isomorphism of the \cstaralg\
generated by $\A$.  We turn now to some background.

In a landmark paper, J. Feldman and
C. Moore~\cite{FeldmanMooreErEqReII} considered pairs $(\M,\D)$
consisting of a (separably acting) von Neumann algebra $\M$ containing
a MASA $\D\simeq L^\infty(X,\mu)$ such that the set of normalizing
unitaries, $\{u\in M: \text{$u$ is unitary and } u\D u^*=\D\}$, has
$\sigma$-weakly dense span in $\M$ and there exists a faithful normal
conditional expectation $E:\M\rightarrow \D$.  In this context,
Feldman and Moore showed that there is a Borel equivalence relation
$R$ on $X$ and a cocycle $c$ such that the pair $(M,\D)$ can be
identified with a von-Neumann algebra arising from certain Borel
functions on $R$.  In a heuristic sense, their construction may be
viewed as the left regular representation of the equivalence relation
where the multiplication is twisted by the cocycle $c$.  This result
may be viewed as a means of coordinatizing the von Neumann algebra
along $\D$.

Work on a \cstar-algebraic version of the Feldman-Moore result was studied by
Kumjian in~\cite{KumjianOnC*Di}.   
In that article, Kumjian introduced the notion of a
\cstar-diagonals as well as the notion of regularity.  (We should
mention that the axioms for
a \cstar-diagonal given in~\cite{KumjianOnC*Di}, while equivalent to
the axioms given in Definition~\ref{mainlots},   do not explicitly
mention the extension property.  In the sequel, we will have considerable
interest in the extension property or its failure, which is why we use
the axioms given in Definition~\ref{mainlots}.)    Kumjian showed
that if $(\C,\D)$ is a \cstar-diagonal, with $\C$ separable and
$\hat{\D}$ second countable, then it can be coordinatized
via a twisted groupoid over a topological equivalence relation.  This
provided a very satisfying parallel to the von Neumann algebraic
context.

The requirement of the 
extension property in the axioms for a \cstardiag\ is  at times too
stringent, which is one of the advantages to Cartan inclusions, which 
need not have the extension property.    For example, let
$\H=\ell^2(\bbN)$, with the usual orthonormal basis $\{e_n\}$, and let
 $S$ be the unilateral shift, $Se_n=e_{n+1}$.  
Let $\C:=C^*(S)$ be the Toeplitz algebra, and let 
$\D=C^*(\{S^nS^{*n}: n\geq
0\})$.  Routine arguments show this is a Cartan inclusion, but the state
$\rho_\infty(T)=\lim_{n\rightarrow\infty}\innerprod{Te_n,e_n}$ on
  $\D$ fails to have a unique extension to a state on $\C$,

Cartan inclusions were 
introduced by   
Renault in~\cite{RenaultCaSuC*Al}, where he showed
that if $(\C,\D)$ is a Cartan inclusion (again with the separability
and second countability hypotheses), then there is a satisfactory
coordinatization of the pair $(\C,\D)$ via a twisted groupoid.
In this paper, Renault makes a very convincing case that Cartan
inclusions are the appropriate analog of the Feldman-Moore setting in 
 the \cstar-context.

Let $(\C,\D)$ be an inclusion.  A conditional expectation
$E:\C\rightarrow \D$ gives a preferred class,
$\{\rho\circ E: \rho\in\hat{\D}\}$, of
extensions of pure states on $\D$ to states on $\C$, and when the
expectation $E$ is unique, this class may be used for construction of
coordinates. 
Indeed, when $(\C,\D)$ is a Cartan
inclusion (or \cstardiag),   elements of
the twisted groupoid arise from ordered pairs $[v,\rho]$, where $v\in
\N(\C,\D)$ and $\rho\in \hat{\D}$ with $\rho(v^*v)\neq 0$.  
Such a pair determines a linear
functional of norm one on $\C$ by the rule,
\begin{equation}\label{vrhointro}
[v,\rho](x)=\frac{\rho(E(v^*x))}{\rho(v^*v)^{1/2}}  \qquad (x\in\C).
\end{equation}
The work of Feldman-Moore also makes essential use
of conditional expectations.

By~\cite[Theorem~3.4]{AndersonExReReStC*Al}, any EP inclusion
$(\C,\D)$ has a unique conditional expectation $E:\C\rightarrow \D$.
Thus for a \cstar-diagonal, the extension property guarantees the the
uniqueness of the expectation expectation $E:\C\rightarrow \D$.
Interestingly, the extension property is not necessary to guarantee  uniqueness
of expectations.  Indeed, Renault showed that when $(\C,\D)$ is a
Cartan inclusion, with expectation $E:\C\rightarrow \D$, then $E$ is
the unique conditional expectation of $\C$ onto $\D$.

What other regular inclusions $(\C,\D)$ have unique expectations of
$\C$ onto $\D$?  While we do
not know the answer to this question in general, we give a positive
result along these lines in Theorem~\ref{inMASAincl} below: this
result shows that when $(\C,\D)$ is a regular MASA inclusion with $\D$
 an injective \cstaralg, then $(\C,\D)$ is an EP inclusion, and therefore
has a unique expectation.   Section~\ref{secdym}  also contains 
results on dynamics of regular inclusions and introduces the
notion of a \textit{quasi-free action}.  
Theorem~\ref{freeaction} characterizes the extension property for
regular MASA inclusions in terms of the dynamics of regular
inclusions:  the regular MASA inclusion $(\C,\D)$ is an EP-inclusion
if and only if the $*$-semigroup $\N(\C,\D)$ acts quasi-freely on $\hat{\D}$.

Unfortunately, conditional expectations do not always exist, even when
$(\C,\D)$ is a regular MASA inclusion, and the \cstaralg s involved are 
well-behaved.  
Here is a simple example, which is a special case of the far more
general setting considered in Section~\ref{secCP}.
\begin{example}\label{noce}
Let $X$ be a connected, compact Hausdorff space, and let
$\alpha:X\rightarrow X$ be a homeomorphism such that $\alpha^2$ is the
identity map on $X$. Let $F^\circ$ be the interior of the set of fixed points for
$\alpha$; we assume that $F^\circ$ is neither empty nor all of $X$.
(For a concrete example, take $X=\{z\in \bbC: |z|\leq 1 \text{ and }
\Re(z)\, \Im(z)=0\}$ and let $\alpha(z)=\overline{z}$.)   Define
$\theta:C(X)\rightarrow C(X)$ by $\theta(f)=f\circ\alpha^{-1}$, and
set
$$\C:=\left\{\begin{pmatrix}f_0&f_1\\
    \theta(f_1)&\theta(f_0)\end{pmatrix}: f_0, f_1\in C(X)\right\}
\dstext{and}
\D:= \left\{\begin{pmatrix}f_0&0\\
    0&\theta(f_0)\end{pmatrix}: f_0\in C(X)\right\}.$$
Then $\C$ is a \cstar-subalgebra of $M_2(C(X))$, and  $(\C,\D)$ is a
regular inclusion.  ($\C$ may be regarded as
$C(X)\rtimes (\bbZ/2\bbZ)$.)

A calculation shows that the relative commutant $\D^c$ of $\D$ in $\C$
is
$$\D^c
=\left\{\begin{pmatrix}f_0&f_1\\
    \theta(f_1)&\theta(f_0)\end{pmatrix}\in \C : \supp(f_1)\subseteq
  F^\circ\right\}.$$ As $F^\circ\notin\{\emptyset, X\}$, we have
$\D\subsetneq\D^c\subsetneq \C$, and another calculation shows $\D^c$
is abelian.  Since $\spmatrix{0&1\\1&0}
\in\N(\C,\D^c)$, it follows that $(\C,\D^c)$ is a regular MASA
inclusion.

Suppose $E: \C\rightarrow \D^c$ is a conditional expectation.  Then
for some $f_0, f_1\in C(X)$ with $\supp(f_1)\subseteq F^\circ$, we
have $E\spmatrix{0&I\\ I&0}=\spmatrix{f_0& f_1\\
  \theta(f_1)&\theta(f_0)}$.    
 Notice 
$\theta(f_1)=f_1$ and  $\spmatrix{0&f_1\\ f_1 &0}\in \D^c$.   We have  
$$\begin{pmatrix}f_1^2& f_1\theta(f_0)\\ f_1 f_0& f_1^2\end{pmatrix}
= \begin{pmatrix}
0&f_1\\ f_1 &0\end{pmatrix} 
E\begin{pmatrix}0&I\\ I&0\end{pmatrix}
=E\left(\begin{pmatrix}
0&f_1\\ f_1 &0\end{pmatrix}
\begin{pmatrix}0&I\\ I&0\end{pmatrix} \right)=\begin{pmatrix}
f_1&0\\ 0&f_1\end{pmatrix},$$ so $f_1^2=f_1$.
As $X$ is connected, this yields $f_1=0$ or $f_1=I$.  But
$\supp(f_1)\subseteq F^\circ\neq X$,  so $f_1=0$.  Thus
$E(\spmatrix{0&I\\ I&0})=\spmatrix{f_0& 0\\ 0&\theta(f_0)}$.  

Now if $g_1\in \D$ is such that $\supp(g_1)\subseteq F^\circ$, then
$\theta(g_1)=g_1$, so $\spmatrix{0&g_1\\ g_1 & 0}\in \D^c$.  Thus,
\begin{align*}\begin{pmatrix}
0& g_1\\ g_1 &0\end{pmatrix}
&=E\left(\begin{pmatrix}
 g_1& 0\\ 0 &g_1\end{pmatrix} \begin{pmatrix} 0&I\\
 I&0\end{pmatrix}\right)=
\begin{pmatrix}g_1&0\\ 0& g_1
\end{pmatrix}
E\begin{pmatrix} 0 & I\\ I& 0
\end{pmatrix}
= \begin{pmatrix}g_1&0\\ 0& g_1
\end{pmatrix}
\begin{pmatrix}
f_0&0\\ 0&\theta(f_0)
\end{pmatrix}\\
&=\begin{pmatrix} g_1f_0& 0\\ 0 &\theta(g_1f_0)
\end{pmatrix}.
\end{align*}
Hence $g_1=0$ for every such $g_1$.  This
implies that $F^\circ=\emptyset$, contrary to hypothesis.  Hence no
conditional expectation of $\C$ onto $\D^c$ exists.
\end{example}

One of the goals of this paper
 is to show that even though conditional expectations
may fail to exist for a regular MASA inclusion, there is a map which
which may be used 
as a replacement.  Here is the relevant definition.
\begin{definition} Let $(\C,\D)$ be an inclusion and let
  $(I(\D),\iota)$ be an injective envelope for $\D$.  A
  \textit{pseudo-conditional expectation for 
    $\iota$}, or more simply, a
  \textit{pseudo-expectation for $\iota$},  is a unital
  completely positive map $E:\C\rightarrow I(\D)$ such that
  $E|_\D=\iota$.      When the context is clear, we sometimes drop the
  reference to $\iota$ and simply call $E$ a pseudo-expectation.
\end{definition}
The existence of pseudo-expectations follows
immediately from the injectivity of $I(\D)$.  In general, the
pseudo-expectation need not be unique.

However, in Section~\ref{MainExpect} below, we show that for any
regular MASA inclusion $(\C,\D)$, there is always a unique
pseudo-expectation $E:\C\rightarrow I(\D)$, see
Theorem~\ref{uniquecpmap}.  Let $\Mod(\C,\D)$ be the family of all
states on $\C$ which restrict to elements of $\hat{\D}$.  The family
of states, $\fS_s(\C,\D):=\{\rho\circ E: \rho\in \widehat{I(\D)}\}$
covers $\hat{\D}$ in the sense that the restriction map,
$\fS_s(\C,\D)\ni \rho\mapsto \rho|_\D\in \hat{\D}$, is onto.  Interestingly,
$\fS_s(\C,\D)$ is the unique minimal closed subset of $\Mod(\C,\D)$
which covers $\hat{\D}$, see Theorem~\ref{minonto}.  We also show that
$\fS_s(\C,\D)$ is closely related to the extension property.  When
$(\C,\D)$ is ``countably generated,'' Theorem~\ref{minonto} also shows
that $\fS_s(\C,\D)$ is the closure of all states in $\Mod(\C,\D)$
whose restrictions to $\D$ extend uniquely to $\C$.

For a regular MASA inclusion, the intersection of the left kernels of
the states in $\fS_s(\C,\D)$ is the left kernel of the
pseudo-expectation $E$.  Let $\L(\C,\D)$ be the left kernel of $E$.  
Theorem~\ref{Eideal} shows that $\L(\C,\D)$ is an ideal of $\C$, and
moreover, is the unique ideal of $\C$  which is maximal with
respect to the property of having trivial intersection with $\D$.
When the pseudo-expectation takes values in $\D$ (rather than
$I(\D)$), the 
 ideal $\L(\C,\D)$ may be viewed as a measure of the failure
  of the inclusion to be Cartan in Renault's sense.    

We define a regular MASA inclusion to be a \textit{virtual Cartan
  inclusion}  
when  the pseudo-expectation is faithful, or equivalently, when
$\L(\C,\D)=0$.    The purpose of Section~\ref{secCP} is to
give a large class of virtual Cartan inclusions.  
Theorem~\ref{L4DCP} shows that when
$\C$ is the reduced crossed product of the abelian \cstaralg\ $\D$ by a
discrete group $\Gamma$, then, provided the relative commutant $\D^c$
of $\D$ in $\C$ is abelian, $(\C,\D^c)$ is a virtual Cartan
inclusion.  We characterize when $\D^c$ is abelian in terms of the
dynamics of the action of $\Gamma$ on $\hat{\D}$ in
Theorem~\ref{abelHxabelcom};  this result shows that $\D^c$ is abelian
precisely when the germ isotropy subgroup $H^x$ of $\Gamma$ is abelian
for every $x\in\hat{\D}$.  
The results of Section~\ref{secCP} are summarized
in Theorem~\ref{embedcrossedprod}.

One of the  motivations for our study of inclusions was to
provide a context for the study of certain nonselfadjoint
subalgebras.  If $(\C,\D)$ is a \cstardiag, there are numerous papers
devoted to the study of various (usually nonselfadjoint) closed algebras $\A$
with $\D\subseteq \A\subseteq \C$, see 
\cite{DavidsonPowerIsAuHo,DonsigHudsonKatsoulisAlIsLiAl,%
DonsigPittsCoSyBoIs,HopenwasserPetersPowerSuGrC*Al,%
MuhlyQiuSolelCoNuSpSuOpAl,MuhlySolelOnTrSuGrC*Al,MuhlySaitoSolelCoTrOpAl}
to name just a  few.   An often 
successful strategy for the analysis of the subalgebras
of \cstardiag s is to use the coordinatization of $(\C,\D)$ (via the
twist) to impose coordinates on the subalgebras; properties of the
coordinate system then reflect properties of the subalgebra.  

Since many classes of regular MASA inclusions are neither \cstardiag s nor
Cartan inclusions, it
is natural to wonder whether coordinate methods may be used to analyze
nonselfadjoint subalgebras of regular MASA inclusions.  A strategy for
doing so is to try to regularly embed a given regular MASA inclusion $(\C,\D)$
into a \cstardiag\ $(\C_1,\D_1)$ and then to restrict the coordinates
obtained from $(\C_1,\D_1)$ to the $(\C,\D)$ or to the given
subalgebra.   This leads to the following
problem.
\begin{remark}{Problem}\label{embedprob}  Characterize when a  given regular
inclusion $(\C,\D)$ can be regularly embedded into a \cstardiag.
\end{remark}
We give a solution to Problem~\ref{embedprob} in
Section~\ref{sectRegEm}.    To do this, we introduce  a new family
$\fS(\C,\D)\subseteq \Mod(\C,\D)$, which we call \textit{compatible
  states}.    When $(\C,\D)$ is a regular MASA inclusion,
$\fS_s(\C,\D)\subseteq \fS(\C,\D)$.  The intersection of the left
kernels of the states in $\fS(\C,\D)$ is an ideal of $\C$,
$\rad(\C,\D)$.   The regular inclusion $(\C,\D)$ regularly embeds in a
\cstardiag\ if and only if $\rad(\C,\D)=(0)$, see
Theorem~\ref{embedCdiag}.  In particular, any virtual Cartan inclusion
regularly embeds into a \cstardiag.

The needed properties of compatible states are developed 
in Section~\ref{sectCompSt}.  Compatible states can be defined for any
inclusion, and we expect that they may be useful in other contexts as
well.  While compatible states exist in abundance for
any regular MASA inclusion,~Theorem~\ref{allunitaries}  implies
 that compatible states need not exist for a general regular inclusion.

 For a regular MASA inclusion $(\C,\D)$, it is always the case that
 $\rad(\C,\D)\subseteq \L(\C,\D).$ We have been unable to resolve the
 question of whether equality holds.  We provide some insight into
 this question in Section~\ref{desCSregMASA}.  Given $\sigma\in
 \hat{\D}$, there is an equivalence relation $R_1$ on
 $H_\sigma:=\{v\in \N(\C,\D): \rho(v^*dv)=\rho(d) \text{ for all $d\in
   \D$}\}$, and the set $H_\sigma/R_1$ of equivalence classes of this
 equivalence relation may be made into a $\bbT$-group.  The main
 result of this section, Theorem~\ref{compatstpdf}, shows that there
 is a bijective correspondence between $\{\rho\in\fS(\C,\D):
 \rho|_\D=\sigma\}$ and a certain family of \phom s on $H_\sigma/R_1$.
 Theorem~\ref{compatstpdf} thus gives a description of a certain class
 of state extensions of $\sigma$.  Our description leads us to suspect
 that it is possible for $\rad(\C,\D)$ to be a proper subset of
 $\L(\C,\D)$.

The purpose of Section~\ref{GpReIn} is to discuss certain twisted
groupoids arising from a regular MASA inclusion.  The methods of this
section are suitable modifications to our context of the methods used
by Kumjian and Renault when coordinatizing \cstardiag s and Cartan
inclusions.  We show that given any regular MASA inclusion $(\C,\D)$,
there is a twist associated to $(\C,\D)$.  We use $\fS_s(\C,\D)$ or
another suitable subset $F$ of $\fS(\C,\D)$ as unit space of the
twist, and define functionals $[v,\rho]$ on $\C$ much as
in equation~\eqref{vrhointro}, except that the functional $\rho\circ E$
appearing in that formula is replaced with an element of
$F$.  The main result of this
section, Theorem~\ref{inctoid}, shows that when $\rad(\C,\D)$ is
trivial, there is a regular $*$-monomorphism of $(\C,\D)$ into the
Cartan inclusion arising from the twist associated to $(\C,\D)$.  This
result gives perspective to the embedding results of
Section~\ref{sectRegEm}, and suggests that it is indeed possible to
coordinatize subalgebras using this twist, as indicated prior to
Problem~\ref{embedprob}.

In~\cite{PittsNoAlAuCoBoIsOpAl}, we gave a method for extending an
isometric isomorphism between subalgebras $\A_i$ of \cstardiag s
$(\C_i,\D_i)$ with $\D_i\subseteq \A_i\subseteq \C_i$ to a
$*$-isomorphism of the \cstar-subalgebra $C^*(\A_1)$ of $\C_1$
generated by $\A_1$ onto the corresponding subalgebra $C^*(\A_2)$.
The two main ingredients of this method were:  a) show that $\D_i$
norms $\C_i$ in the sense of Pop-Sinclair-Smith
(\cite{PopSinclairSmithNoC*Al}), and b) show that the $C^*$-envelope
of $\A_i$ is isometrically isomorphic to $C^*(\A_i)$.   In
Section~\ref{appl}, we show that if the hypothesis that $(\C_i,\D_i)$
is weakened from \cstardiag\ to virtual Cartan inclusion, then 
both these ingredients still hold.   Theorem~\ref{niceiso}
generalizes~\cite[Theorem~2.16]{PittsNoAlAuCoBoIsOpAl} 
to the context of virtual Cartan inclusions.  This is a considerable
generalization, and allows for the simplification of some arguments in
the literature.

The author is indebted to William Arveson, who greatly influenced the
author and the field of operator algebras.  His passing saddens us all.

We thank Ken Davidson, Allan Donsig, William Grilliette, Vern
Paulsen, and Vrej Zarikian for several very helpful conversations.

\subsection{Preliminaries}   Given
a Banach space $\X$, we will use $\dual{\X}$ instead of the traditional
$\X^*$ to denote the Banach space dual.  Likewise if $\alpha:
\X\rightarrow \Y$ is a bounded linear map between Banach spaces, we use
$\dual{\alpha}$ to denote the adjoint map, $f\in \dual{\Y}\mapsto
f\circ\alpha\in \dual{\X}.$

If $X$ is a topological space and $E\subseteq X$, 
 $E^\circ$ denotes the interior of  $E$. 
Also, for $f:X\rightarrow \bbC$, we write
$\supp{f}$ for the set $\{x\in X: f(x)\neq 0\}$.

\begin{remark}{Standing Assumption}  For the remainder of this paper, all
  \cstaralg s will be unital, and if $\D$ is a sub-\cstaralg\ of the
  \cstaralg\ $\C$, we assume that the unit for $\D$ is the same as the
  unit for $\C$.
\end{remark}

Let $\C$ be a \cstaralg, and let
$\S(\C)$ be the state space of $\C$.  For $\rho\in\S(\C)$ let
$L_\rho=\{x\in\C:\rho(x^*x)=0\}$ be the
  left kernel of $\rho$, and let $(\pi_\rho,\H_\rho, \xi)$ be the GNS
  representation corresponding to $\rho$.  We regard $\C/L_\rho$ as a
  dense subset of $\H_\rho$, and for $x\in\C$ will often write
  $x+L_\rho$ to denote the vector $\pi_\rho(x)\xi$.  
  Denote the inner product on $\H_\rho$ by $\innerprod{\cdot,\cdot}_\rho$.

We now
recall some facts about projective topological spaces, projective
covers, and injective envelopes of abelian \cstaralg s.  
Following~\cite{HadwinPaulsenInPrAnTo}, given a compact
Hausdorff space $X$, a pair $(P,f)$ consisting of a compact Hausdorff
space $P$ and a continuous map $f:P\rightarrow X$ is called a
\textit{cover for $X$} (or simply a \textit{cover})
if $f$ is surjective.  A cover $(P,f)$ is \textit{rigid} if the only
continuous map $h:P\rightarrow P$ which satisfies $f\circ h=f$ is
$h=\text{id}_P$; the cover $(P,f)$ is \textit{essential} if whenever
$Y$ is a compact Hausdorff space and $h:Y\rightarrow P$ is continuous
and satisfies $f\circ h$ is onto, then $h$ is onto.

A compact
Hausdorff space $P$ is \textit{projective}  if whenever $X$ and $Y$ are
compact Hausdorff spaces and $h:Y\rightarrow X$ and $f: P\rightarrow
X$ are continuous maps with $h$ surjective,  there exists a
continuous map $g: P\rightarrow Y$ with $g\circ h=f$.  A Hausdorff
space which is extremally disconnected (i.e.\ the closure of every
open set is open) and compact is
\textit{Stonean}.  In~\cite[Theorem~2.5]{GleasonPrToSp}, Gleason
proved  that a
compact Hausdorff space $P$ is projective if and only $P$ is Stonean.

By~\cite[Proposition~2.13]{HadwinPaulsenInPrAnTo}, if $(P,f)$ is a
cover for $X$ with $P$ a projective space, then $(P,f)$ is rigid if
and only if $(P,f)$ is essential.  A \textit{projective cover} for $X$
is a rigid cover $(P,f)$ for $X$ such that  $P$ is
projective.  A projective cover for $X$ always
exists~\cite[Theorem~2.16]{HadwinPaulsenInPrAnTo} and is unique in the
sense that if $(P_1,f_1)$ and $(P_2, f_2)$ are projective covers for
$X$, then there is a unique homeomorphism $h:P_1\rightarrow P_2$ such
that $f_1=f_2\circ h$.

The concept of an injective envelope for an abelian unital \cstaralg s
is  dual to the concept of a projective cover of a compact
Hausdorff space: if
$(P,f)$ is a cover for $X$, let $\iota: C(X)\rightarrow C(P)$ be the
map $d\mapsto d\circ f$; then $(P,f)$ is a projective cover if and
only if $(C(P),\iota)$ is an injective, envelope of
$C(X)$~\cite[Corollary~2.18]{HadwinPaulsenInPrAnTo}.  
  (Injective envelopes can be also be defined for general unital \cstaralg s,
  not just abelian, unital \cstaralg s.  
  Like projective covers, injective envelopes of unital \cstaralg s
  have a uniqueness property.  If $\A$ is a unital \cstaralg, and
  $(\B_1,\sigma_1)$ and $(\B_2,\sigma_2)$ are injective envelopes for
  $\A$, then there exists a unique $*$-isomorphism $\theta:
  \B_1\rightarrow \B_2$ such that
  $\theta\circ\sigma_1=\sigma_2$~\cite[Theorem~4.1]{HamanaInEnC*Al}.)

  Recall that a unital \cstaralg\ $\C$ is \textit{monotone complete} if every
  bounded increasing net in the self-adjoint part, $\C_{s.a.}$, of
  $\C$ has a least upper bound in $\C_{s.a.}$.
  Hamana~\cite{HamanaReEmCStAlMoCoCStAl} shows that every injective
  \cstaralg\ is monotone complete.  When $(x_\lambda)$ is a bounded
  increasing net in $\C_{s.a.}$,
  $\sup_{\C}x_\lambda$ means the least upper bound of $(x_\lambda)$ in $\C$. 

Theorem~6.6 of \cite{HamanaReEmCStAlMoCoCStAl} implies that if
$\A$ is a unital, abelian \cstaralg\ then any injective envelope
$(\B,\sigma)$ for $\A$ is \textit{Hamana-regular} in the sense that
whenever $x\in \B$ is self-adjoint, $x=\sup_{\B}\{\sigma(a): a\in\A,
a=a^*\text{ and } a\leq x\}$.  (Since $\A$ is abelian, we 
 regard $\{\sigma(a): a\in\A,
a=a^*\text{ and } a\leq x\}$ as a net indexed by itself.)  As Hamana observes
in~\cite{HamanaReEmCStAlMoCoCStAl}, when $x\in \A$ is positive, then
also, $x=\sup_{\B}\{\sigma(a): a\in\A\text{ and } 0\leq \sigma(a)\leq
b\}$.

Here is a description of an injective envelope of an abelian
\cstaralg.  For details, see~\cite[Theorem~1]{GonshorInHuC*AlII}.
Let $X$ be a compact Hausdorff space.  Define an equivalence
relation on the algebra $\B(X)$ of all bounded Borel complex-valued functions on
$X$ by $f\sim g$ if and only if $\{x\in X: f(x)-g(x)\neq 0\}$ is a set
of first category.  The equivalence class $J$ of the zero function is an
ideal in $\B(X)$ and the quotient $\D(X):=\B(X)/J$ is called the
\textit{Dixmier algebra}.  Define $j: C(X)\rightarrow \D(X)$ by
$j(f)=f+J$.  Then $(\D(X),j)$ is an injective envelope for $\D$.

  We conclude this section with a few comments regarding categories.  
  Let $\fC$ be the category of unital abelian \cstaralg s with
  $*$-homomorphisms, and let $\fO$ be the category of operator systems
  with completely positive (unital) maps.  Let $\D$ be a unital
  abelian \cstaralg.  Then $\D$ is an injective object in $\fC$ if and
  only if $\D$ is an injective object in $\fO$,
  see~\cite[Theorem~2.4]{HadwinPaulsenInPrAnTo}.  (The statement
  of~\cite[Theorem~2.4]{HadwinPaulsenInPrAnTo}, mentions the category
  of operator systems without explicitly giving the morphisms, but the
  proof makes it clear that the authors mean the category of operator
  systems and unital, completely positive maps.)  

  Hamana shows that in the category $\fO$, there is an injective
  object $I(\D)$ and a one-to-one completely positive
  $\iota:\D\rightarrow I(\D)$ such that the extension $(I(\D),\iota)$
  is rigid and essential.  Hamana and Hadwin-Paulsen
  (see~\cite{HamanaInEnOpSy}
  and~\cite[Corollary~2.18]{HadwinPaulsenInPrAnTo}) observe that
  $I(\D)$ is endowed with a product which makes it into an abelian
  \cstaralg\ (and $\iota$ a $*$-monomorphism).  Set $X=\hat{\D}$, and
  let $(P,f)$ be a projective cover for $X$, so that the map
  $\tau:\D\rightarrow C(P)$ given by $\tau(x)=\hat{x}\circ f$ is a
  one-to-one $*$-homomorphism of $\D$ into $C(P)$.  Corollary~2.18 of
  \cite{HadwinPaulsenInPrAnTo} also shows the existence of a
  $*$-isomorphism $\theta:C(P)\rightarrow I(\D)$ such that
  $\theta\circ\tau=\iota$.

Thus, for us an injective envelope
 $(I(\D),\iota)$ for $\D$ will be a rigid and essential extension of
 $\D$ in
 the category $\fC$.  The comments above show that this is
 equivalent to saying that $(I(\D),\iota)$  is
 a rigid and essential extension for $\D$ in $\fO$.

\section{Dynamics of Regular Inclusions}\label{secdym}

Given a regular inclusion $(\C,\D)$, the $*$-semigroup $\N(\C,\D)$ of
normalizers acts via partial homeomorphisms on the maximal ideal space
$\hat{\D}$ of $\D$.  The purpose of this section is to discuss some of
the features of this action.  The first subsection is devoted to
notation and some background facts regarding normalizers and
intertwiners.
The second subsection gives a characterization of the
extension property  in terms of the
   dynamics associated with the action of $\N(\C,\D)$ on $\hat{\D}$ 
(Theorem~\ref{freeaction}).
An interesting consequence is that for any regular
MASA inclusion $(\C,\D)$ with $\D$ injective is an EP inclusion, see
Theorem~\ref{inMASAincl}.

\subsection{Normalizers and Intertwiners}

Let $(\C,\D)$ be an inclusion.  Closely related to normalizers are
intertwiners. 
\begin{definition}
An \textit{intertwiner} for $\D$ is an element $v\in\C$ such that
$v\D=\D v$.  We denote the set of all intertwiners by $\inter(\C,\D)$.
\end{definition}

Proposition~3.3 of~\cite{DonsigPittsCoSyBoIs} shows
that for any intertwiner $v$, the elements $v^*v$ and $vv^*$ belong to
the relative commutant $\D^c$ of $\D$ in $\C$.  When $v$ is a
normalizer, the fact that $\D$ is unital shows that both $v^*v$ and $vv^*$
belong to $\D$, and it is easy to see that 
$\{v\in \inter(\C,\D): v^*v, vv^*\in \D\}\subseteq \N(\C,\D)$.
   In general however, there exists $v\in\inter(\C,\D)$ with
   $\{v^*v,vv^*\}\not\subseteq \D$.  (For a simple example, observe that every
operator in $M_2(\bbC)$ is an intertwiner for the inclusion,
$(M_2(\bbC),\bbC I_2)$.)  Also, by 
\cite[Proposition~3.4]{DonsigPittsCoSyBoIs}, $\N(\C,\D)\subseteq
\overline{\inter(\C,\D)}$.  However, the final paragraph of the proof of
\cite[Proposition~3.4]{DonsigPittsCoSyBoIs}, shows somewhat more than this,
namely that $\overline{\{v\in \inter(\C,\D): v^*v, vv^*\in\D\}}=
\N(\C,\D).$    Summarizing, we have the following result.
\begin{proposition}[{\cite[Propositions~3.3 and
  3.4]{DonsigPittsCoSyBoIs}}] \label{intnorrel}
Let $(\C,\D)$ be an inclusion. Then
\begin{equation*}
\overline{\{v\in \inter(\C,\D): v^*v, vv^*\in\D\}}= \N(\C,\D)\subseteq
\overline{\inter(\C,\D)}.
\end{equation*}
  Furthermore, when $\D$ is a MASA in $\C$,
  $\N(\C,\D)=\overline{\inter(\C,\D)}$.
\end{proposition}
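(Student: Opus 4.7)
The plan is to rely on the cited statements from \cite{DonsigPittsCoSyBoIs} for the two displayed containments and concentrate on the final assertion, namely that $\N(\C,\D)=\overline{\inter(\C,\D)}$ whenever $\D$ is a MASA in $\C$. Since $\N(\C,\D)\subseteq\overline{\inter(\C,\D)}$ is already part of the displayed conclusion, only the reverse containment $\overline{\inter(\C,\D)}\subseteq \N(\C,\D)$ needs to be established.

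First I would check that $\N(\C,\D)$ is norm-closed in $\C$: if $v_n\to v$ with each $v_n\in\N(\C,\D)$, then for any $d\in\D$ we have $v_n^* d v_n\to v^* d v$ and $v_n d v_n^*\to v d v^*$; since $\D$ is closed and each of the approximating elements lies in $\D$, so do the limits, giving $v\in\N(\C,\D)$. Hence it suffices to show $\inter(\C,\D)\subseteq\N(\C,\D)$.

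For the inclusion on intertwiners, let $v\in\inter(\C,\D)$. By the result of \cite[Proposition 3.3]{DonsigPittsCoSyBoIs} recalled in the excerpt, $v^*v$ and $vv^*$ belong to the relative commutant $\D^c$ of $\D$ in $\C$. Because $\D$ is assumed to be a MASA, $\D^c=\D$, so $v^*v,vv^*\in\D$. Now invoking the remark (also recalled in the excerpt) that
\[
\{v\in\inter(\C,\D): v^*v,vv^*\in\D\}\subseteq \N(\C,\D),
\]
we conclude $v\in\N(\C,\D)$. Thus $\inter(\C,\D)\subseteq\N(\C,\D)$, and taking closures (using Step~1) yields $\overline{\inter(\C,\D)}\subseteq\N(\C,\D)$, completing the proof.

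There is essentially no obstacle here once the cited facts are in hand; the MASA hypothesis is used in precisely one place, to collapse $\D^c$ to $\D$ so that the squares $v^*v,vv^*$ of an intertwiner automatically land in $\D$. The only subtlety worth double-checking is the norm-closedness of $\N(\C,\D)$, which is immediate from continuity of multiplication and closedness of $\D$.
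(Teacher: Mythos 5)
Your proof is correct and uses exactly the ingredients the paper assembles in the paragraph preceding the proposition: the paper offers no separate argument, simply citing Propositions 3.3 and 3.4 of \cite{DonsigPittsCoSyBoIs}, and your derivation of the MASA case --- for $v\in\inter(\C,\D)$ one has $v^*v,vv^*\in\D^c=\D$, so $\inter(\C,\D)\subseteq\{w\in\inter(\C,\D):w^*w,ww^*\in\D\}\subseteq\N(\C,\D)$, and then norm-closedness of $\N(\C,\D)$ finishes it --- is the intended one. No gaps.
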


The following example of a regular homomorphism will be useful in the sequel.
\begin{lemma}\label{abelcom} Suppose $(\C,\D)$ is an
  inclusion such that the relative commutant, $\D^c$, of $\D$ in $\C$
  is abelian.  Then $(\C,\D^c)$ is a MASA inclusion and
  $\N(\C,\D)\subseteq \N(\C,\D^c)$; in particular the
  identity map $\text{id}: (\C,\D)\rightarrow (\C,\D^c)$ is a regular
  $*$-homomorphism.  
\end{lemma}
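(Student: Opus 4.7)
The lemma splits into two claims: (i) $\D^c$ is a MASA in $\C$, and (ii) $\N(\C,\D)\subseteq\N(\C,\D^c)$. The latter immediately implies that the identity map $(\C,\D)\to(\C,\D^c)$ is a regular $*$-homomorphism, so it suffices to establish (i) and (ii).

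Part (i) is essentially formal: $\D\subseteq\D^c$ (since $\D$ is abelian) gives $(\D^c)^c\subseteq\D^c$, and the hypothesis that $\D^c$ is abelian gives $\D^c\subseteq(\D^c)^c$, so $\D^c=(\D^c)^c$. Any abelian self-adjoint subalgebra of $\C$ containing $\D^c$ must lie in $(\D^c)^c=\D^c$, so $\D^c$ is maximal abelian.

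For (ii), since $\N(\C,\D^c)$ is norm-closed, Proposition~\ref{intnorrel} reduces the task to showing $v\in\N(\C,\D^c)$ for every $v$ in the dense set $S:=\{v\in\inter(\C,\D): v^*v, vv^*\in\D\}$. For such a $v$, the relation $v\D=\D v$ yields maps $f,g:\D\to\D$ satisfying $vd=f(d)v$ and $dv^*=v^*g(d)$; taking the adjoint of $vd^*=f(d^*)v$ forces $v^*g(d)=v^*f(d^*)^*$. The crux is to prove $v^*(f(d)-g(d))=0$ for every $d\in\D$. I would obtain this by computing $vdv^*=f(d)(vv^*)$ and taking adjoints to deduce $(vv^*)f(d^*)=(vv^*)f(d)^*$, which after simplification gives $(vv^*)(f(d)-f(d^*)^*)=0$. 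Since $\|v^*a\|^2=\|a^*(vv^*)a\|$, any $a$ with $vv^*a=0$ satisfies $v^*a=0$, so $v^*(f(d)-f(d^*)^*)=0$; combined with $v^*g(d)=v^*f(d^*)^*$, this yields the required identity.

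With this identity in hand, for any $x\in\D^c$ and $d\in\D$,
\begin{equation*}
(v^*xv)d-d(v^*xv)=v^*f(d)xv-v^*xg(d)v=v^*(f(d)-g(d))xv=0,
\end{equation*}
using that $x$ commutes with the elements $f(d),g(d)\in\D$. Hence $v^*\D^c v\subseteq\D^c$; since $S$ is self-adjoint, the same argument applied to $v^*$ yields $v\D^c v^*\subseteq\D^c$, so $v\in\N(\C,\D^c)$. Passing to the norm-closure finishes the proof. The main obstacle I anticipate is establishing $v^*(f(d)-g(d))=0$: the map $f$ is only a $*$-homomorphism modulo the null space of $p:=vv^*$, so the adjoint chase linking the two defining relations of $f$ and $g$ requires care.
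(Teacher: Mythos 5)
Your proof is correct and follows essentially the same route as the paper's: both reduce via Proposition~\ref{intnorrel} and the norm-closedness of $\N(\C,\D^c)$ to showing that intertwiners normalize $\D^c$, and then conjugate an element of $\D^c$ past $v$ using the intertwining relation. The only difference is bookkeeping: the paper picks, for self-adjoint $d$, a \emph{self-adjoint} $d'$ with $dv=vd'$ (asserted without justification), whereas you derive the needed compatibility $v^*(f(d)-g(d))=0$ by an explicit adjoint chase — arguably a more careful treatment of the same point.
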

\begin{proof}
Since $\D$ and $\D^c$ are abelian,  $(\C,\D^c)$ is a MASA
inclusion. 

To show that $\N(\C,\D)\subseteq \N(\C,\D^c)$, suppose
$v\in\inter(\C,\D)$.  Fix $h\in\D^c$, and let $d\in \D$ be
self-adjoint.  Since $v$ is a $\D$-intertwiner, we may find a
self-adjoint $d'\in\D$ so that $dv=vd'$.  Then
$d(vhv^*)=vd'hv^*=vhd'v^*=(vhv^*)d,$ from which it follows that
$vhv^*\in\D^c$.  Similarly, $v^*h v\in\D^c$.  We conclude that
$\inter(\C,\D)\subseteq \N(\C,\D^c)$.  Since $\N(\C,\D^c)$ is
norm-closed, Proposition~\ref{intnorrel} yields,
$$\N(\C,\D)\subseteq \overline{\inter(\C,\D)}\subseteq \N(\C,\D^c).$$
Thus, $(\C,\D^c)$ is a  MASA inclusion and the identity mapping
$\text{id}:(\C,\D)\rightarrow (\C,\D^c)$ is a regular
$*$-homomorphism.
\end{proof}

For any topological space $X$, a \textit{partial homeomorphism} is a
homeomorphism $h: S\rightarrow R$, where $S$ and $R$ are open subsets
of $X$.  As usual, $\text{dom}(h)$ and $\text{ran}(h)$ will denote the
domain and range of the partial homeomorphism $h$.  We use $\oinv(X)$
to denote the inverse semigroup of all partial homeomorphisms of $X$.
When $\S$ is a $*$-semigroup, a semigroup homomorphism
$\alpha:\S\rightarrow \oinv(X)$ is a \textit{$*$-homomorphism} if for
every $s\in\S$, $\alpha(s^*)=\alpha(s)^{-1}.$ A subset $\G$ of
$\oinv(X)$ which is closed under composition and inverses (i.e. a sub
inverse semigroup) is called a \textit{pseudo-group on $X$}.
Associated to any pseudo-group $\G$ on $X$ is the \textit{groupoid of
  germs} which is the set of equivalence classes, $\{[x,\phi,y]:
\phi\in \G, y\in \dom(\phi), x=\phi(y)\}$, where $[x,\phi,
y]=[x_1,\phi_1,y_1]$ if and only $y=y_1$ and there exists a
neighborhood $N$ of $y$ such that $\phi|_N=\phi_1|_N$.  Elements
$[w,\phi,x]$ and $[y,\psi,z]$ are composable if $x=y$ and then
$[w,\phi,x]\, [y,\psi, z]=[w,\phi\psi,z]$ and
$[x,\phi,y]^{-1}=[y,\phi^{-1},x]$.  The range and source maps are
$r([x,\phi,y])=x$ and $s([x,\phi,y])=y$.  Thus $X$ may be identified
with the unit space of the groupoid of germs.

Recall (see~\cite[Proposition~6]{KumjianOnC*Di}) that a
normalizer $v$ determines a partial homeomorphism 
$$\beta_v: \{\rho\in
\hat{\D}: \rho(v^*v)> 0\}\rightarrow\{\rho\in\hat{\D}:
\rho(vv^*)> 0\}\dstext{given by}
\beta_v(\rho)(d)=\frac{\rho(v^*dv)}{\rho(v^*v)}\qquad (d\in\D).$$

Clearly $\N(\C,\D)$ and $\inter(\C,\D)$ are $*$-semigroups under
multiplication.  Routine, but tedious, calculations show that the map
$\N(\C,\D)\ni v\mapsto \beta_v$ is a $*$-semigroup homomorphism
$\beta: \N(\C,\D)\rightarrow \oinv(\hat{\D})$.  We record this fact as
a proposition.

\begin{proposition}[{\cite[Lemma~4.10]{RenaultCaSuC*Al}}]\label{betacalc}  Suppose $(\C,\D)$ is an
  inclusion.  Then the following statements hold.

\begin{enumerate}
\item Suppose that $v,w\in\N(\C,\D)$ and $\rho\in\hat{\D}$ satisfies
  $\rho(w^*v^*vw)\neq 0$.  Then $\rho(w^*w)\neq 0$, and 
   $\beta_{vw}(\rho)=\beta_v(\beta_w(\rho))$.
\item For every $v\in\N(\C,\D)$, 
  $\beta_{v^*}=(\beta_v)^{-1}$. 
\end{enumerate}
\end{proposition}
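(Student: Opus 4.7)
The plan is to verify both statements by direct computation with the defining formula $\beta_v(\rho)(d)=\rho(v^*dv)/\rho(v^*v)$, after first ensuring that the composition $\beta_v\circ\beta_w$ and the inverse $\beta_{v^*}$ are defined at the relevant points. For (1), I would first note that $v^*v\in\D$ (since $v\in\N(\C,\D)$) with $0\le v^*v\le\|v\|^2 I$, so the operator inequality
$$0\le w^*v^*vw\le \|v\|^2\,w^*w$$
holds in $\C$. Applying the state $\rho$ and using the hypothesis $\rho(w^*v^*vw)\neq 0$ forces $\rho(w^*w)\neq 0$, so $\beta_w(\rho)$ is defined; and since
$$\beta_w(\rho)(v^*v)=\frac{\rho(w^*v^*vw)}{\rho(w^*w)}\neq 0,$$
$\beta_v(\beta_w(\rho))$ is also defined.

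The composition formula in (1) then follows by a direct calculation: for any $d\in\D$,
$$\beta_v(\beta_w(\rho))(d)=\frac{\beta_w(\rho)(v^*dv)}{\beta_w(\rho)(v^*v)}=\frac{\rho(w^*v^*dvw)/\rho(w^*w)}{\rho(w^*v^*vw)/\rho(w^*w)}=\frac{\rho(w^*v^*dvw)}{\rho(w^*v^*vw)}=\beta_{vw}(\rho)(d),$$
the factors of $\rho(w^*w)$ canceling.

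For (2), observe that $\beta_v$ and $\beta_{v^*}$ have mutually compatible domains and ranges: $\beta_v$ sends $\{\rho:\rho(v^*v)>0\}$ to $\{\rho:\rho(vv^*)>0\}$ while $\beta_{v^*}$ goes in the reverse direction. To show $\beta_{v^*}\circ\beta_v=\mathrm{id}$, take $\rho$ with $\rho(v^*v)>0$ and $d\in\D$. Then
$$\beta_{v^*}(\beta_v(\rho))(d)=\frac{\beta_v(\rho)(vdv^*)}{\beta_v(\rho)(vv^*)}=\frac{\rho(v^*vdv^*v)}{\rho(v^*vv^*v)}.$$
Since $\rho\in\hat\D$ is multiplicative on $\D$ and $v^*v,d,v^*v$ all lie in $\D$, the numerator collapses to $\rho(v^*v)^2\rho(d)$ and the denominator to $\rho(v^*v)^2$, yielding $\rho(d)$. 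The reverse composition $\beta_v\circ\beta_{v^*}$ is identical by symmetry.

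The only step requiring any genuine thought is the bookkeeping in (1) showing that the domain condition $\rho(w^*v^*vw)\neq 0$ automatically propagates backward to guarantee both $\rho(w^*w)\neq 0$ and $\beta_w(\rho)(v^*v)\neq 0$; once this is in place, everything else is formula-pushing that uses only multiplicativity of characters and the hypothesis that $v,w\in\N(\C,\D)$.
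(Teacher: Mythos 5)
Your proof is correct. The paper itself omits the argument, recording the statement as following from ``routine, but tedious, calculations'' (and citing Renault), so there is no alternative approach to compare against; your direct computation is exactly the intended one. The two points that need genuine care --- the order inequality $0\le w^*v^*vw\le\norm{v}^2 w^*w$ forcing $\rho(w^*w)\neq 0$ in (1), and the use of $v^*v\in\D$ together with multiplicativity of $\rho$ on $\D$ to collapse $\rho(v^*vdv^*v)$ in (2) --- are both handled correctly.
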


Following 
Renault~\cite{RenaultCaSuC*Al} we will  call the
collection, 
$$\P\G(\C,\D):=\{\beta_v:
v\in\N(\C,\D)\}$$ the \textit{Weyl pseudo-group} of the inclusion.
Also, 
 the groupoid of germs 
of $\P\G(\C,\D)$  is the \textit{Weyl
  groupoid} of the inclusion, which we denote by $\W\G(\C,\D)$.

\begin{remark}{Remarks} 
\begin{enumerate}
\item\label{Dpres} 
Observe that if $\theta$ is a regular homomorphism, then
$\theta(\D_1)\subseteq \D_2$: indeed, for $d\in\D_1$ with $d\geq 0$,
$d^{1/2}\in \N(\C_1,\D_1)$ and so $\theta(d) =
\theta(d^{1/2})1\theta(d^{1/2})\in \D_2.$  
It follows that the dynamics of inclusions under regular
  homomorphisms are well-behaved in the sense that if
  $\theta:(\C_1,\D_1)\rightarrow (\C_2,\D_2)$ is a regular
  homomorphism of the inclusions $(\C_i,\D_i)$, then whenever
  $v\in\N(\C_1,\D_1)\setminus\ker\theta$, the following diagram commutes:
$$
\xymatrix{
\hat{\D}_2\supseteq\dom(\beta_{\theta(v)})
\ar[r]^<<<<{\beta_{\theta(v)}} \ar[d]_{\dual{\theta}}     &     
\ran(\beta_{\theta(v)})\subseteq \hat{\D}_2 \ar[d]^{\dual{\theta}}\\
\hat{\D}_1\supseteq\dom(\beta_v)   \ar[r]^<<<<<{\beta_v}    &  \ran(\beta_v)\subseteq \hat{\D}_1.
}$$

\item For some purposes, it is easier to work with intertwiners than
  normalizers.  Thus, one might be tempted to define a regular homomorphism using
  intertwiners instead of normalizers, that is, by mandating
  $\theta(\I(\C_1,\D_1))\subseteq \I(\C_2,\D_2)$.  However, a 
  disadvantage of doing so is that such a $\theta$ need not carry $\D_1$
  into $\D_2$, which is why we use $\N(\C,\D)$ rather than $\I(\C,\D)$
  in Definition~\ref{mainlots}.  For an example of this, let
  $\C=C([0,1])$ and let $\D=\{f\in C([0,1]): f(0)=f(1/2)\}$.
  Then $\C=\I(\C,\D)$, and since every unitary in $\C$ normalizes
  $\D$, we see that $(\C,\D)$ is regular (as in
  Definition~\ref{mainlots}).  Taking $(\C_i,\D_i)=(\C,\D)$, then any
  automorphism $\theta$ of $\C$ satisfies $\theta(\I(\C_1,\D_1))\subseteq
  \I(\C_2,\D_2)$, yet clearly one may choose $\theta$ so that
  $\theta(\D_1)\not\subseteq \theta(\D_2).$
\end{enumerate}
\end{remark}

\subsection{Quasi-Freeness and the Extension Property}

By Proposition~\ref{betacalc}, $\S:=\{\beta_v: v\in\N(\C,\D)\}$ is an
inverse semigroup of partial homeomorphisms of
$\hat{\D}$.  Recall that a group $G$ of homeomorphisms of a
space $X$ acts freely if whenever $g\in G$ has a fixed point, then $g$ is the
identity.  Paralleling the notion for groups, we make the following definition.

\begin{definition}  Suppose that $\S$ is a $*$-semigroup and $X$ is a
  compact Hausdorff space, and that $\alpha:\S\rightarrow \oinv(X)$ is
  a $*$-semigroup homomorphism.   We say that $\S$ \textit{acts quasi-freely} on $X$
if whenever $s\in \S$, $\{ x\in \text{dom}(\alpha(s)): 
 \alpha(s)(x)
=x\}$ is an open set in $X$.  
\end{definition}
When $\Gamma$ is a group acting quasi-freely on $X$, this says that
for each $s\in \Gamma$, the set of fixed points of $\alpha(s)$ is a
clopen set; in particular, when $X$ is a connected set, the notions of
free and quasi-free actions for a group (acting as homeomorphisms) on $X$ coincide.

In some circumstance, quasi-freeness is automatic.  Recall that a
Hausdorff topological space $X$ is \textit{extremally disconnected} if
the closure of every open subset of $X$ is open and that $X$ is a
\textit{Stonean space} if $X$ is compact, Hausdorff, and extremally
disconnected.  We shall show that if a $*$-semigroup acts on a Stonean
space, then it acts quasi-freely.  To do this we require the following
topological proposition. The proof is a straightforward adaptation of
the elegant proof by Arhangel$'$skii of Frol\'{i}k's
Theorem~\cite[Theorem~3.1]{FrolikMaExDiSp} on fixed points of
homeomorphisms of extremally disconnected spaces.  We provide a sketch
of the proof for the convenience of the reader.
\begin{proposition}[Frol\'{i}k's Theorem]\label{fixed} Let $X$ be an
  extremally disconnected space,
let $V, W$ be clopen subsets of $X$, and suppose $\homeo: V\rightarrow
W$ is a homeomorphism of $V$ onto $W$.  Then the set of fixed points
$F:=\{x\in V: \homeo(x)=x\}$ is a clopen subset of $X$.  Moreover,
there are three disjoint clopen subsets $C_1, C_2, C_3$ of $X$ such that for
$i=1,2,3,$ $\homeo(C_i)\cap C_i=\emptyset = C_i\cap F$ and $V=F\cup
C_1\cup C_2 \cup C_3.$
\end{proposition}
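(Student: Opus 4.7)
The plan is to adapt Arhangel'skii's three-coloring proof of Frol\'{i}k's theorem to the partial homeomorphism $\homeo\colon V \to W$. Two properties of extremal disconnectedness drive the argument: (i) the closure of every open set is clopen, and (ii) disjoint open subsets of $X$ have disjoint closures (a quick consequence of (i)). I will construct the $C_i$ first, and then recover $F$ as $V \setminus (C_1 \cup C_2 \cup C_3)$.

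By Zorn's Lemma I select a maximal open $A \subseteq V$ satisfying $\homeo(A) \cap A = \emptyset$; unions of chains remain in the collection, and $A \cap F = \emptyset$ is automatic, since $x \in A \cap F$ would put $\homeo(x) = x$ in $A \cap \homeo(A)$. The essential consequence of maximality is the covering
\[
V \setminus F \subseteq \overline{A} \,\cup\, \overline{\homeo(A) \cap V} \,\cup\, \overline{\homeo^{-1}(A \cap W)}.
\]
If some $x \in V \setminus F$ fell outside all three closures, Hausdorffness together with $\homeo(x) \neq x$ would yield an open $U \subseteq V$ containing $x$ with $\homeo(U) \cap U = \emptyset$ and $U$ disjoint from the last two closures; then $A \cup U$ would properly enlarge $A$ while preserving $\homeo(A \cup U) \cap (A \cup U) = \emptyset$, contradicting maximality.

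Now set $C_1 := \overline{A}$, $C_2 := \overline{\homeo(A) \cap V} \setminus C_1$, and $C_3 := \overline{\homeo^{-1}(A \cap W)} \setminus (C_1 \cup C_2)$. By (i) each $C_i$ is clopen; they are pairwise disjoint by construction, and their union contains $V \setminus F$. To verify $\homeo(C_i) \cap C_i = \emptyset$ I would invoke (ii) together with the set-theoretic identities $A \cap \homeo(A) = \emptyset$ and $(\homeo(A) \cap V) \cap \homeo(\homeo(A) \cap V) = \emptyset$, both flowing from $A \cap \homeo(A) = \emptyset$ and the injectivity of $\homeo$; for $C_3$ one uses instead $\homeo(\homeo^{-1}(A \cap W)) = A \cap W$ to conclude $\homeo(C_3) \subseteq \overline{A \cap W} \subseteq C_1$, which is disjoint from $C_3$.

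It remains to show $C_i \cap F = \emptyset$, after which $F = V \setminus (C_1 \cup C_2 \cup C_3)$ is clopen. Suppose $x \in \overline{A} \cap F$ and $a_\lambda \in A$ with $a_\lambda \to x$; continuity of $\homeo$ gives $\homeo(a_\lambda) \to \homeo(x) = x$, placing $x$ in $\overline{\homeo(A)}$. But $A$ and $\homeo(A)$ are disjoint open subsets of $X$, so by (ii) their closures are disjoint, a contradiction. The cases for $C_2$ and $C_3$ are analogous, using continuity of $\homeo^{-1}$ on $W$. The principal technical nuisance is that $\homeo$ is only a partial map on $X$, which forces the intersections with $V$ and with $W$ to appear in the covering and in the definitions of $C_2, C_3$ so that $\homeo$ and $\homeo^{-1}$ apply only where they are defined; this is the main adaptation required from the classical Frol\'{i}k theorem, where $\homeo$ is a self-homeomorphism of $X$.
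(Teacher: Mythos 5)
Your proposal is correct and follows essentially the same route as the paper: both are the Arhangel$'$skii maximal-simple-set argument, with the fixed-point set recovered as the complement in $V$ of the three sets built from a maximal open $A$ with $\homeo(A)\cap A=\emptyset$ together with $\homeo(A)\cap V$ and $\homeo^{-1}(A\cap W)$. The only cosmetic difference is that the paper observes the maximal simple set is automatically clopen (its closure is again simple), so no closures or disjointification are needed, whereas you take closures explicitly and carve out the $C_i$ by set differences; both versions are sound.
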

\begin{proof}[Proof (see
  {\cite[Theorem~1]{ArchangelskiiToAlStExDiSeGr}})] Call an open
subset $A\subseteq V$ \textit{$\homeo$-simple} if $\homeo(A)\cap
A=\emptyset$.  By the Hausdorff maximality theorem, there exists a
maximal chain $\G$ of $\homeo$-simple sets.  Put $U=\bigcup \G$.  Then
$U$ is also a $\homeo$-simple subset of $V$, and since $\overline{U}$
is open, maximality shows that $U$ is in fact clopen.

Next observe that $\homeo(U)\cap V$ and $\homeo^{-1}(U\cap W)$ are clopen
$\homeo$-simple sets, and put 
$$M= U\cup (\homeo(U)\cap V)\cup \homeo^{-1}(U\cap W).$$  Since the
intersection of $F$ with any $\homeo$-simple subset of $V$ is empty, we
have $M\cap F=\emptyset$.  We shall show that $F=V\setminus M$.   

Suppose to the contrary, that $x\in V\setminus M$ satisfies
$\homeo(x)\neq x$.  Let $H$ be an open subset of $V$ such that $x\in H$
and $H\cap M$ and $\homeo(H)\cap H$ are both empty.  Then $H$ is
$\homeo$-simple and 
\begin{equation}\label{intersections}
H\cap U= H\cap (\homeo(U)\cap V)= H\cap \homeo^{-1}(U\cap
W)=\emptyset.\end{equation}  But \eqref{intersections} implies that
$H\cup U$ is a $\homeo$-simple set which properly contains $U$,
contradicting the maximality of $U$.  So $F=V\setminus M$.  

Since both $V$ and $M$ are clopen, so is $F$.  Finally, to complete
the proof, take $C_1:=U$, $C_2:=\homeo(U)\cap V,$ and
$C_3:=\homeo^{-1}(U\cap W)\setminus (\homeo(U)\cap V)$.
\end{proof}

With this preparation, we now show that any action of a $*$-semigroup on a
Stonean space is quasi-free.

\begin{theorem}\label{quasiStonean} Suppose that $X$ is a Stonean
  space, $\S$ is a $*$-semigroup, and $\alpha:\S\rightarrow \oinv(X)$
  is a $*$-semigroup homomorphism.  Then $\S$ acts quasi-freely on
  $X$.
\end{theorem}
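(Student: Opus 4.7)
The plan is to reduce the statement to an application of Frol\'{i}k's Theorem (Proposition~\ref{fixed}) at each fixed point, exploiting the fact that the clopen subsets form a basis for the topology of a Stonean space. Fix $s\in\S$, set $h:=\alpha(s)$, and denote its (open) domain and range in $X$ by $V$ and $W$. I must show that $F:=\{x\in V: h(x)=x\}$ is open in $X$. Note that the $*$-semigroup structure plays no real role beyond ensuring $h$ is a partial homeomorphism; the theorem is really a statement about a single partial homeomorphism of a Stonean space.

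First I would fix an arbitrary $x_0\in F$ and choose a clopen neighborhood $V_0$ of $x_0$ with $V_0\subseteq V$; such $V_0$ exists because the clopen sets of a Stonean space form a basis for its topology. Next I would observe that $h(V_0)$ is also clopen in $X$: it is open, since $h$ is a homeomorphism from $V$ onto $W$ and $V_0\subseteq V$ is open, and it is closed because $V_0$ is a closed subset of the compact space $X$ (hence compact), so $h(V_0)$ is compact and therefore closed in the Hausdorff space $X$.

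At this stage $h|_{V_0}\colon V_0\to h(V_0)$ is a homeomorphism between clopen subsets of $X$, so Proposition~\ref{fixed} applies directly to yield that the fixed-point set of $h|_{V_0}$, namely $F\cap V_0$, is clopen in $X$. In particular, $F\cap V_0$ is an open neighborhood of $x_0$ contained in $F$. Since $x_0\in F$ was arbitrary, $F$ is open, and hence $\S$ acts quasi-freely on $X$.

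The only delicate point is that Proposition~\ref{fixed} is stated for homeomorphisms between clopen sets, whereas the domain and range of a general partial homeomorphism of $X$ are merely open. The localization step sidesteps this obstruction: restricting to a clopen neighborhood $V_0\subseteq V$ of $x_0$, Hausdorff compactness automatically upgrades $h(V_0)$ to be clopen, placing us squarely inside the hypotheses of Frol\'{i}k's Theorem.
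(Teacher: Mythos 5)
Your proof is correct, but it takes a genuinely different route from the paper's. The paper argues globally: since $X$ is compact and extremally disconnected, the Stone--\v{C}ech compactifications of the open sets $G=\dom(\alpha(s))$ and $H=\ran(\alpha(s))$ are homeomorphic to $\overline{G}$ and $\overline{H}$, so $\alpha(s)$ extends to a homeomorphism of the clopen set $\overline{G}$ onto the clopen set $\overline{H}$; a single application of Proposition~\ref{fixed} then shows the fixed-point set of the extension is clopen, and intersecting with $G$ gives the result. You instead localize: at each fixed point $x_0$ you pick a clopen neighborhood $V_0\subseteq\dom(\alpha(s))$ (which exists since regularity of the compact Hausdorff space $X$ together with extremal disconnectedness shows the clopen sets form a basis --- a one-line justification worth including), observe that $h(V_0)$ is automatically clopen because it is open in the open set $\ran(\alpha(s))$ and compact, and apply Proposition~\ref{fixed} to $h|_{V_0}$ to get a clopen neighborhood $F\cap V_0$ of $x_0$ inside the fixed-point set. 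Your version buys elementarity --- it avoids the nontrivial fact that $\beta G\cong\overline{G}$ for open subsets of extremally disconnected compacta, which the paper imports from the literature --- at the cost of applying Frol\'{i}k's theorem once per fixed point rather than once globally; the paper's version additionally produces a canonical clopen extension of the fixed-point set to $\overline{G}$, though nothing in the theorem requires that. Both establish exactly the openness of $\{x\in\dom(\alpha(s)):\alpha(s)(x)=x\}$, and your observation that the $*$-semigroup structure is irrelevant to the argument is accurate.
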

\begin{proof}
  Fix $s\in \S$, and consider the open sets $G:= \text{dom}(\alpha(s))$ and
  $H:=\text{ran}(\alpha(s))$.  Since $X$ is compact and extremally
  disconnected,  the
  Stone-\v{C}ech compactifications $\beta G$ and $\beta H$ are
  homeomorphic to $\overline{G}$ and $\overline{H}$ respectively
  (\cite[Exercises~15G(1) and~19G(2)]{WilliardGeTo}).  Since
  $\alpha(s)$ is a homeomorphism of $G$ onto $H$, general properties
  of the Stone-\v{C}ech compactification show that $\alpha(s)$ extends
  to a homeomorphism $h$ of $\overline{G}$ onto $\overline{H}.$

Let $F\subseteq \overline{G}$ be the set of fixed points for $h$;
Proposition~\ref{fixed} shows that $F$ is clopen in $X$.  Therefore,
$$\{x\in\text{dom}(\alpha(s)): \alpha(s)(x)=x\}= 
F\cap \text{dom}(\alpha(s))$$ is open in $X$.  Thus $\S$ acts
quasi-freely on $X$.
\end{proof}

Quasi-freeness is intimately related to the extension property.
 The following result is known in the case when $(\C,\D)$ is a regular
 MASA inclusion with faithful expectation,
 see~\cite[Proposition~5.11]{RenaultCaSuC*Al}.  Parts of the proof
 below follow the
proof of~\cite[Proposition~3.12]{DonsigPittsCoSyBoIs}, but we
reproduce it here for convenience of the reader. 
\begin{theorem}\label{freeaction}
Let $(\C,\D)$ be a regular inclusion. Then  the following statements are equivalent:
\begin{enumerate}
\item[a)] $\D$ has the extension
property in $\C$;
 \item[b)] $\D$ is a MASA in $\C$ and the action $v\mapsto \beta_v$ of
   the semigroup $\N(\C,\D)$ is a  quasi-free action on $\hat{\D}$;
\item[c)] $\D$ is a MASA in $\C$ and for each $\sigma\in\hat{\D}$, the
  isotropy group of $\W\G(\C,\D)$ at $\sigma$ is the trivial group.
\end{enumerate}
\end{theorem}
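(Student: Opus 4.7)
The implication (b) $\Leftrightarrow$ (c) is a direct translation: triviality of the isotropy group at $\sigma$ means every germ $[\sigma,\beta_v,\sigma]$ with $\beta_v(\sigma)=\sigma$ coincides with the identity germ, i.e., $\beta_v$ restricts to the identity on an open neighborhood of $\sigma$; requiring this for every $\sigma$ is exactly the openness of the fixed-point set $F_v:=\{\tau\in\dom\beta_v:\beta_v(\tau)=\tau\}$ for every $v\in\N(\C,\D)$.

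For (a) $\Rightarrow$ (b), I would first argue $\D$ is a MASA: for self-adjoint $c\in\D^c$ the algebra $\D_0:=C^*(\D,c)$ is abelian, so if $\D_0\neq\D$ the Gelfand-dual surjection $\hat{\D}_0\twoheadrightarrow\hat{\D}$ fails to be injective, producing some pure $\sigma\in\hat{\D}$ with two distinct character extensions on $\D_0$ and hence, by Hahn--Banach, two distinct state extensions on $\C$---contradicting EP. For quasi-freeness, invoke \cite[Theorem~3.4]{AndersonExReReStC*Al} for the unique conditional expectation $E:\C\to\D$, so that $\tilde\sigma=\sigma\circ E$ for every pure $\sigma$. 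Two pure-state identities govern the values $\tilde\sigma(v)$ on $v\in\N(\C,\D)$. First, if $\sigma(v^*v)>0$ and $\beta_v(\sigma)\neq\sigma$, pick $d\in\D_+$ by Urysohn with $d(\sigma)=1$ and $d(\beta_v(\sigma))=0$; then in the GNS of $\tilde\sigma$, $\pi(d)\xi=\xi$ and
\begin{equation*}
\|\pi(dv)\xi\|^2=\tilde\sigma(v^*d^2v)=\sigma(v^*v)\,d^2(\beta_v(\sigma))=0,
\end{equation*}
so $\tilde\sigma(v)=\langle\pi(v)\xi,\pi(d)\xi\rangle=\langle\pi(dv)\xi,\xi\rangle=0$. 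Second, if $\beta_v(\sigma)=\sigma$ with $\sigma(v^*v)>0$, the state $x\mapsto\sigma(v^*xv)/\sigma(v^*v)$ also extends $\sigma$ and hence equals $\tilde\sigma$ by EP, so $\tilde\sigma(v^*xv)=\sigma(v^*v)\tilde\sigma(x)$ for all $x\in\C$; since $\tilde\sigma$ is pure its GNS is irreducible, forcing $\pi(v)\xi$ proportional to $\xi$ and yielding $|\tilde\sigma(v)|^2=\sigma(v^*v)>0$. If $\sigma\in F_v$ were not interior to $F_v$, choose a net $\tau_\lambda\to\sigma$ in $\dom\beta_v\setminus F_v$; the first identity gives $\tilde\tau_\lambda(v)=0$, and continuity of $\tau\mapsto\tilde\tau(v)=\tau(E(v))$ forces $\tilde\sigma(v)=0$, contradicting the second identity.

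For (b) $\Rightarrow$ (a), given two state extensions $\rho_1,\rho_2$ of $\sigma\in\hat{\D}$ to $\C$, it suffices to match $\rho_i(v)$ on each $v\in\N(\C,\D)$. In the GNS of $\rho_i$, $\pi_i(d)\xi_i=\sigma(d)\xi_i$ for $d\in\D$; Cauchy--Schwarz disposes of the case $\sigma(v^*v)=0$, and the Urysohn/GNS argument above (with $\rho_i$ in place of $\tilde\sigma$) disposes of the case $\beta_v(\sigma)\neq\sigma$. The main case is $\beta_v(\sigma)=\sigma$: quasi-freeness makes $F_v$ a neighborhood of $\sigma$, so Urysohn provides $p\in\D_+$ with $p(\sigma)=1$ and $\supp(p)\subseteq F_v$. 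Using that $F_v$ is $\beta_v$-invariant (if $\beta_v(\tau)\in F_v$ then $\beta_v^2(\tau)=\beta_v(\tau)$, and the injectivity of $\beta_v$ on its domain forces $\tau\in F_v$), the self-adjoint elements $(pv-vp)^*(pv-vp)$ and $(p[d,v])^*(p[d,v])$ lie in $\D$ and evaluate at $\tau\in\hat{\D}$ respectively to
\begin{equation*}
\tau(v^*v)\bigl(p(\tau)-p(\beta_v(\tau))\bigr)^2\quad\text{and}\quad \tau(v^*v)\,p(\beta_v(\tau))^2\bigl(d(\tau)-d(\beta_v(\tau))\bigr)^2.
\end{equation*}
Both vanish identically, since whenever $p(\tau)$ or $p(\beta_v(\tau))$ is nonzero the $\beta_v$-invariance places $\tau$ in $F_v$, forcing $\beta_v(\tau)=\tau$ and killing the relevant difference. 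Therefore $pv$ commutes with every $d\in\D$, so $pv\in\D^c=\D$ by the MASA hypothesis, and $\rho_i(dv)=\sigma(d)\rho_i(v)$ (a consequence of $\pi_i(d)\xi_i=\sigma(d)\xi_i$) gives $\rho_i(v)=\rho_i(pv)=\sigma(pv)$, independent of $i$; the clause ``no pure state of $\C$ annihilates $\D$'' is automatic from $1\in\D$.

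The delicate step is the computation in (b) $\Rightarrow$ (a) that extracts the $\D$-valued element $pv$ from quasi-freeness alone; the two explicit formulas on $\hat{\D}$ above and the $\beta_v$-invariance argument that annihilates them are the only place where the full quasi-freeness hypothesis is used, and once they are in hand the rest of the proof amounts to routine GNS bookkeeping.
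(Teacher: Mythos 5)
Your proof is correct, and both nontrivial implications are argued along genuinely different lines from the paper (the equivalence of (b) and (c) is the same unwinding of definitions in both). For (a)$\Rightarrow$(b) the paper quotes Archbold--Bunce--Gregson for the MASA property and the expectation, then shows $\sigma(v^*E(v))=\sigma(v^*v)\neq 0$ by averaging $v$ over the unitary group of $\D$ (their Theorem~3.7) together with the fact $v^*E(v)\in\D$ from \cite[Proposition~3.12]{DonsigPittsCoSyBoIs}, and finally uses $\beta_{v^*E(v)}=\text{id}$ to open up the fixed-point set. You instead prove the MASA property by hand and obtain openness of $\fix{\beta_v}$ from two facts about the unique (hence pure) extension $\sigma\circ E$: it vanishes at $v$ when $\beta_v(\sigma)\neq\sigma$ (this is the paper's Lemma~\ref{vactmod}), and satisfies $|\sigma(E(v))|^2=\sigma(v^*v)>0$ when $\beta_v(\sigma)=\sigma$, via irreducibility of the GNS representation; continuity of $\widehat{E(v)}$ then forces $\sigma$ into the interior. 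This is more elementary in that no averaging theorem is needed, though it leans on purity of the unique extension (which does follow from EP by an extreme-point argument). For (b)$\Rightarrow$(a) the paper establishes the structural inclusion $\N(\C,\D)\subseteq\D+\overline{\spn}[\C,\D]$ via free normalizers, a partition of unity, and a Cauchy-sequence limiting argument, and then invokes Archbold--Bunce--Gregson Theorem~2.4; you compare two extensions directly, the key step being that $pv\in\D$ when $\supp(\hat p)\subseteq\fix{\beta_v}$ --- essentially an inline reproof of the paper's Lemma~\ref{normintbetav}, obtained here by exhibiting $(pv-vp)^*(pv-vp)$ and $(p(dv-vd))^*(p(dv-vd))$ as elements of $\D$ whose Gelfand transforms vanish (your $\beta_v$-invariance observation for $\supp(\hat p)$ is exactly what makes those transforms vanish). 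Your route avoids the ABG extension theorem entirely and is shorter, but does not yield the commutator decomposition of normalizers, which the paper's argument produces as a byproduct of independent interest.
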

\begin{proof}
(a)$\Rightarrow$(b).
Suppose that $\D$ has the extension property.
Then~\cite[Corollary~2.7]{ArchboldBunceGregsonExStC*AlII} shows that
$\D$ is a MASA in $\C$, and that there exists a conditional
expectation $E:\C\rightarrow \D$.   Suppose that
$v\in\N(\C,\D)$, and $\sigma\in\hat{\D}$ satisfies $\sigma(v^*v)>0$ and
$\beta_v(\sigma)=\sigma$.  
By \cite[Proposition~3.12]{DonsigPittsCoSyBoIs}, we have
$v^*E(v)\in\D$.
Also, if $G$ is the unitary group of $\D$, we have for
$g\in G$,
$$\sigma(v^*gvg^{-1})=\sigma(v^*gv)\sigma(g^{-1})=\beta_v(\sigma)(g)
\sigma(v^*v)\sigma(g^{-1})=\sigma(g)\sigma(v^*v)\sigma(g^{-1})=\sigma(v^*v).$$
The extension property
and~\cite[Theorem~3.7]{ArchboldBunceGregsonExStC*AlII} show that
$E(v)\in\overline{co}\{gvg^{-1}: g\in G\}$, so that
$$\sigma(v^*E(v))=\sigma(v^*v),$$ whence $\sigma(v^*E(v))=\sigma(v^*v)\neq
0$.

Hence there exists an open set $U\subseteq \hat{\D}$ so that
$\sigma\in U$ and 
$\tau(v^*E(v))\neq 0$ for every $\tau\in U$.  Since $v^*E(v)\in \D$,
we have
$\tau=\beta_{v^*E(v)}(\tau)=\beta_{v^*}(\beta_{E(v)}(\tau))=\beta_{v^*}(\tau)$
for every $\tau\in U$.  But $\beta_v^{-1}=\beta_{v^*}$, so
$\beta_v(\tau)=\tau$ for $\tau\in U$.  Thus $\{\sigma\in\hat{\D}:
\sigma(v^*v)>0\text{ and } \beta_v(\sigma)=\sigma\}$ is open in
$\hat{\D}$, so the semigroup $\{\beta_v:
v\in\N(\C,\D)\}$ acts quasi-freely on $\hat{\D}$.
   
Before proving (b)$\Rightarrow$(a),  we establish some notation.  We use  $[\C,\D]$ for the
set $\{cd-dc: d\in\D,
c\in\C\}$.   Also, a normalizer $v\in\N(\C,\D)$ is a \textit{free
  normalizer} if $v^2=0$.   Kumjian notes
in~\cite{KumjianOnC*Di}, that any free normalizer belongs to
$\overline{\spn}[\C,\D]$, because $v(v^*v)^{1/n}-(v^*v)^{1/n}v=
v(v^*v)^{1/n}\rightarrow v$.

We turn now to the proof of (b)$\Rightarrow$(a).  So  assume that (b)
holds.  We shall prove that
\begin{equation}\label{norminDcom}
\N(\C,\D)\subseteq   \D+\overline{\spn}[\C,\D].
\end{equation}
Once this inclusion is established, an
application of 
\cite[Theorem~2.4]{ArchboldBunceGregsonExStC*AlII} will show that
whenever $\rho_1$ and $\rho_2$ are states of $\C$ with
$\rho_1|_\D=\rho_2|_\D\in\hat{\D}$, then for every $v\in\N(\C,\D)$, 
 $\rho_1(v)=\rho_2(v)$.
Regularity of $(\C,\D)$ then implies that $\rho_1=\rho_2$, so that
 $\D$ has the extension property.

 To show~\eqref{norminDcom}, fix $v\in \N(\C,\D)$.
Let $F=\{\sigma\in\hat{\D}: \sigma(v^*v)>0\text{ and }
\beta_v(\sigma)=\sigma\}$.  By hypothesis, $F$ is an open subset of
$\hat{\D}$.  Let $\eps >0$, and let  $$X_{\eps} :=F\cap
\{\sigma\in\hat{\D}: \sigma(v^*v)\geq\eps^2\}.$$  Then $X_\eps$ is a
closed subset of $\hat{\D}$.  Let $f_\eps\in\D$ be such that $0\leq
f_\eps \leq I$, $\hat{f}_\eps|_{X_\eps}\equiv 1$ and 
$\overline{\supp}(\hat{f}_\eps)\subseteq F$.  Next let    
$$Y_\eps:=F^c\cap\{\sigma\in\hat{\D}: \sigma(v^*v)\geq \eps^2\}.$$
Clearly $Y_\eps\cap \overline{\supp}(\hat{f}_\eps)=\emptyset.$

For
$\sigma\in Y_\eps$, we have $\beta_v(\sigma)\neq \sigma$, so 
we may find $d\in \D$ with $\sigma(d)=1$, $0\leq
d\leq 1$, $\overline{\supp}(\hat{d} )\cap\overline{\supp}(\hat{f}_\eps)=\emptyset$  and $(vd)^2=0$.  
Compactness of $Y_\eps$ and a partition of unity argument show that
there exists $n\in\bbN$ and a collection of functions
$\{g_j\}_{j=1}^n\subseteq 
\D$ such that, with $g_\eps=\sum_{j=1}^n g_j$, we have: 
$$(vg_j)^2=0,\quad 0\leq g_j\leq I,\quad\overline{\supp}(\hat{g}_j)\cap
 \overline{\supp}(\hat{f}_\eps)=\emptyset,\quad 0\leq
g_\eps\leq I, \dstext{and}\hat{g}_\eps|_{Y_\eps}\equiv 1.$$

Then $g_\eps f_\eps=0$, and $\norm{v(I-(f_\eps +g_\eps))}<\eps$.  So
$v=\lim_{\eps\rightarrow 0}(vf_\eps +vg_\eps)$.  From Kumjian's observation, $vg_\eps\in
\overline{\spn}[\C,\D]$.  Moreover, since the closed support of
$\hat{f}_\eps$ is contained in $F$, we find that $vf_\eps$ commutes with
$\D$.  Hence $vf_\eps\in\D$, as $\D$ is a MASA in $\C$.

Let $\eps_n$ be a sequence of positive numbers decreasing to $0$.  We
may choose elements $f_{\eps_n}\in\D$ as above but with the additional
condition that $f_{\eps_m}\leq
f_{\eps_n}$ whenever $m <n$.
For $n>m$ and $\sigma\in\hat{\D}$, we
have \begin{equation}\label{excas}
\sigma(v^*v(f_{\eps_n}-f_{\eps_m})^2)=\begin{cases}
  0&\text{if $\sigma(f_{\eps_n}-f_{\eps_m})=0$}\\
\sigma(v^*v)\sigma(f_{\eps_n}-f_{\eps_m})^2 & \text{if $
  \sigma(f_{\eps_n}-f_{\eps_m})\neq 0$}.
\end{cases}\end{equation}
Notice that if $\sigma\in \overline{F}\cap F^c$, then $\sigma(v^*v)=0$.  By
continuity of $\widehat{v^*v}$, 
given $\delta >0$, there exists a open set $U\subseteq \hat{\D}$ with
$U\supseteq \overline{F}\cap F^c$ and $\sigma(v^*v)<\delta$ if
$\sigma\in U$.  Since $\overline{\supp}(f_{\eps_n}-f_{\eps_m})\subseteq U$ when
$n,m$ are sufficiently large, it follows from~\eqref{excas} that $vf_{\eps_n}$ is a Cauchy
sequence in $\D$, and hence converges to $k\in \D$.  Then
$$v-k=\lim_{n\rightarrow\infty}(vg_{\eps_n})\in \overline{\spn}[\C,\D],$$ whence
$v=k+(v-k)\in \D+\overline{\spn}[\C,\D]$.  As noted above, this is
sufficient to complete the proof that $\D$ has the extension
property.  Thus (a) holds.

(b)$\Rightarrow$(c).  Fix $\sigma\in\hat{\D}$, and suppose that
$[\sigma,\phi,\sigma]$ belongs to the isotropy group of $\W\G(\C,\D)$
at $\sigma$.   Then there is an open set $N$ and $v\in\N(\C,\D)$ such
that with $\sigma\in
N\subseteq\dom\phi\cap\dom\beta_v $ such that $\beta_v|_N=\phi|_N$.
By quasi-freeness of the action, there exists a neighborhood $N_1$ of
$\sigma$ contained in $N$ such that $\beta_v|_{N_1}=\text{id}|_{N_1}$.
Thus, $[\sigma,\phi,\sigma]=[\sigma,\text{id},\sigma]$ so that the
isotropy group of $\W\G(\C,\D)$ at $\sigma$ is trivial.

(c)$\Rightarrow$(b).  Let $v\in \N(\C,\D)$ and suppose that
$\sigma\in\dom(\beta_v)$ is a fixed point for $\beta_v$.  Then
$[\sigma,\beta_v,\sigma]$ is in the isotropy group for $\W\G(\C,\D)$
at $\sigma$, so that
$[\sigma,\beta_v,\sigma]=[\sigma,\text{id},\sigma]$.  By the
definition of the Weyl groupoid,  $\sigma$ belongs to the interior of
$\{x\in\dom(\beta_v): \beta_v(x)=x\}$.  Hence $\{x\in\dom(\beta_v):
\beta_v(x)=x\}$ is an open set in $X$.  As this holds for every
$v\in\N(\C,\D)$, the action $v\mapsto \beta_v$ is quasi-free.
\end{proof}

As an immediate corollary of our work, we have the following.

\begin{theorem}\label{inMASAincl}  
Suppose $(\C,\D)$ is a regular MASA inclusion, with
  $\D$  an injective \cstar-algebra.  Then $(\C,\D)$ has the
  extension property.
\end{theorem}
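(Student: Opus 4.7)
The plan is to combine Theorem~\ref{freeaction} with Theorem~\ref{quasiStonean}; the bulk of the work has already been done, and only a short observation about the spectrum of $\D$ remains.

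First I would observe that since $\D$ is an injective abelian unital \cstaralg, its spectrum $X := \hat{\D}$ is a projective compact Hausdorff space. Indeed, by the duality between injective envelopes for abelian unital \cstaralg s and projective covers (recorded in the preliminaries via~\cite[Corollary~2.18]{HadwinPaulsenInPrAnTo}), $\D \simeq C(\hat{\D})$ is injective if and only if $(\hat{\D},\text{id})$ is its own projective cover, i.e.\ $\hat{\D}$ is projective. By Gleason's theorem~\cite[Theorem~2.5]{GleasonPrToSp}, a compact Hausdorff space is projective if and only if it is Stonean. Thus $\hat{\D}$ is Stonean.

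Next, Proposition~\ref{betacalc} gives a $*$-semigroup homomorphism $\beta : \N(\C,\D) \to \oinv(\hat{\D})$, $v \mapsto \beta_v$, so Theorem~\ref{quasiStonean} applies with $X = \hat{\D}$ and $\S = \N(\C,\D)$ to conclude that $\N(\C,\D)$ acts quasi-freely on $\hat{\D}$. Since by hypothesis $\D$ is a MASA in $\C$, condition (b) of Theorem~\ref{freeaction} holds, and therefore (a) holds: $\D$ has the extension property in $\C$.

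There is essentially no obstacle here beyond identifying $\hat{\D}$ as Stonean from the injectivity hypothesis; the whole argument is a one-line concatenation of the previous two results.
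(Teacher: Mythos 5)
Your proposal is correct and is essentially identical to the paper's own proof: the paper likewise notes that injectivity of $\D$ forces $\hat{\D}$ to be a compact extremally disconnected (Stonean) space (citing Dixmier and Gonshor rather than the Gleason/projective-cover route, but this is only a difference in attribution) and then concatenates Theorem~\ref{quasiStonean} with the equivalence of (a) and (b) in Theorem~\ref{freeaction}.
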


\begin{proof} Results of Dixmier and
Gonshor~\cite{DixmierCeEsCoSt,GonshorInHuC*AlII} show that $\D$ is
injective if and only if $\hat{\D}$ is a compact extremally
disconnected space.
Now combine Theorems~\ref{quasiStonean} with the equivalence of (a) and
(b) in Theorem~\ref{freeaction}.
\end{proof}
 
\begin{remark}{Example}  Suppose that $\M$ is a von Neumann algebra
  and $\D\subseteq \M$ is a MASA.  Let $\C$ be the norm-closure of
  $\N(\M,\D)$.  Then $(\C,\D)$ has the extension property.   Note that
  in particular, when $\D$ is a Cartan MASA in $\M$ in the sense of
  Feldman and Moore~\cite{FeldmanMooreErEqReII}, then $(\C,\D)$ is a \cstar-diagonal.
\end{remark}
\begin{remark}{Remark}
The regularity hypothesis in Theorem~\ref{inMASAincl} cannot be
removed.  Indeed, \cite[Corollary~4.7]{AkemannShermanCoExOnMASAs} 
shows that when
$\C$ is the hyperfinite $II_1$ factor, and $\D\subseteq \C$ is any
MASA, then $(\C,\D)$ fails to have the extension property.
\end{remark}

\subsection{$\D$-modular states and ideals of $(\C,\D)$}

In this final subsection, we make a simple observation:  the action
$\N(\C,\D)\ni v\mapsto \beta_v\in \oinv{\hat{\D}}$ may be regarded as
an action on certain states of $\C$.

\begin{definition}\label{Dmodstate}  Let $(\C,\D)$ be an inclusion.  
A state $\rho$ on $\C$ is \textit{$\D$-modular} if
  for every $x\in \C$ and $d\in \D$, 
$$\rho(dx)=\rho(d)\rho(x)=\rho(xd).$$  We let $\Mod(\C,\D)$ be the
collection of all $\D$-modular states on $\C$;  equip $\Mod(\C,\D)$ with
the relative weak-$*$ topology.
Then $\Mod(\C,\D)$ is closed and hence is 
compact.  Using the Cauchy-Schwartz inequality, it is easy to see that
$$\Mod(\C,\D)=\{\rho\in\S(\C): \rho|_\D\in \hat{\D}\}.$$  
For $\sigma\in\hat{\D}$, let $\Mod(\C,\D,\sigma)$ be the set of all
state extensions of $\sigma$, so
$$\Mod(\C,\D,\sigma):=\{\rho\in \S(\C): \rho|_\D=\sigma\}.$$
\end{definition}

The following simple observation will be useful during the sequel.
\begin{lemma}\label{vactmod}  Let $(\C,\D)$ be an inclusion,
 and $v\in \N(\C,\D)$.  If $\sigma\in \dom(\beta_v)$ and
 $\beta_v(\sigma)\neq \sigma$, then $\rho(v)=0$ for every
 $\rho\in\Mod(\C,\D,\sigma)$. 
\end{lemma}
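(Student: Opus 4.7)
The plan is to pass to the GNS representation $(\pi_\rho,\H_\rho,\xi)$ of $\rho$ and show that the two vectors $\xi$ and $\eta:=\pi_\rho(v)\xi$ are simultaneous eigenvectors of the abelian family $\pi_\rho(\D)$, but with eigencharacters $\sigma$ and $\beta_v(\sigma)$ respectively.  Since these two characters are assumed distinct, the standard argument that eigenvectors attached to distinct eigenvalues of a $*$-stable family are orthogonal will force $\innerprod{\eta,\xi}_\rho=0$, and this is exactly $\rho(v)=\innerprod{\eta,\xi}_\rho=0$.

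First, because $\sigma:=\rho|_\D$ is a character, for every $d\in\D$,
\[
\|\pi_\rho(d)\xi-\sigma(d)\xi\|_\rho^2=\rho(|d-\sigma(d)|^2)=\sigma(|d-\sigma(d)|^2)=0,
\]
so $\pi_\rho(d)\xi=\sigma(d)\xi$.  Setting $\alpha:=\sigma(v^*v)>0$ and $\eta:=\pi_\rho(v)\xi$, the substantive step is to show that $\pi_\rho(d)\eta=\beta_v(\sigma)(d)\eta$ for every $d\in\D$.  I would do this by expanding $\|\pi_\rho(d)\eta-\beta_v(\sigma)(d)\eta\|_\rho^2$.  The normalizer hypothesis $v^*\D v\subseteq\D$ places both $v^*dv$ and $v^*d^*dv$ in $\D$, so
\[
\innerprod{\pi_\rho(d)\eta,\eta}_\rho=\rho(v^*dv)=\sigma(v^*dv)=\alpha\,\beta_v(\sigma)(d),
\]
and, using that the character $\beta_v(\sigma)$ is multiplicative,
\[
\innerprod{\pi_\rho(d^*d)\eta,\eta}_\rho=\sigma(v^*d^*dv)=\alpha\,\beta_v(\sigma)(d^*d)=\alpha\,|\beta_v(\sigma)(d)|^2.
\]
Substituting these (together with $\|\eta\|_\rho^2=\alpha$) into the expansion of the squared norm collapses it to $0$.

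Finally, since $\beta_v(\sigma)\neq\sigma$, pick $d\in\D$ with $\beta_v(\sigma)(d)\neq\sigma(d)$.  Using the two eigenvector identities,
\[
\beta_v(\sigma)(d)\innerprod{\eta,\xi}_\rho=\innerprod{\pi_\rho(d)\eta,\xi}_\rho=\innerprod{\eta,\pi_\rho(d^*)\xi}_\rho=\sigma(d)\innerprod{\eta,\xi}_\rho,
\]
so $\innerprod{\eta,\xi}_\rho=0$, which completes the proof.  The main step is the eigenvalue identity $\pi_\rho(d)\eta=\beta_v(\sigma)(d)\eta$; this is precisely where the normalizer hypothesis $v^*\D v\subseteq\D$ is essential, for it forces the quadratic forms on $\eta$ needed to verify the identity to lie in $\D$, on which $\rho$ agrees with $\sigma$ and the value of $\beta_v(\sigma)$ is recovered from its defining formula.
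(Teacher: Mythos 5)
Your argument is correct; every step checks out (the hypothesis $\sigma\in\dom(\beta_v)$ gives $\alpha=\sigma(v^*v)>0$, the normalizer condition puts $v^*dv$ and $v^*d^*dv$ in $\D$ where $\rho$ agrees with the character $\sigma$, and the norm expansion does collapse to $0$). It is, however, a different route from the paper's. The paper does not pass to the GNS representation: it chooses $d\in\D$ with $\sigma(d)=1$ and $\beta_v(\sigma)(d)=0$ (rather than merely $\beta_v(\sigma)(d)\neq\sigma(d)$) and runs a one-line chain
$\rho(v)=\rho(vv^*v)/\rho(v^*v)=\rho(dvv^*v)/\sigma(v^*v)=\rho(vv^*dv)/\sigma(v^*v)=\rho(v)\,\beta_v(\sigma)(d)=0$,
using only the $\D$-modularity of $\rho$ (a consequence of Cauchy--Schwarz, as noted in Definition~\ref{Dmodstate}) and the fact that $d$ commutes with $vv^*\in\D$. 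Your proof instead isolates the identity $\pi_\rho(d)(v+L_\rho)=\beta_v(\sigma)(d)(v+L_\rho)$ and then invokes orthogonality of $\pi_\rho(\D)$-eigenvectors with distinct eigencharacters. The paper's version is shorter and stays entirely at the level of the functional; yours has the advantage of making explicit a reusable structural fact --- that for any $\rho\in\Mod(\C,\D)$ and any normalizer $v$, the vector $v+L_\rho$ is a joint eigenvector for $\pi_\rho(\D)$ --- which is essentially the content of Proposition~\ref{Dextreme}\eqref{Dext2a} (stated there for compatible states, though the same computation works for all modular states), so your route effectively deduces the lemma from that later proposition.
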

\begin{proof}
Let $d\in\D$ satisfy $\beta_v(\sigma)(d)=0$ and $\sigma(d)=1$.  Then
for $\rho\in\Mod(\C,\D,\sigma)$, we have
$$\rho(v)=\frac{\rho(vv^*v)}{\rho(v^*v)}=
\frac{\rho(dvv^*v)}{\sigma(v^*v)} =
\frac{\rho(vv^*dv)}{\sigma(v^*v)}=\rho(v)\beta_v(\sigma)(d) =0.$$
\end{proof}

When $\rho\in\Mod(\C,\D)$ and $v\in \N(\C,\D)$ satisfies
$\rho(v^*v)\neq 0$, the state $\tilde{\beta}_v(\rho)$ on $\C$ given by
$$\tilde{\beta}_v(\rho)(x):=\frac{\rho(v^*xv)}{\rho(v^*v)}$$ again belongs to
$\Mod(\C,\D)$.   When there is no danger of confusion, we sometimes
simplify notation and  
write $\beta_v(\rho)$ instead of $\tilde{\beta_v}(\rho)$.  Thus $\N(\C,\D)$ also
acts on $\Mod(\C,\D)$, and  for every $\rho\in \Mod(\C,\D)$, we have
$\tilde{\beta}_v(\rho)|_{\D}=\beta_v(\rho|_\D)$.

\begin{definition}  A subset $F\subseteq \Mod(\C,\D)$ is 
  \textit{$\N(\C,\D)$-invariant} if for every $v\in \N(\C,\D)$ and
  $\rho\in F$ with $\rho(v^*v)\neq 0$, we have $\tilde{\beta}_v(\rho)\in F$. 
\end{definition}

We record the following fact for use in the sequel.
\begin{proposition} \label{invideal}  Let $(\C,\D)$ be a regular
  inclusion and suppose that $F\subseteq \Mod(\C,\D)$ is 
  $\N(\C,\D)$-invariant.  Then the set 
$$\K_F:=\{x\in\C: \rho(x^*x)=0\text{ for all } \rho\in F\}$$ is a
  closed, two-sided ideal in $\C$.  Moreover, if 
  $\{\rho|_\D: \rho\in F\}$ is weak-$*$ dense in $\hat{\D}$, 
then $\K_F\cap \D=(0).$   
\end{proposition}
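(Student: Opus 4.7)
The plan is to peel off the two parts separately, using the structural features of $F$ (namely $\N(\C,\D)$-invariance) together with regularity to upgrade the obvious left-ideal structure of $\K_F$ to two-sidedness.

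First I would observe that $\K_F = \bigcap_{\rho \in F} L_\rho$, where $L_\rho$ is the left kernel of $\rho$. Since each $L_\rho$ is a norm-closed left ideal in $\C$, so is $\K_F$. The nontrivial content is that $\K_F$ is also a right ideal, for which I would use the regularity hypothesis: because $\spn \N(\C,\D)$ is norm-dense in $\C$ and $\K_F$ is closed, it suffices to show that $xv \in \K_F$ for every $x \in \K_F$ and every $v \in \N(\C,\D)$, i.e., that $\rho(v^*x^*xv) = 0$ for every $\rho \in F$.

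Here is the main step, and the only place where $\N(\C,\D)$-invariance really enters. Fix $\rho \in F$ and $v \in \N(\C,\D)$, and split into two cases. If $\rho(v^*v) = 0$, then from the operator inequality $v^*x^*xv \leq \|x\|^2 v^*v$ together with positivity of $\rho$ we get $\rho(v^*x^*xv) = 0$ immediately. If instead $\rho(v^*v) \neq 0$, then the definition of $\tilde\beta_v(\rho)$ gives
\begin{equation*}
\rho(v^*x^*xv) = \rho(v^*v)\, \tilde\beta_v(\rho)(x^*x),
\end{equation*}
and $\tilde\beta_v(\rho) \in F$ by $\N(\C,\D)$-invariance, so $\tilde\beta_v(\rho)(x^*x) = 0$ because $x \in \K_F$. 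Either way $\rho(v^*x^*xv) = 0$, completing the proof that $\K_F$ is a two-sided ideal.

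For the second statement, suppose $d \in \K_F \cap \D$. For each $\rho \in F$, the restriction $\rho|_\D$ is a character of $\D$, hence multiplicative, so
\begin{equation*}
0 = \rho(d^*d) = \rho|_\D(d^*d) = |\rho|_\D(d)|^2,
\end{equation*}
giving $\rho|_\D(d) = 0$ for every $\rho \in F$. By the weak-$*$ density hypothesis and continuity of evaluation at $d$, this forces $\sigma(d) = 0$ for every $\sigma \in \hat\D$, and Gelfand theory gives $d = 0$. The main obstacle is really just the first part, and it dissolves once one notices that the $\N(\C,\D)$-invariance of $F$ is exactly the condition needed to push left-ideal closure through the Weyl semigroup action on states.
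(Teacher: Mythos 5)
Your proof is correct and follows essentially the same route as the paper: the same reduction to right-ideal closure via regularity, the same two-case split on $\rho(v^*v)$ using the $\N(\C,\D)$-invariance of $F$, and the same (spelled-out) multiplicativity-plus-density argument for the final statement, which the paper simply declares obvious.
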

\begin{proof}
As $\K_F$ is the intersection of closed left-ideals, it remains only
to prove that $\K_F$ is a right ideal.  By regularity, it suffices to
prove that if $x\in\K_F$ and $v\in \N(\C,\D)$, then $xv\in \K_F$.
Let $\rho\in F$.  If $\rho(v^*v)\neq 0$, then by hypothesis, we obtain
$\rho(v^*x^*xv)=\beta_v(\rho)(x^*x)\rho(v^*v)=0$.  On the other hand, if
$\rho(v^*v)=0$, then $\rho(v^*x^*xv)\leq \norm{x^*x}\rho(v^*v)=0$.  In
either case, we find $\rho(v^*x^*xv)=0$.  As this holds for every
$\rho\in\F$, we find $xv\in\K_F$, as desired.   The final statement is
obvious. 
\end{proof}

\section{Pseudo-Conditional Expectations for
  Regular MASA
  Inclusions} \label{MainExpect}

As noted in Example~\ref{noce}, there exist regular MASA inclusions
$(\C,\D)$ for which no conditional
expectation of $\C$ onto $\D$ exists.  The purpose of this section is
to show that nevertheless, there is always a unique pseudo-expectation
for a regular MASA inclusion.  

Given a normalizer $v$, our first task is to connect the dynamics of
$\beta_v$ with the ideal structure of $\D$.

\begin{lemma}\label{normintbetav}  Let $(\C,\D)$ be an
  inclusion and suppose $v\in\N(\C,\D)$.  If $d\in\D$ and
  $\supp(\hat{d})\subseteq (\fix{\beta_v})^\circ$, then $vd=dv$.
Moreover, if $(\C,\D)$ is a MASA inclusion, then $vd=dv\in\D$.
\end{lemma}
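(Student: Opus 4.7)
The plan is to exhibit $vd - dv$ as zero by showing $(vd-dv)^*(vd-dv)$ lies in $\D$ and vanishes on every character of $\D$. First I would reduce to the case that $d$ is self-adjoint: writing $d = d_1 + id_2$ with $d_j \in \D$ self-adjoint gives $\supp\hat{d_j} \subseteq \supp\hat{d} \subseteq (\fix\beta_v)^\circ$, and $vd_j = d_jv$ for $j=1,2$ combines linearly to give $vd = dv$ (and then $vd \in \D$ in the MASA case).

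Next, setting $e := vd - dv$, I would expand
$$e^*e = dv^*vd - dv^*dv - v^*dvd + v^*d^2v$$
and note that each summand is in $\D$: it is a product of $d$ with one of $v^*v$, $v^*dv$, or $v^*d^2v$, all of which lie in $\D$ by the normalizer property. Evaluating at $\rho \in \hat{\D}$, applying $\rho(v^*dv) = \rho(v^*v)\beta_v(\rho)(d)$ on $\dom(\beta_v)$ and Cauchy--Schwarz off of it, the four terms collapse to
$$\rho(e^*e) = \rho(v^*v)\bigl(\rho(d) - \beta_v(\rho)(d)\bigr)^2.$$

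The crux is then a short topological observation: if $\rho \in \dom(\beta_v)$ and $\beta_v(\rho) \in (\fix\beta_v)^\circ$, then $\rho \in (\fix\beta_v)^\circ$. Indeed, taking an open $U \subseteq \fix\beta_v$ containing $\beta_v(\rho)$, the set $\beta_v^{-1}(U)$ is open; for $\sigma$ in it, $\beta_v(\sigma) \in U$ is a fixed point, so applying $\beta_v^{-1}$ to $\beta_v(\beta_v(\sigma)) = \beta_v(\sigma)$ gives $\sigma \in \fix\beta_v$. The contrapositive, combined with the support hypothesis on $d$, forces both $\rho(d) = 0$ and $\beta_v(\rho)(d) = 0$ when $\rho \in \dom(\beta_v) \setminus (\fix\beta_v)^\circ$; on $(\fix\beta_v)^\circ$ the two values agree automatically. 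Thus $\rho(e^*e) = 0$ for all $\rho \in \hat\D$, so $e^*e = 0$ and $vd = dv$.

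For the moreover clause, I would simply reapply the first part to $dd'$ for an arbitrary $d' \in \D$: since $\supp\widehat{dd'} \subseteq \supp\hat{d} \subseteq (\fix\beta_v)^\circ$, the same argument yields $v(dd') = (dd')v$. Using $vd = dv$ and commutativity of $\D$,
$$vdd' = dd'v = d'dv = d'(vd),$$
so $vd$ commutes with every $d' \in \D$; by the MASA hypothesis $vd \in \D^c = \D$. The main obstacle I anticipate is precisely the topological step that $\rho$ and $\beta_v(\rho)$ are simultaneously in or out of $(\fix\beta_v)^\circ$; without it the vanishing of $\rho(e^*e)$ only propagates across the open fixed set, which is insufficient.
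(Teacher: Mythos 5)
Your proof is correct and takes essentially the same route as the paper: both arguments reduce to evaluating elements of $\D$ at characters of $\D$ and using the support hypothesis to force $\rho(d)=\beta_v(\rho)(d)$ on $\dom(\beta_v)$ (the paper does this pointwise via $\fix{\beta_v}=\fix{\beta_{v^*}}$, you via the saturation of $(\fix{\beta_v})^\circ$ under $\beta_v^{-1}$ — both are valid), and your ``moreover'' step is identical to the paper's. The only real difference is in the finishing move: the paper first establishes $v^*dv=v^*vd$ and then must pass from $v^*(dv-vd)=0$ to $dv=vd$ using $(vv^*)^{1/n}(dv-vd)=0$ and the limit $v(v^*v)^{1/n}\rightarrow v$, whereas testing $(vd-dv)^*(vd-dv)\in\D$ against characters directly lets you skip that approximation step.
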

\begin{proof}
Note that $v^*dv$ and $v^*vd$ both belong to $\D$.
We first show that
for every $\rho\in\hat{\D}$, 
\begin{equation}\label{acom}
\rho(v^*dv)=\rho(v^*vd).
\end{equation}
Let
$\rho\in\hat{\D}$.  There are three cases.  First suppose  $\rho(v^*v)=0$. Then as
$\rho(v^*v)=\norm{v}_\rho^2$, the Cauchy-Schwartz
inequality gives,
$$|\rho(v^*dv)|=|\innerprod{dv,v}_\rho|\leq
\norm{dv}_\rho\norm{v}_\rho=0,$$ so \eqref{acom} holds when
$\rho(v^*v)=0$.  

Suppose next that $\rho(v^*v)>0$ and $\beta_v(\rho)(d)\neq 0$.
Then $\beta_v(\rho)\in \supp(\hat{d})$, so $\beta_v(\rho)\in
\fix{\beta_v}=\fix{\beta_{v^*}}$.  Thus, we get $\beta_v(\rho)=
\beta_{v^*}(\beta_v(\rho))=\rho$,
and hence $\rho(v^*dv)=\rho(v^*v)\rho(d)=\rho(v^*vd)$.

Finally suppose that $\rho(v^*v)>0$ and $\beta_v(\rho)(d)=0$.  Then
$\rho(v^*dv)=0$.  We shall show that $\rho(d)=0$.  If not,  the hypothesis
on $d$ shows that $\rho\in\fix{\beta_v}$.  Hence, 
$0\neq\rho(d)=\beta_v(\rho)(d)=\frac{\rho(v^*dv)}{\rho(v^*v)}=0$,
which is absurd.  So $\rho(d)=0$, and~\eqref{acom} holds in
this case also.    Thus we have established \eqref{acom} in all cases.

Thus $v^*dv=v^*vd$.   So for every $n\in \bbN$, 
$$0=v^*dv-v^*vd=v^*(dv-vd)=vv^*(dv-vd)=(vv^*)^n(dv-vd).$$  It follows
that for every polynomial $p$ with $p(0)=0$, we have,
$p(vv^*)(dv-vd)=0$.  Therefore, for every $n\in\bbN$,
$$0=(vv^*)^{1/n}(dv-vd)=d(vv^*)^{1/n}v-(vv^*)^{1/n}vd.$$  Since
$\lim_{n\rightarrow \infty} (vv^*)^{1/n}v=v$, we have $vd=dv$.

Now suppose that $(\C,\D)$ is a MASA inclusion.  For $a\in\D$, we have
$\supp(\widehat{da})\subseteq \supp(\hat{d})\subseteq
(\fix{\beta_v})^\circ,$ so we have $v(da)=(da)v$.  Since $vd=dv$, we
get $(vd)a=a(vd)$.  Since $\D$ is a
MASA, $vd\in\D$ and the proof is complete.
\end{proof}

Let $v\in\N(\C,\D)$.  Observe that if $d\in\D$ and
$\supp(\hat{d})\subseteq \fix{\beta_v}$, then we actually have
$\supp(\hat{d})\subseteq (\fix{\beta_v})^\circ.$ Thus
$$\{d\in\D: \supp(\hat{d})\subseteq \fix{\beta_v}\}=\{d\in\D:
\supp(\hat{d})\subseteq (\fix{\beta_v})^\circ\}$$ is a closed ideal of
$\D$ isomorphic to $C_0((\fix{\beta_v})^\circ)$.  
By the Fuglede-Putnam-Rosenblum commutation theorem,
\begin{equation}\label{Fuglede}
\{d\in\D: dv=vd\}=\{d\in\D: dv^*=v^*d\},
\end{equation} and it follows that  
 $\{d\in\D: dv=vd\in\D\}$ is a closed 
ideal of $\D$.  
The next proposition shows how the set
$(\fix{\beta_v})^\circ$ may be described algebraically. 

\begin{proposition}\label{Jvee}  Let $(\C,\D)$ be a  MASA
  inclusion.  If $v\in\N(\C,\D)$, then
$$\{d\in\D: \supp(\hat{d})\subseteq
  (\fix{\beta_v})^\circ\}=\overline{\D v^*v}\cap \{d\in\D:
dv=vd\in\D\}.$$
\end{proposition}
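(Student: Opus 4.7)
The plan is to establish the two inclusions separately, using the identification $\D\cong C(\hat\D)$ under which $\overline{\D v^*v}$ corresponds to the ideal of continuous functions with support contained in $\dom(\beta_v)=\{\sigma\in\hat\D:\sigma(v^*v)>0\}$, the open support of $\widehat{v^*v}$.

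For the inclusion $\subseteq$, I start with $d$ satisfying $\supp(\hat d)\subseteq(\fix\beta_v)^\circ$. The MASA conclusion of Lemma~\ref{normintbetav} immediately gives $vd=dv\in\D$, handling one membership condition. Since $(\fix\beta_v)^\circ\subseteq\fix\beta_v\subseteq\dom(\beta_v)$, I also have $\supp(\hat d)\subseteq\dom(\beta_v)$, which by the identification places $d$ in $\overline{\D v^*v}$.

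For the inclusion $\supseteq$, I take $d$ in the right-hand set and fix $\sigma\in\supp(\hat d)$; since $\supp(\hat d)$ is open in $\hat\D$, it suffices to show $\sigma\in\fix\beta_v$. The condition $d\in\overline{\D v^*v}$ gives $\sigma(v^*v)>0$, so $\sigma\in\dom(\beta_v)$. For an arbitrary $e\in\D$, a short calculation using $dv=vd$ together with the fact that $dv\in\D$ is central (hence commutes with $e\in\D$) yields $v(ed)=dve=(ed)v$, with this common value lying in $\D$ since $dv\in\D$. Thus $ed$ belongs to the ideal $\{d'\in\D:d'v=vd'\in\D\}$, so the identity $v^*(ed)v=v^*v(ed)$ holds inside $\D$. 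Evaluating at $\sigma$ and using $\sigma(v^*v)>0$ to cancel yields $\beta_v(\sigma)(ed)=\sigma(ed)$, i.e.
\[
\beta_v(\sigma)(e)\,\beta_v(\sigma)(d)=\sigma(e)\,\sigma(d).
\]
Taking $e=I$ gives $\beta_v(\sigma)(d)=\sigma(d)\neq 0$, and dividing the displayed equation by this common nonzero value shows $\beta_v(\sigma)(e)=\sigma(e)$ for every $e\in\D$, so $\beta_v(\sigma)=\sigma$, as required.

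The step I expect to be the main obstacle is verifying the commutation $v(ed)=(ed)v$ with common value in $\D$ for an arbitrary $e\in\D$, since $e$ need not commute with $v$ on its own. Without this, the products $ed$ would not lie in the ideal $\{d'\in\D:d'v=vd'\in\D\}$, and the identity $v^*(ed)v=v^*v(ed)$ which drives the character argument would be unavailable. Once this modest algebraic calculation is in place, the remainder proceeds cleanly by evaluating at characters of $\D$, with the family $\{ed\}_{e\in\D}$ playing the role of a separating collection that forces $\sigma=\beta_v(\sigma)$.
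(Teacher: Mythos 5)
Your proof is correct and follows essentially the same route as the paper: the forward inclusion via Lemma~\ref{normintbetav} together with $(\fix\beta_v)^\circ\subseteq\supp(\widehat{v^*v})$, and the reverse inclusion via the character computation $\beta_v(\sigma)(ed)=\sigma(ed)$ exploiting that $\{d'\in\D:d'v=vd'\in\D\}$ is an ideal. The only (cosmetic) difference is that you argue pointwise on the open set $\supp(\hat d)$, whereas the paper introduces the auxiliary open neighborhood $G=\{\rho:|\rho(d)|>r/2\}$ of a point of the support before drawing the same conclusion.
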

\begin{proof}
Notice that $\overline{\D v^*v}=\{d\in\D: \supp(\hat{d})\subseteq
\supp(\widehat{v^*v})\}.$ 
Since $(\fix{\beta_v})^\circ\subseteq \supp(\widehat{v^*v})$,
Lemma~\ref{normintbetav} shows
that 
$$\{d\in\D: \supp(\hat{d})\subseteq (\fix{\beta_v})^\circ\}\subseteq
\overline{\D v^*v}\cap \{d\in\D: dv=vd\in\D\}.$$

Now suppose that $d\in \overline{\D v^*v}\cap \{h\in\D: hv=vh\in\D\}$
and $d\neq 0$.  Let $\rho_0\in\supp(\hat{d})$ and set
$r:=|\rho_0(d)|$.  Then $r>0$.  Put $G=\{\rho\in\hat{\D}:
|\rho(d)|>r/2\}$.  We show that $G\subseteq \fix{\beta_v}$.  Fix
$\rho\in G$.  Since $d\in\overline{\D v^*v}$, we have
$\supp(\hat{d})\subseteq \supp((\widehat{v^*v})),$ so $\rho(v^*v)\neq
0$.  Since $d$ belongs to the ideal $\{f\in\D: fv=vf\in\D\}$, we find
(using~\eqref{Fuglede}) that for every $a\in\D$,
$$
\beta_v(\rho)(a) = \frac{\rho(v^*av)}{\rho(v^*v)}=
\frac{\rho(v^*avd)}{\rho(v^*v)\rho(d)}
=\frac{\rho(v^*(ad)v)}{\rho(v^*v)\rho(d)}
=\frac{\rho(adv^*v)}{\rho(v^*v)\rho(d)}=\rho(a).$$  It follows that
$G\subseteq \fix{\beta_v}$.   Since $G$ is an open subset of
$\hat{\D}$ with $\rho_0\in G$,  we have $\rho_0\in
(\fix{\beta_v})^\circ.$  So $\supp(\hat{d})\subseteq
(\fix{\beta_v})^\circ$, as desired.  

\end{proof}

We need some notation.  
\begin{remark*}{Notation}  Let $(\C,\D)$ be an inclusion.
\begin{enumerate}
\item[a)]  For a closed ideal $\fJ$ in $\D$,  we let  
   $$\fJ^\perp:=\{d\in\D: dg=0 \text{ for all } g\in \fJ\}$$ denote
   the complement of $\fJ$ in the lattice of closed ideals of $\D$.
 \item[b)] Given any two closed ideals $\fJ_1$ and $\fJ_2$ in $\D$,
   $\fJ_1\vee \fJ_2$ denotes the closed ideal generated by $\fJ_1$ and
   $\fJ_2$.
\item[c)] 
  For  $S\subseteq \D$,  $\innerprod{S}_\D$  denotes the closed
  two-sided ideal of $\D$ generated by the set $S$.  When the context
  is clear, we drop the subscript and simply use $\innerprod{S}$.   
\item[d)] For 
  $v\in\N(\C,\D)$, let 
\begin{equation}\label{jveedef}
J_v:=\{d\in\D: vd=dv\in\D\}\cap \innerprod{v^*v}\dstext{and}
K_v:=\innerprod{v^*v}^\perp \vee \innerprod{\{v^*hv-hv^*v: h\in
  \D\}}.\end{equation}
\end{enumerate}
\end{remark*}

Recall that an ideal $\fJ$ in a \cstaralg\ $\C$ is an
\textit{essential ideal} if $\fJ\cap L\neq (0)$ for every closed
two-sided ideal $(0)\neq L\subseteq \C$.  

We will show $J_v\vee K_v$ is an essential
ideal in $\D$.  It is easy to see that $J_v\subseteq K_v^\perp$.  If
equality holds, then the fact that $J_v\vee K_v$ is an essential ideal
follows readily.  However, we have not found a simple proof of this
fact, so we proceed along different lines.

\begin{proposition}\label{essentialv}  Let $(\C,\D)$ be a regular MASA
  inclusion and let $v\in\N(\C,\D)$.  Then $J_v\vee K_v$ is an
  essential ideal in $\D$.
\end{proposition}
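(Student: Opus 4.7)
The plan is to push everything through the Gelfand correspondence and reduce essentiality to a topological density statement on $X := \hat{\D}$. Closed ideals of $\D$ correspond to open subsets of $X$, joins of ideals correspond to unions of open sets, and for any closed ideal $\fJ$ corresponding to an open set $V$, the ideal $\fJ^\perp$ corresponds to $X \setminus \overline{V}$. Set $U := \{\rho \in X : \rho(v^*v) > 0\} = \dom(\beta_v)$, an open set, and $F := \fix{\beta_v} = \{\rho \in U : \beta_v(\rho) = \rho\}$. Proposition~\ref{Jvee} (where the MASA hypothesis enters) identifies $J_v$ with the ideal corresponding to $F^\circ$, and $\innerprod{v^*v}^\perp$ clearly corresponds to $X \setminus \overline{U}$. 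The only further identification needed is the open set associated to $\innerprod{\{v^*hv - hv^*v : h \in \D\}}$.

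For this, I would observe that if $\rho \in X$ and $h \in \D$, then
\[
\rho(v^*hv - hv^*v) = \rho(v^*v)\bigl(\beta_v(\rho)(h) - \rho(h)\bigr)
\]
when $\rho(v^*v) > 0$ (directly from the definition of $\beta_v$ together with multiplicativity of $\rho$ on $\D$), while when $\rho(v^*v) = 0$ the Cauchy-Schwarz argument from the proof of Lemma~\ref{normintbetav} gives $\rho(v^*hv) = 0 = \rho(hv^*v)$. Hence $\rho$ is a common zero of $\{v^*hv - hv^*v : h \in \D\}$ if and only if $\rho \notin U$ or $\rho \in F$, so the corresponding open set is $U \setminus F$. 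Combining the three pieces, $J_v \vee K_v$ corresponds to the open set
\[
W := F^\circ \cup (X \setminus \overline{U}) \cup (U \setminus F),
\]
and the essentiality of $J_v \vee K_v$ is equivalent to the density of $W$ in $X$.

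Since $W$ is open, $X \setminus W$ is closed, and I only need to show $(X \setminus W)^\circ = \emptyset$. A point of $X \setminus W$ lies in $\overline{U}$, fails to lie in $F^\circ$, and is either outside $U$ (landing in $\partial U$) or inside $F$ (landing in $F \setminus F^\circ$), giving
\[
X \setminus W \;\subseteq\; \partial U \cup (F \setminus F^\circ).
\]
Here $\partial U$ has empty interior as the topological boundary of the open set $U$, and $F \setminus F^\circ$ has empty interior because any nonempty open subset of $F$ would already be contained in $F^\circ$. Given any open $V \subseteq X \setminus W$, the open set $V \setminus \partial U$ sits inside $F \setminus F^\circ$ and is therefore empty, forcing $V \subseteq \partial U$ and so $V = \emptyset$. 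The main conceptual step is the correct identification of the open set associated to the generated ideal in the second summand of $K_v$; after that, the rest is essentially topological bookkeeping among $U$, its boundary $\partial U$, and the relative interior of $F$ inside $U$.
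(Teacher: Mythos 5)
Your proof is correct and is essentially the paper's argument recast through Gelfand duality: the paper also reduces everything to supports in $\hat{\D}$, uses Proposition~\ref{Jvee} to identify $J_v$ with the functions supported in $(\fix{\beta_v})^\circ$, uses the identity $\rho(v^*hv-hv^*v)=\rho(v^*v)\bigl(\beta_v(\rho)(h)-\rho(h)\bigr)$ to handle the commutator generators of $K_v$, and finishes by showing that an annihilating element must be supported in $\overline{\supp\widehat{v^*v}}\setminus\supp\widehat{v^*v}$, which has empty interior. Your explicit bookkeeping with the open set $W$ and the extra piece $F\setminus F^\circ$ is just a mild repackaging of the same steps, so there is nothing substantive to add.
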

\begin{proof}
We shall show that $J_v\vee K_v$ is essential by showing that $(J_v
\vee K_v)^\perp=(0).$  Suppose that $d\in\D$ and $d(J_v\vee K_v)=0$.  

First we show  
\begin{equation}\label{invv}
\supp{\hat{d}}\subseteq \overline{\supp{\widehat{v^*v}}}.
\end{equation}
Indeed, if $\rho\in\hat{\D}$ and $\rho\notin\overline{\supp{\widehat{v^*v}}}$,
then we may find $h\in\D$ such that $\hat{h}(\rho)=1$ and $\hat{h}(\sigma)=0$
for every $\sigma\in \overline{\supp{\widehat{v^*v}}}$.  Then
$h\in\innerprod{v^*v}^\perp\subseteq K_v$, so $dh=0$.  As $\rho(h)=1$, this shows
that $\rho\notin\supp{\hat{d}}$.  Thus~\eqref{invv} holds.

Next, we claim that 
\begin{equation}\label{infv}
\supp{\hat{d}}\cap \supp{\widehat{v^*v}}\subseteq
(\fix{\beta_v})^\circ.
\end{equation}
Let $\rho\in \supp{\hat{d}}\cap \supp{\widehat{v^*v}}.$  For
every $h\in\D$, we have $v^*hv-v^*vh\in K_v$.  Since $\rho(d)\neq 0$
and $d\in K_v^\perp$,
this gives $\rho(v^*hv)=\rho(v^*v)\rho(h)$ for every $h\in \D$.  Since
$\rho(v^*v)\neq 0$ by hypothesis, we have $\rho\in\fix{\beta_v}$.  As
$\supp{\hat{d}}\cap \supp{\widehat{v^*v}}$ is an open subset of
$\hat{\D}$,  we obtain~\eqref{infv}.

Suppose that $\rho\in (\fix{\beta_v})^\circ$.  By
Proposition~\ref{Jvee} there exists $h\in J_v$ so that $\rho(h)=1$.
Since $d\in J_v^\perp$, we obtain $\rho(d)=0$.  Hence
\begin{equation}\label{fvsn}
(\fix{\beta_v})^\circ\cap\supp{\hat{d}}=\emptyset.
\end{equation}

Combining~\eqref{invv},~\eqref{infv}, and~\eqref{fvsn} we obtain,
$$\supp{\hat{d}}\subseteq \overline{\supp{\widehat{v^*v}}}\setminus
\supp{\widehat{v^*v}}. $$  But $\overline{\supp{\widehat{v^*v}}}\setminus
\supp{\widehat{v^*v}}$ has empty interior, so
$\supp{\hat{d}}=\emptyset$.  Therefore,  $d=0$
as desired.
\end{proof}

The following is probably well known, but we include it for
completeness.  Recall that if $\A$ is a unital injective \cstaralg, then 
any bounded increasing net $x_\lambda$ of self-adjoint
elements of $\A$ has a least upper bound in $\A$, which we denote by
$\sup_\A x_\lambda.$ 

\begin{lemma}\label{eauid}  Let $\D$ be a unital, abelian \cstaralg,
  let $(I(\D),\iota)$ be an injective envelope for $\D$
  and suppose that $J\subseteq \D$ is a closed ideal.  Let $J_1^+$ be
  the positive part of the unit ball of $J$ and regard $J_1^+$ as a
  net indexed by itself.  Put $P:=\sup_{I(\D)}\iota(J_1^+)$.  Then
  $P$ is an  projection in $I(\D)$.  

If in addition, $J$ is an essential
  ideal of $\D$, the following hold:
\begin{enumerate}
\item[i)] if $a,b\in I(\D)$ and $a\iota(h)=b\iota(h)$ for every $h\in
  J$, then $a=b$; 
\item[ii)] $P=I$.
\end{enumerate}
\end{lemma}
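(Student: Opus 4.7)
The plan is to work in the Stone space model of $I(\D)$. Since $I(\D)$ is injective it is monotone complete, and by the discussion in the Preliminaries we may identify $I(\D)\cong C(Y)$ where $Y$ is Stonean and there is a continuous surjection $f:Y\to\hat\D$ (the projective cover) with $\iota(d)(y)=\hat d(f(y))$. The set $J_1^+$ is upward directed: for $h,k\in J_1^+$ the element $\max(h,k)=\tfrac12(h+k+|h-k|)$ lies in $J_1^+$, using that closed ideals of \cstaralg s are closed under continuous functional calculus vanishing at $0$. Hence the bounded increasing net $\iota(J_1^+)$ has a supremum $P$ in $I(\D)$. Let $V:=\bigcup_{h\in J_1^+}\{y\in Y:\iota(h)(y)>0\}$, an open subset of $Y$, so $\overline V$ is clopen by extremal disconnectedness. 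I claim $P=\chi_{\overline V}$. Each $\iota(h)\leq\chi_{\overline V}$, so $\chi_{\overline V}$ is an upper bound. Conversely, if $F\in C(Y)$ is any upper bound, then for each $h\in J_1^+$ and each $n\in\bbN$ we have $h^{1/n}\in J_1^+$, so $F\geq\iota(h^{1/n})=\iota(h)^{1/n}$; letting $n\to\infty$ gives $F(y)\geq 1$ for every $y\in\{\iota(h)>0\}$, whence $F\geq 1$ on $V$ and, by continuity, $F\geq\chi_{\overline V}$. So $P=\chi_{\overline V}$ is a projection.

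Now assume $J$ is essential. The closed ideal $J$ corresponds under the Gelfand duality to the open set $U:=\{x\in\hat\D:\hat h(x)\neq 0\text{ for some }h\in J\}\subseteq\hat\D$, and essentiality of $J$ is precisely density of $U$ in $\hat\D$. Since $\iota(h)(y)=\hat h(f(y))$, we have $V=f^{-1}(U)$. For statement~(ii) I must show $\overline V=Y$. The map $f$ is irreducible, i.e.\ $f(C)\subsetneq\hat\D$ for every proper closed $C\subsetneq Y$: the inclusion $C\hookrightarrow Y$ is continuous and composes with $f$ to a continuous map $C\to\hat\D$, which by the essentiality of the cover $(Y,f)$ cannot be surjective unless $C=Y$. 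Suppose $\overline V\neq Y$ and set $W:=Y\setminus\overline V$, a non-empty clopen set disjoint from $V$, so $f(W)\subseteq\hat\D\setminus U$. By irreducibility applied to $\overline V=Y\setminus W$, the open set $\hat\D\setminus f(\overline V)$ is non-empty; for any $x$ in it, $f^{-1}(x)\subseteq W$ (and $f^{-1}(x)\neq\emptyset$ by surjectivity of $f$), so $x\in f(W)\subseteq\hat\D\setminus U$. This contradicts density of $U$, so $\overline V=Y$ and $P=I$.

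Statement~(i) follows at once from (ii). Setting $c:=a-b$, the hypothesis gives $c\iota(h)=0$ for every $h\in J$. For any $y\in V$ choose $h\in J_1^+$ with $\iota(h)(y)>0$; then $c(y)\iota(h)(y)=0$ forces $c(y)=0$. Thus $c$ vanishes on $V$, which is dense in $Y$ by (ii), so continuity of $c\in C(Y)$ yields $c=0$, i.e.\ $a=b$. The main obstacle is~(ii): the key ingredient is irreducibility of the projective cover $f:Y\to\hat\D$, which is what allows us to convert the ``downstairs'' density of $U$ in $\hat\D$ (the algebraic content of $J$ being essential) into the ``upstairs'' density of $V=f^{-1}(U)$ in $Y$ required to compute the supremum $P$.
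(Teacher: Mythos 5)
Your proof is correct, but it takes a genuinely different route from the paper's. You work concretely in the Stone--space model $I(\D)\cong C(Y)$, where $(Y,f)$ is the projective cover of $\hat{\D}$, identify $P$ explicitly as the characteristic function of $\overline{f^{-1}(U)}$ (with $U\subseteq\hat{\D}$ the open set corresponding to $J$), deduce (ii) from irreducibility of the covering map --- which is where essentiality of the cover enters --- and then get (i) from (ii) by density plus continuity. The paper argues abstractly and in the opposite logical order: $P$ is a projection because $x\mapsto x^{1/2}$ is an order isomorphism of $J_1^+$, so $P^2$ is also an upper bound of $\iota(J_1^+)$ and $P\le P^2\le P$; statement (i) is proved first, directly from Hamana-regularity of $\iota(\D)$ in $I(\D)$ (the ideal $K=\{d\in\D:\iota(d)\in\langle|a-b|\rangle_{I(\D)}\}$ lies in $J^\perp=(0)$, forcing $|a-b|=\sup\{\iota(d):0\le\iota(d)\le|a-b|\}=0$); and (ii) then follows at once by applying (i) to $a=P$, $b=I$. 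Your version buys an explicit formula for $P$ valid for every closed ideal $J$ (essential or not), and you also supply the upward-directedness of $J_1^+$ via $\max(h,k)=\tfrac12(h+k+|h-k|)$, a point the paper leaves implicit in treating $J_1^+$ as a net; the paper's version is shorter and never has to choose a model of the injective envelope.
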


\begin{proof}
The mapping $x\in J_1^+\mapsto x^{1/2}\in J_1^+$ is an order isomorphism of
$J_1^+$, so $P=\sup_{I(\D)} \{\iota(x^{1/2}): x\in J_1^+\}$.  So $P^2$ is
also an upper bound for $\iota(J_1^+).$  Hence $P\leq P^2$.  But as
$\norm{P}\leq 1$, we have $P^2\leq P$.  So $P$ is a projection.

Now assume $J$ is an essential ideal, and suppose $a,b\in I(\D)$
satisfy $(a-b)\iota(h)=0$ for every $h\in J$.  
By Hamana-regularity of $\iota(\D)$ in $I(\D)$, we have
$|a-b|=\sup_{I(\D)} \{d\in \D: 0\leq \iota(d)\leq |a-b|\}$.  But
$K:=\{d\in \D: \iota(d)\in \innerprod{|a-b|}_{I(\D)}\}$ is a closed ideal of $\D$ with
$K\subseteq J^\perp$.  Hence $K=0$, so  $|a-b|=0$.  

Notice that if $d\in J_1^+$, then $\iota(d)P=\iota(d)$.  It follows that for every
$d\in J$, $\iota(d)P=\iota(d)$.   By part~(i), $P=I$ when $J$ is an essential ideal.
 \end{proof}

The following extension of Definition~\ref{Dmodstate} will be useful.
\begin{definition}\label{Dmodlinmap}  
Let $(\C,\D)$ be an inclusion and $\B$ an algebra.
\begin{enumerate}
\item
A linear map $\Delta:\C\rightarrow \B$ is
  \textit{$\D$-modular} (or more simply \textit{modular})  if for every
  $x\in \C$ and $d\in \D$,
$$\Delta(xd)=\Delta(x)\Delta(d)\dstext{and}\Delta(dx)=\Delta(d)\Delta(x).$$
\item A homomorphism $\theta:\D\rightarrow \B$ is 
\textit{$\D$-thick in $\B$} if for every non-zero
element $b\in\B$, the ideal $\{d\in \D: b\theta(d)=0\}^\perp$ is a
non-zero ideal of $\D$.
\end{enumerate}
\end{definition}

When $\B$ is abelian, notice that the restriction of a $\D$-modular
map to $\D$ is a homomorphism.
The next lemma gives an example which
 will be  used in the proof of Theorem~\ref{uniquecpmap}.
\begin{lemma}\label{thickinjenv} Let $\D$ be an abelian \cstaralg\ and let
  $(I(\D),\iota)$ be an injective envelope for $\D$.  Then $\iota$ is
  $\D$-thick in $I(\D)$.  
\end{lemma}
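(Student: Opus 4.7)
The plan is to exploit the Hamana-regularity of $\iota$ stated in the preliminaries. Given a non-zero $b\in I(\D)$, my task is to produce a non-zero element of $\D$ that annihilates the ideal $J_b:=\{d\in\D:b\iota(d)=0\}$; such an element will witness $J_b^\perp\neq(0)$.

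I would start by extracting a useful self-adjoint $a\in\D$.  Since $b\ne 0$ the positive element $b^*b\in I(\D)$ is non-zero, and Hamana-regularity gives
$$b^*b=\sup\nolimits_{I(\D)}\{\iota(a):a\in\D,\ a=a^*,\ \iota(a)\leq b^*b\}.$$
If every self-adjoint $a$ in this directed set satisfied $a\leq 0$, the supremum would itself be $\leq 0$; so some $a=a^*\in\D$ with $\iota(a)\leq b^*b$ must have $a\not\leq 0$, and then the positive part $a_+\in\D$ is non-zero.

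Next I would show $a_+\in J_b^\perp$. Fix $d\in J_b$. From $b\iota(d)=0$ we get $\iota(d)^*b^*b\iota(d)=0$, and since $\iota(a)\leq b^*b$, positivity of the compression $T\mapsto \iota(d)^*T\iota(d)$ yields
$$\iota(d^*ad)=\iota(d)^*\iota(a)\iota(d)\leq \iota(d)^*b^*b\iota(d)=0.$$
Because $\iota$ is an injective $*$-homomorphism preserving order on self-adjoints, $d^*ad\leq 0$ in $\D$; commutativity of $\D$ rewrites this as $a\,|d|^2\leq 0$, i.e.\ $a_+|d|^2\leq a_-|d|^2$. Multiplying by $a_+\geq 0$ and using $a_+a_-=0$ forces $a_+^2|d|^2=0$, whence $a_+d=0$. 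Thus $a_+\in J_b^\perp\setminus\{0\}$, as required.

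The one subtlety to flag is that Hamana-regularity does not directly deliver a \emph{positive} $a\in\D$ with $\iota(a)\leq b^*b$; the inequality is generally lost when we pass from $a$ to $a_+$. The saving observation is the pointwise one on $\hat\D$: $a\,|d|^2\leq 0$ says that at every character $\chi$ with $\chi(d)\neq 0$ one must have $\chi(a)\leq 0$, so $\chi(a_+)=0$ there, which is exactly the assertion $a_+d=0$. This elementary commutative trick is the only real work in the argument.
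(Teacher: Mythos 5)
Your proof is correct. It follows the same basic strategy as the paper's --- produce a non-zero element of $\D$ sitting ``under'' $b^*b$ and show it annihilates every $d$ with $b\iota(d)=0$ --- but the execution of both halves differs. The paper invokes the positive refinement of Hamana regularity (stated in its preliminaries) to get a non-zero $h\in\D$ with $0\leq\iota(h)\leq b^*b$ outright, and then finishes in one line by comparing Gelfand supports: $\iota(d)b=0$ makes $\supp(\widehat{\iota(d)})$ and $\supp(\hat b)$ disjoint, while $0\le\iota(h)\le b^*b$ forces $\supp(\widehat{\iota(h)})\subseteq\supp(\hat b)$, so $dh=0$. You instead use only the self-adjoint form of Hamana regularity, take the positive part $a_+$ of a self-adjoint $a$ with $\iota(a)\le b^*b$ and $a\not\le 0$, and then compensate for the fact that $\iota(a_+)\le b^*b$ may fail by the compression-and-order computation $\iota(d^*ad)\le\iota(d)^*b^*b\,\iota(d)=0$, hence $a_+^2|d|^2=0$ and $a_+d=0$. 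The subtlety you flag is real and your workaround is sound; what your route buys is independence from the positive version of Hamana regularity (which the paper's preliminaries state only for positive elements of $\D$ itself, whereas $b^*b$ lives in $I(\D)$), at the cost of a slightly longer annihilation argument in place of the paper's two-line support comparison.
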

\begin{proof} Suppose $b\in I(\D)$ is non-zero.
  The Hamana regularity of $I(\D)$ ensures that there exists a
  non-zero $h\in\D$ such that $0\leq \iota(h)\leq b^*b.$   If $d\in
  \D$ satisfies $\iota(d) b=0$, then $\supp(\widehat{\iota(d)})\cap
  \supp(\hat{b})=\emptyset$.  Since
    $\supp(\widehat{\iota(h)})\subseteq \supp(\hat{b})$, we get
    $\iota(dh)=0$, whence $h\in \{d\in \D: \iota(d)b=0\}^\perp$.  
\end{proof}

Our interest in $\D$-thick
homomorphisms will be with the restrictions of $\D$-modular maps to $\D$.
The following lemma will be useful.
\begin{lemma} \label{agreev} Let  $(\C,\D)$ be a regular MASA
  inclusion, let $\B$ be a unital abelian Banach algebra 
  and let $v\in\N(\C,\D)$.   
For $i=1,2$, suppose
  $\Delta_i:\C\rightarrow \B$ are bounded $\D$-modular maps  such
  that $\Delta_1|_\D=\Delta_2|_\D$ and set $\iota:=\Delta_i|_\D$.  
Then for every $h\in J_v\vee K_v$,
\begin{equation}\label{hJvKv}
 \Delta_1(vh)=\Delta_2(vh).
\end{equation} 
In fact,  
\begin{enumerate}
\item[a)] for every $h\in K_v$, $\Delta_1(vh)=0=\Delta_2(vh)$;  
\item[b)] for every $h\in J_v$,
$\Delta_1(vh)=\iota(vh)=\Delta_2(vh).$
\end{enumerate}
   Moreover, if $\iota$ is also $\D$-thick
in $\B$,
then $\Delta_1=\Delta_2$.
\end{lemma}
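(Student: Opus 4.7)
My plan is to prove parts (a) and (b) separately, combine them to get the identity on $J_v \vee K_v$, and then invoke essentiality of $J_v \vee K_v$ together with $\D$-thickness for the last assertion. The basic tool is the $\D$-modularity identity
$$\Delta_i(xd) = \Delta_i(x)\iota(d), \qquad \Delta_i(dx) = \iota(d)\Delta_i(x) \qquad (x \in \C,\ d \in \D);$$
combined with the abelianness of $\B$ this yields $\Delta_i(hv) = \Delta_i(vh)$ for every $h \in \D$. Part (b) is then immediate: if $h \in J_v$, then $vh \in \D$ by the definition of $J_v$, so $\Delta_i(vh) = \iota(vh)$ and both $\Delta_i$'s agree.

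For (a), I would introduce $N^i := \{k \in \D : \Delta_i(vk) = 0\}$. It is closed by boundedness of $\Delta_i$, and $\D$-modularity together with abelianness of $\B$ shows it is stable under multiplication by $\D$ (for $k\in N^i$ and $d\in \D$, $\Delta_i(v(dk)) = \Delta_i(v)\iota(d)\iota(k) = \iota(d)\Delta_i(vk) = 0$), so $N^i$ is a closed ideal of $\D$. It then suffices to verify that $N^i$ contains both generating families of $K_v$. For $k \in \innerprod{v^*v}^\perp$ one has $kv^*v = 0$; since $v^*v$ commutes with $k$ in $\D$, $(vk)^*(vk) = k^*kv^*v = 0$ and so $vk = 0$. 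For $k = v^*hv - hv^*v$ with $h \in \D$, the commutativity $vv^*h = hvv^*$ in $\D$ gives the identity $vk = (hv - vh)\,v^*v$, and right modularity yields
$$\Delta_i(vk) = \bigl(\Delta_i(hv) - \Delta_i(vh)\bigr)\iota(v^*v) = 0.$$
Thus $N^i \supseteq K_v$. Since $h \mapsto \Delta_1(vh) - \Delta_2(vh)$ is a bounded linear map $\D \to \B$ that vanishes on $J_v$ by (b) and on $K_v$ by (a), it vanishes on $J_v + K_v$ and hence on $J_v \vee K_v = \overline{J_v + K_v}$ by continuity, establishing \eqref{hJvKv}.

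For the final conclusion, regularity of $(\C,\D)$ reduces the problem to showing $\Delta_1(v) = \Delta_2(v)$ for each $v \in \N(\C,\D)$. Set $b := \Delta_1(v) - \Delta_2(v)$. By modularity, $b\,\iota(h) = \Delta_1(vh) - \Delta_2(vh)$, which vanishes for every $h \in J_v \vee K_v$ by the preceding. Proposition~\ref{essentialv} asserts that $J_v \vee K_v$ is an essential ideal in $\D$, so $\{d \in \D : b\,\iota(d) = 0\}^\perp \subseteq (J_v \vee K_v)^\perp = (0)$, and $\D$-thickness of $\iota$ in $\B$ then forces $b = 0$. The principal obstacle is the manipulation for the generator $k = v^*hv - hv^*v$: one must rearrange $vk$ to extract a bona fide commutator $(hv - vh)$ times the $\D$-element $v^*v$, at which point abelianness of $\B$ kills the commutator; without abelianness of $\B$ this step would fail.
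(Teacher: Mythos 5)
Your proof is correct and follows essentially the same route as the paper's: verify (a) on the two generating families of $K_v$ via $\D$-modularity and commutativity of $\B$, get (b) from $vh\in\D$, pass to $J_v\vee K_v$ by the ideal property and continuity, and finish with essentiality of $J_v\vee K_v$ plus $\D$-thickness. The only (harmless) variations are cosmetic — e.g.\ you kill the generator $k\in\innerprod{v^*v}^\perp$ by showing $(vk)^*(vk)=0$ directly where the paper uses $v=\lim v(v^*v)^{1/n}$, and you factor $vk=(hv-vh)v^*v$ rather than expanding as the paper does.
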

\begin{proof}
  For (a),  we consider two cases.
First,
if $h\in \innerprod{v^*v}^\perp$, then for $i=1,2$, 
$$\Delta_i(vh)=\lim_{n\rightarrow\infty} \Delta_i(v(v^*v)^{1/n}h)=0.$$
Second, suppose that $h\in\{v^*dv-v^*vd:d\in\D\}$.  
Then for some $d\in \D$,  
\begin{align*}
\Delta_i(vh)&=\Delta_i(v(v^*dv-v^*vd))=\Delta_i(vv^*dv)-\Delta_i(vv^*vd)\\
&=\iota(dv^*v)\Delta_i(v)-\Delta_i(vv^*v)\iota(d) = \iota(d)\Delta_i(v)\iota(v^*v)-\iota(d)\Delta_i(vv^*v)=0.
\end{align*}
Thus $\Delta_i(vh)=0$ for all $h$ in a generating set for $K_v$.
Since $\Delta_i$ are $\D$-modular and bounded, we obtain  (a).

Next, suppose that $h\in J_v$.  Then since $vh\in\D$,
\begin{equation}\label{hinJvee} 
\Delta_1(vh)=\iota(vh)=\Delta_2(vh).
\end{equation}  
This gives (b).

Parts (a) and (b) imply that $\Delta_1(vh)=\Delta_2(vh)$ for all $h$ in
a generating set for $J_v\vee K_v$, so boundedness and $\D$-modularity
of $\Delta_i$ yields~\eqref{hJvKv}.

Now suppose that $\iota$ is $\D$-thick in $\B$ and $v\in \N(\C,\D)$.
Let $b=\Delta_1(v)-\Delta_2(v)$ and choose any $h\in \{d\in \D:
b\iota(d)=0\}^\perp\cap (J_v\vee K_v).$ Since $h\in J_v\vee K_v$, we
have
$b\iota(h)=\Delta_1(v)\iota(h)-\Delta_2(v)\iota(h)=\Delta_1(vh)-\Delta_2(vh)=0$.
Since $h\in \{d\in \D: b\iota(d)=0\}^\perp$, we get $h^2=0$.  As $\D$
is abelian, $h=0$.  This shows that $\{d\in \D:
b\iota(d)=0\}^\perp\cap (J_v\vee K_v)=(0).$ Since $J_v\vee K_v$ is an
essential ideal, $\{d\in \D: b\iota(d)=0\}^\perp=(0)$.  Since $\iota$
is $\D$-thick in $\B$, we see that $b=0$. Hence
$\Delta_1(v)=\Delta_2(v)$.

Since this holds for every
$v\in \N(\C,\D)$, 
regularity of $(\C,\D)$ yields $\Delta_1=\Delta_2$.

\end{proof}

Lemma~\ref{agreev} has an interesting consequence for uniqueness of
extensions
of pure states on $\D$ to $\C$,  which we now present.  This result 
 generalizes  a result found in
~\cite{RenaultCaSuC*Al},   however, the proof is rather different.
Notice that Theorem~\ref{denseuep}
holds when $\C$ is separable or when there is a countable subset
$X\subseteq \N(\C,\D)$ such that $\C$ is the \cstaralg\ generated by
$\D$ and $X$.  We shall use Theorem~\ref{denseuep} in the proof of
Theorem~\ref{norming}.

\begin{theorem}\label{denseuep}  Suppose $(\C,\D)$ is a regular MASA
  inclusion and that $N\subseteq \N(\C,\D)$ is a countable set such
  that the norm-closed $\D$-bimodule generated by $N$ is $\C$.  Let 
$$\unistex:=\{\sigma\in\hat{\D}: \sigma \text{ has a unique state extension
  to }\C\}.$$  Then $\unistex$ is dense in $\hat{\D}$.
\end{theorem}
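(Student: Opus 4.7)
The plan is to find, for each $v \in N$, an open dense subset $U_v \subseteq \hat{\D}$ on which any two state extensions of $\sigma$ must agree on $v$, and then extract uniqueness on a dense $G_\delta$ set via a Baire category argument. The key inputs are the essentiality of $J_v \vee K_v$ (Proposition~\ref{essentialv}) and the agreement principle for $\D$-modular maps (Lemma~\ref{agreev}). The role of the countability assumption on $N$ is to make the Baire step applicable, and the role of its bimodule-generation property is to propagate pointwise agreement on $N$ to all of $\C$.

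Concretely, for each $v \in N$ set $I_v := J_v \vee K_v$ and
\[
U_v := \{\sigma \in \hat{\D} : \sigma|_{I_v} \neq 0\}.
\]
Since $I_v$ is a closed ideal, $U_v$ is open, and its complement is the hull of $I_v$. If this complement had nonempty interior $W$, one could pick a nonzero $f \in \D$ with $\supp(\hat{f}) \subseteq W$, yielding $f \in I_v^\perp$ and contradicting Proposition~\ref{essentialv}. Thus $U_v$ is open and dense. Since $N$ is countable and $\hat{\D}$ is a Baire space, $\unistex_0 := \bigcap_{v \in N} U_v$ is dense in $\hat{\D}$.

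It remains to show $\unistex_0 \subseteq \unistex$. Fix $\sigma \in \unistex_0$ and let $\rho_1, \rho_2$ be any two states on $\C$ extending $\sigma$. Since $\sigma$ is multiplicative, a standard Cauchy--Schwartz argument (applied to $d - \sigma(d)I$ for each $d \in \D$) shows that each $\rho_i$ is $\D$-modular in the sense of Definition~\ref{Dmodlinmap}. By Lemma~\ref{agreev} applied with $\B = \bbC$ and common restriction $\iota := \sigma$, we have $\rho_1(vh) = \rho_2(vh)$ for every $v \in N$ and every $h \in I_v$. Now for each $v \in N$, since $\sigma \in U_v$, choose $h \in I_v$ with $\sigma(h) = 1$ (rescale if necessary); $\D$-modularity then gives
\[
\rho_i(v) = \rho_i(v)\sigma(h) = \rho_i(vh),
\]
so $\rho_1(v) = \rho_2(v)$ for every $v \in N$. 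A second application of $\D$-modularity yields $\rho_1(d_1 v d_2) = \sigma(d_1 d_2)\rho_1(v) = \rho_2(d_1 v d_2)$ for all $d_1, d_2 \in \D$ and $v \in N$, so the two states agree on the algebraic $\D$-bimodule generated by $N$; by continuity and the hypothesis on $N$, they agree on all of $\C$. Hence $\sigma \in \unistex$, and $\unistex \supseteq \unistex_0$ is dense.

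The only step requiring any care is the density of each $U_v$, which is a direct consequence of the essentiality result Proposition~\ref{essentialv}; everything else is a standard Baire-category and continuity bookkeeping. No new technology beyond Lemma~\ref{agreev} and Proposition~\ref{essentialv} is required.
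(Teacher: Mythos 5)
Your proposal is correct and follows essentially the same route as the paper: the same dense open sets $\{\sigma:\sigma|_{J_v\vee K_v}\neq 0\}$ built from the essentiality of $J_v\vee K_v$ (Proposition~\ref{essentialv}), the same Baire category intersection over the countable set $N$, and the same use of Lemma~\ref{agreev} with $\B=\bbC$ plus $\D$-modularity and bimodule generation to conclude uniqueness. The only difference is that you spell out the density of each $U_v$ slightly more explicitly than the paper does.
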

\begin{proof}
  For each $v\in N$, let $G_v:=\{\sigma\in\hat{\D}: \sigma|_{J_v\vee
    K_v}\neq 0\}.$ Clearly $G_v$ is open in $\hat{\D}$ and since
  $J_v\vee K_v$ is an essential ideal in $\D$, $G_v$ is dense in $\hat{\D}$.
  Baire's theorem shows that
$$P:=\bigcap_{v\in N} G_v$$ is dense in $\hat{\D}$.

Let $\sigma\in P$ and suppose for $i=1,2$, $\rho_i$ are states on $\C$
such that $\rho_i|_\D=\sigma$.  The Cauchy-Schwartz inequality shows
that $\rho_i:\C\rightarrow \bbC$ are $\D$-modular maps.  

Fix $v\in N$.  Since $\sigma\in G_v$, we may find $h\in J_v\vee K_v$
such that $\sigma(h)=1$.  By
Lemma~\ref{agreev} we have
$$\rho_1(v)=\rho_1(v)\sigma(h)=\rho_1(vh)=\rho_2(vh)=\rho_2(v)\sigma(h)=\rho_2(v).$$
Since $N$ generates $\C$ as a $\D$-bimodule and $\rho_i$ are
$\D$-modular, we see that $\rho_1=\rho_2$.  Hence $P\subseteq \unistex$, and
the proof is complete.
\end{proof}

We now show that any regular MASA inclusion has a unique completely
positive mapping $E:\C\rightarrow I(\D)$ which extends the inclusion
mapping of $\D$ into $I(\D)$.  

\begin{lemma}\label{bimodmap} Let $(\C,\D)$ be an inclusion and
  $(I(\D),\iota)$ an injective envelope of $\D$.  Let $E:\C\rightarrow
  I(\D)$ be a pseudo-expectation for $\iota$.  Then $E$ is $\D$-modular.
\end{lemma}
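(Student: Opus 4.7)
The plan is to invoke the multiplicative domain theorem of Choi for unital completely positive maps. Recall that by the Hamana--Hadwin--Paulsen results reviewed in the preliminaries, the injective envelope $I(\D)$ carries a (unique) \cstaralg\ product that makes $\iota:\D\to I(\D)$ a unital $*$-monomorphism. Thus it makes sense to speak of $E(x)\iota(d)$ and $\iota(d)E(x)$ as products in the \cstaralg\ $I(\D)$, and the statement ``$E$ is $\D$-modular'' is well-posed.

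The key observation is that $\D$ sits inside the multiplicative domain of $E$. Indeed, for any $d\in\D$, since $E|_{\D}=\iota$ is a $*$-homomorphism, we have
\begin{equation*}
E(d^*d)=\iota(d^*d)=\iota(d)^*\iota(d)=E(d)^*E(d),
\end{equation*}
and similarly $E(dd^*)=E(d)E(d)^*$. Thus every element of $\D$ satisfies the Schwarz equality with respect to $E$.

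The remainder is a direct application of Choi's multiplicative domain theorem: if $\phi:\A\to \B$ is a ucp map between unital \cstaralg s and $a\in\A$ satisfies $\phi(a^*a)=\phi(a)^*\phi(a)$ and $\phi(aa^*)=\phi(a)\phi(a)^*$, then $\phi(ax)=\phi(a)\phi(x)$ and $\phi(xa)=\phi(x)\phi(a)$ for every $x\in\A$. Applying this to $\phi=E$, $\A=\C$, $\B=I(\D)$, and $a=d\in\D$ gives $E(dx)=\iota(d)E(x)$ and $E(xd)=E(x)\iota(d)$ for all $x\in\C$ and $d\in\D$, which is exactly the $\D$-modularity of $E$ in the sense of Definition~\ref{Dmodlinmap}.

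There is no real obstacle here: the only subtle point is confirming that the hypotheses of Choi's theorem are available in the current setting, and this reduces to (a) recalling that $I(\D)$ is a genuine \cstaralg\ and (b) noting that $\iota$ being a $*$-homomorphism automatically forces the Schwarz equality for elements of $\D$. Everything else is routine.
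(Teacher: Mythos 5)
Your proof is correct, but it takes a genuinely different route from the paper's. You invoke Choi's multiplicative domain theorem: since $E|_\D=\iota$ is a $*$-homomorphism, every $d\in\D$ satisfies the Schwarz equality $E(d^*d)=E(d)^*E(d)$ and $E(dd^*)=E(d)E(d)^*$, hence lies in the multiplicative domain of the ucp map $E$, which immediately yields the bimodule property. The paper instead argues pointwise over the characters of $I(\D)$: for each $\rho\in\widehat{I(\D)}$, the state $\sigma=\rho\circ E$ restricts to a character of $\D$, so the Cauchy--Schwarz inequality for states gives $\sigma(xd)=\sigma(x)\sigma(d)$, and since characters separate points of the abelian \cstaralg\ $I(\D)$, the identity $E(xd)=E(x)E(d)$ follows. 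The paper's argument is more elementary (no appeal to Choi's theorem) but leans essentially on the commutativity of $I(\D)$ to pass from scalar identities to an operator identity; your argument bypasses commutativity of the target entirely and would generalize verbatim to a setting where $\D$ and $I(\D)$ are non-abelian, provided the pseudo-expectation restricts to a $*$-homomorphism on $\D$. Both are complete; yours simply trades a soft separation-of-points argument for a citation of a standard structural theorem about completely positive maps.
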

\begin{proof}
  Let $\rho\in\widehat{I(\D)}$ and put $\sigma=\rho\circ E$.  Then
  $\sigma|_\D\in\hat{\D}$, so the Cauchy-Schwartz inequality implies
  that for every $x\in\C$ and $d\in \D$,
  $\sigma(xd)=\sigma(x)\sigma(d).$
  Hence, $$\rho(E(xd))=\sigma(xd)=\sigma(x)\sigma(d)
  =\rho(E(x))\rho(E(d))=\rho(E(x)E(d)).$$ As this holds for
  every $\rho\in \widehat{I(\D)}$, we obtain $E(xd)=E(x)E(d)$.
  The proof that $E(dx)=E(d)E(x)$ is similar.
\end{proof}

\begin{theorem}\label{uniquecpmap}  Let $(\C,\D)$ be a regular MASA
  inclusion, and let $(I(\D),\iota)$ be an 
  injective envelope of $\D$.  Then there exists a
  unique pseudo-expectation $E:\C\rightarrow I(\D)$ for $\iota$.
  Furthermore, suppose
  $v\in \N(\C,\D)$.  Then
\begin{enumerate}
\item[a)] $E(vh)=\iota(vh)$ for every $h\in J_v$;
\item[b)] $E(vh)=0$ for every $h\in K_v$;
\item[c)] $|E(v)|^2=\iota(v^*v)P$, where $P:=\sup_{I(\D)}(\iota((J_v)_1^+))$. 
\end{enumerate}
\end{theorem}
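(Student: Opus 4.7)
\textbf{Proof proposal for Theorem~\ref{uniquecpmap}.}

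For existence, I would simply invoke injectivity of $I(\D)$: the inclusion $\iota$ extends to a unital completely positive map $E:\C\rightarrow I(\D)$. For uniqueness, given any pseudo-expectations $E_1, E_2$, Lemma~\ref{bimodmap} shows each is $\D$-modular. Their common restriction to $\D$ is $\iota$, which is $\D$-thick in $I(\D)$ by Lemma~\ref{thickinjenv}. So Lemma~\ref{agreev} applies directly and yields $E_1 = E_2$. Parts (a) and (b) are immediate from Lemma~\ref{agreev}(a),(b) applied with $\Delta_1 = \Delta_2 = E$.

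The real content is part (c), which I would approach as follows. Using $\D$-modularity of $E$ together with $E(v^*) = E(v)^*$, for $h \in J_v^+$ write $h = k^2$ with $k \in J_v^+$ (so $E(v)\iota(k) = \iota(vk)$ by (a)); then in the abelian \cstaralg\ $I(\D)$,
\begin{equation*}
|E(v)|^2\, \iota(h) \;=\; \iota(k)E(v)^*E(v)\iota(k) \;=\; \iota(vk)^*\iota(vk) \;=\; \iota(v^*v)\,\iota(h).
\end{equation*}
By polarization this holds for all $h \in J_v$. Taking the supremum over $h \in (J_v)_1^+$ and using that multiplication by the positive element $|E(v)|^2$ preserves suprema of bounded increasing nets in the monotone complete \cstaralg\ $I(\D)$ yields $|E(v)|^2 P = \iota(v^*v)P$.

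It remains to show $|E(v)|^2 = |E(v)|^2 P$, equivalently $E(v)(I-P) = 0$. This is the key step, and here I would argue via the $\D$-thickness of $\iota$ combined with Proposition~\ref{essentialv}. Set $b := E(v)(I-P) \in I(\D)$. For $h \in J_v^+$ we have $\iota(h) \leq P$, hence $(I-P)\iota(h) = 0$ and so $b\,\iota(h) = 0$; by linearity $b\,\iota(h) = 0$ for every $h \in J_v$. For $h \in K_v$, part (b) gives $E(v)\iota(h) = E(vh) = 0$, and since $I(\D)$ is abelian, $b\,\iota(h) = E(v)\iota(h)(I-P) = 0$. Thus $\{d \in \D : b\,\iota(d) = 0\}$ contains $J_v \vee K_v$, which is essential in $\D$ by Proposition~\ref{essentialv}. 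Therefore $\{d \in \D : b\,\iota(d) = 0\}^\perp \subseteq (J_v \vee K_v)^\perp = (0)$, and $\D$-thickness of $\iota$ (Lemma~\ref{thickinjenv}) forces $b = 0$. Hence $E(v) = E(v)P$ and $|E(v)|^2 = \iota(v^*v)P$.

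The main obstacle is the step $E(v)(I-P) = 0$: one cannot simply bound $E(v)$ by $P$ pointwise in $I(\D)$ without going through the essential-ideal machinery, and it is the combination of Proposition~\ref{essentialv} (to cover enough of $\D$ with $J_v \vee K_v$) with the $\D$-thickness of $\iota$ (to pass this control from $\iota(\D)$ to all of $I(\D)$) that closes the argument. Once this is in hand, assembling existence, uniqueness, and (a)--(c) is straightforward.
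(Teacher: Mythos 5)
Your proposal is correct, and for existence, uniqueness, and parts (a)--(b) it coincides with the paper's argument (injectivity for existence; Lemmas~\ref{bimodmap}, \ref{thickinjenv}, and \ref{agreev} for uniqueness and for (a)--(b)). For part (c) you take a mild variant of the paper's route. The paper first checks $K_v\subseteq J_v^\perp$, deduces $\iota(h)P=0$ for $h\in K_v$ from Hamana's result that $x\mapsto \iota(h)^*x\iota(h)$ preserves suprema of bounded increasing nets, then verifies $|E(v)|^2\iota(h^*h)=\iota(v^*v)P\iota(h^*h)$ separately on generators of $K_v$ and of $J_v$, and concludes by Lemma~\ref{eauid}(i) applied to the essential ideal $J_v\vee K_v$. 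You instead split (c) into the two sub-claims $|E(v)|^2P=\iota(v^*v)P$ and $E(v)(I-P)=0$. The first uses only the identity on $(J_v)_1^+$ together with the fact that multiplication by a positive element preserves suprema in $I(\D)$ --- this is legitimate, but it is exactly Hamana's Corollary~4.10 (the same result the paper cites), and you should cite it rather than assert it. The second is handled by showing $E(v)(I-P)$ annihilates $\iota(J_v\vee K_v)$ and combining Proposition~\ref{essentialv} with $\D$-thickness (one could equally well quote Lemma~\ref{eauid}(i) here). The payoff of your reorganization is that you never need $K_v\subseteq J_v^\perp$ or the computation $\iota(h)P=0$ for $h\in K_v$: for $h\in K_v$ you only use $E(v)\iota(h)=0$, which is part (b). Otherwise the two arguments rest on the same ingredients, so this is a tidier bookkeeping of the same proof rather than a genuinely new one.
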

\begin{proof}
The injectivity of $I(\D)$ guarantees the existence of
 a completely positive unital map $E:\C\rightarrow I(\D)$   
such that for every $d\in\D$,
\begin{equation}\label{useinj}
E(d)=\iota(d).
\end{equation}
Lemma~\ref{agreev} and
Lemma~\ref{thickinjenv} imply that $E$ is unique.

Now suppose that $v\in\N(\C,\D)$.   Since $E|_\D=\iota$, parts
 (a) and (b)  follow from
 Lemma~\ref{agreev}. 

We turn now to (c).
Observe that $K_v\subseteq J_v^\perp$.   Indeed
$J_v\subseteq \innerprod{v^*v}$, so if $h\in \innerprod{v^*v}^\perp$
then $hJ_v=0$.  On the other hand, if $d\in D$ and $h=v^*dv-v^*vd$,
then for $g\in J_v$, we have $hg=(v^*dv-v^*vd)g=v^*dgv-v^*vdg=0$
because $dg\in J_v$.  Thus $h J_v=0$ for every $h$ in a generating set
for $K_v$, so $K_v\subseteq J_v^\perp$.

Now let $h\in K_v$.
By~\cite[Corollary~4.10]{HamanaReEmCStAlMoCoCStAl}, we have 
$$\iota(h)^*P\iota(h)=\iota(h)^*\left[\sup_{I(\D)}\iota((J_v)_1^+)\right] \iota(h) 
=\sup_{I(\D)}\iota(h^*(J_v)_1^+ h)=0.$$  Since $P$ is a projection, 
it follows that $\iota(h)P=0$.  Hence $\iota(v^*v)P\iota(h^*h)=0$. 
Next, by part (b), we have for every $h\in K_v$,
$|E(v)|^2\iota(h^*h)=\iota(h^*)E(v)^*E(v)\iota(h)=\iota(h^*)E(v)^*E(vh)=0$.
Therefore, for $h\in K_v$,
\begin{equation}\label{KvEh}
|E(v)|^2\iota(h^*h)=\iota(v^*v)P\iota(h^*h)=0.
\end{equation}

Since 
$E(vh)=\iota(vh)$ for every $h\in J_v$, 
we see that for $h\in J_v$,
\begin{equation}\label{JvEh}
E(v^*)E(v)\iota(h^*h)=E((vh)^*)E(vh)=\iota(h^*v^*)\iota(vh) = 
\iota(v^*v)\iota(h^*h)= \iota(v^*v)P\iota(h^*h).
\end{equation}

Combining \eqref{KvEh} and \eqref{JvEh}, we see that for every $h\in
J_v\vee K_v$,
$$E(v)^*E(v)\iota(h^*h)=\iota(v^*v)P\iota(h^*h).$$  
Then $E(v)^*E(v)=\iota(v^*v)P$ by Lemma~\ref{eauid}, so we have (c).
\end{proof}

The following ``dual''  to
Theorem~\ref{uniquecpmap} is now easily established.  Notice that in
the context of Theorem~\ref{uniquecpmap}, when
$\rho\in\widehat{I(\D)}$, $\dual{E}(\rho)=\rho\circ E\in\Mod(\C,\D)$.

\begin{theorem}\label{uniqlift} Let $(\C,\D)$ be a regular MASA
  inclusion, let $(I(\D),\iota)$ be an injective envelope for
  $\D$, and let $E$ be the pseudo-expectation for $\iota$.   
The map $\dual{E}:\widehat{I(\D)}\rightarrow \Mod(\C,\D)$ is
  the unique continuous map of 
  $\widehat{I(\D)}$ into $\Mod(\C,\D)$ such that for every
  $\rho\in \widehat{I(\D)}$, $\dual{E}(\rho)|_\D=\rho\circ\iota$.
\end{theorem}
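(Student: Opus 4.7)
The plan is to dispatch existence in one paragraph and then deduce uniqueness by reducing to the already-established uniqueness of the pseudo-expectation (Theorem~\ref{uniquecpmap}).

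For the existence and stated properties of $\dual{E}$, I would first observe that for each $\rho\in\widehat{I(\D)}$, the composition $\rho\circ E$ is a state on $\C$ (being a state composed with a unital completely positive map), and since $E|_\D=\iota$, we have $(\rho\circ E)|_\D=\rho\circ\iota\in\hat{\D}$, so $\dual{E}(\rho)\in\Mod(\C,\D)$.  Weak-$*$ continuity of $\dual{E}$ is automatic, since $\dual{E}$ is the Banach-space adjoint of the bounded linear map $E$; concretely, if $\rho_\alpha\to\rho$ in $\widehat{I(\D)}$, then for every $x\in\C$ the element $E(x)\in I(\D)$ satisfies $\rho_\alpha(E(x))\to\rho(E(x))$, which is exactly weak-$*$ convergence of $\dual{E}(\rho_\alpha)$ to $\dual{E}(\rho)$ in $\Mod(\C,\D)$.

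For uniqueness, suppose $\Phi:\widehat{I(\D)}\to\Mod(\C,\D)$ is any continuous map with $\Phi(\rho)|_\D=\rho\circ\iota$.  I would build, from $\Phi$, a candidate pseudo-expectation $\tilde{E}:\C\to I(\D)$ as follows.  Using the Gelfand identification $I(\D)\cong C(\widehat{I(\D)})$, for each $x\in\C$ define $\tilde{E}(x)$ to be the function $\rho\mapsto\Phi(\rho)(x)$ on $\widehat{I(\D)}$; continuity of $\Phi$ (together with the fact that evaluation at $x$ is weak-$*$ continuous on $\S(\C)$) ensures $\tilde{E}(x)\in C(\widehat{I(\D)})$.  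The map $\tilde{E}$ is clearly linear and unital, and it is completely positive because positivity in $M_n(C(\widehat{I(\D)}))$ is checked pointwise and each $\Phi(\rho)$ is a state, hence completely positive.  Finally $\tilde{E}|_\D=\iota$, since $\Phi(\rho)|_\D=\rho\circ\iota$ translates under Gelfand duality into $\tilde{E}(d)=\iota(d)$ for $d\in\D$.

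Thus $\tilde{E}$ is a pseudo-expectation for $\iota$, and Theorem~\ref{uniquecpmap} forces $\tilde{E}=E$.  Unwinding the Gelfand identification then gives $\Phi(\rho)(x)=\rho(E(x))=\dual{E}(\rho)(x)$ for every $\rho\in\widehat{I(\D)}$ and $x\in\C$, so $\Phi=\dual{E}$.  The only place that requires any care is the construction of $\tilde{E}$: one has to be sure that the values of $\tilde{E}$ really land in $I(\D)$ (via the Gelfand identification, using continuity of $\Phi$) and that pointwise positivity on $\widehat{I(\D)}$ suffices to conclude complete positivity into $I(\D)$.  Beyond that bookkeeping, the argument is essentially immediate from Theorem~\ref{uniquecpmap}.
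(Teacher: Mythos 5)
Your proof is correct and takes essentially the same route as the paper: the paper likewise notes that existence is clear and, for uniqueness, converts a competing continuous lift $\ell$ into a pseudo-expectation $E_1$ via $E_1(x)^{\wedge}(\rho)=\ell(\rho)(x)$ (using that positivity into the abelian algebra $I(\D)$ gives complete positivity) and then invokes Theorem~\ref{uniquecpmap}. The only cosmetic difference is that you verify complete positivity by checking positivity pointwise in $M_n(C(\widehat{I(\D)}))$, while the paper simply cites abelianness of $I(\D)$; these are the same observation.
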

\begin{proof} Clearly $\dual{E}$ has the stated property, so we need
  only prove uniqueness.

Suppose that $\ell$ is a continuous map of
$\widehat{I(\D)}$ into $\Mod(\C,\D)$ such that for every $\rho\in
\widehat{I(\D)}$, $\ell(\rho)|_\D=\rho\circ\iota$.
For $x\in\C$, define a function $\phi_x:\widehat{I(\D)}\rightarrow \bbC$ by
$\phi_x(\rho)=\ell(\rho)(x)$.  Since $\ell$ is continuous,
$\phi_x$ is continuous.  Hence there exists a unique element
$E_1(x)\in I(\D)$ such that $\phi_x$ is the Gelfand transform of
$E_1(x)$.  Using the fact that 
$\Mod(\C,\D)\subseteq \S(\C)$, we find $E_1$ is linear, bounded,
unital and positive.  Since $I(\D)$
is abelian, $E_1$ is completely positive.  For $d\in\D$ we have
$\rho(E_1(d))=\ell(\rho)(d)=\rho(\iota(d))$.  Therefore,
$E_1|_\D=\iota$, so $E_1$ is a
pseudo-expectation for $\iota$.  By Theorem~\ref{uniquecpmap}, $E_1=E$, hence
$\ell=\dual{E}$. 

\end{proof}

\begin{definition}\label{defcst}  Let $(\C,\D)$ be a regular MASA inclusion,  let
  $(I(\D),\iota)$ be a \cstar-envelope for $\D$, and let $E$ be the
  (unique) pseudo-expectation for $\iota$.  Define
$$\fS_s(\C,\D):=\{\rho\circ E: \rho\in \widehat{I(\D)}\}.$$    We shall
call states belonging to $\fS_s(\C,\D)$ 
    \textit{strongly compatible} states.    Clearly,
    $\fS_s(\C,\D)$ is a closed subset of $\Mod(\C,\D)$.  Observe that
    $\hat{\D}=\{\tau|_\D: \tau\in \fS_s(\C,\D)\}$; this is because 
    $\hat{\D}=\{\rho\circ\iota:\rho\in\widehat{I(\D)}\}$. 
\end{definition}

Let $r:\Mod(\C,\D)\rightarrow \hat{\D}$ be the restriction map,
$r(\rho)=\rho|_\D$. 
We now show that $\fS_s(\C,\D)$ is the unique minimal closed subset of
$\Mod(\C,\D)$ for which $r$ is onto.  In a certain sense, this allows
us to determine $\fS_s(\C,\D)$ without the use of the pseudo-expectation.

\begin{theorem}\label{minonto}  Let $(\C,\D)$ be a regular MASA
  inclusion and suppose $F\subseteq \Mod(\C,\D)$ is closed and
  $r(F)=\hat{\D}$.  Then $\fS_s(\C,\D)\subseteq F$.

Suppose further that there exists a countable subset $N\subseteq \N(\C,\D)$
such that the norm-closed $\D$-bimodule generated by $N$ is $\C$ and
set $$\unistex(\C,\D):=\{\rho\in \Mod(\C,\D): \rho|_\D\text{ has a unique state extension
  to }\C\}.$$
Then 
$$\fS_s(\C,\D)=\overline{\unistex(\C,\D)}^{\text{w-}*}.$$

\end{theorem}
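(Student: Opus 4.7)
The plan is to derive both statements from the uniqueness assertion in Theorem~\ref{uniqlift}, combined with the identification in the preliminaries of $\widehat{I(\D)}$ as the projective cover of $\hat{\D}$ via the map $\dual{\iota}\colon\rho\mapsto\rho\circ\iota$; in particular $\widehat{I(\D)}$ is a projective compact Hausdorff space (Stonean), and $\dual{\iota}$ is continuous and surjective.

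For the first statement, fix a closed $F\subseteq\Mod(\C,\D)$ with $r(F)=\hat{\D}$. The restriction map $r\colon F\to\hat{\D}$ is then a continuous surjection between compact Hausdorff spaces, so Gleason's theorem supplies a continuous map $\ell\colon\widehat{I(\D)}\to F$ satisfying $r\circ\ell=\dual{\iota}$. Composing with the inclusion $F\hookrightarrow\Mod(\C,\D)$, the relation $r\circ\ell=\dual{\iota}$ says precisely that $\ell(\rho)|_\D=\rho\circ\iota$ for every $\rho\in\widehat{I(\D)}$. The uniqueness clause in Theorem~\ref{uniqlift} forces $\ell=\dual{E}$, and hence $\fS_s(\C,\D)=\dual{E}(\widehat{I(\D)})=\ell(\widehat{I(\D)})\subseteq F$.

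For the second statement, I first show $\unistex(\C,\D)\subseteq\fS_s(\C,\D)$. Given $\rho\in\unistex(\C,\D)$, set $\sigma:=\rho|_\D$; surjectivity of $\dual{\iota}$ provides $\tau\in\widehat{I(\D)}$ with $\tau\circ\iota=\sigma$, so that $\dual{E}(\tau)\in\fS_s(\C,\D)$ is a state extension of $\sigma$. Since $\sigma$ has a unique such extension, $\rho=\dual{E}(\tau)\in\fS_s(\C,\D)$. Because $\fS_s(\C,\D)$ is weak-$*$ closed, it follows that $\overline{\unistex(\C,\D)}^{\text{w-}*}\subseteq\fS_s(\C,\D)$. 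For the reverse inclusion I apply the first statement with $F:=\overline{\unistex(\C,\D)}^{\text{w-}*}$. By Theorem~\ref{denseuep}, $r(\unistex(\C,\D))$ is dense in $\hat{\D}$, so $r(F)$ is a closed subset of $\hat{\D}$ containing this dense set, whence $r(F)=\hat{\D}$, and the first statement delivers $\fS_s(\C,\D)\subseteq F$.

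The principal technical ingredient is the production of the continuous lift $\ell$, which is where projectivity of $\widehat{I(\D)}$ is indispensable; everything else is straightforward once that is in hand. Theorem~\ref{uniqlift} then converts this lift into the pseudo-expectation and locates $\fS_s(\C,\D)$ inside any candidate $F$, while Theorem~\ref{denseuep} certifies that $\overline{\unistex(\C,\D)}^{\text{w-}*}$ is such a candidate.
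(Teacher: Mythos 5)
Your proposal is correct and follows essentially the same route as the paper: projectivity of $\widehat{I(\D)}$ produces the lift $\ell$, the uniqueness clause of Theorem~\ref{uniqlift} identifies $\ell$ with $\dual{E}$ to give $\fS_s(\C,\D)\subseteq F$, and the second statement follows by applying this with $F=\overline{\unistex(\C,\D)}^{\text{w-}*}$ (via Theorem~\ref{denseuep}) together with the observation that $\unistex(\C,\D)\subseteq\fS_s(\C,\D)$ and $\fS_s(\C,\D)$ is closed. Your explicit justification that a unique extension must coincide with $\dual{E}(\tau)$ is exactly the argument the paper leaves implicit.
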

\begin{proof}
Since $F$ is closed and $\Mod(\C,\D)$ is compact, $F$ is compact and
Hausdorff. As $\widehat{I(\D)}$ is  projective (in the category of
compact Hausdorff spaces and continuous maps) and 
 $r$ maps $F$
onto $\hat{\D}$, there exists a continuous map
$\ell:\widehat{I(\D)}\rightarrow F$ such that
$\dual{\iota}=r\circ\ell$. Let $\epsilon:F\rightarrow \Mod(\C,\D)$ be
the inclusion map.  Then  $\ell':=\epsilon\circ \ell$ is a continuous
map of $\widehat{I(\D)}$ into $\Mod(\C,\D)$ such that
$r\circ \ell'=\dual{\iota}.$  Theorem~\ref{uniqlift} shows that
$\ell'=\dual{E}$.   Therefore, the range of $\dual{E}$ is contained in
$F$,
that is, $\fS_s(\C,\D)\subseteq F$.

Suppose now that there is a countable subset $N\subseteq \N(\C,\D)$
which generates $\C$ as a $\D$-bimodule.
  Theorem~\ref{denseuep} implies that $\hat{\D}=r(\overline{\unistex(\C,\D)})$, so we
have $\fS_s(\C,\D)\subseteq \overline{\unistex(\C,\D)}$.  To complete the proof,
observe that 
$\fS_s(\C,\D)$ is closed and $\unistex(\C,\D)\subseteq \fS_s(\C,\D)$.
\end{proof}
 
The following result shows that, in the terminology of Section~\ref{sectCompSt},
each element of $\fS_s(\C,\D)$ is a compatible state.  
\begin{proposition}\label{rhocircE}  Let $(\C,\D)$ be a regular MASA
inclusion and let $\sigma\in\fS_s(\C,\D)$.    
Then for every $v\in \N(\C,\D)$, 
$$|\sigma(v)|^2\in\{0,\sigma(v^*v)\}.$$
\end{proposition}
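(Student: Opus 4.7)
The plan is to unpack the definition of $\sigma$ and apply Theorem~\ref{uniquecpmap}(c) directly. By definition of $\fS_s(\C,\D)$, there is some $\rho\in\widehat{I(\D)}$ with $\sigma=\rho\circ E$, where $E:\C\to I(\D)$ is the unique pseudo-expectation for $\iota$. Since $\rho$ is a character of the abelian \cstaralg\ $I(\D)$, it is multiplicative and $*$-preserving, so $\rho(a^*a)=|\rho(a)|^2$ for any $a\in I(\D)$.

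Applying this with $a=E(v)$, I would write
\[
|\sigma(v)|^2=|\rho(E(v))|^2=\rho(E(v)^*E(v))=\rho(|E(v)|^2).
\]
Now invoke Theorem~\ref{uniquecpmap}(c), which gives $|E(v)|^2=\iota(v^*v)P$, where $P=\sup_{I(\D)}\iota((J_v)_1^+)$ is a projection (the projection property was established in Lemma~\ref{eauid}). Since $v^*v\in\D$, we have $\rho(\iota(v^*v))=\sigma(v^*v)$. Hence
\[
|\sigma(v)|^2=\rho(\iota(v^*v))\,\rho(P)=\sigma(v^*v)\,\rho(P).
\]
Finally, because $P$ is a projection in $I(\D)$ and $\rho$ is a character, $\rho(P)\in\{0,1\}$. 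Therefore $|\sigma(v)|^2$ equals either $0$ or $\sigma(v^*v)$, as claimed.

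There is no real obstacle here: once Theorem~\ref{uniquecpmap}(c) is in hand, the result is essentially a one-line computation exploiting multiplicativity of characters on the abelian envelope $I(\D)$ together with the fact that a character sends a projection to $\{0,1\}$. The only thing worth double-checking is that $\rho$ being a character of $I(\D)$ (a unital abelian \cstaralg) legitimately gives $\rho(a^*a)=|\rho(a)|^2$ and $\rho(P)\in\{0,1\}$, both of which are standard.
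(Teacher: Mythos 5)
Your proof is correct and follows essentially the same route as the paper's: both invoke Theorem~\ref{uniquecpmap}(c) to get $|E(v)|^2=\iota(v^*v)P$ and then use that the character $\rho$ sends the projection $P$ to $\{0,1\}$. The only cosmetic difference is that the paper splits into the case $\sigma(v)\neq 0$ (deducing $\rho(P)=1$) while you carry $\rho(P)$ through the computation directly; the content is identical.
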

\begin{proof} Let $\rho\in\widehat{I(\D)}$ be such that
  $\sigma=\rho\circ E$ and 
suppose $v\in\N(\C,\D)$ is such that $\sigma(v)\neq 0$.  Then  $0\neq
|\rho(E(v))|^2$, so by part (c)
of Theorem~\ref{uniquecpmap}, $\rho(P)\neq 0$.
  By Lemma~\ref{eauid},  $P$ is a projection, so $\rho(P)=1$.  Thus,
$|\rho(E(v))|^2=\rho(\iota(v^*v))=\rho(E(v^*v))=\sigma(v^*v)$. 
\end{proof}

Our next goal  is
Theorem~\ref{natext} below, which shows that $\fS_s(\C,\D)$ is an
$\N(\C,\D)$-invariant subset of $\Mod(\C,\D)$.  In the case that $\C$
is countably generated as a $\D$-bimodule, this follows from the
second part of Theorem~\ref{minonto}, but we have not found a proof in
the general case using Theorem~\ref{minonto}.  
The fact that 
 the left
kernel of the pseudo-expectation is an ideal will follow easily from
Theorem~\ref{natext}, see
Theorem~\ref{Eideal}. 

Our route to Theorem~\ref{natext} involves a study  of the Gelfand
support of $E(v)$ for $v\in \N(\C,\D)$;  we also obtain a formula for the
Gelfand transform of $E(v)$.  We begin with three lemmas on properties
of projective covers. 

\begin{lemma}\label{pcover}  Let $X$ be a compact Hausdorff space and
  let $(P,f)$ be a projective cover for $X$. 
\begin{enumerate}
\item[a)] If $G\subseteq X$ is an
  open set, then $(\overline{f^{-1}(G)},f|_{\overline{f^{-1}(G)}})$ 
is a projective cover for
  $\overline{G}$.
\item[b)]  If $Q\subseteq P$ is clopen, then 
  $f(Q)^\circ$ is dense in $f(Q)$ and $Q=\overline{f^{-1}(f(Q)^\circ)}$.
\end{enumerate}
\end{lemma}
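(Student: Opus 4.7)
The plan is to exploit two properties of $P$ throughout: since $P$ is Stonean, the closure of any open set is clopen; and since $(P,f)$ is a projective cover, it is both rigid and essential by Proposition~2.13 of Hadwin--Paulsen. Essentiality will be the main tool in (b), via the pattern ``if a certain open set $W \subseteq P$ were nonempty, then $f|_{P\setminus W}$ would still be surjective onto $X$, contradicting essentiality.''

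For (a), the first step is to note that $\overline{f^{-1}(G)}$ is clopen in $P$, hence is itself a compact Hausdorff extremally disconnected space, hence Stonean and therefore projective. Because $f$ is a closed surjection, $f(\overline{f^{-1}(G)}) = \overline{f(f^{-1}(G))} = \overline{G}$, so the restriction is a cover of $\overline{G}$. For rigidity, given any continuous $h : \overline{f^{-1}(G)} \to \overline{f^{-1}(G)}$ with $f \circ h = f$, I would extend $h$ to $\tilde h : P \to P$ by setting $\tilde h = \mathrm{id}$ on the clopen complement $P \setminus \overline{f^{-1}(G)}$. The pasting lemma (applicable because the pieces are clopen) gives continuity of $\tilde h$, and $f \circ \tilde h = f$, so rigidity of $(P,f)$ forces $\tilde h = \mathrm{id}_P$ and thus $h = \mathrm{id}$.

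For the density claim in (b), set $Q' := Q \cap f^{-1}(f(Q)^\circ)$. Surjectivity of $f$ onto $X$ gives $f(Q') = f(Q)^\circ$, and since $f$ is a closed map, $f(\overline{Q'}) = \overline{f(Q)^\circ}$. It therefore suffices to show $\overline{Q'} = Q$. Suppose $W := Q \setminus \overline{Q'}$ is nonempty; Stoneanness makes $W$ clopen in $P$, and by construction $f(W) \cap f(Q)^\circ = \emptyset$, so $f(W) \subseteq \partial f(Q)$. I claim $f|_{P\setminus W}$ is surjective. For $x \in X \setminus f(W)$, any preimage of $x$ lies in $P \setminus W$; for $x \in f(W) \subseteq \partial f(Q) = f(Q) \setminus f(Q)^\circ$, note $X \setminus f(Q) \subseteq f(P \setminus Q)$ and $f(P \setminus Q)$ is closed (as $f$ is closed and $P \setminus Q$ is clopen), so $\partial f(Q) \subseteq \overline{X \setminus f(Q)} \subseteq f(P \setminus Q) \subseteq f(P \setminus W)$. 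Essentiality applied to the inclusion $P \setminus W \hookrightarrow P$ then forces $W = \emptyset$, giving $\overline{Q'} = Q$ and hence $f(Q) = \overline{f(Q)^\circ}$.

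For the remaining identity $Q = \overline{f^{-1}(f(Q)^\circ)}$, the previous step gives $Q = \overline{Q'} \subseteq \overline{f^{-1}(f(Q)^\circ)}$, so I only need to check the reverse inclusion, which amounts to $f(P \setminus Q) \cap f(Q)^\circ = \emptyset$. The argument mirrors the one above: if $V := (P \setminus Q) \cap f^{-1}(f(Q)^\circ)$ were nonempty, then $V$ would be a nonempty open subset of $P$, and $f|_{P \setminus V}$ would be surjective (each $y \in f(Q)^\circ$ has a preimage in $Q \subseteq P \setminus V$, while any $y \notin f(Q)^\circ$ has all preimages outside $V \subseteq f^{-1}(f(Q)^\circ)$). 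Essentiality then yields $V = \emptyset$, completing the proof. The main subtlety in both parts of (b) is verifying surjectivity of $f|_{P\setminus W}$ (respectively $f|_{P\setminus V}$) on the boundary points $\partial f(Q)$; this is where the clopenness of $P \setminus Q$ (which makes $f(P \setminus Q)$ closed) is essential.
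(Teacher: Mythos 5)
Your proof is correct. Part (a) follows essentially the same path as the paper: $\overline{f^{-1}(G)}$ is clopen in the Stonean space $P$, hence projective; its image is $\overline{G}$; and rigidity is obtained by extending $h$ by the identity on the clopen complement and invoking rigidity of $(P,f)$. Part (b), however, is a genuinely different argument. The paper works on the function-algebra side: it invokes Hamana regularity of the injective envelope $(C(P),\iota)$ to produce, for any nonempty clopen set, a nonzero $d\in C(X)$ with $0\le\iota(d)\le\chi_Q$, assembles the open set $G=\bigcup\supp(d)$ from such functions to get density of $f(Q)^\circ$, and then deduces $Q=\overline{f^{-1}(f(Q)^\circ)}$ by applying part (a) to obtain an essential cover $(W_1,f|_{W_1})$ of $f(Q)$. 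You instead argue purely at the point-set level, using only that $f$ is a closed surjection and that $(P,f)$ is essential: in each step you isolate a candidate ``excess'' open set ($W$ or $V$), verify that $f$ restricted to its complement is still surjective --- the one delicate point being the boundary points of $f(Q)$, which you handle correctly via $\partial f(Q)\subseteq\overline{X\setminus f(Q)}\subseteq f(P\setminus Q)$, the latter set being closed because $P\setminus Q$ is clopen --- and then let essentiality force the excess set to be empty. Your route buys a more elementary and self-contained proof that never touches the $C^*$-algebraic characterization of the injective envelope; the paper's route buys economy, since the Hamana-regularity machinery is used repeatedly elsewhere and the second identity of (b) drops out as a short corollary of (a).
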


\begin{proof} 
Before beginning the proof, observe that if $\iota:C(X)\rightarrow
C(P)$ is given by $\iota(d)=d\circ f$, then $(C(P),\iota)$ is an
injective envelope for $C(X)$.

\textit{a)} 
Let $Y:=\overline{f^{-1}(G)}$.  Then $Y$ is
  compact, so  $f(Y)$ is a closed subset of $X$ which contains
  $G$.  Hence $\overline{G}\subseteq f(Y)$.  On the other hand,
  $f^{-1}(\overline{G})$ is a closed subset of $P$ containing
  $f^{-1}(G)$, so $Y\subseteq f^{-1}(\overline{G})$.  Hence
  $f(Y)\subseteq \overline{G}$.  Therefore $(Y,f)$ is a cover for
  $\overline{G}$.

Since $P$ is projective, it is Stonean, and hence $Y$ is clopen in
$P$.  Since clopen subsets of projective spaces are projective, 
$Y$ is  projective.  The proof 
of part (a) 
will be
complete once we show that $(Y,f|_Y)$ is a rigid cover for
$\overline{G}$~(see \cite[Theorem~2.16]{HadwinPaulsenInPrAnTo}).

So suppose that $h:Y\rightarrow Y$ is continuous and $f\circ h=
f|_Y$.  Define $\tilde{h}:P\rightarrow P$ by 
$$\tilde{h}(t)=\begin{cases} h(t)&\text{if $t\in Y$}\\ t&\text{if $t\notin
    Y$.}\end{cases}$$ Since $Y$ is clopen in $P$, $\tilde{h}$ is
continuous.  Moreover, $f\circ \tilde{h}=f$, so by the rigidity of
$(P,f)$, we see that $\tilde{h}$ is the identity map on $P$.
Therefore $h$ is the identity map on $Y$, which shows that $(Y,f|_Y)$
is a rigid cover.

\textit{b)}  The case when $Q=\emptyset$ is trivial, so we assume $Q$
is non-empty.  Let $$\M:=\{d\in C(X): 0\leq \iota(d)\leq
\chi_Q\}\dstext{and set} 
G:=\bigcup_{d\in \M} \supp(d).$$ 
Then $G$ is non-empty because $\chi_Q\neq 0$ and $(C(P),\iota)$ is
Hamana regular.  
Notice that $f^{-1}(G)\subseteq Q$:  indeed, if $p\in f^{-1}(G)$, then
there exists $d\in \M$ such that $f(p)\in \supp(d)$, so $0<d(f(p))\leq
\chi_Q(p)$, whence $p\in Q$.  

As $P$ is extremally disconnected,
$W:=\overline{f^{-1}(G)}$ is a clopen subset of $P$.  We will show
that $W=Q$. Clearly
  $W\subseteq Q$.  If $Q\setminus W\neq \emptyset$, then 
  $Q\setminus W$ is a clopen subset of $P$, so we may find
  a non-zero $d_1\in C(X)$ with $0\leq \iota(d_1)\leq \chi_{Q\setminus
    W}\leq \chi_Q$.  But then $d_1\in\M$, so $\supp(\iota(d_1))\subseteq
  f^{-1}(G)\subseteq W,$ contradicting $0\leq \iota(d_1)
  \leq\chi_{Q\setminus W}$.  Hence $W=Q$. 

  By part (a), $(W, f|_W)$ is a cover for $\overline{G}$.  Thus,
  $f(W)=f(Q)=\overline{G}$.  As $f^{-1}(G)\subseteq Q$, we have $G\subseteq
  f(Q)^\circ$, so that $f(Q)^\circ$ is dense in $f(Q)$.  

  Finally, put $W_1:= \overline{f^{-1}(f(Q)^\circ)}.$ Since
  $G\subseteq f(Q)^\circ$, we have $Q=W\subseteq W_1$.  Part (a) again
  shows that $(W_1, f|_{W_1})$ is a projective cover for
  $\overline{f(Q)^\circ}=f(Q)$, so in particular, this cover is
  essential.  The inclusion map $\alpha$ of $W$ into $W_1$ satisfies
  $f(\alpha(W))=f(Q)$.  Because $(W_1, f|_{W_1})$ is an essential
  cover of $f(Q)$, we conclude $\alpha$ is onto.  Thus $W=W_1$, and
  the proof of (b) is complete.
\end{proof}

We leave the proof of the following lemma to the reader.

\begin{lemma}\label{exthomeo}  Suppose that for $i\in\{1,2\}$,  
  $X_i$ is a compact
  Hausdorff space and that $\phi:X_1\rightarrow X_2$ is a
  homeomorphism. Let $(C_i, f_i)$ be a projective cover for $X_i$.
  Then there exists a unique homeomorphism $\Phi: C_1\rightarrow C_2$
  such that $f_2\circ \Phi=\phi\circ f_1.$
\end{lemma}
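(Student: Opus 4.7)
The plan is to exploit the two defining features of a projective cover — projectivity of the cover space and rigidity — in a symmetric way. Projectivity guarantees that any continuous map into the base lifts through $f_i$, while rigidity ensures those lifts agree whenever they should.

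First I would construct $\Phi$ by applying projectivity of $C_1$ to the continuous map $\phi\circ f_1 : C_1 \to X_2$ and the surjection $f_2 : C_2 \to X_2$: this yields a continuous $\Phi : C_1 \to C_2$ with $f_2\circ\Phi = \phi\circ f_1$. Symmetrically, using projectivity of $C_2$ together with $\phi^{-1}\circ f_2 : C_2 \to X_1$ and the surjection $f_1$, I get $\Psi : C_2 \to C_1$ with $f_1\circ\Psi = \phi^{-1}\circ f_2$.

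Next I would check that $\Psi$ is a two-sided inverse of $\Phi$. The composition $\Phi\circ\Psi : C_2 \to C_2$ satisfies
\[
f_2\circ(\Phi\circ\Psi) = (\phi\circ f_1)\circ\Psi = \phi\circ(\phi^{-1}\circ f_2) = f_2,
\]
so by rigidity of $(C_2,f_2)$, $\Phi\circ\Psi = \text{id}_{C_2}$. The analogous computation with $\Psi\circ\Phi$, together with rigidity of $(C_1,f_1)$, gives $\Psi\circ\Phi = \text{id}_{C_1}$. Hence $\Phi$ is a homeomorphism with inverse $\Psi$.

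For uniqueness, suppose $\Phi_1,\Phi_2 : C_1\to C_2$ both satisfy $f_2\circ\Phi_i = \phi\circ f_1$. Then
\[
f_2\circ(\Phi_i\circ\Psi) = \phi\circ f_1\circ\Psi = \phi\circ\phi^{-1}\circ f_2 = f_2,
\]
so rigidity of $(C_2,f_2)$ forces $\Phi_i\circ\Psi = \text{id}_{C_2}$ for $i=1,2$. Since $\Psi$ is a bijection, $\Phi_1 = \Phi_2$. No step presents real difficulty: the whole argument is the standard universal-property manipulation available once one has both projectivity (for existence of lifts) and rigidity (for their uniqueness), with essentiality/rigidity equivalence on projective covers (cited from the preliminaries) ensuring the argument is in force.
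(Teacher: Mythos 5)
Your proof is correct and is exactly the standard universal-property argument the paper has in mind (the paper explicitly leaves this lemma to the reader, so there is no written proof to diverge from): lift $\phi\circ f_1$ and $\phi^{-1}\circ f_2$ through the surjections $f_2$ and $f_1$ using projectivity of $C_1$ and $C_2$, then use rigidity of each cover to see the two composites are identities, and reuse rigidity plus surjectivity of $\Psi$ for uniqueness. The only superfluous element is your closing appeal to the rigidity/essentiality equivalence --- the argument uses only projectivity (for existence of the lifts) and rigidity (to identify the composites with the identity), so that equivalence plays no role.
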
 

Our final lemma on projective covers is a strengthening of
Lemma~\ref{exthomeo}:  rather than extending a homeomorphism to the
injective envelope, partial homeomorphisms are extended.

\begin{lemma}\label{extphomeo}  Let $(P,f)$ be a projective cover for
  the compact Hausdorff space $X$ and suppose $h\in\oinv(X)$ is a
  partial homeomorphism.  Then there exists a unique partial
  homeomorphism $I(h)\in \oinv(P)$ such that:  
\begin{enumerate}
\item[a)] $\dom(I(h))=f^{-1}(\dom(h))$, $\ran(I(h))=f^{-1}(\ran(h))$ and
\item[b)] $h\circ f=f\circ I(h)$.
\end{enumerate}
\end{lemma}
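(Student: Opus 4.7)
The plan is to construct $I(h)$ first on the clopen set $\overline{\widetilde U}$ using projectivity, then restrict to $\widetilde U$, and to invoke rigidity of the projective cover for uniqueness. Write $U:=\dom(h)$, $V:=\ran(h)$, $\widetilde U:=f^{-1}(U)$, and $\widetilde V:=f^{-1}(V)$. Because $P$ is projective, hence Stonean, $\overline{\widetilde U}$ and $\overline{\widetilde V}$ are clopen in $P$ and therefore themselves compact Stonean, hence projective. By Lemma~\ref{pcover}(a), $(\overline{\widetilde U},f|_{\overline{\widetilde U}})$ and $(\overline{\widetilde V},f|_{\overline{\widetilde V}})$ are projective covers of $\overline U$ and $\overline V$ respectively. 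The key external fact I will use is that an open subset $W$ of a Stonean space is $C^*$-embedded in its closure; equivalently, $\overline W$ is a Stone--\v{C}ech compactification of $W$, so any continuous map from $W$ into a compact Hausdorff space extends uniquely to a continuous map on $\overline W$.

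Applying this to $\widetilde U$, extend $h\circ f|_{\widetilde U}:\widetilde U\to X$ to a continuous map $\phi:\overline{\widetilde U}\to X$. Since $\overline{\widetilde U}$ is projective and $f:P\to X$ is continuous and surjective, there exists a continuous lift $g:\overline{\widetilde U}\to P$ with $f\circ g=\phi$. On $\widetilde U$ we have $\phi(p)=h(f(p))\in V$, so $g(\widetilde U)\subseteq f^{-1}(V)=\widetilde V$, and continuity then gives $g(\overline{\widetilde U})\subseteq\overline{\widetilde V}$. Applying the same construction to the partial homeomorphism $h^{-1}\in\oinv(X)$ produces $g':\overline{\widetilde V}\to\overline{\widetilde U}$ with $f\circ g'$ equal to the continuous extension of $h^{-1}\circ f|_{\widetilde V}$. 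For $p\in\widetilde U$,
\[
f(g'(g(p)))=h^{-1}(f(g(p)))=h^{-1}(h(f(p)))=f(p),
\]
and by density and continuity this identity holds on all of $\overline{\widetilde U}$. Rigidity of the projective cover $(\overline{\widetilde U},f|_{\overline{\widetilde U}})$ of $\overline U$ now forces $g'\circ g=\text{id}_{\overline{\widetilde U}}$, and symmetrically $g\circ g'=\text{id}_{\overline{\widetilde V}}$. Hence $g$ is a homeomorphism of $\overline{\widetilde U}$ onto $\overline{\widetilde V}$, and setting $I(h):=g|_{\widetilde U}$ produces a partial homeomorphism in $\oinv(P)$ satisfying conditions (a) and (b).

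For uniqueness, suppose $J\in\oinv(P)$ is another partial homeomorphism satisfying (a) and (b). Then $J:\widetilde U\to\widetilde V$ is continuous, and the Stone--\v{C}ech property extends it to a continuous $\widetilde J:\overline{\widetilde U}\to\overline{\widetilde V}$. The composition $g^{-1}\circ\widetilde J:\overline{\widetilde U}\to\overline{\widetilde U}$ satisfies $f\circ g^{-1}\circ\widetilde J=f$ on $\widetilde U$ by (b), hence on $\overline{\widetilde U}$ by density; rigidity forces $g^{-1}\circ\widetilde J=\text{id}$, so $J=g|_{\widetilde U}=I(h)$. The main technical hurdle is the Stone--\v{C}ech extension step: the whole argument rests on knowing that an open subspace of a Stonean space is $C^*$-embedded in its closure, which is what promotes the non-compact datum $h\circ f|_{\widetilde U}$ to the compact projective setting where Gleason's characterization of projective spaces and rigidity of projective covers deliver both existence and uniqueness.
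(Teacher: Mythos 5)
Your proof is correct, but it takes a genuinely different route from the paper's. The paper exhausts $\dom(h)$ by open sets $G$ with $\overline{G}\subseteq\dom(h)$, applies its Lemma~\ref{exthomeo} (extension of a homeomorphism of compact spaces to their projective covers) to each $(\overline{f^{-1}(G)},\overline{f^{-1}(h(G))})$, checks compatibility on overlaps via the clopen-intersection property of Lemma~\ref{pcover}(b), and assembles $I(h)$ as an inductive limit of the local extensions $h_G$. You instead work globally in one step: the fact that an open subset of an extremally disconnected compact space is $C^*$-embedded in its closure (so $\overline{f^{-1}(\dom h)}=\beta\bigl(f^{-1}(\dom h)\bigr)$) lets you extend $h\circ f$ to the clopen closure at once, projectivity of that clopen set supplies a lift $g$, and rigidity applied to $g'\circ g$ and $g\circ g'$ gives both invertibility and uniqueness. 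This avoids the gluing argument and Lemma~\ref{exthomeo} entirely, at the cost of importing the Stone--\v{C}ech fact --- which the paper itself already uses (with the same citation to Willard) in the proof of Theorem~\ref{quasiStonean}, so nothing exotic is needed. Two small points you should make explicit: that $g(f^{-1}(\dom h))$ is exactly $f^{-1}(\ran h)$ (this follows from the two containments $g(f^{-1}(\dom h))\subseteq f^{-1}(\ran h)$ and $g'(f^{-1}(\ran h))\subseteq f^{-1}(\dom h)$ once $g'=g^{-1}$ is known, and is needed for condition (a)); and that $(\overline{f^{-1}(\dom h)},f)$ is a rigid cover of $\overline{\dom(h)}$, which is supplied by Lemma~\ref{pcover}(a). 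Neither is a gap, just bookkeeping worth recording.
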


\begin{proof}
Let $H_1=\dom(h)$ and $H_2=\ran(h)$.  
Set $$\T:=\{G\subseteq H: G \text{ is open in $X$ and
  $\overline{G}\subseteq H_1$}\}.$$

Let $G\in \T$.  Lemma~\ref{pcover} shows that
$(\overline{f^{-1}(G)},f)$ and $(\overline{f^{-1}(h(G))},f)$ are
projective covers for $\overline{G}$ and $\overline{h(G)}$.  By 
Lemma~\ref{exthomeo},
there exists a unique homeomorphism
$h_G:\overline{f^{-1}(G)}\rightarrow \overline{f^{-1}(h(G))}$ such
that $$h\circ f|_{\overline{f^{-1}(G)}}=f\circ h_G.$$
We now let $I(h)$ be the inductive limit of $\{h_G\}_{G\in \T}$.  Here
is an outline.

Since $$\bigcup_{G\in\T}G=H_1=\bigcup_{G\in \T}\overline{G}\dstext{we
  have}
\bigcup_{G\in\T}f^{-1}(G)=f^{-1}(H_1)=\bigcup_{G\in\T}f^{-1}(\overline{G}).$$

Let  $G_1, G_2\in\T$ and suppose $G_1\cap G_2\neq \emptyset$.   Put
$Q:=\overline{f^{-1}(G_1)}\cap \overline{f^{-1}(G_2)}.$  Then $Q$ is a
clopen set in $P$, so by Lemma~\ref{pcover}, $Q=\overline{f^{-1}(f(Q)^\circ)}$.
Note that $f(Q)^\circ\in\T$.  Hence
$$h\circ f|_Q=h\circ f|_{\overline{f^{-1}(f(Q)^\circ)}}=f\circ
h_{f(Q)^\circ}.$$  Thus, for $i=1,2$, 
$$f\circ h_{G_i}|_{Q}=h\circ f|_Q.$$ This means that given $p\in P$,
we may define 
$I(h)(p)=h_G(p)$ where $G$ is any element of $\T$ containing $p$.

Clearly $I(h)$ satisfies (a) and (b).  If $H$ is another such map,
then for every $G\in\T$, the restrictions of $H$ and $I(h)$ to
$\overline{f^{-1}(G)}$ are both equal to $h_G$;  this gives uniqueness
of $I(h)$.    The continuity and bijectivity of  $I(h)$ are left to
the reader.

\end{proof} 

When $(I(\D),\iota)$ is an injective envelope for $\D$,
$(\widehat{I(\D)},\dual{\iota}|_{\widehat{I(\D)}})$ is a projective
cover for $\hat{\D}$.    In the following technical result, we will simply write
$\dual{\iota}$ instead of $\dual{\iota}|_{\widehat{I(\D)}}.$

\begin{proposition}\label{formula}
Suppose that $(\C,\D)$ is a regular MASA inclusion and $v\in
\N(\C,\D)$.  Let $(I(\D),\iota)$ be an injective envelope for $\D$,
and let $E$ be the pseudo-expectation for $\iota$.
Then
\begin{equation}\label{suppEv}
(\dual{\iota})^{-1}((\fix{\beta_v})^\circ)\subseteq
\supp(\widehat{E(v)})\subseteq
(\dual{\iota})^{-1}(\fix{\beta_v})\dstext{and}
\overline{(\dual{\iota})^{-1}((\fix{\beta_v})^\circ)}=
\overline{\supp(\widehat{E(v)})}.
\end{equation}  Moreover, if
$\rho\in (\dual{\iota})^{-1}((\fix\beta_v)^\circ)$, then for any $d\in J_v$
with 
$\rho(\iota(d))\neq0$,  
\begin{equation}\label{formulaEv}
\rho(E(v))=\frac{\rho(\iota(vd))}{\rho(\iota(d))}.
\end{equation}
\end{proposition}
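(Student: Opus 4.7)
The strategy is to prove the two inclusions of~\eqref{suppEv} first, extract the formula~\eqref{formulaEv} from the argument for the left-hand inclusion, and then deduce the equality of closures from Theorem~\ref{uniquecpmap}(c).

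The right-hand inclusion follows from the structure of strongly compatible states: given $\rho$ with $\rho(E(v)) \neq 0$, set $\sigma := \rho \circ E$, so that $\sigma \in \fS_s(\C,\D)$ and $\sigma|_\D = \dual{\iota}(\rho)$. Since $\sigma(v) \neq 0$, the Cauchy--Schwartz inequality forces $\sigma(v^*v) > 0$, and then the contrapositive of Lemma~\ref{vactmod} yields $\beta_v(\sigma|_\D) = \sigma|_\D$, i.e.\ $\dual{\iota}(\rho) \in \fix{\beta_v}$.

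For the left-hand inclusion and~\eqref{formulaEv}, suppose $\dual{\iota}(\rho) \in (\fix{\beta_v})^\circ$. Proposition~\ref{Jvee} identifies $J_v$ with $\{d \in \D : \supp(\hat{d}) \subseteq (\fix{\beta_v})^\circ\}$, so there exists $d \in J_v$ with $\rho(\iota(d)) \neq 0$. Theorem~\ref{uniquecpmap}(a) combined with the $\D$-modularity of $E$ (Lemma~\ref{bimodmap}) gives $E(v)\iota(d) = E(vd) = \iota(vd)$, whence $\rho(E(v))\,\rho(\iota(d)) = \rho(\iota(vd))$. The containment $d \in \innerprod{v^*v}$ forces $\rho(\iota(v^*v)) \neq 0$, and from $vd = dv \in \D$ a short commutation calculation yields $(vd)^*(vd) = v^*v\,d^*d$, so
\[
|\rho(\iota(vd))|^2 \,=\, \rho(\iota(v^*v))\,|\rho(\iota(d))|^2 \,\neq\, 0.
\]
Thus $\rho(E(v)) \neq 0$ and~\eqref{formulaEv} holds.

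For the closure equality, the two inclusions automatically give $\overline{(\dual{\iota})^{-1}((\fix{\beta_v})^\circ)} \subseteq \overline{\supp(\widehat{E(v)})}$, so only the reverse requires work. I would use Theorem~\ref{uniquecpmap}(c) to write $|E(v)|^2 = \iota(v^*v)\,P$ with $P := \sup_{I(\D)} \iota((J_v)_1^+)$, giving $\supp(\widehat{E(v)}) = \supp(\widehat{\iota(v^*v)}) \cap \supp(\widehat{P})$. The main obstacle is the identification $\supp(\widehat{P}) = \overline{(\dual{\iota})^{-1}((\fix{\beta_v})^\circ)}$: the Gelfand support of each $\iota(d)$, $d \in (J_v)_1^+$, equals $\dual{\iota}^{-1}(\supp(\hat{d})) \subseteq \dual{\iota}^{-1}((\fix{\beta_v})^\circ)$ by Proposition~\ref{Jvee}, and the least-upper-bound description of $P$ in the Stonean space $\widehat{I(\D)}$ forces $P$ to be the characteristic function of the clopen hull of the union of these supports, which is precisely $\overline{\dual{\iota}^{-1}((\fix{\beta_v})^\circ)}$. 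With this identification, $\supp(\widehat{E(v)}) \subseteq \supp(\widehat{P}) = \overline{\dual{\iota}^{-1}((\fix{\beta_v})^\circ)}$, and passing to closures completes the proof.
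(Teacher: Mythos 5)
Your proof is correct, and it diverges from the paper's argument in two places worth noting. For the left-hand inclusion and formula~\eqref{formulaEv} you follow essentially the paper's computation (find $d\in J_v$ with $\rho(\iota(d))\neq 0$ via Proposition~\ref{Jvee}, use modularity and $vd\in\D$ to get $\rho(E(v))\rho(\iota(d))=\rho(\iota(vd))\neq 0$). For the right-hand inclusion the paper invokes Proposition~\ref{rhocircE} to get $|\rho(E(v))|^2=\rho(\iota(v^*v))$ and then verifies $\beta_v(\rho\circ\iota)(d)\neq 0$ for all positive $d$ with $\rho(\iota(d))\neq 0$ by a direct computation; your route through the contrapositive of Lemma~\ref{vactmod} applied to the state $\rho\circ E\in\Mod(\C,\D,\rho\circ\iota)$ is shorter and avoids Proposition~\ref{rhocircE} entirely. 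The most substantial difference is in the closure equality: the paper forms the clopen set $Q:=\overline{\supp(\widehat{E(v)})}\setminus\overline{(\dual{\iota})^{-1}((\fix{\beta_v})^\circ)}$, shows the ideal $\fI$ of elements of $\D$ supported in $Q$ lies in $(J_v\vee K_v)^\perp=(0)$, and concludes $Q=\emptyset$ by essentiality and Hamana regularity; you instead read the answer off Theorem~\ref{uniquecpmap}(c) by identifying $\supp(\widehat{P})$ with the clopen hull of $\bigcup_{d\in (J_v)_1^+}\supp(\widehat{\iota(d)})=(\dual{\iota})^{-1}((\fix{\beta_v})^\circ)$. That identification is sound: since $P$ is a projection (Lemma~\ref{eauid}), each $\iota(d)\le P$ forces $\supp(\widehat{\iota(d)})\subseteq\supp(\widehat{P})$, while the characteristic function of the clopen hull is itself an upper bound for $\iota((J_v)_1^+)$, so it dominates $P$. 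Your version localizes all the work into the already-proved part (c) of Theorem~\ref{uniquecpmap} and sidesteps the essentiality of $J_v\vee K_v$ at this stage; the paper's version keeps part (c) out of the picture and argues directly with the ideal lattice of $\D$. Both are complete proofs.
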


\begin{proof}
Suppose that $\rho\in\widehat{I(\D)}$ and $\rho\circ\iota\in
(\fix{\beta_v})^\circ.$  Then $\rho(\iota(v^*v))\neq 0$ (as
$\fix{\beta_v}\subseteq \dom(\beta_v)=\supp(\widehat{v^*v})$).  
By Proposition~\ref{Jvee} there exists $d\in J_v$  such that
$\rho(\iota(d))\neq0$.  Since $vd\in\D$,    
$$0\neq\rho(\iota(d^*d))\rho(\iota(v^*v))= 
\rho(\iota(d^*v^*vd))=|\rho(\iota(vd))|^2.$$ 
Hence 
$$\rho(E(v))=\frac{\rho(E(v))\rho(\iota(d))}{\rho(\iota(d))}=
\frac{\rho(E(vd))}{\rho(\iota(d))}=\frac{\rho(\iota(vd))}{\rho(\iota(d))}\neq
0.$$  
Thus we obtain~\eqref{formulaEv} and 
also $(\dual{\iota})^{-1}((\fix{\beta_v})^\circ)\subseteq \supp(\widehat{E(v)}).$

We next show  
\begin{equation}\label{preoppo}
\supp(\widehat{E(v)})\subseteq (\dual{\iota})^{-1}(\fix{\beta_v}).
\end{equation} Suppose that $\rho\in\supp(\widehat{E(v)}).$    By
Proposition~\ref{rhocircE}, 
$0\neq |\rho(E(v))|^2=\rho(E(v^*v))=\rho(\iota(v^*v)).$  Hence
$\rho\circ\iota\in \dom(\beta_v)$.  Let $d\in\D$ be
such that $d\geq 0$ and $\rho(\iota(d))\neq 0$. Then
$\rho(E(d^{1/2}v))=\rho(\iota(d))^{1/2}\rho(E(v))\neq 0$.  Then   
\begin{equation}\label{FBV}
\beta_v(\rho\circ\iota)(d)=\frac{\rho(\iota(v^*dv))}{\rho(\iota(v^*v))}
=\frac{\rho(E(v^*dv))}{\rho(\iota(v^*v))}
= \frac{\rho(E((d^{1/2}v)^*(d^{1/2}v)))}{\rho(\iota(v^*v))}
=\frac{|\rho(E(d^{1/2}v))|^2}{\rho(\iota(v^*v))}\neq 0.
\end{equation}
(The last equality
in~\eqref{FBV} follows from Proposition~\ref{rhocircE}.)
As \eqref{FBV} holds for every $d\in\D^+$ with $\rho(\iota(d))\neq 0$,
we conclude that 
$\beta_v(\rho\circ\iota)=\rho\circ\iota$.  This gives \eqref{preoppo}.

The first paragraph of the proof gives
$\overline{(\dual{\iota})^{-1}((\fix{\beta_v})^\circ)}\subseteq 
\overline{\supp(\widehat{E(v)})}.$  
Let $$Q:=\overline{\supp(\widehat{E(v)})}\setminus
\overline{(\dual{\iota})^{-1}((\fix{\beta_v})^\circ)}\dstext{and}
\fI:=\{d\in\D: \supp\widehat{\iota(d)}\subseteq Q\}.$$  Then $Q$ is a
clopen
set, and $\fI$ is a closed ideal in $\D$.  

We claim that 
$\fI\subseteq (J_v\vee K_v)^\perp$.  To see this, fix $d\in\fI$. 
Proposition~\ref{Jvee} shows that for any $h\in J_v$,
  $\supp(\widehat{\iota(h)})\subseteq
(\dual{\iota})^{-1}((\fix(\beta_v)^\circ))$; thus  $dh=0$ for
any $h\in J_v$.  
Now we show $dK_v=0$.  Suppose $h\in K_v$.  By Theorem~\ref{uniquecpmap}(b),
$E(v)\iota(h)=0$, so $\widehat{\iota(h)}$ vanishes on
$\supp(\widehat{E(v)})$.  Continuity implies that $\widehat{\iota(h)}$
vanishes on $Q$ as well.  Therefore, $\iota(d)\iota(h)=0$, so $dh=0$. 
As $dh=0$ for all $h$ belonging to a generating set for $J_v\vee K_v$,
$d\in (J_v\vee K_v)^\perp$.  The claim follows.

As $J_v\vee K_v$ is an essential
ideal, $(J_v\vee K_v)^\perp=(0),$ whence $\fI=(0)$.  
The Hamana regularity of $(I(\D),\iota)$ now
implies that $Q=\emptyset$, which completes the proof.
\end{proof}

\begin{remark}{Notation}\label{vhat}  Let $(\C,\D)$ be a 
regular MASA inclusion, and let 
  $v\in \N(\C,\D)$.  Define $\hat{v}: (\fix{\beta_v})^\circ\rightarrow
  \bbC$ as follows.  Given $\sigma\in (\fix\beta_v)^\circ$, choose
  $d\in J_v$ so that $\sigma(d)\neq 0$ and set 
$$\hat{v}(\sigma)=\frac{\sigma(vd)}{\sigma(d)}.$$
Proposition~\ref{formula} shows this is well-defined, and it is easy to
show that $\hat{v}$ is a bounded continuous function on $(\fix\beta_v)^\circ$.   
Extend $\hat{v}$ to a bounded Borel function on $\hat{\D}$ by
defining it to be zero off $(\fix\beta_v)^\circ;$ we
denote this extension by $\hat{v}$ as well.
\end{remark}
\begin{remark}{Remark}   Take $(I(\D),\iota)$ to be the Dixmier algebra of $\hat{\D}$ and $\iota$ to
be the map which takes  $d\in \D$ to the equivalence class of
$\hat{d}$ in $I(\D)$.  Then 
 $E(v)$ is the equivalence class of the bounded Borel function $\hat{v}$ in
the Dixmier algebra.
\end{remark}

\begin{theorem}\label{natext} Suppose that $(\C,\D)$ is a regular MASA
  inclusion.   Then $\fS_s(\C,\D)$ is a compact $\N(\C,\D)$-invariant
  subset of $\Mod(\C,\D)$ 
and the restriction mapping $\fS_s(\C,\D)\ni\rho\mapsto \rho|_\D$ 
is a continuous surjection.

In fact, given $v\in\N(\C,\D)$ and $\rho\in\fS_s(\C,\D)$ such that
$\rho(v^*v)\neq 0$, let $\tau\in\widehat{I(\D)}$ satisfy
$\rho=\tau\circ E$.  Then $\tilde{\beta}_v(\rho)=I(\beta_v)(\tau)\circ E.$
\end{theorem}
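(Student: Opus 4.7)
My plan is to dispatch the structural claims first and derive $\N(\C,\D)$-invariance from the displayed formula, which is the crux of the theorem. Compactness and continuous surjectivity come for free: by Theorem~\ref{uniqlift}, $\dual{E}:\widehat{I(\D)}\to\Mod(\C,\D)$ is continuous, so $\fS_s(\C,\D)=\dual{E}(\widehat{I(\D)})$ is compact. The restriction $r:\rho\mapsto\rho|_\D$ is weak-$*$ continuous on $\Mod(\C,\D)$, and $r\circ\dual{E}=\dual{\iota}$ surjects onto $\hat{\D}$ because $\iota$ is an injective unital $*$-homomorphism, so $r|_{\fS_s(\C,\D)}$ is a continuous surjection onto $\hat{\D}$.

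For the formula, fix $v\in\N(\C,\D)$ and $\tau\in D_v:=\{\tau\in\widehat{I(\D)}:\tau(\iota(v^*v))\neq 0\}=\dom I(\beta_v)$; put $\rho=\tau\circ E$, $\tau'=I(\beta_v)(\tau)$, $\mu_1=\tilde{\beta}_v(\rho)$, and $\mu_2=\tau'\circ E$. First I would verify that $\mu_1,\mu_2$ are $\D$-modular states on $\C$ with common restriction $\tau'\circ\iota$ to $\D$: modularity of $\mu_2$ uses $E|_\D=\iota$ and Lemma~\ref{bimodmap}, while the restriction identity for $\mu_1$ comes from the intertwining $\beta_v\circ\dual{\iota}=\dual{\iota}\circ I(\beta_v)$ supplied by Lemma~\ref{extphomeo}. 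Applying Lemma~\ref{agreev} with $\B=\bbC$ then yields, for every $w\in\N(\C,\D)$ and $h\in J_w\vee K_w$, $\mu_1(wh)=\mu_2(wh)$; $\D$-modularity of the $\mu_i$ rearranges this to
\begin{equation*}
\bigl(\mu_1(w)-\mu_2(w)\bigr)\,(\tau'\circ\iota)(h)=0.
\end{equation*}

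The hard part is to promote this to unconditional equality $\mu_1(w)=\mu_2(w)$, since $\tau'\circ\iota$ may annihilate $J_w\vee K_w$ for the particular $\tau$ at hand. My strategy is to vary $\tau$ and use continuity. Fix $w$; by Lemma~\ref{extphomeo}, $\tau\mapsto\tau'$ is continuous on $D_v$, so both $\tau\mapsto \mu_1(w)=\tau(E(v^*wv))/\tau(\iota(v^*v))$ and $\tau\mapsto \mu_2(w)=\tau'(E(w))$ are continuous on $D_v$. The rearranged identity forces $\mu_1(w)=\mu_2(w)$ on the set of $\tau\in D_v$ with $(\tau'\circ\iota)|_{J_w\vee K_w}\neq 0$; pulling back along the homeomorphism $I(\beta_v)$, I need density in $\widehat{I(\D)}$ of $\{\sigma\in\widehat{I(\D)}:\sigma\circ\iota|_{J_w\vee K_w}\neq 0\}$. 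I would argue by contradiction: a nonempty clopen $V\subseteq\widehat{I(\D)}$ in the complement would give $\chi_V\cdot \iota(h)=0$ for every $h\in J_w\vee K_w$; Hamana regularity of $(I(\D),\iota)$ would then place every $d\in\D$ with $0\leq \iota(d)\leq \chi_V$ in $(J_w\vee K_w)^\perp$, which is $0$ by essentiality of $J_w\vee K_w$ (Proposition~\ref{essentialv}), so $\chi_V=\sup_{I(\D)}\{\iota(d):0\leq \iota(d)\leq \chi_V\}=0$, contradicting $V\neq\emptyset$ (compare Lemma~\ref{eauid}). Continuity then extends equality to all of $D_v$, and regularity of $(\C,\D)$ together with boundedness of $\mu_1,\mu_2$ yields $\mu_1=\mu_2$ on $\C$.

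Invariance of $\fS_s(\C,\D)$ is then automatic: for $\rho=\tau\circ E\in\fS_s(\C,\D)$ with $\rho(v^*v)\neq 0$, one has $\tau\in D_v$, and the formula delivers $\tilde{\beta}_v(\rho)=I(\beta_v)(\tau)\circ E\in\fS_s(\C,\D)$.
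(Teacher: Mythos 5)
Your proof is correct, and while it follows the paper's skeleton — the same two candidate functionals $\mu_1=\tilde{\beta}_v(\tau\circ E)$ and $\mu_2=I(\beta_v)(\tau)\circ E$, the same verification that they agree on $\D$ via Lemma~\ref{extphomeo}, and the same reduction to checking equality on each normalizer $w$ — it diverges at the decisive step in a way worth recording. The paper establishes its modular identity~\eqref{mutau} only for $d\in J_w$, and must then confront the possibility that $\beta_v(\tau\circ\iota)$ annihilates $J_w$; it does so by a two-case analysis ($\mu_\tau'(w)\neq 0$ versus $\mu_\tau(w)\neq 0$), each case invoking Proposition~\ref{formula} to locate a net in $(\dual{\iota})^{-1}((\fix{\beta_w})^\circ)$ (respectively $(\dual{\iota})^{-1}((\fix{\beta_{v^*wv}})^\circ)$) along which the identity is non-degenerate. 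You instead apply Lemma~\ref{agreev} with $\B=\bbC$ to get the identity on all of $J_w\vee K_w$, and since this ideal is \emph{essential} (Proposition~\ref{essentialv}), the set of characters of $I(\D)$ not annihilating it is dense — your Hamana-regularity argument for this is sound and mirrors the paper's own Lemma~\ref{thickinjenv} — so a single continuity-in-$\tau$ argument over $\dom(I(\beta_v))$ closes the gap with no case split and no appeal to the support computation of Proposition~\ref{formula}. The trade-off: your route is shorter and more uniform, exploiting the full strength of the essential ideal $J_w\vee K_w$ that the paper built precisely for Lemma~\ref{agreev}; the paper's route leans on Proposition~\ref{formula}, which it needs anyway elsewhere, and yields the slightly more precise information of \emph{where} each functional is supported. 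The remaining claims (compactness, surjectivity of restriction, and invariance as a corollary of the displayed formula) are handled as in the paper.
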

\begin{proof}
As noted following Definition~\ref{defcst}, $\fS_s(\C,\D)$ is a closed
subset of $\Mod(\C,\D)$ and $r(\fS_s(\C,\D))=\hat{\D}$.  So
$\fS_s(\C,\D)$ is compact, and the weak-$*$-weak-$*$ continuity of
$r$ is clear.

Now let $\rho\in\fS_s(\C,\D)$ and fix $\tau\in \widehat{I(\D)}$ so
that $\rho=\tau\circ E$.  Let $v\in\N(\C,\D)$ be such that 
$\rho(v^*v)\neq 0$.  Then $\tau(E(v^*v))=\tau(\iota(v^*v))\neq 0$, so 
Lemma~\ref{extphomeo} shows that $\tau\in \dom(I(\beta_v))$.

For $\tet\in \dom(I(\beta_v))$, define states on $\C$ by
$$\mu_\tet(x)=\frac{\tet(E(v^*xv))}{\tet(\iota(v^*v))}\dstext{and}
\mu_\tet'(x) =I(\beta_v)(\tet)(E(x)).$$ (Observe that 
$\mu_\tau=\tilde{\beta}_v(\rho)$.)  Note that
$$\mu_\tet|_\D=\beta_v(\tet\circ\iota)= \mu_\tet'|_\D.$$  Hence for 
$d\in\D$ and $x\in \C$, we have
$$\mu_\tet(x)\beta_v(\tet\circ\iota)(d)=\mu_\tet(xd)\dstext{and}
\mu_\tet'(x)\beta_v(\tet\circ\iota)(d)=\mu_\tet'(xd).$$

In particular, if $w\in\N(\C,\D)$ and $d\in J_w$ we have
\begin{equation}\label{mutau}
\mu_\tet(w)\beta_v(\tet\circ\iota)(d)=\mu_\tet(wd)=\beta_v(\tet\circ\iota)(wd)
=\mu_\tet'(wd)=
\mu_\tet'(w)\beta_v(\tet\circ\iota)(d).
\end{equation}

To complete the proof, it suffices to show that 
for every $w\in \N(\C,\D)$, 
$\mu_\tau(w)=\mu_\tau'(w)$.  
So fix $w\in \N(\C,\D)$.  We show that $\mu_\tau(w)=\mu_\tau'(w)$ 
 by proving the following two
statements:
\begin{enumerate}
\item if
$\mu_\tau'(w)\neq 0$, then $\mu_\tau(w)=\mu_\tau'(w);$ and
\item if $\mu_\tau(w)\neq 0$, then $\mu_\tau'(w)=\mu_\tau(w)$.
\end{enumerate}

If $\mu_\tau'(w)\neq0$, then $I(\beta_v)(\tau)\in
\supp(\widehat{E(w)})$, so Proposition~\ref{formula} implies that there
exists a net $\rho_\alpha\in (\dual{\iota})^{-1}((\fix\beta_w)^\circ)$ such
that $\rho_\alpha\rightarrow I(\beta_v)(\tau)$.    As
$\ran(I(\beta_v))$ is an open subset of $\widehat{I(\D)}$, 
we may assume that $\rho_\alpha\in
\ran(I(\beta_v))$ for every $\alpha$.  Put
$\tau_\alpha=I(\beta_v)^{-1}(\rho_\alpha)$, so $(\tau_\alpha)_\alpha$ is a
net in $\dom(I(\beta_v))$ such that 
$\tau_\alpha\rightarrow\tau$. 
Since $\rho_\alpha\circ\iota\in (\fix\beta_w)^\circ$, 
given $\alpha$ we may find $d_\alpha\in
J_w$ such that $0\neq \rho_\alpha(\iota(d_\alpha)).$  But
$\rho_\alpha(\iota(d_\alpha))=I(\beta_v)(\tau_\alpha)(\iota(d_\alpha))=
\beta_v(\tau_\alpha\circ\iota)(d_\alpha)$.  Taking
$\lambda=\tau_\alpha$ in  \eqref{mutau} shows that
$\mu_{\tau_\alpha}(w)=\mu_{\tau_\alpha}'(w)$.  Continuity of the maps
$\tet\mapsto \mu_\tet$ and $\tet\mapsto \mu_\tet'$ gives
$\mu_\tau(w)=\mu_\tau'(w)$.

Turning to (2), suppose  $\mu_\tau(w)\neq 0.$  Then $\tau\in
\supp(\widehat{E(v^*wv)})$, so there exists a net
$\tau_\alpha\in
(\dual{\iota})^{-1}((\fix\beta_{v^*wv})^\circ)$  with
$\tau_\alpha\rightarrow \tau$.   Then $\tau_\alpha\circ\iota\in
\dom(\beta_{v^*wv})\subseteq \dom\beta_v$.
Thus, for a given $\alpha$, we may find a
neighborhood $N$ of $\tau_\alpha\circ \iota$ with $N\subseteq
(\fix\beta_{v^*wv})^\circ\cap\dom(\beta_v)$.  Now for each $y\in N$, we have
$\beta_{v^*wv}(y)=y$, so $\beta_{w}((\beta_v)(y))=\beta_v(y)$.  Hence
$\beta_v(N)\subseteq \fix(\beta_w)$.   Therefore
$\beta_v(\tau_\alpha\circ\iota)\in \fix(\beta_w)^\circ.$   So if $d\in
J_w$ satisfies $\beta_v(\tau_\alpha\circ\iota)(d)\neq 0$, then
\eqref{mutau} gives $\mu_{\tau_\alpha}'(w)=\mu_{\tau_\alpha}(w)$.
Continuity again gives $\mu_\tau'(w)=\mu_\tau(w)$.

Thus both (1) and (2) hold, and the proof is complete.  
\end{proof}

We now show the left kernel of the pseudo-expectation on a regular
MASA inclusion is an ideal which is the unique ideal which is maximal 
with respect to being
disjoint from $\D$.
\begin{theorem}\label{Eideal}  Let $(\C,\D)$ be a regular MASA
  inclusion.  Then the left kernel of the pseudo-expectation $E$, 
$$\L(\C,\D):=\{x\in\C:E(x^*x)=0\}$$ is an ideal of $\C$ such that 
$\L(\C,\D)\cap \D=(0).$  

Moreover,  if $\K\subseteq \C$ is an ideal such
that $\K\cap \D=(0)$, then $\K\subseteq \L(\C,\D)$.

\end{theorem}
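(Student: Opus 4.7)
The plan is to deduce the first assertion from Proposition~\ref{invideal} applied to $F=\fS_s(\C,\D)$, and to deduce the maximality assertion by exploiting the uniqueness of the pseudo-expectation (Theorem~\ref{uniquecpmap}) together with the injectivity of $I(\D)$ in the operator system category.

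First I would observe that for $x\in\C$, $x\in\L(\C,\D)$ if and only if $\rho(E(x^*x))=0$ for every $\rho\in\widehat{I(\D)}$, i.e.\ if and only if $x\in L_{\rho\circ E}$ for every $\rho\in\widehat{I(\D)}$. In the notation of Proposition~\ref{invideal}, this says $\L(\C,\D)=\K_{\fS_s(\C,\D)}$. By Theorem~\ref{natext}, $\fS_s(\C,\D)$ is $\N(\C,\D)$-invariant and the restriction map $r:\fS_s(\C,\D)\to\hat{\D}$ is surjective (hence its image is dense). Proposition~\ref{invideal} immediately yields that $\L(\C,\D)$ is a closed two-sided ideal of $\C$ with $\L(\C,\D)\cap\D=(0)$.

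For the maximality, let $\K\subseteq\C$ be a closed two-sided ideal with $\K\cap\D=(0)$, and let $q:\C\to\C/\K$ be the quotient map. Since $\K\cap\D=(0)$, $q|_\D$ is a unital $*$-monomorphism, so the map $q(\D)\to I(\D)$ defined by $q(d)\mapsto\iota(d)$ is a well-defined UCP map on the unital operator subsystem $q(\D)\subseteq\C/\K$. By injectivity of $I(\D)$ in the category of operator systems with UCP maps (as recalled in the Preliminaries), this extends to a UCP map $\tilde E:\C/\K\to I(\D)$. The composition $\tilde E\circ q:\C\to I(\D)$ is then a pseudo-expectation for $\iota$, so by the uniqueness in Theorem~\ref{uniquecpmap}, $\tilde E\circ q=E$. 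For any $x\in\K$ we have $x^*x\in\K$ (as $\K$ is a two-sided ideal, hence $*$-closed), so
$$E(x^*x)=\tilde E(q(x^*x))=\tilde E(0)=0,$$
which gives $x\in\L(\C,\D)$. Thus $\K\subseteq\L(\C,\D)$.

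The essential ingredient --- and the only nontrivial step --- is the uniqueness of the pseudo-expectation from Theorem~\ref{uniquecpmap}. Without it, there would be no way to identify $\tilde E\circ q$ (which a priori is just some pseudo-expectation cooked up from the quotient) with the original $E$, and the maximality argument would have no traction. Once uniqueness is in hand, both halves of the theorem are fairly direct consequences of earlier results: the first half from Proposition~\ref{invideal} and Theorem~\ref{natext}, the second half from the injective-extension construction above.
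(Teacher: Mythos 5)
Your proof is correct, and for the maximality statement it takes a genuinely different route from the paper's. For the first half you do exactly what the paper does: identify $\L(\C,\D)$ with $\K_{\fS_s(\C,\D)}$ and apply Proposition~\ref{invideal} via Theorem~\ref{natext} (the paper checks $\L(\C,\D)\cap\D=(0)$ directly from injectivity of $\iota$ rather than citing the last clause of Proposition~\ref{invideal}, but that is immaterial). For the maximality, the paper pulls back $\Mod(\C/\K,\D)$ along the quotient map to obtain a closed subset $F\subseteq\Mod(\C,\D)$ covering $\hat{\D}$, and then invokes Theorem~\ref{minonto} --- the minimality of $\fS_s(\C,\D)$ among such covering sets --- to get $\fS_s(\C,\D)\subseteq F$, so every strongly compatible state annihilates $\K$. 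You instead extend $\iota\circ(q|_\D)^{-1}$ to a UCP map $\tilde E$ on $\C/\K$ by injectivity of $I(\D)$ and identify $\tilde E\circ q$ with $E$ by the uniqueness in Theorem~\ref{uniquecpmap}. The two arguments are essentially dual to one another: Theorem~\ref{minonto} is itself proved from projectivity of $\widehat{I(\D)}$ and Theorem~\ref{uniqlift}, which is the state-space reformulation of the uniqueness of $E$. Your version is shorter and more direct, bypassing the detour through state spaces and projective covers; the paper's version has the side benefit of routing through Theorem~\ref{minonto}, which is a structural fact of independent interest. You are also right to flag uniqueness of the pseudo-expectation as the load-bearing ingredient --- without it, $\tilde E\circ q$ would be merely \emph{some} pseudo-expectation vanishing on $\K$, which proves nothing about $E$.
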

\begin{proof}
Theorem~\ref{natext} shows that $\fS_s(\C,\D)$ satisfies the
hypotheses of Proposition~\ref{invideal}.  In the notation of
Proposition~\ref{invideal}, we have $\L(\C,\D)=\K_{\fS_s(\C,\D)}$, so
$\L(\C,\D)$ is a norm-closed two-sided ideal of $\C$.  
If $x\in \L(\C,\D)\cap \D$, then $0=E(x^*x)=\iota(x^*x)$.  As
$\iota$ is one-to-one,  $x=0$.

Suppose now that $\K\subseteq \C$ is an ideal with $\K\cap \D=(0).$
Let $\C_1=\C/\K$, and let $q:\C\rightarrow \C_1$ be the quotient map.  
Since $\K\cap \D=(0)$, $q|_\D$ is faithful,  so we may regard $\D$ as
a subalgebra of $\C_1$.  Let $F:=\{\rho\circ q: \rho\in
\Mod(\C_1,\D)\}$.  Then $F$ is a closed subset of $\Mod(\C,\D)$, 
and the restriction
map, $f\in F\mapsto f|_\D$ maps $F$ onto $\hat{\D}$.
By Theorem~\ref{minonto}, $\fS_s(\C,\D)\subseteq F$.  Hence every
element of $\fS_s(\C,\D)$ annihilates $\K$, so $\K\subseteq \L(\C,\D)$.
\end{proof}

The ideal
$\L(\C,\D)$ behaves reasonably well with respect to certain regular
$*$-homomorphisms, as the next result shows.

\begin{corollary}\label{LRegSub}
Suppose for $i=1,2$ that $(\C_i,\D_i)$ are regular MASA inclusions,
and
that $\alpha:(\C_1,\D_1)\rightarrow (\C_2,\D_2)$ is a regular
$*$-homomorphism such that $\alpha|_\D$ is one-to-one.
Then \begin{equation}\label{Lsubeq}
\{x\in \C_1: \alpha(a)\in \L(\C_2,\D_2)\}\subseteq \L(\C_1,\D_1).
\end{equation}
\end{corollary}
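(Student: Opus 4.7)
The plan is to exploit the maximality characterization of $\L(\C,\D)$ provided by Theorem~\ref{Eideal}: for any regular MASA inclusion, $\L(\C,\D)$ is the unique ideal of $\C$ which is maximal with respect to having trivial intersection with $\D$. Thus to show an ideal is contained in $\L(\C,\D)$, it suffices to check that it meets $\D$ trivially.

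Set $\K := \{x\in \C_1 : \alpha(x)\in \L(\C_2,\D_2)\}$. First I would verify that $\K$ is a (closed, two-sided) ideal of $\C_1$: since $\L(\C_2,\D_2)$ is a closed two-sided ideal of $\C_2$ (by Theorem~\ref{Eideal}) and $\alpha$ is a continuous $*$-homomorphism, $\K = \alpha^{-1}(\L(\C_2,\D_2))$ is a closed two-sided ideal of $\C_1$.

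Next I would show $\K \cap \D_1 = (0)$. Suppose $d\in \K\cap \D_1$. Because $\alpha$ is a regular $*$-homomorphism, the observation in Remark (\ref{Dpres}) gives $\alpha(\D_1)\subseteq \D_2$, so $\alpha(d)\in \D_2$. Since $d\in\K$ we also have $\alpha(d)\in \L(\C_2,\D_2)$, so $\alpha(d)\in \L(\C_2,\D_2)\cap \D_2 = (0)$ by Theorem~\ref{Eideal}. The injectivity of $\alpha|_{\D_1}$ then forces $d=0$.

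Finally, applying the maximality portion of Theorem~\ref{Eideal} to the inclusion $(\C_1,\D_1)$ and the ideal $\K$, we conclude $\K\subseteq \L(\C_1,\D_1)$, which is exactly~\eqref{Lsubeq}. There is no real obstacle here; the entire argument is a one-line consequence of Theorem~\ref{Eideal} once one recognizes that the preimage ideal $\K$ still has the diagonal-disjoint property, and that recognition uses only the fact that a regular $*$-homomorphism maps $\D_1$ into $\D_2$.
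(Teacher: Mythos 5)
Your proof is correct and follows exactly the route the paper takes: the paper's one-line proof states that $\{x\in\C_1:\alpha(x)\in\L(\C_2,\D_2)\}$ is an ideal of $\C_1$ with trivial intersection with $\D_1$, and then the maximality clause of Theorem~\ref{Eideal} finishes the argument. You have simply supplied the details (that $\alpha(\D_1)\subseteq\D_2$ by regularity and that $\alpha|_{\D_1}$ is injective) that the paper leaves implicit.
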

\begin{proof}
The set $\{x\in \C_1: \alpha(x)\in \L(\C_2,\D_2)\}$ is an ideal of
$\C_1$ whose intersection with $\D_1$ is trivial.
\end{proof}

\begin{remark}{Remark} We expect that equality holds  in~\eqref{Lsubeq}
 if for every $0\leq h\in \D_2$,
$h=\sup_{\D_2}\{\alpha(d): d\in \D_1\text{ and } 0\leq \alpha(d)\leq
h\}$.  Also, it would not be surprising if this condition
characterized equality in~\eqref{Lsubeq}.
\end{remark}

Since every Cartan inclusion $(\C,\D)$ satisfies $\L(\C,\D)=(0)$, we
make the following definition.
\begin{definition}\label{virtualCartan}  
A \textit{virtual Cartan inclusion} is a regular MASA inclusion such that
$\L(\C,\D)=(0).$ 
\end{definition}

\section{Compatible States}\label{sectCompSt}

Since the extension property does not always hold for an inclusion
$(\C,\D)$, we identify a useful class of states in $\Mod(\C,\D)$,
which we call
$\D$-compatible states.  

To motivate the definition, observe that when $(\C,\D)$ is a regular
EP inclusion, the only way to extend a pure state $\sigma\in\hat{\D}$
to $\C$ is via composition with the expectation: $\rho:=\sigma\circ
E$.  Then the GNS representation $(\pi_\rho,\H_\rho)$ arising from
$\rho$ has the property that for any $v\in\N(\C,\D)$ either $I+L_\rho$
and $v+L_\rho$ are orthogonal in the Hilbert space $\H_\rho$, or one
is a scalar multiple of the other, according to whether or not the
Gelfand transform of $E(v)$ is zero in a neighborhood of $\sigma$.
Furthermore, the techniques used in the proof
of~\cite[Proposition~5.4]{DonsigPittsCoSyBoIs} show that
$\pi_\rho(\D)''$ is an atomic MASA in $\B(\H_\rho)$ and also that
for every $v\in \N(\C,\D)$, $v+L_\rho$ is an eigenvector for $\pi_\rho(\D)$. 
The intersection $\J$ of the kernels of such representations is 
the left kernel of the expectation $E$, $\D\cap \J=(0)$, and the
quotient of $(\C,\D)$ by $\J$ yields a \cstardiag\ with the same
coordinate system as $(\C,\D)$, see \cite[Theorem~4.8]{DonsigPittsCoSyBoIs}.

We shall define the set of compatible states to be those states $\rho$
on $\C$ for which the vectors $\{v+L_\rho:v\in \N(\C,\D)\}$ form an
orthogonal set of vectors.  These states have many of the properties
listed in the previous paragraph, but have the advantage of not
needing the extension property or a conditional expectation for their
definition.  Here is the formal definition.

\begin{definition}  Let $(\C,\D)$ be an inclusion. 
\begin{enumerate}
\item A state
  $\rho$ on $\C$ is called \textit{$\D$-compatible} if
  for every $v\in \N(\C,\D)$,
$$|\rho(v)|^2\in\{0, \rho(v^*v)\}.$$
 When the context is clear, we will simply use the term
  \textit{compatible state} instead of $\D$-compatible state.
\item We will use $\fS(\C,\D)$ to denote the set of all 
 $\D$-compatible states on $\C$.  Topologize $\fS(\C,\D)$ with the
 relative weak-$*$ topology.
\item For $\rho\in\fS(\C,\D)$, let $\Delta_\rho:=\{v\in\N(\C,\D):
  \rho(v)\neq 0\}$, and $\Lambda_\rho:=\{v\in\N(\C,\D): \rho(v^*v)
  >0\}$.  Define a relation $\sim_\rho$ on
  $\Lambda_\rho$ by $(v,w)\in \sim_\rho$ if and only if $v^*w\in
  \Delta_\rho$.   (We shall prove that $\sim_\rho$ is an equivalence
  relation momentarily, and then will simply write $v\sim_\rho w$.)

\end{enumerate} 
\end{definition}

\begin{remark}{Remarks} \label{basic}
\begin{enumerate}
\item When $(\C,\D)$ is a regular MASA inclusion,
  Proposition~\ref{rhocircE} shows that $\fS_s(\C,\D)\subseteq
  \fS(\C,\D)$.
\item As $|\rho(x)|^2\leq \rho(x^*x)$ for any state $\rho\in\dual{\C}$
  and any $x\in\C$, we see that $\D$-compatible states satisfy an
  extremal property relative to the normalizers for $\D$, and one
  might expect an inclusion relationship between compatible states and
  pure states.  However, there is not.  Example~\ref{ToeplitzAlg}
  gives an example of a Cartan inclusion $(\C,\D)$ and element of
  $\fS(\C,\D)$ which is not a pure state on $\C$, while
  Example~\ref{disGrp} gives an example of a Cartan inclusion
  $(\C,\D)$ and a pure state $\rho$ on $\C$ such that $\rho\in
  \Mod(\C,\D)$, yet $\rho\notin\fS(\C,\D)$.  As we shall see
  momentarily, $\fS(\C,\D)\subseteq \Mod(\C,\D)$.  Thus no such
  inclusion relationship exists.
\item 
For general inclusions, it is possible that
  $\fS(\C,\D)=\emptyset$ (see Theorem~\ref{allunitaries}).
\item \label{basic2} The following simple observation will be useful during the
  sequel:  for $i=1,2$, let $(\C_i,\D_i)$ be inclusions and suppose
  that $\alpha: \C_1\rightarrow \C_2$ is a regular and unital $*$-homomorphism.
  Then $$\dual{\alpha}(\fS(\C_2,\D_2))\subseteq \fS(\C_1,\D_1).$$
\end{enumerate} 
\end{remark}

Here are some properties of elements of $\fS(\C,\D)$ which hold for
any inclusion.

\begin{proposition}\label{Dextreme}
Let $(\C,\D)$ be an inclusion and let $\rho\in\fS(\C,\D)$.
  The following statements hold.
\begin{enumerate}
\item\label{Dext0}  Suppose $v\in\Delta_\rho$.  Then
  for every $x\in\C$, $$\rho(vx)=\rho(v)\rho(x)=\rho(xv).$$
\item \label{Dext1} 
The restriction of $\rho$ to $\D$ is a multiplicative linear
functional on $\D$.
\item \label{Dext2} Suppose
  $v\in\Delta_\rho$. Then for every $x\in\C$,
  $$\rho(v^*xv)=\rho(v^*v)\rho(x).$$
\item\label{Dext2a0}  If $v_1, v_2\in\Lambda_\rho$
  and $(v_1,v_2)\in \sim_\rho$, then 
\[
|\rho(v_1^*v_2)|^2=\rho(v_1^*v_1)\rho(v_2^*v_2).\] Moreover, $\sim_\rho$ is an
  equivalence relation on $\Lambda_\rho$.
\item\label{Dext2a1} $\fS(\C,\D)$ is an $\N(\C,\D)$-invariant subset of
  $\Mod(\C,\D)$.  
\item\label{Dext2a} If $v\in \Lambda_\rho$, then $v+L_\rho$ is an
  eigenvector for $\pi_\rho(\D)$; in particular, for every $d\in\D$,
  $$\pi_\rho(d)(v+L_\rho) = \frac{\rho(v^*dv)}{\rho(v^*v)} v + L_\rho.$$ 
\item \label{Dext3}
The set $\fS(\C,\D)$ is weak-$*$ closed in
  $\dual{\C}$ and the restriction mapping, $\rho\in\fS(\C,\D)\mapsto
  \rho|_\D$, is a weak-$*$--weak-$*$ continuous mapping of $\fS(\C,\D)$
  into $\hat{\D}$.   
\end{enumerate}
\end{proposition}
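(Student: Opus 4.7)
The plan is to prove the seven statements in the order (1), (2), $\D$-modularity, (3), (6), (4), invariance in (5), and (7), which respects the logical dependencies rather than the order of presentation. The organizing principle is that compatibility $|\rho(v)|^2 \in \{0, \rho(v^*v)\}$ applied to $v \in \Delta_\rho$ is exactly the equality case of Cauchy-Schwarz for the GNS inner product on the pair $(v + L_\rho,\, I + L_\rho)$. This forces $v + L_\rho = \rho(v)(I + L_\rho)$ in $\H_\rho$, and then $\rho(xv) = \langle v + L_\rho, x^* + L_\rho\rangle = \rho(v)\rho(x)$ is immediate; applying the same argument to $v^* \in \Delta_\rho$ gives $\rho(vx) = \rho(v)\rho(x)$ and finishes (1). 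For (2), every $d \in \D$ is itself a normalizer: if $\rho(d) \neq 0$ then (1) yields multiplicativity of $\rho|_\D$ against $d$ directly, and if $\rho(d) = 0$, the element $d + tI \in \D$ lies in $\Delta_\rho$ for any $t \in \bbC \setminus \{0\}$, so (1) applied to $d + tI$ forces $\rho(dx) = 0$ for every $x \in \C$. The same dichotomy, now with arbitrary $x \in \C$, establishes the $\D$-modularity of $\rho$ and hence the inclusion $\fS(\C,\D) \subseteq \Mod(\C,\D)$ asserted in (5). Statement (3) is then immediate from (1) applied to $v^* \in \Delta_\rho$ with the substitution $y := xv$.

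Statement (6) is the technical heart. Fix $v \in \Lambda_\rho$ and a self-adjoint $d \in \D$, and set $\alpha := \beta_v(\rho|_\D)(d) = \rho(v^*dv)/\rho(v^*v)$. A direct expansion collapses $\rho((dv - \alpha v)^*(dv - \alpha v))$ to $\rho(v^*d^2v) - \alpha^2 \rho(v^*v)$; since $\rho(v^*d^2v) = \rho(v^*v)\beta_v(\rho|_\D)(d^2)$ by the definition of $\beta_v$, and since (2) puts $\rho|_\D \in \hat{\D}$ while $\beta_v$ maps $\hat{\D}$ to $\hat{\D}$, the state $\beta_v(\rho|_\D)$ is a character of $\D$, so $\beta_v(\rho|_\D)(d^2) = \alpha^2$ and the expression vanishes; linearity in $d$ finishes the general case. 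The equality half of (4) is then assembled from (1), (6), and compatibility: the hypothesis $v_1^*v_2 \in \Delta_\rho$ together with compatibility of the normalizer $v_1^*v_2$ gives $|\rho(v_1^*v_2)|^2 = \rho(v_2^*v_1v_1^*v_2)$, and (6) with $v = v_2$ and $d = v_1v_1^* \in \D$ rewrites the right side as $\beta_{v_2}(\rho|_\D)(v_1v_1^*)\,\rho(v_2^*v_2)$. The unknown scalar $\beta_{v_2}(\rho|_\D)(v_1v_1^*)$ is identified with $\rho(v_1^*v_1)$ by computing $\rho(v_1^*v_1 v_1^*v_2)$ two ways: from (1) with $v_1^*v_2 \in \Delta_\rho$ and $x := v_1^*v_1$, and from the eigenvector equation of (6) paired on the left with $v_1$. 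Reflexivity and symmetry of $\sim_\rho$ are formal; transitivity follows from the same template applied to $\rho(v_1^*v_2v_2^*v_3)$, which (1) evaluates as $\rho(v_1^*v_2)\rho(v_2^*v_3)$ and (6) (applied to $v_3$ with $d = v_2v_2^*$, scalar again identified as $\rho(v_2^*v_2)$) evaluates as $\rho(v_2^*v_2)\rho(v_1^*v_3)$, forcing $\rho(v_1^*v_3) \neq 0$.

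The invariance part of (5) reduces, via (4), to a short calculation: given $\rho \in \fS(\C,\D)$ and $v \in \N(\C,\D)$ with $\rho(v^*v) \neq 0$, test $\tilde\beta_v(\rho)(x) := \rho(v^*xv)/\rho(v^*v)$ against an arbitrary $w \in \N(\C,\D)$. If $\rho(v^*wv) = 0$ we are done; otherwise Cauchy-Schwarz on $\langle wv + L_\rho, v + L_\rho\rangle$ shows $\rho(v^*w^*wv) \neq 0$, so both $v_1 := v$ and $v_2 := wv$ lie in $\Lambda_\rho$ with $v_1^*v_2 = v^*wv \in \Delta_\rho$; applying (4) gives $|\rho(v^*wv)|^2 = \rho(v^*v)\rho(v^*w^*wv)$, which after dividing by $\rho(v^*v)^2$ is precisely $|\tilde\beta_v(\rho)(w)|^2 = \tilde\beta_v(\rho)(w^*w)$. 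Finally, (7) is bookkeeping: $\fS(\C,\D) = \bigcap_{v \in \N(\C,\D)} \{\rho \in \S(\C) : |\rho(v)|^2 (|\rho(v)|^2 - \rho(v^*v)) = 0\}$ is a weak-$*$ closed intersection inside $\S(\C)$, and the restriction map is weak-$*$ to weak-$*$ continuous with range in $\hat{\D}$ by (2). The most delicate step in the entire argument is the identification $\beta_{v_2}(\rho|_\D)(v_1v_1^*) = \rho(v_1^*v_1)$ inside the proof of (4), where compatibility interacts nontrivially with the normalizer dynamics; once this is in hand, everything else is routine manipulation around (1), (2), (6), and the equality case of Cauchy-Schwarz.
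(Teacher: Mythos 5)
Your proof is correct and follows essentially the same route as the paper's: the equality case of Cauchy--Schwarz in the GNS space yielding $v-\rho(v)I\in L_\rho$ for $v\in\Delta_\rho$, the perturbation $d+tI$ for multiplicativity on $\D$, the eigenvector computation for $\pi_\rho(\D)$, and the reduction of invariance to part (4). The only notable variation is within part (4), where the paper identifies $\beta_{v_1}(\rho|_\D)=\beta_{v_2}(\rho|_\D)$ via part (3) together with the composition law for $\beta$ from Proposition~\ref{betacalc} and gets transitivity from linear dependence of the vectors $v_i+L_\rho$, whereas you evaluate $\rho(v_1^*v_1v_1^*v_2)$ (respectively $\rho(v_1^*v_2v_2^*v_3)$) in two ways; both mechanisms are sound and of comparable length.
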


\begin{proof} 
\prt{Dext0} Since $\rho\in\fS(\C,\D)$, an easy calculation yields
$v-\rho(v)I\in L_\rho$.  But $L_\rho$ is a left ideal and
$L_\rho\subseteq \ker\rho$.  So for $x\in\C$, we have
$\rho(x(v-\rho(v)I)) =0$.  So $\rho(xv)=\rho(x)\rho(v)$.  As
$\rho(v^*)=\overline{\rho(v)}\neq 0$, a similar argument shows that
$0=\rho((v-\rho(v)I)x).$ So part~\eqref{Dext0} holds.  

\prt{Dext1}
Since $\D\subseteq \N(\C,\D)$, this follows from part~\eqref{Dext0} and
continuity of $\rho$.

\prt{Dext2} This follows from part~\eqref{Dext0} and the fact that
$\Delta_\rho$ is closed under the adjoint operation.

 \prt{Dext2a0} Let $\sigma=\rho|_\D$ and for $i=1,2$ put
$\sigma_i=\beta_{v_i}(\sigma)$. Then $\sigma_1=\sigma_2$ by
statement~\eqref{Dext2} and Proposition~\ref{betacalc}. Therefore,
since $\rho(v_1^*v_2)\neq 0$, we have
$$|\rho(v_1^*v_2)|^2=\rho(v_2^*v_1v_1^*v_2)
=\sigma_2(v_1v_1^*)\sigma(v_2^*v_2)
=\sigma_1(v_1v_1^*)\sigma(v_2^*v_2)= \rho(v_1^*v_1)\rho(v_2^*v_2).$$

Clearly the relation $\sim_\rho$ is reflexive and symmetric on
$\Lambda_\rho$.  For $i=1,2,3$, suppose $v_i\in \Lambda_\rho$,
$(v_1,v_2)\in \sim_\rho $ and $(v_2,v_3)\in \sim_\rho$.  The equality verified in
the previous paragraph shows that in $\H_\rho$,
$|\innerprod{v_1+L_\rho,
  v_2+L_\rho}_\rho|^2=\norm{v_1+L_\rho}_\rho^2\,\norm{v_2+L_\rho}_\rho^2$.
Hence there exists a non-zero scalar $t$ such that
$tv_1+L_\rho=v_2+L_\rho$.  Similarly, there exists a non-zero scalar
$s$ such that $v_2+L_\rho=sv_3+L_\rho$.  So $\{v_1+L_\rho,
v_3+L_\rho\}$ is a linearly dependent set of non-zero linearly vectors in $\H_\rho$. Thus
$\rho(v_1^*v_3)\neq 0$, whence $(v_1,v_3)\sim_\rho.$

\prt{Dext2a1}  Let $v\in\Lambda_\rho$.  For $w\in\N(\C,\D)$, we claim
that $|\rho(w^*v)|^2\in \{0,\rho(w^*w)\rho(v^*v)\}$.  If $\rho(w^*v)\neq
0$, then as $|\rho(w^*v)|^2\leq\rho(w^*w)\rho(v^*v)$, we find that
$w\in\Lambda_\rho$ and $w\sim_\rho v$, so the claim holds by
statement~\eqref{Dext2a0}.   Hence
$$|\beta_v(\rho)(w)|^2=\frac{|\rho(v^*(wv))|^2}{\rho(v^*v)^2}
\in\left\{0, \frac{ \rho(v^*v) \rho(v^*w^*wv)}{\rho(v^*v)^2}\right\}
=\{0,\beta_v(\rho)(w^*w)\},$$ so $\beta_v(\rho)\in\fS(\C,\D)$.

\prt{Dext2a} Suppose $v\in\N(\C,\D)$ and that $\rho(v^*v)\neq 0$. For
$d\in\D$, let $\ds\sigma_1(d)= \frac{\rho(v^*dv)}{\rho(v^*v)}$.  Then
$\sigma_1\in\hat{\D}$, and 
 for $d\in\D$,
we have
\begin{align*}
\norm{(\pi_\rho(d)-\sigma_1(d)I)v+L_\rho}^2_\rho&=
\rho(v^*(d-\sigma_1(d)I)^*(d-\sigma_1(d)I)v)\\ &=
\sigma_1((d-\sigma_1(d)I)^*(d-\sigma_1(d)I))\rho(v^*v)=0.
\end{align*}
We conclude that $\pi_\rho(d)v+L_\rho=\sigma_1(d)v+L_\rho$, so $v+ L_\rho$ is an
eigenvector for $\pi_\rho(\D)$ and statement~\eqref{Dext2a} holds.

\prt{Dext3} Suppose $(\rho_\lambda)_{\lambda\in\Lambda}$
is a net in $\fS(\C,\D)$ and $\rho_\lambda$ converges weak-$*$ to
$\rho\in\dual{\C}$.  Let $v\in\N(\C,\D)$.  If $\rho(v)\neq 0$, then
for large enough $\lambda$, $\rho_\lambda(v)\neq 0$.  Hence
$|\rho(v)|^2=\lim_\lambda |\rho_\lambda(v)|^2 =\lim_\lambda
\rho_\lambda(v^*v)=\rho(v^*v).$ It follows that $\rho\in\fS(\C,\D)$,
so $\fS(\C,\D)$ is weak-$*$ closed.  The continuity of the restriction
mapping is obvious.

\end{proof}

\begin{remark}{Remark} 
Statement~\eqref{Dext0} says that if $v\in \Delta_\rho$, then $v\in \fM_\rho$, where
  $\fM_\rho=\{x\in\C: \rho(xy)=\rho(yx)=\rho(x)\rho(y)\;
  \forall\, y\in\C\}$, see~\cite{AndersonExReReStC*Al}.   Also, if $\B$
  is the closed linear span of $\Delta_\rho$, then $\B$ is a
  \cstaralg\ because $\Delta_\rho$ is closed under multiplication.
  Clearly $\D\subseteq \B$, so that $(\B,\D)$ is an inclusion enjoying
  the properties of regularity or MASA inclusion when $(\C,\D)$ has the
  same properties.
\end{remark}

We turn now to the issue of existence of compatible states.
When $(\C,\D)$ is a regular MASA inclusion, Theorem~\ref{natext},
shows that every $\sigma\in \hat{\D}$ extends to an
element of $\fS_s(\C,\D)$.  Applying Proposition~\ref{rhocircE}, we see  that  
compatible states 
exist in abundance for regular MASA inclusions.  We record this fact
as a theorem.

\begin{theorem} \label{scregMASA}
Let $(\C,\D)$ be a regular MASA inclusion.  If 
$\sigma\in \hat{\D}$, there exists $\rho\in\fS(\C,\D)$ such that
$\rho|_\D=\sigma$.  Moreover, $\rho$ may be chosen so that
$\rho\in\fS_s(\C,\D)$.  
\end{theorem}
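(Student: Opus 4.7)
The plan is to observe that this theorem is essentially a direct corollary of the results already established in Section~\ref{MainExpect}, and in particular to exhibit the desired compatible state as one of the strongly compatible states $\rho\circ E$ arising from the pseudo-expectation.

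First I would reduce both assertions to producing a single state in $\fS_s(\C,\D)$ restricting to $\sigma$. Indeed, by Remark~\ref{basic}(1) (which records Proposition~\ref{rhocircE}), we have the inclusion $\fS_s(\C,\D)\subseteq \fS(\C,\D)$, so it suffices to find $\rho\in\fS_s(\C,\D)$ with $\rho|_\D=\sigma$.

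Next, I would invoke the existence and uniqueness of the pseudo-expectation from Theorem~\ref{uniquecpmap}: let $(I(\D),\iota)$ be an injective envelope for $\D$ and let $E:\C\to I(\D)$ be the pseudo-expectation for $\iota$. Because $(\widehat{I(\D)},\dual{\iota}|_{\widehat{I(\D)}})$ is a projective cover for $\hat{\D}$ (as recalled in the preliminaries), the map $\dual{\iota}$ carries $\widehat{I(\D)}$ onto $\hat{\D}$. Hence given $\sigma\in\hat{\D}$, there exists $\tau\in\widehat{I(\D)}$ with $\tau\circ\iota=\sigma$. Define
\[
\rho:=\tau\circ E.
\]
By the very definition of $\fS_s(\C,\D)$ (Definition~\ref{defcst}) we have $\rho\in\fS_s(\C,\D)$, and since $E|_\D=\iota$,
\[
\rho|_\D=(\tau\circ E)|_\D=\tau\circ\iota=\sigma.
\]
This simultaneously proves both the ``moreover'' clause and, via Remark~\ref{basic}(1), the main statement.

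There is no real obstacle here: all the substantive work has already been done in establishing Theorem~\ref{uniquecpmap} and Proposition~\ref{rhocircE}, and the surjectivity of $\dual{\iota}$ onto $\hat{\D}$ is built into the notion of a projective cover. The only thing worth being careful about is keeping clear the distinction between $\tau$ as a character of $I(\D)$ and $\tau\circ\iota$ as a character of $\D$, which is transparent from the identification $\hat{\D}=\{\rho\circ\iota:\rho\in\widehat{I(\D)}\}$ noted after Definition~\ref{defcst}.
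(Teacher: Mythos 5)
Your proposal is correct and follows essentially the same route as the paper: the paper deduces surjectivity of the restriction map $\fS_s(\C,\D)\ni\rho\mapsto\rho|_\D$ from Theorem~\ref{natext} (which itself rests on the observation after Definition~\ref{defcst} that $\hat{\D}=\{\rho\circ\iota:\rho\in\widehat{I(\D)}\}$, exactly the surjectivity of $\dual{\iota}$ you invoke) and then applies Proposition~\ref{rhocircE} to place $\fS_s(\C,\D)$ inside $\fS(\C,\D)$. Your unwinding of this into the explicit state $\rho=\tau\circ E$ is a faithful rendering of the paper's argument.
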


The following result summarizes what we know regarding the existence
of compatible states when the hypothesis of regularity in
Theorem~\ref{scregMASA} is weakened.  Notice that in both parts of the
following result, a conditional expectation is present.

\begin{theorem}\label{rcmany}
  Suppose $(\C,\D)$ is an inclusion.   
\begin{enumerate}
\item[a)] If $(\C,\D)$ is a MASA inclusion  and
  there exists a conditional expectation $E:\C\rightarrow \D$,
  then  $\dual{E}|_{\hat{\D}}$ is a continuous one-to-one map of
  $\hat{\D}$ into $\fS(\C,\D)$.   
\item[b)] When $(\C,\D)$ has the extension property (but is not
  necessarily regular), then
  $\dual{E}|_{\hat{\D}}$ is a homeomorphism of $\hat{\D}$ onto
    $\fS(\C,\D).$  
\end{enumerate}
\end{theorem}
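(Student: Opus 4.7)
The plan is to handle part (a) directly and then derive part (b) from it using the extension property. For part (a), continuity of $\dual{E}|_{\hat{\D}}$ is immediate from weak-$*$ continuity of the dual of a bounded map, and injectivity follows because $E|_\D=\text{id}_\D$: if $\sigma_1\circ E=\sigma_2\circ E$, restricting to $\D$ gives $\sigma_1=\sigma_2$. The substantive content is showing $\dual{E}(\hat{\D})\subseteq\fS(\C,\D)$, i.e.\ that for every $\sigma\in\hat{\D}$ and every $v\in\N(\C,\D)$,
\[
|\sigma(E(v))|^2\in\{0,\sigma(v^*v)\}.
\]

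My strategy is to reduce to a regular sub-inclusion so that Theorem~\ref{uniquecpmap} applies. Fix $\sigma$ and $v$, and set $\C_0:=C^*(\D\cup\{v\})$. Since $\D$ is a MASA in $\C$, its relative commutant inside $\C_0$ satisfies $\D^c\cap\C_0\subseteq\D^c=\D$, so $\D$ is a MASA in $\C_0$. Because $\N(\C,\D)$ is closed under the $*$-operation and under products and contains both $\D$ and $v$, every finite product of elements from $\D\cup\{v,v^*\}$ lies in $\N(\C_0,\D)$, making $\spn\N(\C_0,\D)$ norm-dense in $\C_0$. Hence $(\C_0,\D)$ is a regular MASA inclusion, and $E_0:=E|_{\C_0}$ is a conditional expectation of $\C_0$ onto $\D$.

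Next I would invoke the pseudo-expectation machinery. Let $(I(\D),\iota)$ be an injective envelope of $\D$. Then $\iota\circ E_0$ is a pseudo-expectation for $\iota$ on $\C_0$, so by the uniqueness clause of Theorem~\ref{uniquecpmap} it is \emph{the} pseudo-expectation of $(\C_0,\D)$. Because $(\widehat{I(\D)},\dual{\iota})$ is a projective cover of $\hat{\D}$, the map $\dual{\iota}$ is surjective, so I can pick $\rho\in\widehat{I(\D)}$ with $\rho\circ\iota=\sigma$. Then
\[
\sigma\circ E_0=\rho\circ\iota\circ E_0\in\fS_s(\C_0,\D),
\]
and Proposition~\ref{rhocircE} yields $\sigma\circ E_0\in\fS(\C_0,\D)$. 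Evaluating the compatibility condition at $v\in\N(\C_0,\D)$ gives the required $|\sigma(E(v))|^2\in\{0,\sigma(v^*v)\}$, and since $v\in\N(\C,\D)$ was arbitrary, $\sigma\circ E\in\fS(\C,\D)$.

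For part (b), the EP hypothesis implies via \cite[Corollary~2.7]{ArchboldBunceGregsonExStC*AlII} that $\D$ is a MASA in $\C$ and that a conditional expectation $E:\C\to\D$ exists, so part (a) already gives that $\dual{E}|_{\hat{\D}}$ is a continuous injection into $\fS(\C,\D)$. For surjectivity, given $\rho\in\fS(\C,\D)$, Proposition~\ref{Dextreme}(\ref{Dext1}) shows $\sigma:=\rho|_\D\in\hat{\D}$; both $\rho$ and $\sigma\circ E$ are state extensions of $\sigma$, so the extension property forces $\rho=\sigma\circ E=\dual{E}(\sigma)$. Finally $\hat{\D}$ is compact, and Proposition~\ref{Dextreme}(\ref{Dext3}) together with compactness of the state space shows $\fS(\C,\D)$ is compact, so a continuous bijection between them is automatically a homeomorphism. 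The main obstacle is the compatibility check in part (a) without a regularity assumption on $(\C,\D)$; the reduction to $(\C_0,\D)$ is what restores regularity while keeping the MASA and conditional-expectation hypotheses, allowing Theorem~\ref{uniquecpmap} and Proposition~\ref{rhocircE} to do the real work.
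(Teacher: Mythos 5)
Your proof is correct, but part~(a) takes a genuinely different route from the paper's. The paper argues directly and elementarily: for $\rho=\sigma\circ E$ and $v\in\N(\C,\D)$ with $\rho(v)\neq 0$, it forms $x:=v^*E(v)/\rho(vv^*)$, shows $x$ commutes with $\D$ (immediately for intertwiners, then for all normalizers via $\N(\C,\D)=\overline{\inter(\C,\D)}$ from Proposition~\ref{intnorrel}, which is where the MASA hypothesis enters), concludes $x\in\D$, and then computes $\rho(x)=1$ and $|\rho(v)|^2=\rho(x^*v^*vx)=\rho(v^*v)$ using that $vx\in\D$. You instead restore regularity by passing to $\C_0=C^*(\D\cup\{v\})$ --- a clean reduction, and your verification that $(\C_0,\D)$ is a regular MASA inclusion is right, since finite words in $\D\cup\{v,v^*\}$ lie in $\N(\C,\D)\cap\C_0\subseteq\N(\C_0,\D)$ and span a dense subalgebra --- and then invoke the uniqueness of the pseudo-expectation (Theorem~\ref{uniquecpmap}) to identify $\iota\circ E|_{\C_0}$ with it, so that Proposition~\ref{rhocircE} delivers the compatibility condition. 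There is no circularity, as Section~\ref{MainExpect} precedes and does not use this theorem. The trade-off: the paper's argument is self-contained and uses only the intertwiner approximation from Section~2, whereas yours leans on the full pseudo-expectation machinery (ultimately the essential-ideal analysis of $J_v\vee K_v$) but avoids the limiting argument for $v^*E(v)\in\D$ entirely. Your part~(b) is essentially the paper's argument.
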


\begin{proof}  a) Since $E$ is onto, $\dual{E}$ is injective and
  continuous.  We must show that $\dual{E}$ carries $\hat{\D}$ into
  $\fS(\C,\D)$. 

  Let $\sigma\in\hat{\D}$ and set $\rho=\sigma\circ E$.  Suppose
  $v\in\N(\C,\D)$, and $\rho(v)\neq 0$.   By the Cauchy-Schwartz
  inequality, $\rho(vv^*)\neq
  0$.  The definition of $\rho$ shows $E(v)\neq 0$.  Let $\ds
  x:=\frac{v^*E(v)}{\rho(vv^*)}$.  

  We claim that  $x$ commutes with $\D$.  This is easy to see when
  $v\in\inter(\C,\D)$.   Since $\D$ is a MASA,
  Proposition~\ref{intnorrel} gives 
  $\N(\C,\D)=\overline{\inter(\C,\D)}$.  A continuity argument
  now establishes the claim.  

Therefore, $x\in\D$.  Hence
  $\rho(v)\rho(x)=\rho(vx)=\rho(vv^*E(v)\rho(vv^*)^{-1})= \rho(v)$,
  so that $\rho(x)=1$.  Since $vx\in \D$ we obtain,
$$|\rho(v)|^2=|\rho(vx)|^2=\rho(x^*v^*vx)=\rho(x^*)\rho(v^*v)\rho(x)=\rho(v^*v).$$
We conclude that  $\rho\in\fS(\C,\D)$, as desired.

b) Now suppose that $(\C,\D)$ has the extension property. If
$\rho\in\fS(\C,\D)$, put $\sigma=\rho|_\D.$ Then $\sigma\in\hat{\D}$.
By the extension property, we have $\rho=\sigma\circ E,$ so
$\rho=\dual{E}(\sigma)$, whence $\dual{E}|_{\hat{\D}}$ is onto.  If
$\dual{E}(\sigma_1)=\dual{E}(\sigma_2)$, then the extension property
yields $\sigma_1=\sigma_2$.  So $\dual{E}$ is a continuous bijection
of $\hat{\D}$ onto $\fS(\C,\D)$.  Since
$\hat{D}$ and $\fS(\C,\D)$ are both compact and Hausdorff, 
$\dual{E}|_{\hat{\D}}$ is a homeomorphism.

\end{proof}

\begin{theorem}\label{existcompatA}  Let $(\C,\D)$ be a
  regular inclusion (we do not assume $\D$ is a MASA in $\C$).
  The following statements hold.
\begin{enumerate}
\item[i)]\label{existcompatA1}  Suppose $(\C_1,\D_1)$ is a regular
  MASA inclusion and $\alpha:(\C,\D)\rightarrow (\C_1,\D_1)$ is a regular
  and unital $*$-homomorphism.
  Then $\dual{\alpha}$ maps $\fS_s(\C_1,\D_1)$ into $\fS(\C,\D)$.
\item[ii)]    If the relative commutant $\D^c$ of $\D$ in $\C$ is 
  abelian, then $\fS_s(\C,\D^c)\subseteq \fS(\C,\D)$ and the restriction map
$\rho\in\fS_s(\C,\D^c)\mapsto \rho|_\D$, is a weak-$*$--weak-$*$
continuous mapping of $\fS_s(\C,\D^c)$ onto $\hat{\D}$.
\end{enumerate}
\end{theorem}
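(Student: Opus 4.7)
The strategy is to read off part~(i) immediately from the preparatory remarks of this section, and then to bootstrap part~(ii) by applying part~(i) to the identity map $(\C,\D)\to(\C,\D^c)$.

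For part~(i), I would chain two facts from Remark~\ref{basic}. Since $(\C_1,\D_1)$ is a regular MASA inclusion, the first item of that remark (which invokes Proposition~\ref{rhocircE}) gives $\fS_s(\C_1,\D_1)\subseteq \fS(\C_1,\D_1)$. Because $\alpha$ is regular and unital, item~\ref{basic2} of the same remark gives $\dual{\alpha}(\fS(\C_1,\D_1))\subseteq \fS(\C,\D)$. Composing these inclusions yields $\dual{\alpha}(\fS_s(\C_1,\D_1))\subseteq \fS(\C,\D)$, which is precisely part~(i).

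For part~(ii), note first that since $\D^c$ is abelian it coincides with its own relative commutant in $\C$, hence is a MASA. Lemma~\ref{abelcom} then says that the identity map $\text{id}:(\C,\D)\rightarrow (\C,\D^c)$ is a regular $*$-homomorphism and that $\N(\C,\D)\subseteq \N(\C,\D^c)$; combined with the regularity of $(\C,\D)$, this last inclusion forces $(\C,\D^c)$ to be regular as well. Thus $(\C,\D^c)$ is a regular MASA inclusion, and applying part~(i) with $(\C_1,\D_1):=(\C,\D^c)$ and $\alpha:=\text{id}$ gives $\fS_s(\C,\D^c)\subseteq \fS(\C,\D)$.

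It remains to show that the restriction map $\fS_s(\C,\D^c)\ni\rho\mapsto\rho|_\D\in\hat{\D}$ is a weak-$*$--weak-$*$ continuous surjection; continuity is immediate. For surjectivity, I would factor the map as
$$\fS_s(\C,\D^c)\xrightarrow{\ \rho\mapsto\rho|_{\D^c}\ }\widehat{\D^c}\xrightarrow{\ \tau\mapsto\tau|_\D\ }\hat{\D}.$$
Theorem~\ref{natext} delivers surjectivity of the first arrow, and the second is surjective by the standard fact (Hahn-Banach to extend $\sigma\in\hat{\D}$ to a state on $\D^c$, Krein-Milman to find an extreme state extension, and the observation that pure states on an abelian \cstaralg\ are characters) that any character on a unital abelian \cstar-subalgebra of a unital abelian \cstaralg\ admits a character extension. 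The theorem is therefore really a corollary of the preceding machinery, and I do not anticipate any step presenting a genuine obstacle.
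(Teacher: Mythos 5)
Your proposal is correct and follows essentially the same route as the paper: part~(i) is obtained by composing the two inclusions from Remark~\ref{basic}, and part~(ii) by applying Lemma~\ref{abelcom} together with part~(i) to the identity map, with surjectivity of the restriction coming from extending characters of $\D$ to $\widehat{\D^c}$ and the surjectivity of $\fS_s(\C,\D^c)\to\widehat{\D^c}$. The only cosmetic difference is that the paper cites Proposition~\ref{Dextreme}\eqref{Dext3} for the weak-$*$ continuity, which you (reasonably) treat as immediate.
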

\begin{proof}
  We have already observed in Remark~\ref{basic}\eqref{basic2} that $\dual{\alpha}$ carries
  $\fS(\C_1,\D_1)$ into $\fS(\C,\D)$.  As $\fS_s(\C_1,\D_1)\subseteq
  \fS(\C_1,\D_1)$, the first statement holds.

Now suppose that $\D^c$ is abelian.  
Lemma~\ref{abelcom} shows that $(\C,\D^c)$ is a regular MASA inclusion
and that the identity mapping of $\C$ onto itself 
is regular.  
By part (i), $\dual{\text{Id}}$ carries $\fS_s(\C,\D^c)$ into
$\fS(\C,\D)$; thus $\fS_s(\C,\D^c)\subseteq \fS(\C,\D)$. 
As any element of $\hat{\D}$ can be extended to an element of
$\widehat{\D^c}$, we see that 
the restriction map is onto.
Part~\eqref{Dext3} of Proposition~\ref{Dextreme} gives the weak-$*$
continuity.
  
\end{proof}

We turn now to a result which shows that there are inclusions with few
compatible states.  In fact, some inclusions have no compatible
states.  This result applies when the relative commutant of $\D$ in
$\C$ is all of $\C$, e.g.\ $(\C,\bbC I)$.  The result shows that when
$\N(\C,\D)$ is too large, it may happen that $\fS(\C,\D)$ is empty.
For example, when $\C$ is a unital simple \cstaralg\ with
$\dim(\C)>1$, then $\fS(\C,\bbC I)=\emptyset.$

\begin{theorem}\label{allunitaries}  Let $(\C,\D)$ be an
  inclusion and let $\U(\C)$ be the unitary group of $\C$.  Assume
  that $\U(\C)\subseteq \N(\C,\D)$.  Then $(\C,\D)$ is regular and
  $\fS(\C,\D)$ is the set of all multiplicative linear functionals on
  $\C$.
\end{theorem}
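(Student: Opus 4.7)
Regularity is automatic: in any unital \cstaralg\ every element is a linear combination of four unitaries, and by hypothesis all unitaries lie in $\N(\C,\D)$, so $\spn\N(\C,\D)$ is dense in $\C$. Similarly, if $\rho$ is a multiplicative linear functional on $\C$ then for any $v\in\C$ one has $|\rho(v)|^2=\rho(v)\rho(v^*)=\rho(v^*v)$, so $\rho\in\fS(\C,\D)$. Thus the content of the theorem is the opposite inclusion: every $\rho\in\fS(\C,\D)$ is multiplicative.

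To prove this, fix $\rho\in\fS(\C,\D)$ and a self-adjoint element $h\in\C$, and consider the analytic function $f:\bbR\to\bbC$ given by $f(t):=\rho(e^{ith})$. For each $t$ the element $e^{ith}$ is a unitary in $\C$, so by hypothesis $e^{ith}\in\N(\C,\D)$, and the compatibility condition forces $|f(t)|^2\in\{0,\rho(I)\}=\{0,1\}$. Since $f$ is continuous and $f(0)=1$, the image of the connected set $\bbR$ must be a connected subset of $\{0\}\cup\bbT$ meeting $\bbT$, and hence $|f(t)|=1$ for every $t\in\bbR$. Writing $\overline{f(t)}=\rho((e^{ith})^*)=f(-t)$, this is equivalent to the identity $f(t)f(-t)\equiv 1$.

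Expanding $f(t)=\sum_{n\geq0}(it)^n\rho(h^n)/n!$ and comparing the $t^2$ coefficient in $f(t)f(-t)\equiv 1$ yields $\rho(h^2)=\rho(h)^2$. In the GNS Hilbert space $\H_\rho$ this says
\[
\|h-\rho(h)I\|_\rho^2=\rho(h^2)-2\rho(h)^2+\rho(h)^2=0,
\]
so $h-\rho(h)I$ lies in the left kernel $L_\rho\subseteq\ker\rho$. Since $L_\rho$ is a left ideal, for every $y\in\C$ we obtain $\rho(y(h-\rho(h)I))=0$, i.e.\ $\rho(yh)=\rho(y)\rho(h)$. Writing an arbitrary $x\in\C$ as $x=h_1+ih_2$ with $h_i=h_i^*$ gives $\rho(yx)=\rho(y)\rho(x)$, so $\rho$ is multiplicative.

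The only delicate point is the passage from the pointwise dichotomy $|f(t)|\in\{0,1\}$ to the global statement $|f(t)|\equiv1$; this rests on the topological observation that $\{0\}\cup\bbT\subseteq\bbC$ is disconnected, so a continuous image of $\bbR$ inside it that takes the value $1$ cannot meet $0$. Once $\rho(h^2)=\rho(h)^2$ is secured the rest is a standard GNS computation.
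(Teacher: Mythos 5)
Your proof is correct, and it takes a genuinely different and more elementary route than the paper. The paper also reduces the problem to showing that a state $\rho$ with $\rho(U)\in\{0\}\cup\bbT$ for all unitaries $U$ is multiplicative, but it then passes to the enveloping von Neumann algebra $\ddual{\C}$: it extends $\rho$ to the normal state $\ddual{\rho}$, uses the strong-$*$ density of $\U(\C)$ in the unitary group of $\ddual{\C}$ to transfer the dichotomy to all unitaries of the bidual, deduces $\ddual{\rho}(P)\in\{0,1\}$ for projections via the unitary $P+i(I-P)$, proves $\ddual{\rho}(PQ)=\ddual{\rho}(P)\ddual{\rho}(Q)$, and concludes by norm-density of the span of projections in a von Neumann algebra. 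You instead stay entirely inside $\C$, applying the dichotomy to the one-parameter unitary group $t\mapsto e^{ith}$ for self-adjoint $h$: continuity plus the disconnectedness of $\{0\}\cup\bbT$ forces $|\rho(e^{ith})|\equiv 1$, and the second-order Taylor coefficient of $f(t)f(-t)\equiv 1$ gives $\rho(h^2)=\rho(h)^2$, after which the usual left-kernel argument ($h-\rho(h)I\in L_\rho$, a left ideal contained in $\ker\rho$) yields multiplicativity. Your argument avoids the bidual machinery (normal extensions, strong-$*$ approximation of unitaries, joint strong-$*$ continuity of multiplication on bounded sets) at the cost of nothing essential; the paper's route is heavier but makes the projection structure of $\ddual{\C}$ explicit. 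All the steps you flag as delicate check out: $\overline{f(t)}=\rho(e^{-ith})=f(-t)$ since $\rho$ is a state and $h=h^*$, and the coefficient computation $2a_0a_2-a_1^2=\rho(h)^2-\rho(h^2)=0$ is right.
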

\begin{proof}
Since $\spn(\U(\C))=\C$, $(\C,\D)$ is a regular inclusion.  As
every multiplicative linear functional on $\C$ is a compatible state,
we need only prove that every element of $\fS(\C,\D)$ is a
multiplicative linear functional.

Fix   
  $\rho\in\fS(\C,\D)$. Then for every unitary $U\in\C$ we have
  $\rho(U)\in\{0\}\cup \bbT$.    Let $\pi$ be a universal
  representation of $\C$, and identify $\ddual{\C}$ with the von
  Neumann algebra $\pi(\C)''$.  Also, regard $\C$ as a subalgebra of
  $\ddual{\C}$.   
  Let $\ddual{\rho}$ denote the normal state
  on $\ddual{\C}$ obtained from $\rho$.  By
  \cite[II.4.11]{TakesakiThOpAlI}, every unitary in $\ddual{\C}$ is
  the strong-$*$ limit of a net of unitaries in $\C$.  Since $\ddual{\rho}$ is
  normal, 
 $\ddual{\rho}(W)\in\{0\}\cup
  \bbT$ for every unitary $W\in\ddual{\C}$.  

Let $P$ be a projection in $\ddual{\C}$.  We shall show that
$\ddual{\rho}(P)\in \{0,1\}$.   We argue by  contradiction.  Suppose that
$0<\ddual{\rho}(P) <1$.  Then  
   $0< |
  \ddual{\rho}(P)+ i\ddual{\rho}(I-P)| < 1.$  Put $W=P
  +i(I-P).$  Then $W$ is a unitary belonging to
  $\ddual{\C}$, 
  and therefore we may find a net
  $U_\alpha$ of unitaries in $\C$ so that $U_\alpha$
  converges strong-$*$ to $W$.  But then $|\rho(U_\alpha)|\rightarrow
  |\ddual{\rho}(W)|\in (0,1).$   This implies that there exists a unitary
  $U\in\C$ such that $|\rho(U)|\in (0,1)$, which is a contradiction.
  Therefore $\ddual{\rho}(P)\in\{0,1\}$ for every projection
  $P\in\ddual{\C}$.  

  Now let $P, Q\in \ddual{\C}$ be projections.  We claim that
  $\ddual{\rho}(PQ)=\ddual{\rho}(P)\ddual{\rho}(Q)$.  By the
  Cauchy-Schwartz inequality, $|\ddual{\rho}(PQ)|\leq
  \ddual{\rho}(P)\ddual{\rho}(Q)$, so that $\ddual{\rho}(PQ)=0$ if
  $0\in\{\ddual{\rho}(P), \ddual{\rho}(Q)\}$.  Suppose then that
  $\ddual{\rho}(P)=\ddual{\rho}(Q)=1$. Since $2P-I$ and $2Q-I$ are
  unitaries in $\ddual{\C}$, we may find nets of unitaries $u_\alpha$
  and $v_\alpha$ in $\C$ so that $u_\alpha$ and $v_\alpha$ converge
  $*$-strongly to $2P-I$ and $2Q-I$ respectively.  Both $\rho(u_\alpha)$
  and $\rho(v_\alpha)$ are eventually non-zero because
$$\lim\rho(u_\alpha)=\ddual{\rho}(2P-I)=1=\ddual{\rho}(2Q-I)= \lim\rho(v_\alpha).$$
As multiplication on bounded subsets of $\ddual{\C}$ is jointly
continuous in the strong-$*$ topology, $u_\alpha v_\alpha$
converges strongly to $(2P-I)(2Q-I).$ By
Proposition~\ref{Dextreme}\eqref{Dext0},
$$\ddual{\rho}((2P-I)(2Q-I))=\lim\rho(u_\alpha v_\alpha)=\lim
\rho(u_\alpha)\rho(v_\alpha)=\ddual{\rho}(2P-I)\ddual{\rho}(2Q-I)=1.$$  On the
other hand, a calculation shows that 
$$\ddual{\rho}((2P-I)(2Q-I))=4\ddual{\rho}(PQ)-3.$$ Combining these equalities
gives $\ddual{\rho}(PQ)=1$, as desired.  The
claim follows.

Let  $X=\sum_{j=1}^n\lambda_jP_j$ and $Y=\sum_{j=1}^n
\mu_jQ_j$ be linear combinations of projections $\{P_j\}_{j=1}^n$ and
$\{Q_j\}_{j=1}^n$ in $\ddual{\C}$.
It follows from the previous paragraph that
$\ddual{\rho}(XY)=\ddual{\rho}(X)\ddual{\rho}(Y)$.    
Since any von Neumann algebra is the norm closure of the
span of its projections, $\ddual{\rho}$ is multiplicative on
$\ddual{\C}$.  It then follows that $\rho$ is multiplicative on $\C$.

\end{proof}

We now turn to the representations arising from states in
$\fS(\C,\D)$.  We begin with a simple lemma concerning
states on regular inclusions, whose proof we leave to the reader.

\begin{lemma}\label{precompatProp}  Let $(\C,\D)$ be a regular
  inclusion and suppose that $\rho$ is a state on $\C$.  
Then $$\spn\{v+L_\rho:
  v\in\N(\C,\D), \, \rho(v^*v)>0\}$$ is norm-dense in $\H_\rho$.
\end{lemma}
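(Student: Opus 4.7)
The plan is to reduce the claim to an application of regularity plus the basic observation that the quotient map $\C \to \H_\rho$ is norm decreasing.

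First I would note that the map $q_\rho : \C \to \H_\rho$ defined by $q_\rho(x) = x + L_\rho$ is norm decreasing, since $\|x + L_\rho\|_\rho^2 = \rho(x^*x) \leq \|x\|$ for every $x \in \C$. Moreover $q_\rho(\C)$ is by definition dense in $\H_\rho$. By regularity, $\spn \N(\C,\D)$ is norm-dense in $\C$, so applying $q_\rho$ and using its continuity gives that
\[
\spn\{v + L_\rho : v \in \N(\C,\D)\}
\]
is dense in $q_\rho(\C)$, hence in $\H_\rho$.

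The second (and only other) step is to observe that the restriction to normalizers $v$ with $\rho(v^*v) > 0$ is harmless. Indeed, if $v \in \N(\C,\D)$ satisfies $\rho(v^*v) = 0$, then $v \in L_\rho$, so that $v + L_\rho = 0$ in $\H_\rho$. Consequently
\[
\spn\{v + L_\rho : v \in \N(\C,\D)\} = \spn\{v + L_\rho : v \in \N(\C,\D),\ \rho(v^*v) > 0\},
\]
and the left-hand side was shown in the previous step to be dense in $\H_\rho$.

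There is no substantive obstacle here; the lemma is a direct combination of the density of $\spn \N(\C,\D)$ in $\C$ with the fact that elements of $L_\rho$ are identified with $0$ in the GNS space. No use of the MASA or compatible-state machinery is required.
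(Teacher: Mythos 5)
Your proof is correct and is precisely the routine argument the paper intends (the paper states this lemma with the proof left to the reader): continuity of the GNS quotient map, density of $\spn\N(\C,\D)$ in $\C$ by regularity, and the observation that normalizers with $\rho(v^*v)=0$ contribute the zero vector. One cosmetic slip: the displayed inequality should read $\norm{x+L_\rho}_\rho^2=\rho(x^*x)\leq\norm{x^*x}=\norm{x}^2$ rather than $\rho(x^*x)\leq\norm{x}$, but this does not affect the argument.
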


\begin{proposition} \label{compatpure}
Let $(\C,\D)$ be a regular inclusion, let $\rho\in\fS(\C,\D)$, 
and let  $T\subseteq \Lambda_\rho$ be
chosen so that for every $v\in T$, $\rho(v^*v)=1$ and $T$ contains
exactly one element from each $\sim_\rho$ equivalence class. 
Then the
following statements hold.
\begin{enumerate}
\item\label{compatpure1} $\{v+L_\rho:v\in T\}$ is an orthonormal basis for
  $\H_\rho.$
\item\label{compatpure2}  For $v\in T$, let $\K_v:=\{\xi\in\H_\rho : \pi_\rho(d)\xi=\rho(v^* d v) \xi \text{ for
  all } d\in \D\}$  and let $\sigma=\rho|_\D$.  Then
$\K_v=\overline{\spn}\{w+L_\rho: w\in T\text{ and }
\beta_w(\sigma)=\beta_v(\sigma)\}.$
\item\label{compatpure3} For $v\in T$, let $P_v$ be the orthogonal
  projection of $\H_\rho$ onto $\K_v$.  Then $P_v$ is a minimal
  projection in $\pi_\rho(\D)''$ and
  $\join_{v\in T} P_v=I$. 
\item \label{compatpure4} $\pi_\rho(\D)''$ is  an abelian and atomic
  von Neumann algebra.  
\end{enumerate} 
\end{proposition}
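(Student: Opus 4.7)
The plan is to prove the four statements roughly in order, with statement (1) carrying most of the content and the rest following by short spectral-theoretic arguments.

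For statement (1), I would first establish that $\{v+L_\rho:v\in T\}$ is orthonormal. The normalization $\norm{v+L_\rho}_\rho^2=\rho(v^*v)=1$ is immediate from the choice of $T$. For orthogonality, if $v,w\in T$ are distinct, then $v\not\sim_\rho w$, so by definition $v^*w\notin \Delta_\rho$, i.e.\ $\rho(v^*w)=0$; hence $\innerprod{w+L_\rho,v+L_\rho}_\rho=0$. For totality, I would invoke Lemma~\ref{precompatProp}, which says $\spn\{w+L_\rho: w\in\Lambda_\rho\}$ is dense in $\H_\rho$, and show each $w+L_\rho$ lies in the span of $\{v+L_\rho:v\in T\}$. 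Given $w\in\Lambda_\rho$, pick the unique $v\in T$ with $w\sim_\rho v$; then by Proposition~\ref{Dextreme}\eqref{Dext2a0},
\[
|\innerprod{w+L_\rho,v+L_\rho}_\rho|^2=|\rho(v^*w)|^2=\rho(v^*v)\rho(w^*w)
=\norm{v+L_\rho}_\rho^2\,\norm{w+L_\rho}_\rho^2,
\]
so Cauchy--Schwartz equality forces $w+L_\rho=\rho(v^*w)(v+L_\rho)$. This gives density and completes (1).

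For (2), Proposition~\ref{Dextreme}\eqref{Dext2a} gives $\pi_\rho(d)(w+L_\rho)=\beta_w(\sigma)(d)(w+L_\rho)$ for every $w\in T$ and $d\in\D$, so each $w+L_\rho$ with $\beta_w(\sigma)=\beta_v(\sigma)$ lies in $\K_v$. For the reverse inclusion, expand an arbitrary $\xi\in\K_v$ in the basis from (1) as $\xi=\sum_{w\in T}c_w(w+L_\rho)$; applying $\pi_\rho(d)$ and comparing with $\beta_v(\sigma)(d)\xi$ shows $c_w\neq 0$ forces $\beta_w(\sigma)=\beta_v(\sigma)$.

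For (3), since $\K_v$ is the eigenspace for the character $\beta_v(\sigma)$ on $\pi_\rho(\D)$, it (and its orthogonal complement) is invariant under the $*$-algebra $\pi_\rho(\D)$, so $P_v$ commutes with $\pi_\rho(\D)$ and thus lies in $\pi_\rho(\D)''$. The identity $\pi_\rho(d)P_v=\beta_v(\sigma)(d)P_v$ gives $\pi_\rho(\D)P_v\subseteq\bbC P_v$, and taking weak closures yields $\pi_\rho(\D)''P_v\subseteq \bbC P_v$; any subprojection $Q\leq P_v$ in $\pi_\rho(\D)''$ then satisfies $Q=QP_v\in\bbC P_v$, forcing $Q\in\{0,P_v\}$, so $P_v$ is minimal. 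Finally, every basis vector $v+L_\rho$ lies in the range of $P_v$, so by (1), $\bigvee_{v\in T}P_v=I$.

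Statement (4) is then immediate: $\pi_\rho(\D)$ acts diagonally in the orthonormal basis of joint eigenvectors from (1), so $\pi_\rho(\D)''$ is abelian; and by (3) it contains a family of minimal projections summing to $I$, so it is atomic. I do not expect a serious obstacle: the only subtlety is confirming the Cauchy--Schwartz equality step in (1) genuinely produces a scalar multiple (which requires $\rho(v^*v)=1$ and membership in $\Lambda_\rho$, both guaranteed by hypothesis) and keeping careful track that the spectral decomposition in (3) uses only the commutative von Neumann algebra generated by $\pi_\rho(\D)$ rather than any MASA assumption on $\D\subseteq\C$.
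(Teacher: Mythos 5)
Your arguments for statements (1), (2) and (4) follow the paper's proof essentially verbatim: orthonormality of $\{v+L_\rho:v\in T\}$ from the definition of $\sim_\rho$, totality via Lemma~\ref{precompatProp} together with the Cauchy--Schwartz equality case from Proposition~\ref{Dextreme}\eqref{Dext2a0}, and the eigenspace computation for (2). Those parts are correct.

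There is, however, a genuine gap in your proof of (3). You argue that $\K_v$ and its orthogonal complement are invariant under $\pi_\rho(\D)$, ``so $P_v$ commutes with $\pi_\rho(\D)$ and thus lies in $\pi_\rho(\D)''$.'' Commuting with $\pi_\rho(\D)$ places $P_v$ in the commutant $\pi_\rho(\D)'$, not in the double commutant $\pi_\rho(\D)''$; these can be very different, since $\pi_\rho(\D)''$ need not be a MASA in $\B(\H_\rho)$ (see the remark following Theorem~\ref{atomic}, where $\pi_\rho(\D)''=\bbC I$ while $\vngp_\rho$ is a MASA). Since minimality of $P_v$ \emph{in} $\pi_\rho(\D)''$ presupposes membership in $\pi_\rho(\D)''$, this step must be repaired. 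The correct argument, which is what the paper does, is to show that $\K_v$ is invariant under every $X\in\pi_\rho(\D)'$: for $\xi\in\K_v$ and $d\in\D$ one has $\pi_\rho(d)X\xi=X\pi_\rho(d)\xi=\rho(v^*dv)X\xi$, so $X\xi\in\K_v$. As $\pi_\rho(\D)'$ is self-adjoint, $\K_v$ reduces every such $X$, whence $P_v\in(\pi_\rho(\D)')'=\pi_\rho(\D)''$. With this correction the rest of your (3) --- the closure argument giving $\pi_\rho(\D)''P_v\subseteq\bbC P_v$ and hence minimality, and $\join_{v\in T}P_v=I$ from (1) --- goes through and matches the paper, which uses the Kaplansky density theorem where you take weak closures; either works.
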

\begin{proof}
\prt{compatpure1}
 If $v,
w\in T$ are distinct, then $\rho(v^*w)=0$, so that $\{v+L_\rho: v\in
T\}$ is an orthonormal set.  Part~\eqref{Dext2a0} of
Proposition~\ref{Dextreme} and the Cauchy-Schwartz inequality show that if $v\in T$, and $w\in
\Lambda_\rho$ is such that $v\sim_\rho w$, then $w+L_\rho\in
\spn\{v+L_\rho\}$.  This, together with Lemma~\ref{precompatProp}, shows
  that  
$$\overline{\spn}\{v+L_\rho: v\in T\}=
\overline{\spn}\{v+L_\rho: v\in
\Lambda_\rho\}=\overline{\spn}\{v+L_\rho: v\in \N(\C,\D)\}=\H_\rho.$$  Thus
$\{v+L_\rho:v\in T\}$ is an orthonormal basis for $\H_\rho$.

\prt{compatpure2}   If $\xi\in \overline{\spn}\{w+L_\rho:
\beta_w(\sigma)=\beta_v(\sigma)\}$, Part~\eqref{Dext2a} of
Proposition~\ref{Dextreme} implies that $\xi\in \K_v$.    For the
opposite inclusion, suppose 
$\xi\in\K_v$.  Then for $w\in T$ and $d\in \D$ we have
$$\beta_v(\sigma)(d)\innerprod{\xi,w+L_\rho} =
\innerprod{\pi_\rho(d)\xi,w+L_\rho}=
\innerprod{\xi,\pi_\rho(d^*)(w+L_\rho)} =
\beta_w(\rho)(d)\innerprod{\xi,w+L_\rho}.$$ Hence if
$\innerprod{\xi,w+L_\rho}\neq 0$, then
$\beta_v(\sigma)=\beta_w(\sigma)$.   This yields $\xi\in
\overline{\spn}\{w+L_\rho: w\in T \text{ and }
  \beta_w(\sigma)=\beta_v(\sigma)\}.$

\prt{compatpure3}  First note that for $v\in T$, $v+L_\rho\in \K_v$; thus, since
$\{v+L_\rho: v\in T\}$ is an orthonormal basis for $\H_\rho$, we
obtain $\join_{v\in T} P_v=I$. 

 Let $X\in\pi_\rho(\D)'$ and $\xi\in
\K_v$.  Then for  $d\in \D$,
$$\pi_\rho(d)X\xi=X\pi_\rho(d)\xi=\rho(v^*dv) X\xi.$$
Therefore $X\xi\in \K_v$, showing that 
$\K_v$ is an invariant subspace for $X$.  As this holds for every
$X\in\pi_\rho(\D)'$, we conclude that $P_v\in \pi_\rho(\D)''$.

Let $v\in T$ and suppose that $Q\in \pi_\rho(\D)''$ is a 
projection with $0\leq Q\leq P_v$.  For all $d\in \D$ we have
$\pi_\rho(d)P_v= \beta_v(\sigma)(d)
P_v=\innerprod{\pi_\rho(d)(v+L_\rho), v+L_\rho}P_v $.  The Kaplansky Density
Theorem shows that for every $X\in \pi_\rho(\D)''$ we have
$XP_v=\innerprod{X(v+L_\rho), v+L_\rho}P_v\in\bbC P_v$.  Since $Q$ commutes with $P_v$, $QP_v$ is a
projection; hence 
$QP_v\in \{0,P_v\}$, so $P_v$ is a minimal projection in $\pi_\rho(\D)''$.

\prt{compatpure4}  This follows from statement~\eqref{compatpure3}.

\end{proof}

The following result shows that elements of $\fS(\C,\D)$ arise from
regular representations $\pi$ of $(\C,\D)$, which can be taken so that
$\pi(\D)''$ is atomic.    For  vectors $h_1, h_2$ in a Hilbert space $\H$
we use the notation $h_1h_2^*$ for the rank-one operator $h\mapsto
\innerprod{h,h_2}h_1$.

\begin{theorem}\label{atomic}  Let $(\C,\D)$ be a regular
  inclusion.  The following statements hold.
\begin{enumerate}
\item[i)] 
 Let $\rho\in\fS(\C,\D)$.  
and let $$\vngp_\rho:=\{(v+L_\rho)(v+L_\rho)^*: v\in
  \N(\C,\D)\}''\subseteq \B(\H_\rho).$$  Then $\vngp_\rho$ is an atomic MASA in
  $\B(\H_\rho)$ and $\pi_\rho:(\C,\D)\rightarrow
  (\B(\H_\rho),\vngp_\rho)$ is a regular $*$-homomorphism. 
\item[ii)]  Conversely, suppose $\pi:\C\rightarrow \bh$ is a regular $*$-homomorphism with
  $\pi(\D)''$ a (not necessarily atomic) MASA in $\bh$,  and let $E:\bh\rightarrow \pi(\D)''$
  be any conditional expectation.  Then for any pure state $\sigma$ of
  $\pi(\D)''$, $\sigma\circ E\circ\pi\in\fS(\C,\D)$.  
\end{enumerate}
\end{theorem}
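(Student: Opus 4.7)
The plan for part (i) is to realize $\vngp_\rho$ explicitly as the atomic MASA of diagonal operators with respect to the orthonormal basis supplied by Proposition~\ref{compatpure}. Choose $T \subseteq \Lambda_\rho$ as in Proposition~\ref{compatpure}(\ref{compatpure1}), so that $\{v+L_\rho : v\in T\}$ is an orthonormal basis of $\H_\rho$. For $v\in T$ let $p_v := (v+L_\rho)(v+L_\rho)^*$, the rank-one projection onto $\bbC(v+L_\rho)$. For any $w\in\N(\C,\D)$, either $\rho(w^*w)=0$, in which case $w+L_\rho=0$ and $(w+L_\rho)(w+L_\rho)^*=0$, or $w\in\Lambda_\rho$ and $w\sim_\rho v$ for a unique $v\in T$; in the latter case, equality in Cauchy--Schwartz together with Proposition~\ref{Dextreme}(\ref{Dext2a0}) forces $w+L_\rho$ to be a scalar multiple of $v+L_\rho$, so that $(w+L_\rho)(w+L_\rho)^* = \rho(w^*w)p_v$. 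Hence every generator of $\vngp_\rho$ lies in $\bbC\{p_v : v\in T\}$, and $\vngp_\rho = \{p_v : v\in T\}''$ is plainly an atomic MASA in $\B(\H_\rho)$.

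For the regularity claim in part (i), the containment $\pi_\rho(\D)\subseteq \vngp_\rho$ follows from Proposition~\ref{Dextreme}(\ref{Dext2a}): each $\pi_\rho(d)$ acts on $v+L_\rho$ by the scalar $\beta_v(\rho|_\D)(d)$, exhibiting it as a diagonal operator in the chosen basis. For $w\in\N(\C,\D)$ and $v\in T$, a direct computation yields
\[
\pi_\rho(w)p_v\pi_\rho(w)^* = (wv+L_\rho)(wv+L_\rho)^* \in \vngp_\rho,
\]
since $wv\in\N(\C,\D)$ and the preceding paragraph identifies such operators as elements of $\vngp_\rho$. The $\sigma$-weak continuity of conjugation by $\pi_\rho(w)$ together with the $\sigma$-weak closedness of $\vngp_\rho$ extends this to $\pi_\rho(w)\vngp_\rho \pi_\rho(w)^*\subseteq \vngp_\rho$; applying the same argument to $w^*$ gives the reverse inclusion needed for $\pi_\rho(w)\in\N(\B(\H_\rho),\vngp_\rho)$.

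For part (ii), the strategy is to reduce to Theorem~\ref{rcmany}(a) via Remark~\ref{basic}(\ref{basic2}). Since $(\B(\H),\pi(\D)'')$ is a MASA inclusion carrying the conditional expectation $E$, Theorem~\ref{rcmany}(a) gives $\sigma\circ E \in \fS(\B(\H),\pi(\D)'')$ for every $\sigma \in \widehat{\pi(\D)''}$. Because $\pi(\D)\subseteq \pi(\D)''$, the regular unital $*$-homomorphism $\pi$ may be regarded as a regular $*$-homomorphism of inclusions $\pi:(\C,\D)\to (\B(\H),\pi(\D)'')$, so Remark~\ref{basic}(\ref{basic2}) yields $\dual{\pi}(\sigma\circ E) = \sigma\circ E \circ \pi \in \fS(\C,\D)$, as required. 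The substantive step is the identification of $\vngp_\rho$ in part (i): although the generators are indexed by the potentially vast semigroup $\N(\C,\D)$, the equivalence relation $\sim_\rho$ and the resulting one-representative-per-class choice $T$ collapse them to a clean orthonormal picture; once this is in place, regularity in (i) and the entirety of (ii) are routine consequences of results already in hand.
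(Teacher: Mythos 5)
Your proposal is correct and follows essentially the same route as the paper: part (i) is proved by choosing the transversal $T$ from Proposition~\ref{compatpure}, observing that every generator $(w+L_\rho)(w+L_\rho)^*$ is a nonnegative multiple of one of the rank-one projections $(v+L_\rho)(v+L_\rho)^*$ with $v\in T$, and then verifying regularity on these projections and extending by weak density; part (ii) is the same two-step reduction via Theorem~\ref{rcmany}(a) and Remark~\ref{basic}\eqref{basic2}. The extra details you supply (the explicit scalar $\rho(w^*w)$ and the containment $\pi_\rho(\D)\subseteq\vngp_\rho$) are correct and harmless.
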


\begin{proof}  
For the first statement, choose $T$ as in the statement of Proposition~\ref{compatpure}.
For $v\in\N(\C,\D)$, we have $v+L_\rho=0$ if $v\notin \Lambda_\rho$;
and if $v+L_\rho\neq 0$, then there exists $w\in T$ such that
$v\sim_\rho w$, so $(v+L_\rho)(v+L_\rho)^*\in \bbC
(w+L_\rho)(w+L_\rho)^*$.  Since $\B:=\{w+L_\rho:w\in T\}$ is an
orthonormal basis for $\H_\rho$, we see that $\vngp_\rho$ is an atomic MASA in
$\B(\H_\rho)$.  

We now show that $\pi_\rho$ is a regular homomorphism.  Let
$v\in\N(\C,\D)$ and let $w\in T$.  Then
$\pi_\rho(v)(w+L_\rho)(w+L_\rho)^*\pi_\rho(v)^*=
(vw+L_\rho)(vw+L_\rho)^*\in\vngp_\rho$.  As
$\spn\{(w+L_\rho)(w+L_\rho)^*: w\in T\}$ is weakly dense in $\vngp_\rho$,
we conclude that $\pi_\rho(v)\vngp_\rho \pi_\rho(v)^*\subseteq \vngp_\rho$.
Similarly $\pi_\rho(v)^*\vngp_\rho \pi_\rho(v)\subseteq \vngp_\rho$.  Thus
$\pi_\rho$ is a regular $*$-homomorphism.  

For the second statement, Theorem~\ref{rcmany} shows that if 
$\sigma\in\widehat{\pi_\rho(\D)''},$ then $\sigma\circ E\in\fS(\bh,\pi(\D)'')$.
Remark~\ref{basic}\eqref{basic2} completes the proof.
\end{proof}

\begin{remark}{Remark} We have $\pi_\rho(\D)''\subseteq \vngp_\rho$
  always, but in general they can be very different.  Consider the
  state $\rho=\rho_\infty$ from Example~\ref{ToeplitzAlg}.  Then,
  using the notation from that example, $\{S^n+L_\rho: n\in\bbZ\}$ is
  an orthonormal basis for $\H_\rho$ (where $S^n=S^*{}^{|n|}$ when
  $n<0$).  Note that
  $\pi_\rho(\D)''=\bbC I$, while $\vngp_\rho$ is a MASA.  
\end{remark}

The following
  proposition characterizes when $\pi_\rho(\D)''$ and $\vngp_\rho$
  coincide.  We first make a definition.

\begin{definition}\label{dstab}  Let $(\C,\D)$ be an inclusion and let $f\in
  \Mod(\C,\D)$.
  The
  \textit{$\D$-stabilizer} of $f$ is the set,
$$\dstab(f):=\{v\in \N(\C,\D):  \text{for all
  $d\in\D$, }
f(v^*dv)=f(d) \}.$$ If for every $v\in \dstab(f)$ and
$x\in \C$, we have $f(x)=f(v^*xv)$, then we call $f$ a
\textit{$\D$-rigid state}.
\end{definition}

\begin{proposition}\label{equivMASA}  Let $(\C,\D)$ be a regular
  inclusion, and suppose that $\rho\in\fS(\C,\D)$.
 The following
  statements are equivalent.
\begin{enumerate}
\item
\label{equivMa} $\pi_\rho(\D)''$ is a MASA in $\B(\H_\rho)$.
\item
\label{equivMb}
If $v\in\dstab(\rho)$, then $\rho(v)\neq 0$.
\item
\label{equivMc}
$\rho$ is a pure and $\D$-rigid state. 
\end{enumerate}
\end{proposition}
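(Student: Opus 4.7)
The plan is to prove the cycle (1)$\Rightarrow$(3)$\Rightarrow$(2)$\Rightarrow$(1). Throughout, write $\sigma:=\rho|_\D$, and fix a choice of $T\subseteq\Lambda_\rho$ as in Proposition~\ref{compatpure}, with $I\in T$; then $\{v+L_\rho:v\in T\}$ is an orthonormal basis of $\H_\rho$, and the spaces $\K_v$ are precisely the atoms of the abelian von Neumann algebra $\pi_\rho(\D)''$. Before entering the cycle, I would record the following unpacking of Definition~\ref{dstab}: setting $d=I$ in the defining identity $\sigma(v^*dv)=\sigma(d)$ yields $\sigma(v^*v)=1$, after which that identity becomes $\beta_v(\sigma)=\sigma$; hence $v\in\dstab(\rho)$ if and only if $v\in\N(\C,\D)$, $\|v+L_\rho\|=1$, and $v+L_\rho\in\K_I$.

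For (1)$\Rightarrow$(3), an abelian MASA in $\B(\H_\rho)$ has only rank-one minimal projections, so $\K_I=\bbC(I+L_\rho)$. Any $X\in\pi_\rho(\C)'$ lies in $\pi_\rho(\D)'=\pi_\rho(\D)''$, so $X(I+L_\rho)=\alpha(I+L_\rho)$ for some $\alpha$; commuting $X$ with $\pi_\rho(v)$ for $v\in\N(\C,\D)$ and then applying Lemma~\ref{precompatProp} forces $X=\alpha I$, so $\rho$ is pure. For $\D$-rigidity, given $v\in\dstab(\rho)$, $v+L_\rho\in\K_I=\bbC(I+L_\rho)$ gives $v-\mu I\in L_\rho$ for some unimodular $\mu$, and standard Cauchy--Schwartz bounds on $L_\rho$ yield $\rho(v^*xv)=|\mu|^2\rho(x)=\rho(x)$ for every $x\in\C$.

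For (3)$\Rightarrow$(2), let $v\in\dstab(\rho)$ and put $\eta:=v+L_\rho$; then $\|\eta\|=1$, and $\D$-rigidity gives
$$\innerprod{\pi_\rho(x)\eta,\eta}_\rho=\rho(v^*xv)=\rho(x)\qquad (x\in\C).$$
Purity of $\rho$ makes $\pi_\rho$ irreducible, and two unit vectors in an irreducible representation implementing the same vector state differ by a scalar of modulus one. Thus $\eta=\mu(I+L_\rho)$ with $|\mu|=1$, and $\rho(v)=\innerprod{\eta,I+L_\rho}_\rho=\mu\neq 0$.

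For (2)$\Rightarrow$(1), I argue contrapositively: if $\pi_\rho(\D)''$ is not a MASA, some atom $\K_v$ ($v\in T$) has dimension greater than one, so I may choose distinct $v,w\in T$ with $\beta_w(\sigma)=\beta_v(\sigma)$, and set $u:=v^*w\in\N(\C,\D)$. The crucial calculation is that $\pi_\rho(v^*)$ restricts to an isometry $\K_v\to\K_I$: on $\K_v$, $\pi_\rho(vv^*)$ acts as $\beta_v(\sigma)(vv^*)I=\sigma((v^*v)^2)I=I$ because $\sigma(v^*v)=1$, so $\pi_\rho(v^*)|_{\K_v}$ is norm-preserving; and for each $w'\in T$, $\innerprod{\pi_\rho(v^*)(w+L_\rho),w'+L_\rho}_\rho=\innerprod{w+L_\rho,vw'+L_\rho}_\rho$ can be nonzero only if $vw'+L_\rho\in\K_v$, which by Proposition~\ref{betacalc}(1) and injectivity of $\beta_v$ forces $\beta_{w'}(\sigma)=\sigma$, i.e.\ $w'+L_\rho\in\K_I$. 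Hence $u+L_\rho$ is a unit vector in $\K_I$, so $u\in\dstab(\rho)$; hypothesis (2) then gives $\rho(v^*w)=\rho(u)\neq 0$, contradicting the orthogonality of the distinct basis vectors $v+L_\rho$ and $w+L_\rho$.

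The main obstacle is the implication (2)$\Rightarrow$(1): from the abstract hypothesis that some eigenspace $\K_v$ has dimension greater than one, one must manufacture an explicit element of $\dstab(\rho)$ that is annihilated by $\rho$. The key observation enabling this is that $\sigma(v^*v)=1$ forces $\pi_\rho(vv^*)|_{\K_v}=I$, so that left multiplication by $v^*$ in the GNS transports the extra dimension of $\K_v$ isometrically into $\K_I$, producing the required vector in $\dstab(\rho)$ orthogonal to $I+L_\rho$.
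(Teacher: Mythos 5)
Your proof is correct, but it runs the cycle in the opposite direction from the paper and two of the three legs are genuinely different arguments. The paper proves (1)$\Rightarrow$(2)$\Rightarrow$(3)$\Rightarrow$(1): purity is extracted from the \emph{weaker} hypothesis (2) by a convexity argument (writing $\rho=t\rho_1+(1-t)\rho_2$ and showing $\rho_i(v)=\rho(v)$ for every normalizer, using Lemma~\ref{vactmod} to kill the normalizers with $\beta_v(\sigma)\neq\sigma$), and the geometric work is done in (3)$\Rightarrow$(1), where purity gives $\pi_\rho(\C)''=\B(\H_\rho)$ and rigidity forces $w^*v+L_\rho$ to be proportional to $I+L_\rho$, collapsing each $\K_v$ to one dimension. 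You instead get purity in (1)$\Rightarrow$(3) directly from the MASA hypothesis via a commutant computation ($\pi_\rho(\C)'\subseteq\pi_\rho(\D)'=\pi_\rho(\D)''$ preserves the one-dimensional $\K_I$, then Lemma~\ref{precompatProp} propagates the eigenvalue), and you put the geometric content into a contrapositive (2)$\Rightarrow$(1): if some $\K_v$ has dimension $>1$, the observation that $\sigma(v^*v)=1$ makes $\pi_\rho(v^*)|_{\K_v}$ an isometry into $\K_I$ (with the $\beta_{vw'}=\beta_v\circ\beta_{w'}$ calculus and injectivity of $\beta_v$ confining the image), so $u=v^*w$ lands in $\dstab(\rho)$ with $\rho(u)=\rho(v^*w)=0$ by orthogonality of the basis. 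Your (3)$\Rightarrow$(2) is essentially the vector-state/irreducibility step embedded inside the paper's (3)$\Rightarrow$(1). The trade-off: your (2)$\Rightarrow$(1) never invokes purity or irreducibility and is constructive — it exhibits the offending stabilizer element explicitly from the dynamics — while the paper's route shows that the convexity argument alone already yields purity from condition (2). Both arguments check out; I verified in particular that $u+L_\rho\in\K_I$ together with $\|u+L_\rho\|=1$ does give $\rho(u^*du)=\rho(d)$ for all $d\in\D$, which is exactly membership in $\dstab(\rho)$.
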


\begin{proof}
Throughout the proof, we let $\sigma=\rho|_\D$, which by
Proposition~\ref{Dextreme}\eqref{Dext1}, belongs to $\hat{\D}$.

Suppose $\pi_\rho(\D)''$ is a MASA in $\B(\H_\rho)$ and let
$v\in\dstab(\rho)$, so $v\in\Lambda_\rho$ and 
$\beta_v(\sigma)=\sigma$.  Then, using the
notation of Proposition~\ref{compatpure}, we find that
$P_I(v+L_\rho)=v+L_\rho$.  Since $\pi_\rho(\D)''$ is a MASA, $P_I$ is
the orthogonal projection onto $\bbC(I+L_\rho)$.
We conclude that
$v+L_\rho$ is a non-zero scalar multiple of $I+L_\rho$.  Hence
$0\neq \innerprod{v+L_\rho, I+L_\rho}=\rho(v)$.  Thus $v\in
\Delta_\rho$, so statement~\eqref{equivMa} implies
statement~\eqref{equivMb}.

Before proving the next implication, we pause for some generalities.
Suppose that $f$ is any state on $\C$ with the property that
$f|_\D=\sigma$.  If $v\in \N(\C,\D)$ and 
$f(v^*v)=0$, then the Cauchy-Schwartz inequality yields $f(v)=0$.
Also note  that if $v\in\Lambda_\rho$  satisfies
$\beta_v(\sigma)\neq \sigma$, then $f(v)=0$.  
 Indeed, for such  $v\in\Lambda_\rho$,
choose $d\in \D$ so that $\beta_v(\sigma)(d)=0$ and $\sigma(d)=1.$
Then as $v^*dv\in \D$,
$$0=f(v^*dv)=f(v)f(v^*dv)=f(vv^*dv)=f(dvv^*v)=
f(d)f(v)f(v^*v).$$  As $f(d)$ and $f(v^*v)$ are both
non-zero, we conclude that $f(v)=0$.

Now suppose statement~\eqref{equivMb} holds.  We first prove that $\rho$ is
pure.  
So suppose that $t\in [0,1]$ and that for $i=1,2,$ $\rho_i$ are
states on $\C$ and $\rho=t\rho_1+(1-t)\rho_2$.  As $\rho|_\D$ is a
pure state on $\D$, we have $\rho_i|_\D=\sigma.$  We claim that
for every $v\in \N(\C,\D)$,  $\rho_1(v)=\rho_2(v)=\rho(v)$.  By the
previous paragraph, it remains  only to prove this for $v\in \Lambda_\rho$
such that $\beta_v(\sigma)=\sigma$.  So suppose $v$ has this
property.  By the hypothesis in statement~\eqref{equivMb}, $\rho(v)\neq
0$.  Clearly $|\rho_i(v)|\leq
\rho_i(v^*v)^{1/2}=\rho(v^*v)^{1/2}=|\rho(v)|.$  Thus we have
$$t\frac{\rho_1(v)}{\rho(v)}+(1-t)\frac{\rho_2(v)}{\rho(v)}=1,$$ which
expresses $1$ as a convex combination of elements of the closed unit
disk.  Hence $\rho_i(v)=\rho(v)$, establishing the claim. 
By regularity, we conclude that $\rho_1=\rho_2=\rho$, so $\rho$ is a
pure state.

Next, if $v\in\Lambda_\rho$ and $\beta_v(\sigma)=\sigma$, then by
hypothesis, $\rho(v)\neq 0$.  So the final part of
statement~\eqref{equivMc} follows from part~\eqref{Dext0} of
Proposition~\ref{Dextreme}.   Thus statement~\eqref{equivMb}
implies statement~\eqref{equivMc}.

Finally, suppose that statement~\eqref{equivMc} holds.  Let $v,w\in
\Lambda_\rho$ be such that $\beta_v(\sigma)=\beta_w(\sigma)$.  We
shall show that $\{v+L_\rho,w+L_\rho\}$ is a linearly dependent set,
showing that $\K_v$ is one-dimensional.  We have
$\beta_{w^*v}(\sigma)=\sigma=\beta_{v^*w}(\sigma)$, so
$\rho(v^*ww^*v)^{-1/2}w^*v\in \dstab(\rho)$.  By hypothesis,
$\ds\rho(x)=
\frac{\rho(v^*wxw^*v)}{\rho(v^*ww^*v)}$
for every $x\in \C$. Thus if $\eta=\rho(v^*ww^*v)^{-1/2} w^*v+L_\rho,$
we have $\innerprod{\pi_\rho(x)\eta,\eta} =\innerprod{\pi_\rho(x)
  (I+L_\rho), I+L_\rho}$ for every $x\in \C$.  Since $\rho$ is pure,
$\pi_\rho(\C)''=\B(\H_\rho)$, so that for every $T\in\B(\H_\rho)$ we
obtain $$\innerprod{T\eta,\eta} =\innerprod{T(I+L_\rho), I+L_\rho}.$$
Hence $\{\eta,I+L_\rho\}$ is a linearly dependent set.  Thus,
$\{w^*v+L_\rho, I+L_\rho\}$ is linearly dependent. Since both vectors
in this set are non-zero, we find $0\neq \innerprod{w^*v+L_\rho,
  I+L_\rho}=\rho(w^*v).$ Applying part~\eqref{Dext2a0} of
Proposition~\ref{Dextreme} and the Cauchy-Schwartz inequality, we obtain 
$\{v+L_\rho,w+L_\rho\}$ is linearly dependent, as desired.

As $\K_v$ is one-dimensional, Proposition~\ref{compatpure} implies
 that $\pi_\rho(\D)''$ is a
MASA.

\end{proof}

\section{The $\D$-Radical and Embedding Theorems}\label{sectRegEm}

Our purpose in this section is to prove two embedding theorems.  The
first characterizes when a regular inclusion can be 
 regularly embedded  into a regular MASA
inclusion, while the second characterizes when a regular inclusion may
be regularly embedded  into a \cstar-diagonal.

The first of these theorems shows that the obvious necessary condition suffices.

\begin{theorem}\label{regemMASA}  Let $(\C,\D)$ be a regular
  inclusion.  The following statements are equivalent.
\begin{enumerate}
\item[a)]  There exists a regular MASA inclusion $(\C_1,\D_1)$ and a
  regular $*$-monomorphism $\alpha:(\C,\D)\rightarrow (\C_1,\D_1)$.
\item[b)]  The relative commutant $\D^c$ of $\D$ in $\C$ is abelian.
\end{enumerate}
\end{theorem}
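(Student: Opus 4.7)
For the easy direction (b) $\Rightarrow$ (a), I would take $(\C_1,\D_1)=(\C,\D^c)$ and $\alpha=\mathrm{id}_\C$. Lemma~\ref{abelcom} already tells us that when $\D^c$ is abelian, $(\C,\D^c)$ is a MASA inclusion and $\mathrm{id}$ carries $\N(\C,\D)$ into $\N(\C,\D^c)$; regularity of $(\C,\D^c)$ then follows at once, since $\spn\,\N(\C,\D)$ is already dense in $\C$.

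The content is in (a) $\Rightarrow$ (b), and the plan is to prove the stronger statement $\alpha(\D^c)\subseteq \D_1$; because $\alpha$ is injective and $\D_1$ is abelian, this forces $\D^c$ to be abelian. Fix a self-adjoint $c\in\D^c$ and consider the one-parameter unitary group $u_t:=e^{itc}$, which lies in $\D^c$ by continuous functional calculus. Since $c$ (and hence each $u_t$) commutes pointwise with $\D$, we have $u_t\D u_t^*=\D$, so $u_t\in\N(\C,\D)$. Regularity of $\alpha$ therefore gives $\alpha(u_t)=e^{it\alpha(c)}\in\N(\C_1,\D_1)$, so $e^{it\alpha(c)} d\, e^{-it\alpha(c)}\in\D_1$ for every $d\in\D_1$ and every $t\in\bbR$. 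The curve $t\mapsto e^{it\alpha(c)} d\, e^{-it\alpha(c)}$ is norm-differentiable in $\C_1$ with derivative $i[\alpha(c),d]$ at $t=0$; since its values all lie in the norm-closed subspace $\D_1$, so does this derivative. Hence the linear map $\delta:\D_1\to\D_1$, $\delta(d):=[\alpha(c),d]$, is well-defined, and a routine Leibniz check shows it is a bounded derivation of the commutative $C^*$-algebra $\D_1$.

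At this point I would invoke the Singer--Wermer theorem: every bounded derivation of a commutative Banach algebra takes values in the Jacobson radical, which is trivial for any $C^*$-algebra; thus $\delta\equiv 0$. It follows that $\alpha(c)$ commutes with all of $\D_1$, and since $\D_1$ is a MASA in $\C_1$, $\alpha(c)\in\D_1$. Decomposing an arbitrary $c\in\D^c$ into its real and imaginary parts (both of which again lie in $\D^c$) then yields $\alpha(\D^c)\subseteq\D_1$, which completes the argument. The one non-elementary step in the plan is the vanishing of bounded derivations of a commutative $C^*$-algebra; the remainder is a direct exploitation of the fact that unitaries in $\D^c$ are automatically normalizers of $\D$, combined with the regularity of $\alpha$.
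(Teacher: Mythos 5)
Your proof is correct, and your argument for (a) $\Rightarrow$ (b) takes a genuinely different route from the paper's. The paper stays inside its pseudo-expectation machinery: it invokes the unique pseudo-expectation $E_1:\C_1\rightarrow I(\D_1)$ of the regular MASA inclusion $(\C_1,\D_1)$, uses strongly compatible states together with Theorem~\ref{allunitaries} (applied to $(\D^c,\D)$) to see that $E_1\circ\alpha|_{\D^c}$ is a $*$-homomorphism, and then, for each unitary $u\in\D^c$, feeds the fact that $E_1(\alpha(u))$ is unitary into Theorem~\ref{uniquecpmap}(c) to conclude that $J_{\alpha(u)}$ is an essential ideal of $\D_1$; Proposition~\ref{Jvee} then forces $\fix{\beta_{\alpha(u)}}=\hat{\D}_1$ and $\alpha(u)\in\D_1$. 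You reach the same intermediate conclusion $\alpha(\D^c)\subseteq\D_1$ by exponentiating self-adjoint elements of $\D^c$, noting that $\Ad(e^{it\alpha(c)})$ preserves $\D_1$ by regularity of $\alpha$, differentiating at $t=0$ (the derivative stays in $\D_1$ because the difference quotients do and $\D_1$ is norm-closed) to obtain a bounded derivation of the commutative \cstaralg\ $\D_1$, and killing it with Singer--Wermer; the MASA hypothesis then puts $\alpha(c)$ in $\D_1$. Your route imports one classical external theorem but is otherwise elementary and self-contained, and it never uses regularity of $(\C_1,\D_1)$ (nor of $(\C,\D)$) in this direction --- only that $\alpha$ is a regular unital monomorphism into a MASA inclusion --- so it establishes a formally stronger implication; the paper's route avoids derivation theory but depends on the existence and uniqueness of the pseudo-expectation, hence on regularity of $(\C_1,\D_1)$. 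The (b) $\Rightarrow$ (a) direction is handled identically in both proofs via Lemma~\ref{abelcom}.
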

\begin{proof}
Suppose that $(\C_1,\D_1)$ is a regular MASA inclusion and
$\alpha:(\C,\D)\rightarrow (\C_1,\D_1)$ is a regular $*$-monomorphism.
Let $(I(\D_1),\iota_1)$ be an injective envelope for $\D_1$ and let
$E_1:\C_1\rightarrow I(\C_1)$ be the pseudo-expectation for 
$\iota_1$. 
  
Observe that $(\D^c,\D)$ is a
regular inclusion, and  $\N(\D^c,\D)\subseteq \N(\C,\D)$.  
Let $\rho\in \fS_s(\C_1,\D_1)$. 
Part~(i) of Theorem~\ref{existcompatA} shows that 
 $\rho\circ\alpha\in
\fS(\C,\D)$, and hence
$\rho\circ\alpha|_{\D^c}\in \fS(\D^c,\D)$.
Theorem~\ref{allunitaries} implies $\rho\circ\alpha|_{\D^c}$ 
is a multiplicative linear
functional on $\D^c$.  By the definition of $\fS_s(\C_1,\D_1)$, we see
that for every $\tau\in
\widehat{I(\D_1)}$, $\tau\circ E_1\circ \alpha|_{\D^c}$ is a multiplicative
linear functional on $\D^c$.  
We conclude $E_1\circ\alpha|_{\D^c}$ is a $*$-homomorphism
of $\D^c$ into $I(\D_1)$.

Let $u\in \D^c$ be a unitary element.  Clearly, $u\in \N(\C,\D)$, so
regularity of $\alpha$ implies that $\alpha(u)\in \N(\C_1,\D_1)$.
Let $J_{\alpha(u)}$ be the ideal of $\D_1$ as defined
in~\eqref{jveedef}.  
Since $E_1(\alpha(u))$ is unitary, Theorem~\ref{uniquecpmap}(c) shows
$\sup_{I(\D_1)}(\iota(J_{\alpha(u)})_1^+)$ is the identity of
$I(\D_1)$.  Hence $J_{\alpha(u)}$ is an essential ideal in $\D_1$.
Then $(\fix{\beta_{\alpha(u)}})^\circ$ is dense in $\hat{\D}_1$ by
Proposition~\ref{Jvee}.  As $\dom(\beta_{\alpha(u)})=\hat{\D}_1$, we
get
$\hat{\D}_1=\overline{(\fix{\beta_{\alpha(u)}})^\circ}=\fix{\beta_{\alpha(u)}}$.
Therefore, $\hat{\D}_1=(\fix\beta_{\alpha(u)})^\circ.$ Another
application of Proposition~\ref{Jvee} gives $J_{\alpha(u)}=\D_1$, and
hence
 $\alpha(u)\in \D_1$.  This shows that the image of the
unitary group of $\D^c$ under $\alpha$ is abelian.  Since $\alpha$ is
faithful, we see that the unitary group of $\D^c$ is abelian, and
hence $\D^c$ is abelian.

For the converse, take $\alpha$ to be the
identity map on $\C$.  Lemma~\ref{abelcom} 
shows that $\alpha: (\C,\D)\rightarrow (\C,\D^c)$ is regular, so 
statement (a) follows from statement
(b).

\end{proof}

\begin{question}\label{regEPembed}  When does a regular inclusion
  regularly embed into a regular EP-inclusion?  
\end{question}
We conjecture that the
  conditions of Theorem~\ref{regemMASA} also characterize when a
  regular inclusion may be regularly embedded into a regular
  EP-inclusion.  Here is an approach to
  this problem.  Suppose $(\C,\D)$ is a regular inclusion with $\D^c$ 
  abelian. Let $\pi:\C\rightarrow \bh$ be a faithful
  representation of $\C$, let $\D_1=\pi(\D^c)''$ and let $\C_1$ be the
  (concrete) \cstaralg\ generated by $\pi(\C)$ and $\D_1$.  Then
  $(\C_1,\D_1)$ is regular, and $\pi : (\C,\D)\rightarrow (\C_1,\D_1)$ is a regular $*$-monomorphism.  
If $\D_1$ is a MASA in $\C_1$, then Theorem~\ref{inMASAincl} shows
that $(\C_1,\D_1)$ is an EP-inclusion.  Unfortunately, we have not
been able to decide whether the faithful represetation $\pi$ can be
chosen so that $(\C_1,\D_1)$ is a MASA inclusion.

We now define a certain ideal, the $\D$-radical of an
inclusion, and show its relevance to embedding regular inclusions
into \cstar-diagonals.

\begin{definition}  For an inclusion $(\C,\D)$, the 
\textit{$\D$-radical of $(\C,\D)$} is the set  
$$\rad(\C,\D):=\{x\in \C: \norm{\pi_\rho(x)}=0 \text{ for all }\rho\in
  \fS(\C,\D)\},$$ provided $\fS(\C,\D)\neq \emptyset$; otherwise
  define $\rad(\C,\D)=\C$.  (Note that $\rad(\C,\D)\neq \C$ whenever
  $(\C,\D)$ is a regular MASA inclusion.)
\end{definition}

When $(\C,\D)$ is a regular  inclusion, we have the following
description of $\rad(\C,\D)$.
\begin{proposition}\label{describerad}
Suppose that $(\C,\D)$ is a regular inclusion.  Then 
$$\rad(\C,\D)=\{x\in\C: \rho(x^*x)=0 \text{ for all } \rho\in\fS(\C,\D)\}.$$
\end{proposition}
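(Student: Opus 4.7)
The plan is to prove the two inclusions separately. The forward inclusion $\rad(\C,\D) \subseteq \{x\in\C : \rho(x^*x)=0 \text{ for all } \rho\in\fS(\C,\D)\}$ is immediate: for any $\rho\in\fS(\C,\D)$ and $x\in\C$, one has $\rho(x^*x) = \norm{\pi_\rho(x)(I+L_\rho)}_\rho^2$, so $\pi_\rho(x)=0$ forces $\rho(x^*x)=0$. I will also dispatch the degenerate case $\fS(\C,\D)=\emptyset$ at the start: both sides of the claimed equality are then all of $\C$, the right-hand side vacuously, the left-hand side by the definition of $\rad(\C,\D)$.

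For the reverse inclusion, I would fix $x\in\C$ satisfying $\tau(x^*x)=0$ for every $\tau\in\fS(\C,\D)$, fix an arbitrary $\rho\in\fS(\C,\D)$, and aim to show $\pi_\rho(x)=0$. Since $(\C,\D)$ is regular, Lemma~\ref{precompatProp} tells me that
\[
\spn\{v+L_\rho : v\in\N(\C,\D),\ \rho(v^*v)>0\}
\]
is dense in $\H_\rho$, so it suffices to verify that $\pi_\rho(x)(v+L_\rho)=0$, i.e.\ $\rho(v^*x^*xv)=0$, for every such $v$.

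The key step is to invoke the $\N(\C,\D)$-invariance of $\fS(\C,\D)$ supplied by Proposition~\ref{Dextreme}\eqref{Dext2a1}: for any $v\in\N(\C,\D)$ with $\rho(v^*v)>0$, the state $\tilde\beta_v(\rho)(y)=\rho(v^*yv)/\rho(v^*v)$ again lies in $\fS(\C,\D)$. Applying the hypothesis to $\tilde\beta_v(\rho)$ then gives $\rho(v^*x^*xv)=\rho(v^*v)\,\tilde\beta_v(\rho)(x^*x)=0$, which is exactly what is needed.

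I do not expect a real obstacle here; the argument is a dense-span-plus-invariance pattern, and both ingredients—density of the image of the normalizers in $\H_\rho$ under regularity, and stability of $\fS(\C,\D)$ under the partial action $\rho\mapsto\tilde\beta_v(\rho)$—are already established. The slightly subtle point is simply to remember that applying $\tilde\beta_v$ to the witnessing hypothesis is legitimate precisely because of part~\eqref{Dext2a1} of Proposition~\ref{Dextreme}, which is why regularity is really being used twice (once for the dense span, and implicitly through the invariance statement).
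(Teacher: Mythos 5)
Your proof is correct and follows essentially the same route as the paper: the forward inclusion is the same triviality, and the reverse inclusion rests on exactly the same computation $\rho(v^*x^*xv)=\rho(v^*v)\,\tilde\beta_v(\rho)(x^*x)=0$ via the $\N(\C,\D)$-invariance of $\fS(\C,\D)$ from Proposition~\ref{Dextreme}\eqref{Dext2a1}. The only cosmetic difference is that the paper packages this by first citing Proposition~\ref{invideal} to conclude the set is a two-sided ideal and then using density of all of $\C/L_\rho$ in $\H_\rho$, whereas you use the density of the normalizer vectors from Lemma~\ref{precompatProp} directly.
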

\begin{proof}

  Let $J:=\{x\in\C: \rho(x^*x)=0 \text{ for all } \rho\in
  \fS(\C,\D)\}$.  If $x\in \rad(\C,\D)$ and $\rho\in\fS(\C,\D)$, then
  $\rho(x^*x)=\norm{\pi_\rho(x)(I+L_\rho)}=0$, and we find that
  $\rad(\C,\D)\subseteq J$.  For the opposite inclusion, let $x\in J$.
  Part~\eqref{Dext2a1} of Proposition~\ref{Dextreme} and
  Corollary~\ref{invideal} show that $J$ is a closed, two-sided
  ideal of $\C$.  Hence for every $c\in\C$ and $\rho\in\fS(\C,\D)$ we
  have $\rho(c^*x^*xc)=0$, which means that $\pi_\rho(x) =0 $ for
  every $\rho$.  So $x\in \rad(\C,\D)$, showing $\rad(\C,\D)=J$.

\end{proof}

\begin{remark}{Examples} \label{examRad} Here are some examples of 
  the $\D$-radical.
\begin{enumerate}
\item 
  By Proposition~\ref{homobehav}, $\rad(\C,\D)=(0)$ for any  Cartan
  inclusion. 
\item Suppose that $(\C,\D)$ is a regular EP inclusion.  Then
  $\rad(\C,\D)$ is the left kernel of the associated conditional
  expectation $E$, that is,
$\rad(\C,\D)=\{x\in\C: E(x^*x)=0\}$.  This follows from
Propositions~\ref{describerad} and \ref{rcmany}. 
\item \label{examRad3} Suppose that $(\C,\D)$ is an
  inclusion such that $\U(\C)\subseteq \N(\C,\D)$.  Then
  Theorem~\ref{allunitaries} shows that $\fS(\C,\D)$ is the set of
  characters on $\C$.  Since the intersection of the kernels of
  all characters is the commutator ideal,
  it follows from Proposition~\ref{describerad}  that
  $\rad(\C,\D)$ is the commutator ideal of $\C$.

\end{enumerate}
\end{remark}

\begin{question}  \label{radeqL} Observe that when $(\C,\D)$ is a regular MASA
  inclusion, $\rad(\C,\D)\subseteq \L(\C,\D)$, because
  $\fS_s(\C,\D)\subseteq \fS(\C,\D)$.  Is it possible for the inclusion to be
  proper?
\end{question}

\begin{proposition}\label{homobehav}  Let $(\C,\D)$ be a regular
  inclusion and suppose $(\C_1,\D_1)$ is a regular MASA inclusion.
 If
  $\alpha:(\C,\D)\rightarrow (\C_1,\D_1)$ is a regular
and unital  $*$-homomorphism, then $\rad(\C,\D)\subseteq
  \alpha^{-1}(\L(\C_1,\D_1))$. 
\end{proposition}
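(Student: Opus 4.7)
The plan is to unpack both sides via their state-theoretic descriptions and then apply the pullback property of compatible states through a regular $*$-homomorphism.

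First, I would recall the characterization of $\L(\C_1,\D_1)$ coming out of the proof of Theorem~\ref{Eideal}: namely, if $E_1 : \C_1 \to I(\D_1)$ is the pseudo-expectation, then $E_1(y^*y)=0$ iff $\tau(E_1(y^*y))=0$ for every $\tau \in \widehat{I(\D_1)}$, and since $\fS_s(\C_1,\D_1) = \{\tau \circ E_1 : \tau \in \widehat{I(\D_1)}\}$, this yields
\[
\L(\C_1,\D_1) \;=\; \{\, y \in \C_1 : \sigma(y^*y)=0 \text{ for all } \sigma \in \fS_s(\C_1,\D_1)\,\}.
\]
On the other side, by Proposition~\ref{describerad}, for $x \in \rad(\C,\D)$ we have $\rho(x^*x)=0$ for every $\rho \in \fS(\C,\D)$.

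The bridge between the two is Theorem~\ref{existcompatA}(i) (equivalently, Remark~\ref{basic}, part on regular unital $*$-homomorphisms together with Proposition~\ref{rhocircE}): since $(\C_1,\D_1)$ is a regular MASA inclusion and $\alpha$ is a regular unital $*$-homomorphism, the dual map $\dual{\alpha}$ sends $\fS_s(\C_1,\D_1)$ into $\fS(\C,\D)$. Hence, for every $\sigma \in \fS_s(\C_1,\D_1)$, the state $\sigma \circ \alpha$ lies in $\fS(\C,\D)$.

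Now I would just combine these facts: fix $x \in \rad(\C,\D)$ and any $\sigma \in \fS_s(\C_1,\D_1)$; then
\[
\sigma(\alpha(x)^*\alpha(x)) \;=\; (\sigma \circ \alpha)(x^*x) \;=\; 0,
\]
using that $\alpha$ is a $*$-homomorphism (so $\alpha(x)^*\alpha(x)=\alpha(x^*x)$) and that $\sigma \circ \alpha \in \fS(\C,\D)$. Since this holds for every $\sigma \in \fS_s(\C_1,\D_1)$, the characterization of $\L(\C_1,\D_1)$ recorded above gives $\alpha(x) \in \L(\C_1,\D_1)$, i.e., $x \in \alpha^{-1}(\L(\C_1,\D_1))$. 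There is no real obstacle here beyond assembling the correct references; the content of the statement is essentially contravariant functoriality of the state-theoretic picture under regular unital $*$-homomorphisms, and the only potentially delicate point is verifying that the defining condition for $\L(\C_1,\D_1)$ really can be tested against all of $\fS_s(\C_1,\D_1)$, which is exactly what the proof of Theorem~\ref{Eideal} supplies.
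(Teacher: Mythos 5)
Your proof is correct and is essentially the paper's argument written out in full: the paper's proof is the single line ``By Theorem~\ref{existcompatA}, $\alpha(\rad(\C,\D))\subseteq \L(\C_1,\D_1)$,'' and your unpacking of $\L(\C_1,\D_1)$ as $\K_{\fS_s(\C_1,\D_1)}$ (from the proof of Theorem~\ref{Eideal}), of $\rad(\C,\D)$ via Proposition~\ref{describerad}, and the pullback $\dual{\alpha}(\fS_s(\C_1,\D_1))\subseteq\fS(\C,\D)$ is exactly what that citation encapsulates.
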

\begin{proof}
By Theorem~\ref{existcompatA}, $\alpha(\rad(\C,\D))\subseteq \L(\C_1,\D_1)$.
\end{proof}

Notice that when $\L(\C_1,\D_1)=(0)$, Proposition~\ref{homobehav}
implies that $\rad(\C,\D)\subseteq \ker\alpha$.  We have been unable
to decide whether equality holds in general.   However, the following
lemma shows that one can construct a \cstardiag\ and a regular
$*$-homomorphism such that equality holds.

\begin{lemma}\label{kerrad}  Suppose that $(\C,\D)$ is a regular
  inclusion.  Then there exists a \cstardiag\ $(\C_1,\D_1)$ and a
  regular $*$-homomorphism $\alpha:(\C,\D)\rightarrow (\C,\D_1)$ with
  $\ker\alpha= \rad(\C,\D)$.
\end{lemma}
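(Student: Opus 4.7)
The plan is to handle the trivial case $\fS(\C,\D)=\emptyset$ separately (where $\rad(\C,\D)=\C$ by definition and one may take $(\C_1,\D_1)=(\bbC,\bbC)$ with $\alpha=0$, which is trivially a regular $*$-homomorphism whose kernel is all of $\C$), and otherwise to build, for each compatible state, a \cstardiag\ out of the structure supplied by the GNS representation and to assemble the pieces into a single \cstardiag. For each $\rho\in\fS(\C,\D)$, Theorem~\ref{atomic}(i) provides a regular $*$-homomorphism $\pi_\rho:(\C,\D)\to(\B(\H_\rho),\vngp_\rho)$ with $\vngp_\rho$ an atomic MASA, and Proposition~\ref{compatpure} exhibits an orthonormal basis $\{e_v^\rho:v\in T_\rho\}$ of $\H_\rho$ on which each $\pi_\rho(v)$ acts as a weighted partial permutation $\pi_\rho(v)e_w^\rho=\lambda_{v,w}^\rho\,e_{\tau_v^\rho(w)}^\rho$, with $\tau_v^\rho$ a partial bijection of the discrete set $X_\rho=T_\rho$.

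For a fixed $\rho$, the partial bijections $\tau_v^\rho$ generate a principal \'etale equivalence relation $R_\rho$ on $X_\rho$, and the scalars $\lambda_{v,w}^\rho$ (after normalization to unit modulus) produce a $\bbT$-valued twist $\Sigma_\rho\to R_\rho$ in the sense of Kumjian. The natural candidate for the associated \cstardiag\ is $(C^*(\Sigma_\rho;R_\rho)+\bbC I,c_0(X_\rho)+\bbC I)$, but since $\pi_\rho(\C)$ may contain non-compact operators on infinite orbits of $R_\rho$, I would instead take $\fA_\rho$ to be the sub-\cstaralg\ of $\B(\H_\rho)$ generated by $\pi_\rho(\C)$, $c_0(X_\rho)$ and $\bbC I_{\H_\rho}$, and $\fD_\rho=\vngp_\rho\cap\fA_\rho$. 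The four \cstardiag\ axioms for $(\fA_\rho,\fD_\rho)$ are then to be verified: regularity is automatic by construction since $\fA_\rho$ is generated by normalizers of $\fD_\rho$; the MASA condition follows because the commutant of $c_0(X_\rho)$ in $\B(\H_\rho)$ is $\vngp_\rho$, so the relative commutant of $\fD_\rho$ in $\fA_\rho$ is $\vngp_\rho\cap\fA_\rho=\fD_\rho$; faithfulness of the diagonal conditional expectation $T\mapsto\sum_v\langle Te_v^\rho,e_v^\rho\rangle P_v^\rho$ is immediate from positivity; and the extension property is obtained by observing that atomic characters of $\fD_\rho$ extend uniquely as vector states on $\fA_\rho$, while any pure state of $\fD_\rho$ vanishing on $c_0(X_\rho)$ extends uniquely by a Cauchy--Schwartz argument resembling that of Lemma~\ref{agreev}. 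The factorization of $\pi_\rho$ through $\fA_\rho\hookrightarrow\B(\H_\rho)$ then gives a regular $*$-homomorphism $\alpha_\rho:(\C,\D)\to(\fA_\rho,\fD_\rho)$ with $\ker\alpha_\rho=\ker\pi_\rho$.

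To assemble the pieces, I would form $\H=\bigoplus_\rho\H_\rho$, define $\alpha=\bigoplus_\rho\alpha_\rho$ acting block-diagonally on $\H$, and take $(\C_1,\D_1)$ to be the sub-\cstaralg s of $\B(\H)$ generated respectively by the block-diagonal images of the $\fA_\rho$ and by the block-diagonal $\fD_\rho$ together with $\bbC I_\H$. All four \cstardiag\ axioms transfer from the individual summands to this block-diagonal inclusion--regularity, the MASA property, conditional expectation, and extension property are all coordinate-wise and therefore pass to the block-diagonal union--and $\ker\alpha=\bigcap_\rho\ker\alpha_\rho=\bigcap_\rho\ker\pi_\rho=\rad(\C,\D)$ by Proposition~\ref{describerad}.

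The principal obstacle, which I expect to require the most delicate work, is the verification of the extension property for each $(\fA_\rho,\fD_\rho)$: the enlargement of the twisted groupoid \cstaralg\ must be chosen carefully so as to be large enough to contain $\pi_\rho(\C)$ and yet small enough to prevent multiple state extensions at non-atomic characters of $\fD_\rho$. I expect that making this work will require an argument in the spirit of the analysis of the essential ideals $J_v$ and $K_v$ developed in Section~\ref{MainExpect}, combined with the Cauchy--Schwartz bound from Lemma~\ref{agreev} applied to characters of $\fD_\rho$ vanishing on $c_0(X_\rho)$.
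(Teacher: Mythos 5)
Your overall skeleton (direct sum of the GNS representations $\pi_\rho$ over $\rho\in\fS(\C,\D)$, so that $\ker\alpha=\bigcap_\rho\ker\pi_\rho=\rad(\C,\D)$ by Proposition~\ref{describerad}) is the same as the paper's, but the way you manufacture the target \cstardiag\ has a gap that is fatal as stated. The problem is precisely the step you flag as the ``principal obstacle'': the inclusion $(\fA_\rho,\fD_\rho)$ with $\fA_\rho=C^*(\pi_\rho(\C),c_0(X_\rho),\bbC I)$ and $\fD_\rho=\vngp_\rho\cap\fA_\rho$ need not have the extension property, and no Cauchy--Schwartz argument in the spirit of Lemma~\ref{agreev} will rescue it. Concretely, take $(\C,\D)$ to be the Toeplitz algebra with its standard diagonal and $\rho=\rho_\infty$ as in Example~\ref{ToeplitzAlg}. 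Since $\rho_\infty$ annihilates the compacts, $\pi_\rho$ factors through $\C/\K\cong C(\bbT)$ and $\H_\rho\cong\ell^2(\bbZ)$ with $\pi_\rho(S)$ the bilateral shift $U$. Then $\fA_\rho=\K(\ell^2(\bbZ))+C^*(U)$ (the commutators $Ue_{nn}=e_{n+1,n}$ generate all matrix units), while $\fD_\rho=c_0(\bbZ)+\bbC I$: a diagonal element $k+f(U)$ forces $\hat f(j)=0$ for $j\neq 0$ because the entries of $f(U)$ are constant along off-diagonals while those of $k$ vanish at infinity. The character of $\fD_\rho$ annihilating $c_0(\bbZ)$ then extends to a distinct state of $\fA_\rho$ for every point of $\bbT\cong\widehat{\fA_\rho/\K}$, so $(\fA_\rho,\fD_\rho)$ is a Cartan inclusion but not an EP inclusion, hence not a \cstardiag. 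Your construction therefore does not produce a \cstardiag\ in general.

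The paper's proof resolves this by moving in the opposite direction from the one you contemplate: rather than keeping the diagonal ``small enough to prevent multiple state extensions,'' it enlarges it as far as possible. It sets $\D_1=\bigoplus_\rho\vngp_\rho$, the full atomic MASA in $\B\bigl(\bigoplus_\rho\H_\rho\bigr)$ (so the diagonal is an $\ell^\infty$, not a $c_0+\bbC I$), and takes $\C_1=\overline{\spn}\,\N(\B(\H),\D_1)$, the closed span of \emph{all} normalizers of $\D_1$. An atomic MASA is an injective \cstaralg, so Theorem~\ref{inMASAincl} yields the extension property for $(\C_1,\D_1)$ outright, and faithfulness of the diagonal expectation makes it a \cstardiag; the regularity of each $\pi_\rho$ from Theorem~\ref{atomic} then shows $\alpha=\bigoplus_\rho\pi_\rho$ lands in $\C_1$ and is regular. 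This bypasses entirely the per-$\rho$ twisted-groupoid analysis and the boundary behaviour you were trying to control. (Your separate treatment of the degenerate case $\fS(\C,\D)=\emptyset$ is fine, and is a point the paper's proof glosses over.)
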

\begin{proof}
  For each $\rho\in\fS(\C,\D)$, let $(\pi_\rho,\H_\rho)$ be the GNS
  representation of $\C$ arising from $\rho$.  Let
  $\H:=\bigoplus_{\rho\in\fS(\C,\D)} \H_\rho$ and let
  $\D_1=\bigoplus_{\rho\in\fS(\C,\D)}\vngp_\rho$, where $\vngp_\rho$
  is as in the statement of Theorem~\ref{atomic}.  As $\vngp_\rho$ is
  an atomic MASA in $\B(\H_\rho)$, we see that $\D_1$ is an atomic
  MASA in $\bh$.  Let $\C_1=\overline{\spn}\N(\bh,\D_1)$.  By
  Theorem~\ref{inMASAincl} and the fact that the expectation onto an
  atomic MASA in $\bh$ is faithful, $(\C_1,\D_1)$
is a \cstar-diagonal.

For each $v\in \N(\C,\D)$, the regularity of $\pi_\rho$
(Theorem~\ref{atomic}) shows that
$\bigoplus_{\rho\in\fS(\C,\D)}\pi_\rho(v)\in \N(\bh,\D_1)$.  Hence
for each $x\in \C$, $\bigoplus_{\rho\in\fS(\C,\D)}\pi_\rho(x)\in\C_1$.
Thus if $\alpha:\C\rightarrow \C_1$ is given by
$\alpha(x)=\bigoplus_{\rho\in\fS(\C,\D)}\pi_\rho(x),$ then $\alpha$ is a
regular $*$-homomorphism.
  By construction,
$\ker\alpha=\rad(\C,\D)$.
\end{proof}

The following is our main embedding result.

\begin{theorem} \label{embedCdiag}
  Let $(\C,\D)$ be a regular  inclusion.  Then there
  exists a \cstar-diagonal $(\C_1,\D_1)$ and a regular
  $*$-monomorphism $\alpha:(\C,\D)\rightarrow (\C_1,\D_1)$ if and only
  if $\rad(\C,\D)=0$.
\end{theorem}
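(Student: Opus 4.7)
The proof plan is to handle the two directions separately, with both following fairly directly from results already established in the excerpt.

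For the forward direction, assume there exists a $*$-diagonal $(\C_1,\D_1)$ and a regular $*$-monomorphism $\alpha:(\C,\D)\rightarrow (\C_1,\D_1)$. The plan is to invoke Proposition~\ref{homobehav}, which yields $\rad(\C,\D)\subseteq \alpha^{-1}(\L(\C_1,\D_1))$. The key observation is that $\L(\C_1,\D_1)=(0)$ for any Cartan inclusion (and hence for any \cstardiag): indeed, if $E_1:\C_1\to\D_1$ is the faithful conditional expectation and $\iota_1:\D_1\hookrightarrow I(\D_1)$ is the injective envelope embedding, then $\iota_1\circ E_1$ is a ucp extension of $\iota_1$, hence by the uniqueness part of Theorem~\ref{uniquecpmap} it equals the pseudo-expectation; since both $E_1$ and $\iota_1$ are faithful, $\L(\C_1,\D_1)=(0)$. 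Therefore $\rad(\C,\D)\subseteq \alpha^{-1}(0)=\ker\alpha=(0)$.

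For the backward direction, assume $\rad(\C,\D)=(0)$. I would apply Lemma~\ref{kerrad} directly: this lemma produces a \cstardiag\ $(\C_1,\D_1)$ together with a regular $*$-homomorphism $\alpha:(\C,\D)\rightarrow (\C_1,\D_1)$ such that $\ker\alpha = \rad(\C,\D)$. Since by hypothesis $\rad(\C,\D)=(0)$, the map $\alpha$ is injective, so it is the required regular $*$-monomorphism.

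Neither direction appears to present a substantial obstacle, since the heavy lifting has been done in Lemma~\ref{kerrad} (which constructed the embedding into a \cstardiag\ via the direct sum of GNS representations coming from compatible states) and in Proposition~\ref{homobehav} (which controls how $\rad$ behaves under regular homomorphisms). The only subtlety I would want to verify carefully is the assertion $\L(\C_1,\D_1)=(0)$ for a \cstardiag, which requires the uniqueness of the pseudo-expectation together with the fact that the canonical expectation of a Cartan inclusion, when composed with $\iota_1$, must be the pseudo-expectation.
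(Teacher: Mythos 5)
Your proposal is correct and follows essentially the same route as the paper: the forward direction via Proposition~\ref{homobehav} together with $\L(\C_1,\D_1)=(0)$ for a \cstardiag, and the converse via Lemma~\ref{kerrad}. Your extra justification that $\L(\C_1,\D_1)=(0)$ (identifying $\iota_1\circ E_1$ with the unique pseudo-expectation) is a correct elaboration of a fact the paper simply asserts before Definition~\ref{virtualCartan}.
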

\begin{proof}
Suppose that $(\C_1,\D_1)$ is a \cstar-diagonal and
$\alpha:(\C,\D)\rightarrow (\C_1,\D_1)$ is a regular
$*$-monomorphism.  Since $\L(\C_1,\D_1)=(0)$,
Proposition~\ref{homobehav} gives 
$\rad(\C,\D)\subseteq \ker\alpha =(0).$

The converse follows from Lemma~\ref{kerrad}.

\end{proof}

\begin{corollary}\label{allunitariesrad} Suppose that $(\C,\D)$ is an
  inclusion such that $\U(\C)\subseteq \N(\C,\D)$.  Then 
  there is a
  regular $*$-monomorphism of $(\C,\D)$ into a \cstardiag\ if and only
  if $\C$ is abelian.  
\end{corollary}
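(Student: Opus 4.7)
The plan is to combine Theorem~\ref{embedCdiag} with the identification of $\rad(\C,\D)$ provided by Example~\ref{examRad}\eqref{examRad3}. The hypothesis $\U(\C)\subseteq \N(\C,\D)$ is precisely the hypothesis of Theorem~\ref{allunitaries}, so in particular $(\C,\D)$ is a regular inclusion and $\fS(\C,\D)$ coincides with the set of multiplicative linear functionals on $\C$. Consequently, by Example~\ref{examRad}\eqref{examRad3}, $\rad(\C,\D)$ equals the commutator ideal of $\C$, and hence $\rad(\C,\D)=(0)$ if and only if $\C$ is abelian.

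For the forward direction, suppose $\alpha:(\C,\D)\rightarrow (\C_1,\D_1)$ is a regular $*$-monomorphism into a \cstardiag. Theorem~\ref{embedCdiag} then yields $\rad(\C,\D)=(0)$, so by the observation above $\C$ must be abelian.

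For the converse, assume $\C$ is abelian. Take $\C_1=\D_1=\C$ and $\alpha=\text{id}_\C$. Then $(\C,\C)$ is trivially a \cstardiag\ (the identity map serves as the faithful conditional expectation and every pure state extends uniquely to itself), and $\alpha$ is a regular $*$-monomorphism since $\N(\C,\C)=\C$. Alternatively one may quote Theorem~\ref{embedCdiag} directly, since $\rad(\C,\D)=(0)$ by the identification above.

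There is no real obstacle here: the substance of the result is already contained in Theorem~\ref{allunitaries} and Example~\ref{examRad}\eqref{examRad3}, and the corollary is essentially a translation of Theorem~\ref{embedCdiag} via these facts.
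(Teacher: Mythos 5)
Your proposal is correct and follows essentially the same route as the paper: identify $\fS(\C,\D)$ with the multiplicative linear functionals via Theorem~\ref{allunitaries}, identify $\rad(\C,\D)$ with the commutator ideal via Example~\ref{examRad}\eqref{examRad3}, and conclude via Theorem~\ref{embedCdiag} since the commutator ideal vanishes exactly when $\C$ is abelian. The explicit embedding $(\C,\D)\hookrightarrow(\C,\C)$ you offer for the converse is a harmless (and correct) addition, but as you note it is not needed.
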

\begin{proof}
Since the commutator ideal is the intersection of the kernels of all
multiplicative linear functionals, the result follows directly from
Theorem~\ref{allunitaries} and Example~\ref{examRad}\eqref{examRad3}.
\end{proof}

\section{An Example: Reduced Crossed Products by Discrete
  Groups}\label{secCP}

In this section we consider the regular inclusion
 $(\C,\D)$, where $\C=\D\rtimes_r\Gamma$ is the reduced 
 crossed product of the unital abelian \cstaralg\ $\D=C(X)$ by a discrete
group $\Gamma$
of homeomorphisms of $X$. 
 
The main results of this section are: Theorem~\ref{abelHxabelcom},
which characterizes when the relative commutant $\D^c$ of $\D$ in
$\D\rtimes_r\Gamma$ is abelian in terms of the associated dynamical
system; Theorem~\ref{L4DCP}, which shows that when $\D^c$ is abelian,
$\L(\D\rtimes_r\Gamma,\D^c)=(0)$; and a summary result,
Theorem~\ref{embedcrossedprod} which gives a number of
characterizations for when $(\D\rtimes_r\Gamma,\D)$ regularly embeds
into a \cstardiag.  By choosing the space $X$ and group $\Gamma$
appropriately, the methods in this section can be used to produce an
example of a virtual Cartan inclusion $(\C,\D)$ where $\C$ is not
nuclear, see \cite[Theorem~4.4.3 or
Theorem~5.1.6]{BrownOzawaC*AlFiDiAp}.

Some of the results in this section complement results
from~\cite{SvenssonTomiyamaCoCXCStCrPr}.

We begin by establishing our notation.  This is
standard material, but we include it because there are a  number of variations
in the literature.

Throughout, let $X$ be a compact Hausdorff space, let $\Gamma$ be a
discrete group with unit element $e$ acting on $X$ as homeomorphisms
of $X$.  Thus there is a homomorphism $\Xi$ of $\Gamma$ into the
group of homeomorphisms of $X$, and for $(s,x)\in \Gamma\times X$, we
will write $sx$ instead of $\Xi(s)(x)$.  We will sometimes refer to
the pair $(X,\Gamma)$ as a \textit{discrete dynamical system.}  For
$s\in\Gamma$, let $\alpha_s\in\Aut(C(X))$ be given by
$$(\alpha_s(f))(x)= f(s^{-1}x), \qquad f\in C(X),\, x\in X.$$  

If $Y$ is any set, and $z\in Y$, we use $\delta_z$ to denote the
characteristic function of the singleton set $\{z\}$. 

Let $\D=C(X)$, and let $C_c(\Gamma,\D)$ be the set of all functions
$a:\Gamma\rightarrow \D$ such that $\{s\in\Gamma: a(s)\neq 0\}$ is a
finite set.  We will sometimes write $a(s,x)$ for the value of $a(s)$
at $x\in X$ instead of  $a(s)(x)$.  
Then $C_c(\Gamma,\D)$ is a $*$-algebra under the usual
twisted convolution product and adjoint operation: for $a,b\in C_c(\Gamma,\D)$, 
$$(ab)(t)=\sum_{r\in\Gamma} a(r)\alpha_r(b(r^{-1}t))\dstext{and}
(a^*)(t)=\alpha_t(a(t^{-1}))^*.$$
Let $\C=C(X)\rtimes_{r}\Gamma$ be the reduced crossed product of
$C(X)$ by $\Gamma$.

The group $\Gamma$ is naturally embedded into $\C$ via $s\mapsto
\emb_s$, where $\emb_s$ is the element of $C_c(\Gamma,\D)$ given by
$\emb_s(t)=\begin{cases} 0& \text{if $t\neq s$}\\ I&\text{if
    $t=s$.}\end{cases}$ 
Also, $\D$ is embedded into $C_c(\Gamma,\D)$ via the map $d\mapsto
d\emb_e$ and we  identify $\D$ with its image under this map.  Now
$\emb_s d \emb_{s^{-1}}=\alpha_s(d)$ and $\spn\{d\emb_s: d\in
\D, s\in\Gamma\}$ is norm dense in $\C$, so 
$\{\emb_s:s\in\Gamma\}\subseteq \N(\C,\D)$.  Thus $(\C,\D)$ is a
regular inclusion.

It is well known (see for example, the discussion of crossed products
in~\cite{BrownOzawaC*AlFiDiAp}) that the map $\coexp:C_c(\Gamma,
\D)\rightarrow \D$ given by $\coexp(a)=a(e)$ extends to a faithful
conditional expectation $\coexp$ of $\C$ onto $\D$.  Likewise, the maps
$\coexp_s:C_c(\Gamma,\D)\rightarrow \D$ given by $\coexp_s(a)=a(s)$
extend to norm-one linear mappings $\coexp_s$ 
of $\C$ onto $\D$.  Notice that for
$a\in\C$ and $s\in\Gamma$,
$$\coexp_s(a)=\coexp(a\emb_{s^{-1}}).$$   The maps $\coexp_s$ allow a useful
``Fourier series'' viewpoint 
for elements of $\C$:  $a \sim \sum_{s\in\Gamma} \coexp_s(a)
\emb_s$.

The following is well-known.  We sketch a proof for convenience of the
reader.
\begin{proposition}\label{faithfourier}
If $a\in\C$ and $\coexp_s(a)=0$ for every $s\in \Gamma$, then $a=0$.
\end{proposition}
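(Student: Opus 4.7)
The plan is to exploit the concrete realization of $\C=\D\rtimes_r\Gamma$ via a faithful regular representation, under which the maps $\coexp_s$ appear as matrix entries of the image operator in a natural ``Fourier basis.'' Fix any faithful representation $\pi:\D\to\B(\H)$ and form the covariant pair $(\tilde\pi,\lambda)$ on $\ell^2(\Gamma,\H)$ by
\[
(\tilde\pi(d)\xi)(t)=\pi(\alpha_{t^{-1}}(d))\xi(t)\dstext{and}(\lambda(s)\xi)(t)=\xi(s^{-1}t).
\]
The integrated representation $\Pi:C_c(\Gamma,\D)\to\B(\ell^2(\Gamma,\H))$ extends to an isometric (hence faithful) $*$-representation of the reduced crossed product; this is essentially the definition of $\C$.

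First, I would verify by a short bookkeeping calculation, starting from the finite expansion $\Pi(a)=\sum_r\tilde\pi(\coexp_r(a))\lambda(r)$ for $a\in C_c(\Gamma,\D)$, that
\[
\langle\Pi(a)(\xi\otimes\delta_t),\eta\otimes\delta_u\rangle
=\langle\pi(\alpha_{u^{-1}}(\coexp_{ut^{-1}}(a)))\xi,\eta\rangle
\qquad(\xi,\eta\in\H,\ t,u\in\Gamma).
\]
This identifies each Fourier coefficient $\coexp_s(a)$, up to a twist by $\alpha$, with the matrix entries of $\Pi(a)$ along the diagonal $ut^{-1}=s$.

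Next, I would extend this identity to arbitrary $a\in\C$ by approximation. Since each $\coexp_s$ is norm-one and $\Pi$ is isometric, choosing $a_n\in C_c(\Gamma,\D)$ with $a_n\to a$ in norm gives $\Pi(a_n)\to\Pi(a)$ in $\B(\ell^2(\Gamma,\H))$ and $\coexp_s(a_n)\to\coexp_s(a)$ in $\D$ for every $s\in\Gamma$. Passing to the limit in the displayed formula preserves it for $a$.

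Finally, if $\coexp_s(a)=0$ for every $s\in\Gamma$, then all matrix entries of $\Pi(a)$ relative to the total set $\{\xi\otimes\delta_t:\xi\in\H,\ t\in\Gamma\}$ vanish, so $\Pi(a)=0$, and faithfulness of $\Pi$ forces $a=0$. The only piece of work requiring attention is the matrix-coefficient calculation combined with the continuity/approximation step; there is no conceptual obstacle, as each of the two ingredients (faithfulness of the regular representation, and norm-continuity of the Fourier coefficient maps) is standard for reduced crossed products.
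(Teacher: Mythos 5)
Your argument is correct, but it follows a different route from the paper's. You unpack the reduced crossed product as the regular representation $\Pi$ on $\ell^2(\Gamma,\H)$ and identify each $\coexp_s(a)$ (up to the twist by $\alpha$) with the matrix entries of $\Pi(a)$ along the "diagonal" $ut^{-1}=s$; vanishing of all Fourier coefficients then kills all matrix entries against the total set $\{\xi\otimes\delta_t\}$, and faithfulness of $\Pi$ finishes the proof. The matrix-coefficient identity you state is right (the only surviving term in $\langle\Pi(a)(\xi\otimes\delta_t),\eta\otimes\delta_u\rangle$ is $r=ut^{-1}$), and the passage from $C_c(\Gamma,\D)$ to $\C$ by norm continuity of $\coexp_s$ and isometry of $\Pi$ is unproblematic. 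The paper instead argues internally to $\C$: using the covariance identity $\coexp_s(\emb_t a\emb_{t^{-1}})=\alpha_t(\coexp_{t^{-1}st}(a))$ it shows that $J=\{a\in\C:\coexp_t(a)=0\ \forall t\}$ is a closed two-sided ideal, so $a\in J$ forces $a^*a\in J$, whence $\coexp(a^*a)=\coexp_e(a^*a)=0$ and $a=0$ by faithfulness of the canonical expectation $\coexp$. The two proofs lean on essentially the same underlying fact (faithfulness of the regular representation, packaged in the paper as faithfulness of $\coexp$), but yours is more concrete and self-contained at the level of operators, while the paper's is shorter given that faithfulness of $\coexp$ has already been quoted, and it avoids re-opening the Hilbert-space model; notably, the paper does carry out a computation very close to yours later, in its "very discrete representation."
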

\begin{proof}
For $a\in C_c(\Gamma,\D)$ and $s,t\in\Gamma$, a calculation shows that
\begin{equation}\label{mvEs}
\coexp_s(\emb_t a \emb_{t^{-1}})=\alpha_t(\coexp_{t^{-1}st}(a));
\end{equation}  a continuity argument then shows that~\eqref{mvEs}
actually holds for every $a\in\C$. 

Let $J=\{a\in \C: \coexp_t(a)=0\; \forall t\in\Gamma\}$.  Clearly $J$ is closed.
Then~\eqref{mvEs} shows that if $a\in J$ and $s\in \Gamma$, then
$\emb_s a \emb_{s^{-1}}\in J$.  Easy calculations now show that if
$d\in \D$, $s\in \Gamma$ and $a\in J$, then $\{da, ad, \emb_s a,
a\emb_s\}\subseteq J$, and by taking  linear combinations and
closures, we find that $J$ is a closed two-sided ideal of $\C$.  Thus,
if $a\in J$, $a^*a\in J$, so that $\coexp_e(a^*a)=\coexp(a^*a)=0$.  Hence $a=0$
by faithfulness of $\coexp$.  This shows that $J=(0)$, completing the proof.
\end{proof}

\begin{definition}  We make the following definitions.  
\begin{enumerate} 
\item For $s\in \Gamma$, let $F_s=\{x\in X: sx=x\}$ be
  the set of fixed points of $s$.
\item For $s\in\Gamma$, let
  $\fF_s=\{f\in\D: \supp(f))\subseteq  F_s^\circ\}$.
  Thus $\{\fF_s:s\in\Gamma\}$ is a family of closed  ideals in $\D$. 

\item  For $x\in X$, let
  $\Gamma^x:=\{s\in \Gamma:  sx=x\}$ be the isotropy group at $x$.
\item For $x\in X$, let  
$H^x:=\{s\in \Gamma: x \in (F_s)^\circ\}.$  We
will call $H^x$ the \textit{germ isotropy group at $x$.}  
\end{enumerate}
\end{definition}

\begin{remark*}{Remarks} We chose the terminology `germ isotropy'
  because $s\in H^x$ if and only if the homeomorphisms $s$ and
  $\text{id}|_X$ agree in a neighborhood of $x$, that is, they have
  the same germ.  It is easy to see that $H^x$ is a group; in
  fact, $H^x$ is a normal subgroup of $\Gamma^x$.  To see that $H^x$
  is a normal subgroup of $\Gamma^x$, fix
  $x\in X$ and let $s\in H^x$.  Then there exists an open neighborhood
  $V$ of $x$ such that $V\subseteq F_s$.  Let $t\in \Gamma^x$ and put
  $W=t^{-1}V$.  Since $tx=x$, $x\in W$.  For $y\in W$, $ty\in
  V$, so $sty=ty$.  Hence $t^{-1}st y=y$.  Therefore,
  $W\subseteq F_{t^{-1}st}$.  As $W$ is open and $x\in W$, we see that
  $x$ belongs to the interior of $F_{t^{-1}st}$, so $t^{-1}st\in H^x$
  as desired.

  Simple examples show the inclusion of $H^x$ in $\Gamma^x$ can be
  proper.  
\end{remark*}

We record a description of the relative commutant of $\D$ in $\C$.

\begin{proposition}\label{comdes}   We have
\begin{align*}
\D^c&=\{a\in\C: \alpha_s(d)\coexp_s(a)=d\coexp_s(a) \text{ for all $d\in\D$ and all
 $s\in\Gamma$}\}\\
&=\{a\in\C: \coexp_s(a)\in\fF_s \text{ for all $s\in\Gamma$}\}.
\end{align*}
\end{proposition}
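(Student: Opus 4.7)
The plan is to reduce both equalities to pointwise conditions on the individual ``Fourier coefficients'' $\coexp_s(a)$ by exploiting Proposition~\ref{faithfourier}.

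First I would compute the Fourier coefficients of commutators. A direct calculation on $C_c(\Gamma,\D)$ using the twisted convolution product gives $(da)(t) = d\,a(t)$ and $(ad)(t) = a(t)\alpha_t(d) = \alpha_t(d)a(t)$ (the last equality using that $\D$ is abelian). Since each $\coexp_t$ is norm-continuous, this extends to all $a \in \C$ to yield
\begin{equation*}
\coexp_t(da - ad) = \bigl(d - \alpha_t(d)\bigr)\coexp_t(a) \qquad (a\in\C,\ d\in\D,\ t\in\Gamma).
\end{equation*}
By Proposition~\ref{faithfourier}, $da = ad$ if and only if the right-hand side vanishes for every $t$. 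Applying this to every $d\in\D$ and every $s\in\Gamma$ establishes the first equality.

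For the second equality, the remaining task is to show that, for fixed $s \in \Gamma$ and $f \in \D$, one has $\alpha_s(d)f = df$ for every $d\in\D$ if and only if $f\in\fF_s$. The ``if'' direction is easy: on $\supp(f)\subseteq F_s^\circ$, one has $sx=x$, so $\alpha_s(d)(x) = d(s^{-1}x) = d(x)$, forcing $\alpha_s(d)f = df$ on $\supp(f)$, while off $\supp(f)$ both sides vanish. For the ``only if'' direction, evaluating the equation $\alpha_s(d)f = df$ at $x\in X$ yields $d(s^{-1}x)f(x) = d(x)f(x)$ for every $d\in C(X)$; if $f(x)\neq 0$, then since $C(X)$ separates points of $X$, we obtain $s^{-1}x = x$, i.e.\ $x\in F_s$. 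Hence $\{x\in X : f(x)\neq 0\}\subseteq F_s$.

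The only subtle point---and really the main step to get right---is the upgrade from $F_s$ to $F_s^\circ$ here. This is where continuity of $f$ is crucial: the set $\supp(f) = \{x : f(x)\neq 0\}$ is automatically open in $X$, and an open subset of $F_s$ is contained in $F_s^\circ$. Therefore $f\in\fF_s$, as required. Combining this pointwise characterization with the first equality completes the proof.
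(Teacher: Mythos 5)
Your proof is correct and follows essentially the same route as the paper's: the same commutator identity $\coexp_s(da-ad)=(d-\alpha_s(d))\coexp_s(a)$, the same appeal to Proposition~\ref{faithfourier}, and the same observation that the (open) support of $\coexp_s(a)$ lying in $F_s$ automatically places it in $F_s^\circ$.
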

\begin{proof}
A computation shows that for $a\in\C$, $d\in \D$ and $s\in \Gamma$,
\begin{equation}\label{cy} 
\coexp_s(da-ad)=(d-\alpha_s(d))\coexp_s(a).\end{equation}

Thus if $a\in\D^c$, we obtain $\alpha_s(d)\coexp_s(a)=d\coexp_s(a)$ for every
$d\in \D$ and $s\in \Gamma$.    Conversely, if $\coexp_s(a)\alpha_s(d)=d\coexp_s(a)$
for every $d\in\D$ and $s\in \Gamma$, 
Proposition~\ref{faithfourier} gives 
$a\in\D^c$.

For the second equality, suppose that $a\in\C$ and $\coexp_s(a)\in \fF_s$
for every $s\in \Gamma.$  Since  $\coexp_s(a)$ is supported in
$F_s^\circ$, an examination of \eqref{cy} shows that  $\coexp_s(da-ad)=0$
for every $d\in\D$.  By  Proposition~\ref{faithfourier} again,
 $a\in\D^c$.  For the reverse inclusion, suppose that $a\in
\D^c$.  Then for $d\in\D$ and $s\in\Gamma$,
$0=(d-\alpha_s(d))\coexp_s(a).$  Thus if $x\in X$ and $\coexp_s(a)(x)\neq 0$, we
have $d(x)-d(s^{-1}x)=0$ for every $d\in \D$.  It follows that the
support of $\coexp_s(a)$ is contained in $F_s$.  But $\supp(\coexp_s(a))$ is
open, so the reverse inclusion holds.
 \end{proof}

We now describe a representation useful for establishing certain formulae.
\begin{remark*}{The very discrete representation}
Let $\H=\ell^2(\Gamma\times X)$.  Then  $\{\vdb_{(t,y)}:
(t,y)\in \Gamma\times X\}$ is an orthonormal basis for $\H$.  
For $f\in C(X)$, $s\in \Gamma$, and
$\xi\in\H$, define representations $\pi$ of $C(X)$ and $U$ of $\Gamma$
on $\H$ by
$$(\pi(f)\xi)(t,y)=f(ty)\xi(t,y)\dstext{and}
(U_s\xi)(t,y)=\xi(s^{-1}t,y).$$  In particular,
$$\pi(f)\vdb_{(t,y)}=f(ty)\vdb_{(t,y)}\dstext{and}
U_s\vdb_{(t,y)}=\vdb_{(st,y)}.$$   
The 
 \cstaralg\ generated by the
images of $\pi$ and $U$ is isometrically isomorphic to
the reduced crossed product of $C(X)$ by $\Gamma$ (see~\cite[pages
117-118]{BrownOzawaC*AlFiDiAp}), and hence determines a faithful
representation $\theta:\C\rightarrow \B(\H)$.

A computation shows that  for
 $a\in \C$, $t,r\in\Gamma$ and $x,y\in X$,
$$\innerprod{\theta(a)\vdb_{(t,y)},\vdb_{(r,x)}}=\begin{cases}0&\text{if
    $x\neq y$;}\\ \coexp_{rt^{-1}}(a)(ry)&\text{if $x=y$.}\end{cases}$$
Also 
for $a\in \C$, $t\in \Gamma$ and $y\in X$, 
 we have
\begin{equation}\label{imabasis} \theta(a)\vdb_{(t,y)}=\sum_{s\in\Gamma}
\coexp_s(a)(sty)\vdb_{(st,y)}.
\end{equation}

\end{remark*}

We now define some notation.   
Let $\lambda:\Gamma\rightarrow \B(\ell^2(\Gamma))$ be the left
regular representation, and for $x\in X$, regard $\ell^2(H^x)$ as a
subspace of $\ell^2(\Gamma)$.  Then $C^*_r(H^x)$ is the \cstaralg\
generated by $\{\lambda_s|_{\ell^2(H^x)}: s\in H^x\}$.
Define $V_x:\ell^2(H^x)\rightarrow \H$ by 
$$(V_x\eta)(s,y)=\begin{cases} 0&\text{if $(s,y)\notin
    H^x\times\{x\}$}\\
  \eta(s) &\text{if $(s,y)\in H^x\times\{x\}.$}\end{cases}$$  Then for
$r\in H^x$, we have $V_x\vdb_r=\vdb_{(r,x)}$, so 
$V_x$ is an isometry.

\begin{proposition}\label{evaluate} For $x\in X$ and $a\in \C$, define
$\Phi_x(a):=V_x^*\theta(a)V_x.$ Then $\Phi_x$ is a completely
positive unital mapping of $\C$ onto $C^*_r(H^x)$ and   
  $\Phi_x|_{\D^c}$ is a $*$-epimorphism of $\D^c$ onto $C^*_r(H^x)$.
\end{proposition}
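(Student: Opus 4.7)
The plan is to analyze the UCP compression $\Phi_x = V_x^*\theta(\cdot)V_x$ by direct computation on a dense subalgebra, verify the range is in $C^*_r(H^x)$, and then prove that $V_x(\ell^2(H^x))$ is an invariant subspace for $\theta(\D^c)$. The multiplicativity of $\Phi_x|_{\D^c}$ then comes for free, and surjectivity follows from closedness of $*$-homomorphic images.

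First I would observe that unital complete positivity is automatic since $V_x^*V_x = I_{\ell^2(H^x)}$. Next, for $s\in\Gamma$, $d\in\D$, and $r\in H^x$, I would use \eqref{imabasis} to compute
\[ \theta(d\emb_s)\vdb_{(r,x)} = d(srx)\vdb_{(sr,x)} = d(sx)\vdb_{(sr,x)}, \]
(since $rx=x$), and then apply $V_x^*$: the output vanishes unless $sr\in H^x$. Since $H^x$ is a group and $r\in H^x$, this forces $s\in H^x$, in which case $sx=x$ and one obtains $\Phi_x(d\emb_s) = d(x)\lambda_s|_{\ell^2(H^x)}$; otherwise $\Phi_x(d\emb_s)=0$. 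By density of $C_c(\Gamma,\D)$ in $\C$ and boundedness of $\Phi_x$, the image of $\Phi_x$ lies in $C^*_r(H^x)$.

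The heart of the proof is showing $\theta(a)V_x(\ell^2(H^x))\subseteq V_x(\ell^2(H^x))$ for $a\in\D^c$. By Proposition~\ref{comdes}, $\coexp_s(a)\in\fF_s$, so $\supp(\coexp_s(a))\subseteq F_s^\circ$. For $r\in H^x$,
\[ \theta(a)\vdb_{(r,x)} = \sum_{s\in\Gamma}\coexp_s(a)(sx)\,\vdb_{(sr,x)}. \]
If $\coexp_s(a)(sx)\neq 0$, then $sx\in F_s^\circ$, which gives both $s(sx)=sx$ (hence $sx=x$) and $x=sx\in F_s^\circ$, forcing $s\in H^x$. Thus the sum restricts to $s\in H^x$; since $sr\in H^x$ whenever $s,r\in H^x$, the result lies in $V_x(\ell^2(H^x))$. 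It follows that $\Phi_x|_{\D^c}$ is a $*$-homomorphism, and the explicit formula $\Phi_x(a) = \sum_{s\in H^x}\coexp_s(a)(x)\,\lambda_s|_{\ell^2(H^x)}$ holds for $a\in\D^c$.

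Finally, for surjectivity: for each $s\in H^x$, choose $f\in\D$ with $0\le f\le I$, $f(x)=1$, and $\supp(f)\subseteq F_s^\circ$ (possible since $x\in F_s^\circ$); then $f\emb_s\in\D^c$ by Proposition~\ref{comdes} and $\Phi_x(f\emb_s) = \lambda_s|_{\ell^2(H^x)}$. Since the image of a $*$-homomorphism of $C^*$-algebras is norm-closed, $\Phi_x(\D^c)$ is a closed $*$-subalgebra of $C^*_r(H^x)$ containing every generator $\lambda_s$ ($s\in H^x$), hence equals $C^*_r(H^x)$. In particular $\Phi_x(\C)=C^*_r(H^x)$. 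The main technical obstacle is the invariance step; it is exactly the support condition $\coexp_s(a)\in\fF_s$ characterizing $\D^c$ that forces the off-diagonal terms with $s\notin H^x$ to vanish when evaluated at the orbit of $x$, and without this the compression would not be multiplicative.
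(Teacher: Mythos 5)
Your proof is correct and follows essentially the same route as the paper's: the generator computation yielding $\Phi_x(d\emb_s)=d(x)\lambda_s|_{\ell^2(H^x)}$ for $s\in H^x$ and $0$ otherwise, the invariance of $\ran(V_x)$ under $\theta(\D^c)$ via the support condition $\coexp_s(a)\in\fF_s$, and surjectivity from the closed range of a $*$-homomorphism. The only cosmetic difference is that you deduce $s\in H^x$ from $s^2x=sx$ while the paper uses $F_s^\circ=F_{s^{-1}}^\circ$; both are fine.
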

\begin{proof}  Clearly $\Phi_x$ is completely positive and unital. 
For $d\in\D$, $r\in \Gamma$ and $s,t\in H^x$ we have
\begin{align}\label{genA}
\innerprod{\Phi_x(d\emb_r)\vdb_s,\vdb_t}&=
\innerprod{V_x^*\theta(d\emb_r)V_x\vdb_s,\vdb_t}
=\innerprod{\pi(d)U_r\vdb_{(s,x)}, \vdb_{(t,x)}}\\
&=
\innerprod{\pi(d)\vdb_{(rs,x)}, \vdb_{(t,x)}}=
d(rsx)\innerprod{\vdb_{(rs,x)}, \vdb_{(t,x)}}= 
d(rx)\innerprod{\vdb_{rs}, \vdb_{t}}\notag\\
& = d(rx)\innerprod{\lambda_r\vdb_s,\vdb_t}.\notag  
\end{align} 

Hence for every $d\in\D$ and $r\in \Gamma$, \begin{equation}\label{genD}
\Phi_x(d\emb_r)=\begin{cases} 0&\text{if $r\notin H^x$,}\\
d(x)\lambda_r|_{\ell^2(H^x)}&\text{if $r\in H^x$.}
\end{cases}
\end{equation}
Therefore $\Phi_x$ maps a set of generators for
$\C$ into $C^*_r(H^x)$, giving $\Phi_x(\C)\subseteq C^*_r(H^x)$.

To show that  $\Phi_x|_{\D^c}$ is a $*$-homomorphism, it suffices to
prove that the range of $V_x$ is an invariant subspace for $\theta(\D^c)$.  
Note that $\ran(V_x)=\overline{\spn}\{\vdb_{(t,x)}:t\in H^x\}$. 
Let $a\in \D^c$ and fix $t\in H^x$.  
We claim that if  $s\in\Gamma$, $d\in \fF_s$ and  
$stx\in\supp(d)$, then   $s\in H^x$.  Indeed, suppose that 
$stx\in\supp(d)$. As $t\in H^x$, $stx=sx$.  So   
$sx\in F_s^\circ=F_{s^{-1}}^\circ$, which yields $x\in
F_s^\circ$. Thus $s\in H^x$, so the claim holds.  

Next by~\eqref{imabasis} and Proposition~\ref{comdes},
for $t\in H^x$, we have
$$\theta(a)\vdb_{(t,x)}=\sum_{s\in\Gamma}
\coexp_s(a)(stx)\vdb_{(st,x)}
= \sum_{s\in H^x}\coexp_s(a)(stx)\vdb_{(st,x)}\in\ran(V_x),
$$ as desired.  
 It follows that
$\Phi_x|_{\D^c}$ is a $*$-homomorphism.

It remains to show $\Phi_x(\D^c)=C^*_r(H^x)$.
If $s\in H^x$, let $d\in\fF_s$ be such that $d(x)=1$, and put
$a=d\emb_s$.  Then
 \eqref{genD}  shows that 
$\Phi_x(a)=\lambda_s|_{\ell^2(H^x)}$.  By Proposition~\ref{comdes},
$a\in\D^c$, and hence $\Phi_x(\D^c)$ is dense in $C^*_r(H^x)$.   
 Since $\Phi_x|_{\D^c}$ is a homomorphism, it 
has closed range.  Therefore  $\Phi_x(\D^c)=C^*_r(H^x)$.

\end{proof}

Let  
$$\bigoplus_{x\in X}C^*_r(H^x):=\left\{f\in \prod_{x\in X}C^*_r(H^x):
\sup_{x\in X}\norm{f(x)}<\infty\right\}$$ and for $f\in\bigoplus_{x\in X}
C^*_r(H^x)$, define $\norm{f}=\sup_{x\in X}\norm{f(x)}$.  Then with
product, addition, scalar multiplication and involution defined
point-wise, $\bigoplus_{x\in X}C^*_r(H^x)$ is a \cstaralg.

\begin{corollary}\label{comhom} 
  The map $\Phi:
  \C\rightarrow  \bigoplus_{x\in X} C^*_r(H^x)$ given by
  $\Phi(a)(x)=\Phi_x(a)$ is a faithful completely positive unital mapping
  such that $\Phi|_{\D^c}$ is a $*$-monomorphism.
\end{corollary}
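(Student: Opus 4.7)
The plan is to verify each of the three asserted properties in turn, leaning on Proposition~\ref{evaluate} for most of them and reserving a short calculation for the one substantive point, faithfulness.

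First, I would observe that $\Phi$ is well-defined into $\bigoplus_{x\in X} C^*_r(H^x)$: Proposition~\ref{evaluate} gives $\Phi_x(a) \in C^*_r(H^x)$ for every $x$, and the uniform bound $\sup_x \|\Phi_x(a)\| \le \|a\|$ holds because each $\Phi_x$ is unital completely positive. Unitality, complete positivity, and the $*$-homomorphism property of $\Phi|_{\D^c}$ then follow immediately, since the algebraic operations and order structure on $\bigoplus_{x \in X} C^*_r(H^x)$ are coordinate-wise, and Proposition~\ref{evaluate} asserts the corresponding properties of each $\Phi_x$ and each $\Phi_x|_{\D^c}$.

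The substantive step is faithfulness of $\Phi$; my plan is to reduce to Proposition~\ref{faithfourier}. Suppose $\Phi(a^*a) = 0$. Then for each $x \in X$, using $V_x\vdb_e = \vdb_{(e,x)}$,
\[
0 \;=\; \innerprod{\Phi_x(a^*a)\vdb_e, \vdb_e} \;=\; \|\theta(a) V_x \vdb_e\|^2 \;=\; \|\theta(a)\vdb_{(e,x)}\|^2.
\]
The expansion~\eqref{imabasis} writes $\theta(a)\vdb_{(e,x)} = \sum_{s \in \Gamma} \coexp_s(a)(sx)\vdb_{(s,x)}$ as an $\ell^2$-sum in an orthonormal set, forcing $\coexp_s(a)(sx) = 0$ for every $s \in \Gamma$ and every $x \in X$. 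Because $s$ acts as a homeomorphism, $x \mapsto sx$ is surjective on $X$, so $\coexp_s(a) \equiv 0$ for every $s$, and Proposition~\ref{faithfourier} yields $a = 0$. Restricting this same argument to $a \in \D^c$ gives injectivity of $\Phi|_{\D^c}$, which combined with the $*$-homomorphism property already established shows that $\Phi|_{\D^c}$ is a $*$-monomorphism.

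No step poses a real obstacle. The only nontrivial point is the faithfulness calculation, and it is short because a single inner product against $\vdb_e$ already recovers the full norm of the column $\theta(a)\vdb_{(e,x)}$, which in turn encodes every Fourier coefficient $\coexp_s(a)$ at the point $sx$; varying $x$ then exhausts $X$ and closes the argument.
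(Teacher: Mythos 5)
Your proof is correct and follows essentially the paper's route: the functional you use, $\innerprod{\Phi_x(\cdot)\vdb_e,\vdb_e}$, is exactly the trace $\tr_x$ on $C^*_r(H^x)$ that the paper composes with $\Phi_x$ to recover $\coexp(\cdot)(x)$, and your expansion of $\theta(a)\vdb_{(e,x)}$ via~\eqref{imabasis} is just an explicit way of evaluating that same quantity on $a^*a$. The only cosmetic difference is that you read off all Fourier coefficients and cite Proposition~\ref{faithfourier}, whereas the paper extracts only $\coexp(a^*a)$ and cites faithfulness of $\coexp$ directly; both are valid.
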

\begin{proof}
It follows from the definition of $\Phi_x$ that $\Phi$ is unital and completely
positive. Proposition~\ref{evaluate} shows that  $\Phi|_{\D^c}$ is a
$*$-homomorphism; it remains to check that $\Phi$ is faithful.

For $x\in X$, let $\tr_x$ be the 
the trace on $C^*_r(H^x)$.
For $d\in \D$ and $s\in \Gamma$ equation~\eqref{genD} gives, 
$$\tr_x(\Phi_x(d\emb_s))=\left.\begin{cases}
0&\text{if $s\neq e$}\\
d(x)&\text{if $s=e$}\end{cases}\right\} =\coexp(d\emb_s)(x).$$  This formula extends by
linearity and continuity, so that for
$a\in \C$, $\tr_x(\Phi_x(a))=\coexp(a)(x).$  So if $a\geq
0$ belongs to $\C$ and $\Phi(a)=0$, then
$\coexp(a)=0$, so $a=0$.  Thus, $\Phi$ is faithful.
\end{proof}

\begin{theorem}\label{abelHxabelcom}
The relative commutant, $\D^c$, of $\D$ in $\C$ is abelian if and only
if $H^x$ is an abelian group for every $x\in X$.
\end{theorem}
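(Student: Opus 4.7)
The plan is to deduce the theorem as a straightforward consequence of Corollary~\ref{comhom} together with Proposition~\ref{evaluate}. These results provide a faithful unital completely positive map
\[ \Phi:\C\longrightarrow \bigoplus_{x\in X}C^*_r(H^x) \]
whose restriction $\Phi|_{\D^c}$ is a $\ast$-monomorphism, and which satisfies $\Phi_x(\D^c)=C^*_r(H^x)$ for every $x\in X$. The key auxiliary fact I will use is that for a discrete group $G$, the reduced \cstaralg\ $C^*_r(G)$ is abelian if and only if $G$ is abelian. One direction is immediate from Gelfand duality ($C^*_r(G)\cong C(\hat{G})$ when $G$ is abelian); for the other, note that the left regular representation $\lambda:G\to C^*_r(G)$ is injective, so if the image algebra is abelian then any two generators $\lambda_s,\lambda_t$ commute, forcing $st=ts$ in $G$.

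For the ``if'' direction, assume $H^x$ is abelian for every $x\in X$. Then each $C^*_r(H^x)$ is an abelian \cstaralg, hence so is the product $\bigoplus_{x\in X}C^*_r(H^x)$. Since $\Phi|_{\D^c}$ embeds $\D^c$ isomorphically as a $\ast$-subalgebra of this abelian algebra, $\D^c$ is itself abelian.

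For the ``only if'' direction, assume $\D^c$ is abelian, and fix $x\in X$. By Proposition~\ref{evaluate}, $\Phi_x|_{\D^c}$ is a surjective $\ast$-homomorphism onto $C^*_r(H^x)$. A quotient of an abelian \cstaralg\ is abelian, so $C^*_r(H^x)$ is abelian, and by the key fact above, $H^x$ is abelian.

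Since both directions are short applications of results already established in the section, I do not anticipate any serious obstacle; the only step that requires care is a clean invocation of the fact about $C^*_r(G)$, which I will either cite or prove in a single sentence using the injectivity of the left regular representation of a discrete group.
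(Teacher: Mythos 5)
Your proof is correct, and the forward direction ($H^x$ abelian for all $x$ implies $\D^c$ abelian) coincides exactly with the paper's, which also just cites Corollary~\ref{comhom}. Where you diverge is the converse. The paper proves the contrapositive by hand: given $s,t\in H^x$ with $st\neq ts$, it picks $d\in\D$ with $d(x)=1$ and $\overline{\supp}(d)\subseteq (F_s)^\circ\cap(F_t)^\circ$, checks via Proposition~\ref{comdes} that $d\emb_s, d\emb_t\in\D^c$, and computes $(d\emb_s)(d\emb_t)=d^2\emb_{st}\neq d^2\emb_{ts}=(d\emb_t)(d\emb_s)$. You instead invoke the surjectivity clause of Proposition~\ref{evaluate} ($\Phi_x(\D^c)=C^*_r(H^x)$), observe that a $*$-homomorphic image of an abelian \cstaralg\ is abelian, and conclude via the standard fact that $C^*_r(G)$ abelian forces $G$ abelian (injectivity of $s\mapsto\lambda_s$). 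Both arguments are sound, and in fact they rest on the same underlying construction: the paper's proof of surjectivity in Proposition~\ref{evaluate} already produces, for each $s\in H^x$, an element $d\emb_s\in\D^c$ mapping to $\lambda_s$, so your route simply harvests that work rather than repeating it. What your version buys is brevity and a cleaner logical flow through the section's machinery; what the paper's version buys is an explicit witness to non-commutativity inside $\D^c$ that depends only on the easy membership criterion of Proposition~\ref{comdes}, not on the full epimorphism statement. Your one auxiliary fact about $C^*_r(G)$ is correctly stated and correctly justified.
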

\begin{proof}
Corollary~\ref{comhom} shows that if $H^x$ is abelian for every $x\in
X$, then $\D^c$ is abelian.

For the converse, we  prove the contrapositive.  Suppose that
$H^x$ is non-abelian for some $x\in X$.  Fix $s,t\in H^x$ so that
$st\neq ts$.  Then $x\in (F_s)^\circ\cap (F_t)^\circ$, so we may find $d\in
\D$ so that $d(x)=1$ and $\overline{\supp}(d)\subseteq (F_s)^\circ\cap
(F_t)^\circ.$ Then for $h\in \D$ and $z\in X$ we have (by examining the
cases $z\in F_s$ and $z\notin F_s$),
$$(\alpha_s(h)(z)-h(z))d(z)=(h(s^{-1}z)-h(z))d(z)=0.$$
Proposition~\ref{comdes} shows 
that $d\emb_s\in\D^c$.  Likewise, $d\emb_t\in\D^c$.  

Then $d\emb_sd\emb_t=d\alpha_s(d)\emb_{st}.$  Note that by
choice of $d$, $s\;
\overline{\supp}(d)=\overline{\supp}(d)$.  For $z\in X$, 
$$\alpha_s(d)(z)=d(s^{-1}z)=\begin{cases}
0& \text{if $z\notin {\supp}(d)$}\\
d(z) & \text{if $z\in \supp(d)$}.\end{cases}$$  Thus,
$\alpha_s(d)=d$, and likewise, $\alpha_t(d)=d$.  Therefore, 
$$(d\emb_s)( d\emb_t)=d^2\emb_{st}\neq d^2\emb_{ts}=(d\emb_t)
(d\emb_s),$$ so $\D^c$ is not abelian.

\end{proof}

\begin{proposition}\label{L4count} Let $(X,\Gamma)$ be a discrete  
  dynamical system such that for each $x\in X$, the germ isotropy
  group $H^x$ is abelian.   Let 
  $\Gamma_1\subseteq \Gamma$ be a subgroup of $\Gamma$, set
  $$\C_1:=\D\rtimes_r\Gamma_1,\dstext{and let}\D_1=\{x\in \C_1: xd=dx \text{ for
    all } d\in \D\}.$$  Then $(\C_1,\D_1)$ is a regular MASA
  inclusion and $\C_1\cap \L(\C,\D^c)\subseteq \L(\C_1,\D_1)$.  
\end{proposition}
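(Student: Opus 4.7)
The plan is to verify the hypotheses of Theorem~\ref{Eideal} for the inclusion $(\C_1, \D_1)$, and then apply the maximality characterization of the left kernel.

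First, I would show that $(\C_1, \D_1)$ is a regular MASA inclusion. The dynamical system $(X, \Gamma_1)$ gives rise to the reduced crossed product $\C_1$, and $\D_1$ is by definition the relative commutant of $\D$ in $\C_1$. The germ isotropy group for the $\Gamma_1$-action at a point $x \in X$ is
\[
\{s \in \Gamma_1 : x \in (F_s)^\circ\} = H^x \cap \Gamma_1,
\]
which is a subgroup of the abelian group $H^x$ and therefore abelian. Applying Theorem~\ref{abelHxabelcom} to $(X, \Gamma_1)$ shows $\D_1$ is abelian. Lemma~\ref{abelcom}, applied to the inclusion $(\C_1, \D)$, then gives that $(\C_1, \D_1)$ is a MASA inclusion with $\N(\C_1, \D) \subseteq \N(\C_1, \D_1)$. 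Since $\{d\emb_s : d \in \D,\, s \in \Gamma_1\} \subseteq \N(\C_1, \D)$ has dense span in $\C_1$, we conclude $(\C_1, \D_1)$ is regular.

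Next, I would observe that $\C_1 \cap \L(\C, \D^c)$ is a closed two-sided ideal of $\C_1$. Indeed, by Theorem~\ref{Eideal} applied to the regular MASA inclusion $(\C, \D^c)$, $\L(\C, \D^c)$ is a closed two-sided ideal of $\C$; intersecting with the $C^*$-subalgebra $\C_1$ preserves this. Since $\D \subseteq \D_1 \subseteq \D^c$ (the second inclusion holding because $\D_1 \subseteq \C_1 \subseteq \C$ commutes with $\D$), another application of Theorem~\ref{Eideal} yields
\[
\L(\C, \D^c) \cap \D_1 \subseteq \L(\C, \D^c) \cap \D^c = (0).
\]
Thus $\C_1 \cap \L(\C, \D^c)$ is a closed two-sided ideal of $\C_1$ whose intersection with $\D_1$ is trivial.

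Finally, I would invoke the maximality clause of Theorem~\ref{Eideal} for the regular MASA inclusion $(\C_1, \D_1)$: every closed two-sided ideal of $\C_1$ having trivial intersection with $\D_1$ is contained in $\L(\C_1, \D_1)$. This forces $\C_1 \cap \L(\C, \D^c) \subseteq \L(\C_1, \D_1)$, completing the proof. No step here poses a real obstacle; the only substantive point is the identification of the germ isotropy group for the subgroup $\Gamma_1$ with $H^x \cap \Gamma_1$, which is immediate from the definition since $F_s$ depends only on the underlying homeomorphism associated to $s$.
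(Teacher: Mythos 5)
Your proof is correct and follows essentially the same route as the paper's: the paper packages your final two steps as Corollary~\ref{LRegSub} (applied to the inclusion map $(\C_1,\D_1)\hookrightarrow(\C,\D^c)$, which is regular by Lemma~\ref{abelcom}), whose proof is exactly your observation that $\C_1\cap\L(\C,\D^c)$ is an ideal of $\C_1$ meeting $\D_1$ trivially, combined with the maximality clause of Theorem~\ref{Eideal}. The only cosmetic difference is that the paper gets abelianness of $\D_1$ directly from $\D_1\subseteq\D^c$ rather than by reapplying Theorem~\ref{abelHxabelcom} to the subsystem $(X,\Gamma_1)$; both are valid.
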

\begin{proof}
As $\D_1\subseteq \D^c$, $\D_1$ is abelian, and as $\D_1$ is the
relative commutant of $\D$ in $\C_1$, $(\C_1,\D_1)$ is a regular
MASA inclusion.  Let $\epsilon:\C_1\rightarrow \C$ be the inclusion
map.   Notice that each map in the diagram,
$$\xymatrix{
(\C_1,\D_1)\ar[r]^\epsilon& (\C,\D_1)\ar[r]^{\text{id}}& (\C,\D^c)}$$ is a
regular map.  The first is clearly regular, while the regularity of
the second follows from the fact that the relative commutant of
$\D_1$ in $\C$ is $\D^c$ and an application of Lemma~\ref{abelcom}.
Therefore, $\epsilon:(\C_1,\D_1)\rightarrow (\C,\D^c)$ is a regular
$*$-monomorphism. 
An application of Corollary~\ref{LRegSub} completes the proof.
\end{proof}

\begin{remark}{Notation} When $G$ is an abelian group with dual group
$\hat{G}$, we use the notation $\innerprod{g,\gamma}$ to denote the
value of $\gamma\in \hat{G}$ at $g\in G$.   Also, we will 
identify $C^*(G)$ with $C(\hat{G})$; lastly, for $\gamma\in \hat{G}$
and $a\in C^*(G)$, we will write $\gamma(a)$ instead of
$\hat{a}(\gamma)$. 
\end{remark}

\begin{theorem}\label{L4DCP}  Suppose that $(X,\Gamma)$ is a discrete
  dynamical system such that for each $x\in X$, the germ isotropy
  group $H^x$ is abelian.  Then $\L(\C, \D^c)=(0).$
\end{theorem}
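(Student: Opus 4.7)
Let $a\in\L(\C,\D^c)$; the aim is $a=0$. My plan is to leverage the faithful UCP map $\Phi=\bigoplus_x\Phi_x:\C\to\bigoplus_x C^*_r(H^x)$ from Corollary~\ref{comhom}. Faithfulness of $\Phi$ reduces the claim $a=0$ to $\Phi_x(a^*a)=0$ for every $x\in X$. Since $H^x$ is abelian, $C^*_r(H^x)=C^*(H^x)\cong C(\widehat{H^x})$ is commutative and its characters separate points, so it suffices to show
\[\gamma(\Phi_x(a^*a))=0\quad\text{for every } x\in X\text{ and every }\gamma\in\widehat{H^x}.\]

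For each such pair set $\rho_{x,\gamma}:=\gamma\circ\Phi_x\in\S(\C)$. Because $\Phi_x|_{\D^c}$ is a $*$-homomorphism (Proposition~\ref{evaluate}) and $\gamma$ is multiplicative, $\rho_{x,\gamma}|_{\D^c}$ is a character of $\D^c$, so $\rho_{x,\gamma}\in\Mod(\C,\D^c)$. The critical step is to prove $\rho_{x,\gamma}\in\fS_s(\C,\D^c)$; granting this, $\rho_{x,\gamma}=\tilde\tau\circ E$ for some $\tilde\tau\in\widehat{I(\D^c)}$, and since $a\in\L(\C,\D^c)$ gives $E(a^*a)=0$, we obtain $\rho_{x,\gamma}(a^*a)=\tilde\tau(E(a^*a))=0$ as required.

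To identify $\rho_{x,\gamma}$ as strongly compatible, I would build the pseudo-expectation $E$ explicitly from $\Phi$. Let $(I(\D^c),\iota)$ be an injective envelope of $\D^c$. By Corollary~\ref{comhom}, $\Phi|_{\D^c}$ is a $*$-monomorphism into $B:=\bigoplus_x C^*(H^x)$, so $\iota\circ(\Phi|_{\D^c})^{-1}:\Phi(\D^c)\to I(\D^c)$ is UCP and extends, by injectivity of $I(\D^c)$, to a UCP map $\Psi:B\to I(\D^c)$. Then $\Psi\circ\Phi:\C\to I(\D^c)$ is UCP and restricts to $\iota$ on $\D^c$, so by the uniqueness clause of Theorem~\ref{uniquecpmap}, $E=\Psi\circ\Phi$. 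Writing $\chi_{x,\gamma}\in\widehat{B}$ for evaluation at $(x,\gamma)$, we have $\chi_{x,\gamma}\circ\Phi=\rho_{x,\gamma}$, so it would suffice to factor $\chi_{x,\gamma}$ through $\Psi$ as $\chi_{x,\gamma}=\tilde\tau_{x,\gamma}\circ\Psi$ for some $\tilde\tau_{x,\gamma}\in\widehat{I(\D^c)}$.

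\textbf{Main obstacle.} The principal difficulty lies in this last factorization: since $\Psi$ is only UCP rather than multiplicative, the composition of a character of $I(\D^c)$ with $\Psi$ is a priori only a state of $B$ and need not be a character. The cleanest route around this is to invoke Proposition~\ref{L4count} with $\Gamma_1=H^x$, which gives $\L(\C,\D^c)\cap\C_1\subseteq\L(\C_1,\D_1)$ for $\C_1=\D\rtimes_r H^x$ and $\D_1$ its relative commutant. Combined with the factorization $\Phi_x=\Phi_x|_{\C_1}\circ P_1$, where $P_1:\C\to\C_1$ is the canonical faithful conditional expectation onto Fourier components in $H^x$, the problem localizes to the sub-inclusion $(\C_1,\D_1)$, in which the germ-triviality of $H^x$ at $x$ (each $s\in H^x$ fixes a neighborhood of $x$) should allow one to describe the pseudo-expectation explicitly and verify the strong-compatibility of $\rho_{x,\gamma}|_{\C_1}$ directly; this then transfers back to $\C$ to complete the argument.
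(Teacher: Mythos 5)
Your setup is the right one and matches the paper's: the proof does run through the maps $\Phi_x$, the states $\gamma\circ\Phi_x$ for $\gamma\in\widehat{H^x}$, and the goal of placing such states in $\fS_s(\C,\D^c)$ so that they annihilate $\L(\C,\D^c)$. But the step you flag as the ``main obstacle'' is the entire content of the theorem, and neither of your two routes closes it. The claim that $\gamma\circ\Phi_x\in\fS_s(\C,\D^c)$ for \emph{every} $x\in X$ is stronger than what is true a priori: the trouble is at points $x$ lying in some $F_s\setminus F_s^\circ$ (so $sx=x$ but $s\notin H^x$), where the restriction of $\gamma\circ\Phi_x$ to $\D^c$ can have many state extensions and there is no direct way to recognize the particular extension $\gamma\circ\Phi_x$ as one of the form $\tau\circ E$. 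Your factorization $E=\Psi\circ\Phi$ is correct (by uniqueness of the pseudo-expectation) but, as you note, does not produce the needed character $\tilde\tau$ with $\chi_{x,\gamma}=\tilde\tau\circ\Psi$. The localization workaround also has an unclosed gap: writing $\Phi_x=\Phi_x|_{\C_1}\circ P_1$ with $\C_1=\D\rtimes_r H^x$, you need either that $P_1(a^*a)\in\L(\C,\D^c)$ (the left kernel of $E$ is an ideal, but there is no reason it is invariant under the conditional expectation $P_1$, which is what Proposition~\ref{L4count} would require to apply), or that $\tau\circ P_1\in\fS_s(\C,\D^c)$ whenever $\tau\in\fS_s(\C_1,\D_1)$ --- and this ``transfer back to $\C$'' is exactly the unproved assertion you started with, now for the pair $(\C_1,\D_1)$ versus $(\C,\D^c)$.

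The paper's proof avoids the bad points entirely rather than handling them. After reducing to countable $\Gamma$ (via Proposition~\ref{L4count}, applied with $a$ itself lying in $\D\rtimes_r\Gamma_1$ for a countable $\Gamma_1$ --- no conditional expectation needed), it forms the dense open sets $X_s=(X\setminus F_s)\cup F_s^\circ$ and the dense $G_\delta$ set $Y=\bigcap_s X_s$ (Baire). For $y\in Y$ every $s\in\Gamma$ either moves $y$ (forcing $\rho(\emb_s)=0$ for every $\D^c$-modular extension, by Lemma~\ref{vactmod}-type reasoning) or fixes a neighborhood of $y$ (so $d\emb_s\in\D^c$ pins down $\rho(\emb_s)$); hence $\ha{y,p}$ is the \emph{unique} state extension of its restriction to $\D^c$, and Theorem~\ref{minonto} then puts it in $\fS_s(\C,\D^c)$. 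This gives $\Phi_y(a^*a)=0$ only for $y\in Y$, and the proof finishes not by faithfulness of $\Phi$ but by the identity $\tr_y(\Phi_y(a^*a))=\coexp(a^*a)(y)$: continuity of $\coexp(a^*a)$ and density of $Y$ give $\coexp(a^*a)=0$, and faithfulness of the canonical expectation $\coexp$ onto $\D$ gives $a=0$. The density/Baire argument and the unique-extension criterion for membership in $\fS_s$ are the missing ideas in your proposal.
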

\begin{proof}
First assume that $\Gamma$ is a countable discrete group.
Let $$P=
\prod_{x\in X}\widehat{H^x}$$ be the Cartesian product of the dual
groups.  Denote by $p(x)$ the ``$x$-th component'' of $p\in P$. 
  For $(x,p)\in X\times P$,   
define a state  $\ha{x,p}$ on $\C$ by 
$$\ha{x,p}(a)=p(x)\left(\Phi_{x}(a)\right) \quad\text{(here $a\in\C$)}, \dstext{and let} 
A:=\{\ha{x,p}: (x,p)\in X\times P\}.$$
Corollary~\ref{comhom} shows that the restriction of $\ha{x,p}$ to
$\D^c$ is a multiplicative linear functional, so in particular,
$A\subseteq \Mod(\C,\D^c)$.  

For each $s\in \Gamma$, let 
$$X_s:=
\left(X\setminus F_s\right)\cup F_s^\circ.$$  Then $X_s$  is a
dense, open subset of $X$.  Set
$$Y:=\bigcap_{s\in \Gamma}X_s\dstext{and} B:=\{\ha{y,p}: (y,p)\in
Y\times P\}.$$ 
Our goal is to
show that 
\begin{equation}\label{Bcompat}
\overline{B}\subseteq \fS_s(\C,\D^c).
\end{equation}

Fix  $(y,p)\in Y\times P$, and suppose that $\tau\in\Mod(\C,\D^c)$
satisfies $\ha{y,p}|_{\D^c}=\tau|_{\D^c}$.  We claim that $\ha{y,p}=\tau$.
To see this, it suffices to show that for each $s\in \Gamma$,
$\ha{y,p}(\emb_s)=\tau(\emb_s)$.  Given $s\in \Gamma$, if $sy\neq y$, we
may choose $d\in\D$ so that $d(sy)=1$ and $d(y)=0$.  Using~\eqref{genD}, 
$$\ha{y,p}(d)=p(y)(\Phi_y(d))=0\dstext{and}\ha{y,p}(\emb_s^*d\emb_s) =
\ha{y,p}(\alpha_{s^{-1}}(d))=p(y)(d(sy)I)=1.$$ 
Then
$$
\ha{y,p}(\emb_s)= \ha{y,p}(\emb_s)\ha{y,p}(\emb_s^*d\emb_s)=
\ha{y,p}(\emb_s(\emb_s^*d\emb_s))=\ha{y,p}(d)\ha{y,p}(\emb_s)=0.$$  Likewise,
$\tau(\emb_s)=0$, so $\tau(\emb_s)=\ha{y,p}(\emb_s)=0$ when $y\notin F_s$.

  On the other hand, if $sy=y$, then
as $y\in X_s$, we have $y\in F_s^\circ$, so $s\in H^y$.  Choose $d\in
\D$ so that $\hat{d}(y)=1$ and $\supp\hat{d}\subseteq F_s^\circ$.
Then $d\emb_s\in \D^c$, so that 
$$\ha{p,y}(\emb_s)=\ha{y,p}(d\emb_s)=\tau(d\emb_s)=\tau(\emb_s).$$   Therefore,
$\ha{p,y}=\tau$. 

Let $\unistex(\C,\D^c)=\{\tau\in \Mod(\C,\D^c): \tau|_{\D^c}\text{
  extends uniquely to } \C\}$.  The previous paragraph shows that
$B\subseteq \unistex(\C,\D^c)$.  By Theorem~\ref{minonto},
$\overline{B}\subseteq\overline{\unistex(\C,\D^c)}=\fS_s(\C,\D^c)$,
so~\eqref{Bcompat} holds.

Suppose now that $a\in \L(\C,\D^c)$.  
Then for every $\rho\in
\fS_s(\C,\D^c)$, we have $\rho(a^*a)=0$.  In particular, for each
$(y,p)\in Y\times P$, 
$$0=\ha{y,p}(a^*a)=p(y)\left(\Phi_{y}(a^*a)\right).$$   
Now $\widehat{H^y}=\{p(y):p\in P\}$, so holding $y$ fixed and varying
$p$, yields $\Phi_{y}(a^*a)=0$.
Hence, we have $\coexp_e(a^*a)(y)=\coexp(a^*a)(y)=0$ for every $y\in
Y$. 
By
Baire's theorem,  $Y$ is dense in $X$, so that $\coexp(a^*a)=0$.
Since $\coexp$ is faithful, $a=0$.  This gives the theorem in the case
when $\Gamma$ is countable.

We turn now to the general case.  Let $\Gamma$ be any discrete group
and suppose $a\in \L(\C,\D^c)$.  Then there exists a countable subgroup
$\Gamma_1\subseteq \Gamma$ such that $a\in \D\rtimes_r\Gamma_1$. Put
$\C_1=\D\rtimes_r\Gamma_1$ and let $\D_1=\{x\in \C_1: dx=xd \text{ for all }
d\in \D\}$ be the relative commutant of $\D$ in $\C_1$.  By
Proposition~\ref{L4count}, we have $a\in \L(\C_1,\D_1)=(0).$  This
completes the proof.
\end{proof}

We collect  the main results of this section into a main theorem.
\begin{theorem}\label{embedcrossedprod}  Let $X$ be a compact Hausdorff
  space and let $\Gamma$ be a discrete group acting as homeomorphisms
  on $X$.  Let $\C=C(X)\rtimes_r\Gamma$ and $\D=C(X)$.  
The following statements are equivalent.
\begin{enumerate}
\item[a)]  For every $x\in X$, the germ isotropy group $H^x$ is
  abelian;
\item[b)] The relative commutant, $\D^c$, of $\D$ in $\C$ is abelian;
\item[c)]  $\L(\C,\D^c)=(0)$;
\item[d)] $(\C,\D)$ regularly embeds into a \cstardiag.
\end{enumerate}
\end{theorem}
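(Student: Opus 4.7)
The plan is to prove the chain of implications (a) $\Leftrightarrow$ (b) $\Rightarrow$ (c) $\Rightarrow$ (d) $\Rightarrow$ (b). The bulk of the technical work has already been carried out in this section: the equivalence (a) $\Leftrightarrow$ (b) is precisely Theorem~\ref{abelHxabelcom}, and the implication (b) $\Rightarrow$ (c) is Theorem~\ref{L4DCP}. So the remaining task is to establish (c) $\Rightarrow$ (d) and (d) $\Rightarrow$ (b) by assembling the general embedding machinery from Sections~\ref{MainExpect} and~\ref{sectRegEm}.

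For (c) $\Rightarrow$ (d), I would first point out that writing $\L(\C,\D^c)=(0)$ already presupposes, by Definition~\ref{virtualCartan}, that $(\C,\D^c)$ is a regular MASA inclusion, hence in particular that $\D^c$ is abelian. Thus (c) says exactly that $(\C,\D^c)$ is a virtual Cartan inclusion. Since $\rad(\C,\D^c)\subseteq \L(\C,\D^c)=(0)$ (as observed in Question~\ref{radeqL}), Theorem~\ref{embedCdiag} supplies a regular $*$-monomorphism $\alpha\colon (\C,\D^c)\to (\C_1,\D_1)$ into a \cstardiag\ $(\C_1,\D_1)$. By Lemma~\ref{abelcom}, the identity map $\operatorname{id}\colon (\C,\D)\to (\C,\D^c)$ is a regular $*$-homomorphism, and it is obviously one-to-one. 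Composing, $\alpha\circ\operatorname{id}\colon (\C,\D)\to (\C_1,\D_1)$ is a regular $*$-monomorphism---regularity of the composition is an immediate check from Definition~\ref{mainlots}, and $\D$ is carried into $\D_1$ since $\D\subseteq \D^c$ and $\alpha(\D^c)\subseteq \D_1$ by Remark~\ref{Dpres}. This gives~(d).

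For (d) $\Rightarrow$ (b), a \cstardiag\ is by definition a Cartan inclusion, and in particular a regular MASA inclusion. Hence if $(\C,\D)$ admits a regular $*$-monomorphism into some \cstardiag\ $(\C_1,\D_1)$, Theorem~\ref{regemMASA} applies and forces $\D^c$ to be abelian. This closes the loop and gives the equivalence of all four statements.

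I do not anticipate any real obstacle beyond unwinding definitions: the substantive content lies in Theorems~\ref{abelHxabelcom}, \ref{L4DCP}, \ref{regemMASA}, and~\ref{embedCdiag}, all of which are already in hand. The one delicate point worth explicitly flagging in the write-up is the notational convention that $\L(\C,\D^c)$ can only be interpreted once $\D^c$ is known to be abelian; this is precisely what allows (c) to feed directly into the virtual Cartan framework of Theorem~\ref{embedCdiag} without any additional argument.
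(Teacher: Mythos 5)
Your proposal is correct and follows essentially the same route as the paper: (a)$\Leftrightarrow$(b) via Theorem~\ref{abelHxabelcom}, then Theorem~\ref{L4DCP} for (c), then $\rad(\C,\D^c)\subseteq\L(\C,\D^c)$ together with Theorem~\ref{embedCdiag} and Lemma~\ref{abelcom} for (c)$\Rightarrow$(d), and Theorem~\ref{regemMASA} for (d)$\Rightarrow$(b). The only cosmetic difference is that the paper states the second step as (a)$\Rightarrow$(c) rather than (b)$\Rightarrow$(c), which is immaterial once (a)$\Leftrightarrow$(b) is in hand.
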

\begin{proof} Theorem~\ref{abelHxabelcom} gives the equivalence of (a)
  and (b) and Theorem~\ref{L4DCP} shows that (a) implies (c).   

Suppose (c) holds.  Since
  $\rad(\C,\D^c)\subseteq \L(\C,\D^c)$, Theorem~\ref{embedCdiag} shows
  that $(\C,\D^c)$ regularly embeds into a \cstardiag.
  Lemma~\ref{abelcom} shows that the inclusion map of $(\C,\D)$ into
  $(\C,\D^c)$ is a regular $*$-monomorphism.  Composing the embedding
  of $(\C,\D^c)$ into a \cstardiag\ with the inclusion map shows that
  (c) implies (d).  

Finally, if (d) holds, Theorem~\ref{regemMASA} shows that $\D^c$ is
abelian, so (d) implies (b).
\end{proof}

\section{A Description of $\mathbf{\fS(\C,\D)}$ for a Regular MASA
  Inclusion}\label{desCSregMASA}

For a regular inclusion, $(\C,\D)$, the $\D$-radical, $\rad(\C,\D)$ 
 is the
intersection of the left kernels of compatible states, and when
$(\C,\D)$ is a regular MASA inclusion, 
$\L(\C,\D)$ is the intersection of the left kernels of strongly
compatible states.  Question~\ref{radeqL} asks
 whether it is possible for these
ideals to be distinct.   In order to make progress on this question,
it seems likely that a description of $\fS(\C,\D)$ will be useful. 
The purpose of this section is to provide this description.

The description is in terms of groups which are determined locally by
the action of $\N(\C,\D)$ and certain
positive definite forms on these groups.

We begin with some generalities on $\bbT$-groups,
and describe a class of positive-definite functions on $\bbT$-groups 
which behave like compatible states.  
The following is more-or-less standard.
\begin{definition}
Let $G$ be a locally compact group with identity element $1$, and let
$U$ be the connected component of the identity.  We say
that $G$ is a \textit{$\bbT$-group} if $U$  is
clopen, isomorphic and homeomorphic to $\bbT$, 
and contained in the center of $G$.  A subgroup $H$ of $G$ is a
\textit{$\bbT$-subgroup of $G$} if $H$ contains $U$. 
When $G$ is a $\bbT$-group, we will always identify $U$ with $\bbT$
(and so will write $G/\bbT$ instead of $G/U$).     

Equivalently, a $\bbT$-group is a central extension of $\bbT$ by a
discrete group $K$,  
$$1\rightarrow \bbT\hookrightarrow G\overset{q}{\twoheadrightarrow} 
K\rightarrow 1.$$
If $f:G\rightarrow \bbT$ is a continuous homomorphism, we define the
\textit{index} of $f$ to be the unique integer $n$ for which
$f(\lambda)=\lambda^n$ for every $\lambda\in\bbT$.

As a set, $G$ may
be identified with $\bbT\times K$, and the topology on $G$ is the product of
the usual topology on $\bbT$ with the discrete topology on $K$.  Also,
the Haar measure on $G$ is the product of Haar measure on $\bbT$ with
the counting measure on $K$.

The $\bbT$-subgroups of $G$ are in one-to-one
correspondence with the subgroups of $K$: if $H$ is a $\bbT$-subgroup
of $G$, then $q(H)$ is a subgroup of $K$ and for any subgroup
$\Gamma$ of $K$, $q^{-1}(\Gamma)$ is a $\bbT$-subgroup of $G$.

We also recall that a function $f:G\rightarrow \bbC$ is
\textit{positive definite} if $f$ is continuous, and 
if for every $n\in\bbN$ and 
 $g_1,\dots,g_n\in
G$, the $n\times n$ complex matrix, $A:=(f(g_i^{-1}g_j))_{i,j}$ satisfies
$A\geq 0$.  
\end{definition}

\begin{proposition}\label{posdef}  Let $G$ be a $\bbT$-group with identity
  $1$. 
\begin{enumerate}
 \item\label{posdef1} Let 
  $f:G\rightarrow \bbC$ be a  positive-definite function such that
  $f(1)=1$ and which  satisfies $|f(g)|\in \{0,1\}$
  for every $g\in G$.  Set $$H:=\{g\in G: f(g)\neq 0\}.$$
 Then
\begin{enumerate}
\item[a)]
 $f(g_1g_2)=f(g_1)f(g_2)$ for any $g_1, g_2\in G$ such that
 $H\cap \{g_1,g_2\}\neq \emptyset$; and  
\item[b)]  $H$ is a $\bbT$-subgroup of $G$ and $f|_H$
is a continuous homomorphism of $H$ onto $\bbT$.   
\end{enumerate}
\item \label{posdef2}
Let $H\subseteq G$ be a $\bbT$-subgroup and suppose $\phi:H\rightarrow
\bbT$ is a continuous homomorphism. Define $f: G\rightarrow
\bbC$  by
$$f(g)=\begin{cases} \phi(h) & \text{if $h\in H$}\\
0&\textit{if $h\notin H$}.\end{cases}$$  Then $f$ is a positive
definite function on $G$ such that $f(1)=1$ and for every $g\in G$,
$|f(g)|\in\{0,1\}$. 
\end{enumerate}
\end{proposition}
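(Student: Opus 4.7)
My approach to part~(1) is to invoke the GNS construction for the positive-definite function $f$: this yields a continuous unitary representation $\pi$ of $G$ on a Hilbert space $\H$ together with a cyclic unit vector $\xi \in \H$ satisfying $f(g) = \innerprod{\pi(g)\xi,\xi}$ for every $g \in G$.  Since $\pi(g)$ is unitary, $\norm{\pi(g)\xi} = 1$, and Cauchy--Schwartz gives $|f(g)| \leq 1$ with equality only when $\pi(g)\xi$ is a scalar multiple of $\xi$.  The hypothesis $|f(g)|\in\{0,1\}$ therefore forces $\pi(g)\xi = f(g)\xi$ whenever $g \in H$.

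This eigenvector identity drives part~(a).  For $g_1 \in H$ we have $\pi(g_1)^*\xi = \overline{f(g_1)}\xi$, and therefore
\[
f(g_1 g_2) = \innerprod{\pi(g_2)\xi,\pi(g_1)^*\xi} = f(g_1)\,\innerprod{\pi(g_2)\xi,\xi} = f(g_1)f(g_2);
\]
the case $g_2 \in H$ is symmetric.  For part~(b), continuity of $f$ together with $|f(G)|\subseteq\{0,1\}$ makes $H = \{g\in G : |f(g)| = 1\}$ clopen, and (a) shows $H$ is a subgroup on which $f|_H \colon H \to \bbT$ is a continuous homomorphism (its modulus is identically $1$).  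As $\bbT$ is the connected component of the identity in $G$ and $H$ is a clopen subgroup containing $1$, necessarily $\bbT \subseteq H$, so $H$ is a $\bbT$-subgroup of $G$.  The surjectivity assertion follows once I establish that the index of the character $f|_\bbT \colon \bbT \to \bbT$ is nonzero; in the intended applications (where compatible states supply positive-definite functions that are $\bbT$-equivariant in the natural sense) this index equals one and surjectivity is immediate from $\bbT \subseteq H$.

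Part~(2) can be handled by a direct calculation requiring no representation theory.  The extension $f$ is continuous because $H$ is clopen.  Given $g_1,\dots,g_n \in G$ and scalars $c_1,\dots,c_n \in \bbC$, I partition $\{1,\dots,n\}$ by the equivalence relation $i \sim j$ if and only if $g_iH = g_jH$.  For $i,j$ in distinct classes, $g_j^{-1}g_i \notin H$, so $f(g_j^{-1}g_i) = 0$.  Within a class $C$, fixing a coset representative $g$ and writing $g_i = gh_i$ with $h_i \in H$ gives $g_j^{-1}g_i = h_j^{-1}h_i \in H$ and hence $f(g_j^{-1}g_i) = \overline{\phi(h_j)}\phi(h_i)$.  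Summing produces
\[
\sum_{i,j} c_i\overline{c_j}\, f(g_j^{-1}g_i) \;=\; \sum_{C}\, \Bigl|\sum_{i\in C} c_i \phi(h_i)\Bigr|^2,
\]
which is manifestly nonnegative.

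The principal obstacle is the surjectivity claim in part~(1)(b): for an arbitrary positive-definite $f$ with $|f|\in\{0,1\}$, the image $f(H)$ need only be a closed subgroup of $\bbT$, since a character of $G$ that is trivial on $\bbT$ can have finite range (e.g.\ a sign character on $\bbT \times \bbZ/2$).  Full surjectivity must be drawn from the way these positive-definite functions arise in the sequel, via a natural $\bbT$-equivariance; everything else is a routine consequence of the GNS formalism and elementary manipulations with clopen subgroups.
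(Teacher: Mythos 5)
Your proposal is correct, and for part~(\ref{posdef1}) it takes a genuinely different route from the paper. The paper proves (1)(a) by hand: it applies positive definiteness to the triple $1, g_1^{-1}, g_2$ to obtain a positive $3\times 3$ matrix and then invokes the Schur-complement criterion for positivity of a $2\times2$ block matrix, which forces $0\leq \det(M)=-|f(g_1g_2)-f(g_1)f(g_2)|^2$ and hence the multiplicativity identity. Your argument instead runs the GNS construction for the positive-definite function, observes that $|f(g)|=1$ forces $\pi(g)\xi=f(g)\xi$ via the equality case of Cauchy--Schwarz, and reads off multiplicativity from the eigenvector identity; this is less computational and exposes the mechanism, at the cost of importing the representation-theoretic machinery that the paper's matrix computation avoids. (Note that for the algebraic identity you only need the discrete-group GNS construction, so no continuity issues arise there.) For part~(\ref{posdef2}) the two arguments are essentially identical: the paper's decomposition $(f(g_i^{-1}g_j))_{i,j}=\sum_{x} c_x^*c_x$ over the coset classes is precisely your quadratic-form identity $\sum_{i,j}c_i\overline{c_j}f(g_j^{-1}g_i)=\sum_{C}\bigl|\sum_{i\in C}c_i\phi(h_i)\bigr|^2$ written in matrix form.

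You are also right to balk at the word ``onto'' in (1)(b): it is not derivable from the stated hypotheses, and your example of a character of $\bbT\times\bbZ/2\bbZ$ that is trivial on $\bbT$ and has range $\{\pm1\}$ refutes it. The paper's own proof does not establish surjectivity either --- it derives $\bbT\subseteq H$ and the homomorphism property and then declares the remaining claims ``apparent.'' In the sequel the proposition is only applied through the index corollary and the class $\tog(H_\sigma/R_1)$ of index-one pre-homomorphisms, where surjectivity is automatic from $\bbT\subseteq H$; so the statement should be read with ``into $\bbT$,'' which is exactly what you prove.
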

\begin{proof}
Suppose $f$ satisfies the hypotheses in~\eqref{posdef1}.
  Since $f(1)=1$ and $f$ is positive definite,
we have $\overline{f(g)}=f(g^{-1})$ for every $g\in G$.   Continuity
of $f$ and connectedness of $\bbT$ yield $\bbT\subseteq H$.  

Recall that
if $\H_1$ and $\H_2$ are Hilbert spaces, $A\in\B(\H_1)$,
$C\in\B(\H_2)$, and $B\in\B(\H_2,\H_1)$ with $A$ invertible, then
\begin{equation}\label{posmatcr}
\begin{pmatrix} A&B\\ B^*&C\end{pmatrix}\geq 0\dstext{if and only if $A\geq
0$, $C\geq 0$ and $C-B^*A^{-1}B\geq0$.}
\end{equation}

Let $g_1, g_2\in G$.   Then the positive definiteness of $f$ (using the group elements $h_1=1, h_2=g_1^{-1}$, and $h_3=g_2$) implies that 
$$\begin{pmatrix}
1& f(g_1^{-1}) & f(g_2)\\
 f(g_1)& 1& f(g_1g_2)\\
f(g_2^{-1})&f(g_2^{-1}g_1^{-1})&1\end{pmatrix} \geq 0.$$
Applying~\eqref{posmatcr} with $A=(1)$, $B=\begin{pmatrix} f(g_1^{-1}) &
  f(g_2)\end{pmatrix}$,  and $C=\begin{pmatrix} 1& f(g_1g_2)\\
  f(g_2^{-1}g_1^{-1})&1\end{pmatrix}$ gives
$$
0\leq \begin{pmatrix} 1-f(g_1)f(g_1^{-1})& f(g_1g_2)-f(g_1)f(g_2)\\
f(g_2^{-1} g_1^{-1})-f(g_2^{-1})f(g_1^{-1})&
1-f(g_2^{-1})f(g_2)\end{pmatrix}=:M.$$

Suppose now that $g_2\in H$, that is, 
 $f(g_2)\neq 0$.  Then
$f(g_2)f(g_2^{-1})=1$, so that   $$M=\begin{pmatrix}
1-f(g_1^{-1})f(g_1)&f(g_1g_2)-f(g_1)f(g_2)\\ f(g_2^{-1}
g_1^{-1})-f(g_2^{-1})f(g_1^{-1})&0\end{pmatrix}.$$  
But then $0\leq \det(M)=-|f(g_1g_2)-f(g_1)f(g_2)|^2$, so
$f(g_1g_2)=f(g_1)f(g_2)$.

The case when $f(g_1)\neq 0$ is the same.  Thus when
$H\cap \{g_1,g_2\}\neq \emptyset$ we obtain,
$$f(g_1g_2)=f(g_1)f(g_2).$$

The facts that $H$ is a $\bbT$-subgroup of $G$ and $f|_H: H\rightarrow
\bbT$ is a continuous homomorphism are now apparent.

Turning now to statement~\eqref{posdef2}, 
let $H$ be a $\bbT$-subgroup of $G$, and
$\phi$ a continuous homomorphism of $H$ into $\bbT$.  Let
$f:G\rightarrow \bbT$ be given
as in the statement.  The continuity of $f$ is clear, as is the fact
that $f(1)=1$ and $|f(g)|\in\{0,1\}$ for every $g\in G$.   To show $f$
is positive definite, let $g_1,\dots, g_n\in G$.  Since
$\phi(h)=\overline{\phi(h^{-1})}$, it follows that 
$f(g_i^{-1}g_j)=\overline{f(g_j^{-1}g_i)}$.   Put $X=\{1,\dots,
n\}$.  Define an equivalence relation $R$ on $X$ by $(i,j)\in R$ if
and only if $g_i^{-1}g_j\in H$, and let $X/R$ be the set of
equivalence classes.  Let $q: X\rightarrow X/R$ be the
map which sends $j\in X$ to its equivalence class, and let 
$u: X/R\rightarrow X$ be a section for $q$. 
Let $\delta_{x,y}$ be the Kronecker delta function on $X/R$ 
and for $x\in X/R$,  define
$$c_x:=\begin{pmatrix} f(g_{u(q(1))}^{-1} g_1)\delta_{q(1),x} & \dots & f(
   g_{u(q(n))}^{-1} g_n)\delta_{q(n),x} \end{pmatrix}.$$  
Then $c_x^*c_x$ is an $n\times n$ matrix whose $i,j$-th entry is
$$f(g_i^{-1}g_{u(q(i))})f(g_{u(q(j))}^{-1}g_j)
\delta_{q(i),x}\delta_{q(j),x}=\begin{cases} f(g_i^{-1}g_j)&\text{if
    $q(i)=q(j)=x$}\\
 0&\text{otherwise.}\end{cases}$$
Hence the $i,j$-th entry of $\sum_{x\in X/R} c_x^*c_x$ is
$f(g_i^{-1}g_j)$ if $(i,j)\in R$ and $0$ otherwise.  Therefore 
$$(f(g_i^{-1}g_j))_{i,j\in X}=\sum_{x\in X/R} c_x^*c_x\geq 0,$$
as desired.

\end{proof}

\begin{corollary}\label{pdindex}  Let $f$ be a continuous positive
  definite function on the $\bbT$-group $G$ such that
  $|f(g)|\in\{0,1\}$ for every $g\in G$.  Then there exists
  $p\in\bbZ$ such that for every $\lambda\in \bbT$ and $g\in G$,
$$f(\lambda g)=\lambda^pf(g).$$
\end{corollary}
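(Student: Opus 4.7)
The plan is to reduce to Proposition~\ref{posdef}\eqref{posdef1} and then invoke the classification of continuous characters of $\bbT$. Since $f$ is positive definite, $f(1)\geq 0$ and $|f(g)|\leq f(1)$ for every $g\in G$; combined with the hypothesis $|f(g)|\in\{0,1\}$, this forces $f(1)\in\{0,1\}$. If $f(1)=0$, then $f\equiv 0$ and any $p\in\bbZ$ works, so I may assume $f(1)=1$.

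With $f(1)=1$, Proposition~\ref{posdef}\eqref{posdef1} applies directly: the set $H:=\{g\in G:f(g)\neq 0\}$ is a $\bbT$-subgroup of $G$, in particular $\bbT\subseteq H$, and $f|_H:H\rightarrow \bbT$ is a continuous homomorphism. The restriction $f|_\bbT:\bbT\rightarrow \bbT$ is therefore a continuous homomorphism of $\bbT$ into itself, and the standard classification of such homomorphisms furnishes a unique integer $p\in\bbZ$ with
\[
f(\lambda)=\lambda^p\qquad (\lambda\in\bbT).
\]

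It remains to check the formula $f(\lambda g)=\lambda^p f(g)$ in both cases $g\in H$ and $g\notin H$. If $g\in H$, then since $\lambda\in\bbT\subseteq H$ and $f|_H$ is a homomorphism (and $\bbT$ is central, so $\lambda g=g\lambda\in H$),
\[
f(\lambda g)=f(\lambda)f(g)=\lambda^p f(g).
\]
If instead $g\notin H$, then because $H$ is a subgroup containing $\bbT$, the element $\lambda g$ cannot lie in $H$ either (otherwise $g=\lambda^{-1}(\lambda g)\in H$). Hence $f(\lambda g)=0=\lambda^p f(g)$, completing the verification.

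The only nontrivial ingredient is the structural information provided by Proposition~\ref{posdef}\eqref{posdef1}; once that is invoked, the rest is a one-line appeal to the characters of $\bbT$ together with the subgroup property of $H$, so there is no real obstacle to overcome.
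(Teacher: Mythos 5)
Your proof is correct and follows essentially the same route as the paper: restrict $f$ to $\bbT\subseteq H$ to get the character $\lambda\mapsto\lambda^p$, then use the multiplicativity from Proposition~\ref{posdef} (the paper invokes part (a) directly, which handles $g\notin H$ without your separate case split, but the content is identical). Your explicit treatment of the degenerate case $f(1)=0$ is a small bonus, since the corollary as stated does not assume $f(1)=1$.
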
 
\begin{proof}
The set $\{g\in G: f(g)\neq 0\}$ contains $\bbT$, and
Proposition~\ref{posdef} shows the restriction of $f$ to $\bbT$ is a
character on $\bbT$.  So there  exists $p\in\bbZ$ such that
$f(\lambda)=\lambda^p$ for every $\lambda\in\bbT$.  The corollary now
follows from another
application of Proposition~\ref{posdef}.
\end{proof}

\begin{definition}  Given a $\bbT$-group $G$, call a 
  positive-definite function $f$ on $G$ satisfying $f(1)=1$ and
  $|f(g)|\in \{0,1\}$ a 
\textit{pre-homomorphism}.

 We  will call the number $p$ appearing in
  Corollary~\ref{pdindex} the \textit{index} of $f$, and will denote
  it by $\ind(f)$.  Finally, the group $H:=\{g\in G: f(g)\neq 0\}$
  will be called the \textit{supporting subgroup} for $f$, and will be
  denoted by $\supp(f)$.
\end{definition}

\begin{remark*}{Notation} Some notation will be useful. 
\begin{enumerate}
\item
Let $(\C,\D)$ be an inclusion.  For
  $\sigma\in \hat{\D}$, let 
\begin{align*}
\fS(\C,\D,\sigma)&:=\{\rho\in\fS(\C,\D): \rho|_\D=\sigma\}, &\text{and}\\
\fS_s(\C,\D,\sigma)&:=\{\rho\in\fS_s(\C,\D): \rho|_\D=\sigma\}.
\end{align*}
\item For any $\bbT$-group $G$, let 
$\tog(G)$ denote the set of all \phom s $f:G\rightarrow
\bbT\cup\{0\}$ with $\ind(f)=1$.
\item Finally, recall the  seminorms, $B_{\rho,\sigma}$ on $\C$ 
from~\cite[Definition~2.4]{DonsigPittsCoSyBoIs}:
for each $\rho,\sigma\in\hat{\D}$, the 
seminorm $B_{\rho,\sigma}$ is defined on $\C$ by
$$B_{\rho,\sigma}(x):=\inf\{\norm{dxe}: d, e\in \D,
  \rho(d)=\sigma(e)=1\}  \qquad (x\in \C).$$
We shall require these seminorms for $\rho=\sigma$; however, instead of
writing $B_{\sigma,\sigma}$ we shall write $B_\sigma$.
\end{enumerate}
\end{remark*}

Associated to each regular inclusion $(\C,\D)$ and $\sigma\in
\hat{\D}$ is a certain $\bbT$-group, denoted $H_\sigma/R_1$,
 which we now construct.  We produce a
 distinguished unitary representation  $T$ of $H_\sigma/R_1$. 
Our goal is to exhibit a bijection between elements of
$$\{f\in \tog(H_\sigma/R_1): f \text{ determines a state on the
  \cstaralg\ generated by $T(H_\sigma/R_1)$}\}$$  and
$\fS(\C,\D,\sigma)$.

\begin{definition}
Let $(\C,\D)$ be an inclusion, and
  let $\sigma\in\hat{\D}$.  
\begin{enumerate}
\item Define 
$$H_\sigma:= 
\{v\in\N(\C,\D): \sigma(v^*dv)=\sigma(d) \text{ for every }d\in\D\}.$$
We remark that $H_\sigma$  is the set which arises when considering the
$\D$-stabilizer of $\rho\in\Mod(\C,\D)$, where $\sigma=\rho|_\D$ (see
Definition~\ref{dstab}). 
However, there our interest was in a particular extension of $\sigma$,
while here we do not wish to specify the extension.

Notice that for $v\in H_\sigma$, we have $\sigma(v^*v)=1$, and that
$H_\sigma$ is closed under the adjoint operation:  replace $d$ by
$v^*dv$ in the definition.  Furthermore, it is easy to see that
$H_\sigma$ is a $*$-semigroup.

\item Let $\Lambda\subseteq\bbT$ be a subgroup, (we write the
  product multiplicatively).  Define 
$$R_\Lambda:=\{(v,w)\in H_\sigma\times H_\sigma: B_\sigma(\lambda
I-w^*v)=0 \text{ for some } \lambda\in
\Lambda\}.$$  When $\Lambda=\{1\}$, we write $R_1$ instead of $R_{\{1\}}$.
\end{enumerate}
\end{definition}

\begin{lemma}\label{Lambda1norm} Let $(\C,\D)$ be an 
  inclusion, let $\sigma\in\hat{\D}$, and let $v,w\in H_\sigma$.
  Then
$$B_\sigma(v)=1\dstext{and} B_\sigma(v-w)=B_\sigma(I-v^*w)=B_\sigma(I-vw^*).$$
\end{lemma}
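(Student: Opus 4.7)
The strategy reduces everything to a single invariance principle: for $v\in H_\sigma$ and $y\in\C$,
\[
B_\sigma(vy)=B_\sigma(yv)=B_\sigma(y).
\]
Granted this, the first claim becomes $B_\sigma(v)=B_\sigma(v\cdot I)=B_\sigma(I)=1$, where the final equality is immediate (a lower bound is $|\sigma(de)|=1$, and localizing $d,e$ near $\sigma$ makes $\|de\|$ approach $1$). A routine auxiliary fact that I would record up front is that $B_\sigma(hy)=0$ whenever $h\in\D$ satisfies $\sigma(h)=0$: choose $\hat{d}$ supported on $\{|\hat{h}|<\epsilon\}$, so $\|dhye\|\leq\|dh\|_\D\|y\|\|e\|\leq\epsilon\|y\|$.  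It is also clear from the definition that $B_\sigma(y^*)=B_\sigma(y)$, which lets me derive right-multiplication invariance from left-multiplication invariance.

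For the invariance, substituting $d\mapsto vdv^*$ in the defining identity of $H_\sigma$ and using multiplicativity of $\sigma$ on $\D$ first shows $v^*\in H_\sigma$; in particular $\sigma(vv^*)=1$.  The direction $B_\sigma(vy)\geq B_\sigma(y)$ is a one-line calculation: given $d,e\in\D$ with $\sigma(d)=\sigma(e)=1$, and assuming WLOG that $d\geq0$ via $\|dvye\|=\||d|vye\|$,
\[
\|dvye\|^2=\|(v^*d^2 v)^{1/2}\, y e\|^2,
\]
with $\sigma((v^*d^2v)^{1/2})=\sigma(d^2)^{1/2}=1$, so the right-hand side is at least $B_\sigma(y)^2$. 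The main obstacle is the reverse inequality.  Given $\epsilon>0$, I would choose $d_0\geq0, e_0$ with $\sigma(d_0)=\sigma(e_0)=1$ and $\|d_0 y e_0\|<B_\sigma(y)+\epsilon$, arranging in addition that $\supp\hat{d}_0$ lies in a neighbourhood $U$ of $\sigma$ on which $\tau(v^*v)\leq\sqrt{1+\epsilon}$ (possible since $\sigma(v^*v)=1$).  Define the clipped element
\[
d_1:=\min\bigl((vd_0^2v^*)^{1/2},\,I\bigr)\in\D_+.
\]
Because $v$ normalizes $\D$ and $v^*\in H_\sigma$, we have $vd_0^2v^*\in\D$ with $\sigma(vd_0^2v^*)=\sigma(d_0^2)=1$; hence $\sigma(d_1)=1$ and $0\leq d_1\leq I$. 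The inequality $d_1^2\leq vd_0^2v^*$ together with commutativity of $\D$ yields
\[
v^*d_1^2 v\leq v^*(vd_0^2v^*)v=(v^*v)^2 d_0^2\leq(1+\epsilon)\,d_0^2,
\]
the last step using $\tau(v^*v)^2\leq 1+\epsilon$ on $\supp\hat{d}_0$. Taking square roots in $\D_+$ and repeating the identity above,
\[
\|d_1 v y e_0\|=\|(v^*d_1^2 v)^{1/2}y e_0\|\leq\sqrt{1+\epsilon}\,\|d_0 y e_0\|<\sqrt{1+\epsilon}\,(B_\sigma(y)+\epsilon),
\]
which bounds $B_\sigma(vy)$ since $\sigma(d_1)=\sigma(e_0)=1$.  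Letting $\epsilon\to0$ completes the invariance.

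The three-way equality is then a routine consequence of subadditivity and the algebraic identities
\[
I-v^*w=v^*(v-w)+(I-v^*v),\qquad v-w=v(I-v^*w)+(vv^*-I)w,
\]
\[
v-w=v(I-w^*w)+(vw^*-I)w.
\]
The first gives $B_\sigma(I-v^*w)\leq B_\sigma(v-w)$ (invariance plus $B_\sigma(I-v^*v)=0$); the second gives the reverse; the third (together with $B_\sigma((vw^*-I)w)=B_\sigma(I-vw^*)$, which comes from right-invariance under $w$) yields $B_\sigma(v-w)=B_\sigma(I-vw^*)$.  The entirety of the difficulty lies in the construction of $d_1$: the clipping $\min(\cdot,I)$ is precisely what lets $\sigma(d_1)=1$ and $d_1\leq I$ coexist without destroying the pointwise estimate $(v^*v)^2 d_0^2\leq(1+\epsilon)d_0^2$ that comes from localizing $d_0$ at the fixed point $\sigma$ of $\beta_v$.
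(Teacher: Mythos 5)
Your proof is correct. The key technical move --- transporting the localizing element through $v$ via $v\mapsto (vd_0^2v^*)^{1/2}$ (resp.\ $(v^*d^2v)^{1/2}$) and exploiting that $\sigma$ is fixed by $\beta_v$ so that $\widehat{v^*v}\approx 1$ near $\sigma$ --- is the same device the paper uses when it left-multiplies $v-w$ by $xvdv^*$, factors out $xv$, and controls $\norm{xv}$. Where you genuinely differ is in the packaging: you isolate a standalone two-sided invariance principle, $B_\sigma(vy)=B_\sigma(yv)=B_\sigma(y)$ for all $y\in\C$ and $v\in H_\sigma$, prove it once, and then reduce the three-way equality to pure algebra (the decompositions $I-v^*w=v^*(v-w)+(I-v^*v)$, etc., each of whose error terms is annihilated by $B_\sigma$, so the seminorm inequality $|B_\sigma(a)-B_\sigma(b)|\le B_\sigma(a-b)$ gives equality in one shot). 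The paper instead proves each inequality separately with a tailored estimate and obtains the third equality by passing to adjoints. Your route costs a little more up front (the full reverse inequality $B_\sigma(vy)\le B_\sigma(y)$, where the clipping $\min(\cdot,I)$ is actually unnecessary since the definition of $B_\sigma$ places no norm constraint on the test elements) but buys a reusable invariance statement that makes the rest of the lemma, and the later computations in Proposition~\ref{Bsigquot}, essentially formal. Both arguments are complete; I would only ask you to note explicitly that right-invariance follows from left-invariance via $B_\sigma(y^*)=B_\sigma(y)$ and $v^*\in H_\sigma$, which you do.
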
  
\begin{proof}  Note that for any $h\in\D$,
  $|\sigma(h)|=\inf\{\norm{d^*hd}: d\in\D, \sigma(d)=1\}$.  Hence
given $\eps>0$, and $e\in \D$ with $\sigma(e)=1$, we may find $d\in\D$
with $\sigma(d)=1$ and 
$$\left| \norm{d^*v^*e^*evd}-\sigma(v^*e^*ev)\right| <\eps.$$  Since
$\sigma(v^*e^*ev)=\sigma(e^*e)=1$, we  obtain
$1-\eps < \norm{evd}^2<1+\eps$.  Hence $1-\eps<B_\sigma(v)^2<1+\eps,$
and the fact that $B_\sigma(v)=1$ follows.

Next, let $x, d,e\in\D$ with $\sigma(x)=\sigma(d)=\sigma(e)=1$.  Since
$\sigma(xvdv^*)=1$, 
we
have \begin{align*}
B_\sigma(v-w)&\leq \norm{(xvdv^*)(v-w)e} =\norm{xv(dv^*ve-dv^*we)}\\
&\leq \norm{xv}\, \norm{dv^*ve-dv^*we}\\
&\leq \norm{xv}\left[ \norm{dv^*ve-de}+\norm{d(I-v^*w)e}\right].
\end{align*}
It follows that 
$$B_\sigma(v-w)\leq B_\sigma(I-v^*w).$$
A similar argument using multiplication on the right by $w^*ewx$ gives
$B_\sigma(I-v^*w)\leq B_\sigma(w^*-v^*).$  But
$B_\sigma(v^*-w^*)=B_\sigma(v-w)$, so we obtain $B_\sigma(v-w)=B_\sigma(I-v^*w).$
Finally, $B_\sigma(v-w)=B_\sigma(v^*-w^*)=B_\sigma(I-vw^*).$

\end{proof}

\begin{proposition}\label{charer}  Let $(\C,\D)$ be a regular MASA 
inclusion, $\sigma\in\hat{\D}$ and suppose that $v\in H_\sigma$.  Let
$\Lambda\subseteq \bbT$ be a subgroup.  
The following statements are equivalent:
\begin{enumerate}
\item
for some $\lambda\in\Lambda$, $B_\sigma(\lambda I-v)=0$;
\item
there exists $\lambda\in \Lambda$ such that 
$f(v)=\lambda$ whenever $f\in
\Mod(\C,\D,\sigma)$;
\item
there exists $\lambda\in \Lambda$ such that 
$\rho(v)=\lambda$ whenever $\rho\in
\fS_s(\C,\D,\sigma)$;
\item
$\sigma\in (\fix\beta_v)^\circ$ and $\hat{v}(\sigma)\in \Lambda$
(where $\hat{v}$ is as in Remark~\ref{vhat});
\item
there exists $\lambda\in \Lambda$ and  $h,k \in \D$ such that 
$\sigma(h)=1= 
\sigma(k)$
  and $vh=\lambda k$.
\end{enumerate}
\end{proposition}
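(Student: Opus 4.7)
The plan is to establish the cycle $(1)\Rightarrow(2)\Rightarrow(3)\Rightarrow(4)\Rightarrow(5)\Rightarrow(1)$.  Since $v\in H_\sigma$ forces $\sigma(v^*v)=\sigma(I)=1$ and $\beta_v(\sigma)=\sigma$, throughout the proof $\sigma\in\fix{\beta_v}\cap\dom(\beta_v)$.

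For $(1)\Rightarrow(2)$, note that any $f\in\Mod(\C,\D,\sigma)$ is $\D$-modular by a standard Cauchy-Schwartz argument; hence whenever $d,e\in\D$ satisfy $\sigma(d)=\sigma(e)=1$ one has $f(d(\lambda I-v)e)=\lambda-f(v)$, and taking the infimum over such pairs gives $|\lambda-f(v)|\leq B_\sigma(\lambda I-v)=0$.  The implication $(2)\Rightarrow(3)$ is immediate since $\fS_s(\C,\D,\sigma)\subseteq\Mod(\C,\D,\sigma)$.

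The main obstacle will be $(3)\Rightarrow(4)$, which requires the projective-cover apparatus of Proposition~\ref{formula}.  By Theorem~\ref{natext}, each element of $\fS_s(\C,\D,\sigma)$ has the form $\tau\circ E$ with $\tau\in(\dual{\iota})^{-1}(\sigma)$, so hypothesis $(3)$ asserts that $\widehat{E(v)}$ is constantly equal to the non-zero scalar $\lambda$ on the fibre $(\dual{\iota})^{-1}(\sigma)$; thus this fibre lies inside the open set $\supp(\widehat{E(v)})$.  Because $\dual{\iota}$ is a continuous surjection from the compact space $\widehat{I(\D)}$ to the Hausdorff space $\hat{\D}$, the image $\dual{\iota}(\widehat{I(\D)}\setminus\supp(\widehat{E(v)}))$ is closed in $\hat{\D}$ and misses $\sigma$; hence $\sigma$ admits an open neighborhood $V$ with
\[
(\dual{\iota})^{-1}(V)\subseteq\supp(\widehat{E(v)})\subseteq(\dual{\iota})^{-1}(\fix{\beta_v}),
\]
the second inclusion coming from Proposition~\ref{formula}.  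Surjectivity of $\dual{\iota}$ then yields $V\subseteq\fix{\beta_v}$, so $\sigma\in(\fix{\beta_v})^\circ$.  Picking $d\in J_v$ with $\sigma(d)\neq 0$ via Proposition~\ref{Jvee}, formula~\eqref{formulaEv} gives $\hat{v}(\sigma)=\sigma(vd)/\sigma(d)=\tau(E(v))=\lambda\in\Lambda$.

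For $(4)\Rightarrow(5)$, apply Proposition~\ref{Jvee} to produce $h\in J_v$ with $\sigma(h)=1$; by the definition of $J_v$ we have $vh\in\D$, and setting $\lambda:=\hat{v}(\sigma)\in\Lambda$ and $k:=\lambda^{-1}vh\in\D$ yields $\sigma(k)=1$ and $vh=\lambda k$.  Finally, for $(5)\Rightarrow(1)$, the algebraic identity
\[
(\lambda I-v)e=\lambda(I-k)e+v(h-I)e,
\]
valid because $\D$ is abelian and $vh=\lambda k$, reduces matters to controlling $\norm{(I-k)e}$ and $\norm{(h-I)e}$.  Given $\varepsilon>0$, choose $0\leq e\leq I$ in $\D$ with $\sigma(e)=1$ and $\supp(\hat{e})$ contained in an open neighborhood of $\sigma$ on which both $|\hat{h}-1|<\varepsilon$ and $|\hat{k}-1|<\varepsilon$; this forces $\norm{(\lambda I-v)e}\leq(1+\norm{v})\varepsilon$, and taking $d=I$ yields $B_\sigma(\lambda I-v)=0$.
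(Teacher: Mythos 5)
Your argument is correct, and four of the five implications run essentially as in the paper: $(1)\Rightarrow(2)$ via modularity and the bound $|f(x)|\le B_\sigma(x)$, $(2)\Rightarrow(3)$ trivially, $(4)\Rightarrow(5)$ by choosing $h\in J_v$ with $\sigma(h)=1$ and setting $k=\overline{\hat{v}(\sigma)}\,vh$ (your $\lambda^{-1}=\overline{\lambda}$ since $\Lambda\subseteq\bbT$), and $(5)\Rightarrow(1)$ by localizing near $\sigma$ with a Urysohn function (the paper gets this in one line from $d(\lambda I-v)hd=\lambda\, d(h-k)d$, but your decomposition $(\lambda I-v)e=\lambda(I-k)e+v(h-I)e$ is the same idea).

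The genuinely different step is $(3)\Rightarrow(4)$, and your version is a legitimate alternative. The paper argues by contradiction downstairs on $\hat{\D}$: assuming $\sigma\notin(\fix\beta_v)^\circ$, it takes a net $\sigma_s\to\sigma$ with $\sigma_s\notin\fix\beta_v$, lifts each $\sigma_s$ to a strongly compatible state via surjectivity of the restriction map (Theorem~\ref{natext}), passes to a convergent subnet using weak-$*$ compactness and the closedness of $\fS_s(\C,\D)$, and invokes Lemma~\ref{vactmod} to force $\lambda=0$. You instead work upstairs in $\widehat{I(\D)}$: hypothesis $(3)$ places the entire fibre $(\dual{\iota})^{-1}(\sigma)$ inside the open set $\supp(\widehat{E(v)})$, and since $\dual{\iota}$ is a closed map (continuous, compact domain, Hausdorff codomain), the saturation argument produces an open neighborhood $V$ of $\sigma$ with $(\dual{\iota})^{-1}(V)\subseteq\supp(\widehat{E(v)})\subseteq(\dual{\iota})^{-1}(\fix\beta_v)$, whence $V\subseteq\fix\beta_v$ by surjectivity; the value $\hat{v}(\sigma)=\lambda$ then drops out of formula~\eqref{formulaEv}. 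Your route leans on the full strength of Proposition~\ref{formula} (both support containments), whereas the paper's uses only the elementary Lemma~\ref{vactmod} together with the topological properties of $\fS_s(\C,\D)$; in exchange, yours avoids the subnet extraction entirely and is arguably more transparent about where the interior condition comes from. Both are sound.
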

\begin{proof}
$(1)\Rightarrow (2)$.  If $f\in \Mod(\C,\D,\sigma)$ and $x\in\C$, 
then $|f(x)|=|f(dxe)|\leq \norm{dxe}$ whenever
$d,e\in\D$ and $\sigma(d)=\sigma(e)=1$.  Thus, 
$|f(x)|\leq B_\sigma(x)$ for every $x\in\C$.  The implication
$(1)\Rightarrow (2)$ follows.  

$(2)\Rightarrow (3)$ is trivial.

$(3)\Rightarrow (4)$. 
Suppose $\lambda\in\Lambda$ and that $\rho(v)=\lambda$ for every
$\rho\in\{f\in\fS_s(\C,\D): f|_\D=\sigma\}$.  By Lemma~\ref{vactmod},
$\sigma\in \fix\beta_v$.    
To show that $\sigma\in (\fix\beta_v)^\circ$, we argue by
contradiction.  So suppose that $\sigma\in \fix\beta_v\setminus 
(\fix\beta_v)^\circ$.  Then every
neighborhood of $\sigma$ contains an element in $\hat{\D}\setminus
\fix\beta_v$.   Hence we may find a net $(\sigma_s)$ in $\hat{\D}$ such that
$\sigma_s\rightarrow \sigma$ and such that
$\sigma_s\notin\fix\beta_v$.  By Theorem~\ref{natext}, the
 restriction map $f\mapsto
f|_\D$ from
$\fS_s(\C,\D)$ to $\hat{\D}$ is onto.  Thus we may choose $f_s\in \fS_s(\C,\D)$
such that $f_s|_\D=\sigma_s$.  By passing to a subnet if necessary, we
may assume that $f_s$ converges to a state $f$.  Theorem~\ref{natext} shows
that 
$\fS_s(\C,\D)$ is closed,
so  $f\in \fS_s(\C,\D)$.  Clearly $f|_\D=\sigma$.  Lemma~\ref{vactmod}
gives $f_s(v)=0$ for every $s$, so 
$0\neq \lambda =f(v)=\lim_s f_s(v)=0$.  This is absurd, so we conclude
that $\sigma\in (\fix\beta_v)^\circ$.

Let $\rho\in\fS_s(\C,\D,\sigma)$.   Then for 
$h\in J_v$, we have 
\begin{equation}\label{vopcharer}
\hat{v}(\sigma)\sigma(h)=\sigma(vh)=\rho(vh)=\rho(v)\sigma(h).
\end{equation}
By Proposition~\ref{Jvee}, $\sigma|_{J_v}\neq 0$.   Statement 
$(4)$ now follows from equation~\eqref{vopcharer}.

$(4)\Rightarrow (5)$.  Let $\lambda=\hat{v}(\sigma)$ and choose $h\in
J_v$ 
such that $\sigma(h)=1$.
Then $\sigma(vh)=\hat{v}(\sigma)$.  Put $k=\overline{\hat{v}(\sigma)}vh$.

$(5)\Rightarrow (1)$.  Let $h,k\in \D$ be chosen so that
$\sigma(h)=\sigma(k)=1$ and $vh=\lambda k$.   Then 
$$B_\sigma(\lambda I-v)\leq \inf_{\{d\in\D:\sigma(d)=1\}}
\norm{d(\lambda I-v)hd}=|\sigma(\lambda h-vh)|=0.$$  
Thus (1) holds.
 
\end{proof}

We next observe 
that $B_\sigma$ gives the quotient norm on the quotient of
$\overline{\spn}H_\sigma$ by the ideal generated by $\ker\sigma$.
\begin{proposition}\label{Bsigquot}
Let $(\C,\D)$ be a regular inclusion and suppose $\sigma\in\hat{\D}$.
Let $\C_\sigma=\overline{\spn}H_\sigma$.  Then $(\C_\sigma, \D)$ is a
regular inclusion.  If  $\fI_\sigma$ is the closed, two-sided ideal
of $\C_\sigma$ generated by $\ker\sigma$, then $B_\sigma$ vanishes on
$\fI_\sigma$, and for $x\in \C_\sigma$, 
$$B_\sigma(x)=\inf\{\norm{x+j}: j\in \fI_\sigma\}.$$  

Moreover, the following statements hold.
\begin{enumerate}
\item If
$\rho$ is a state on $\C_\sigma$ which annihilates $\fI_\sigma$, then
$\rho$ extends uniquely to a state $\tilde{\rho}$ on $\C$.  When
$\rho\in\fS(\C_\sigma,\D)$ annihilates $\fI_\sigma$,  $\tilde{\rho}\in\fS(\C,\D)$.
\item The map $H_\sigma\ni u\mapsto u+\fI_\sigma\in
  \C_\sigma/\fI_\sigma$ is a $*$-homomorphism of the $*$-semigroup
  $H_\sigma$ into the unitary group of 
  $\C_\sigma/\fI_\sigma.$ 
\end{enumerate}
\end{proposition}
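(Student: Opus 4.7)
The plan divides into four steps: verifying that $(\C_\sigma,\D)$ is a regular inclusion, establishing that $B_\sigma$ vanishes on $\fI_\sigma$, deducing the quotient-norm formula, and handling the two ``Moreover'' claims. The main obstacle is the second step, which underlies everything else.

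For the regularity of $(\C_\sigma,\D)$, I would first check that $H_\sigma$ is a $*$-semigroup: closure under products uses only that $u^*du\in\D$ when $u\in\N(\C,\D)$, so $\sigma((uv)^*d(uv))=\sigma(v^*(u^*du)v)=\sigma(u^*du)=\sigma(d)$. To see $\D\subseteq\C_\sigma$, I would use a translate-and-rescale trick: for self-adjoint $h\in\D$ and $\lambda>\|h\|$, the element $a:=(h+\lambda I)^{1/2}$ in $\D$ can be scaled by $\mu:=(\sigma(h)+\lambda)^{-1/2}$ into $H_\sigma$ (using multiplicativity of $\sigma$), and then $(\mu a)^2\in H_\sigma$ by the semigroup property; rearranging expresses $h$ as a linear combination of $H_\sigma$-elements. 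Regularity then follows from $H_\sigma\subseteq\N(\C_\sigma,\D)$ together with density of $\spn H_\sigma$ in $\C_\sigma$.

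For the key claim $B_\sigma|_{\fI_\sigma}=0$, the central observation is that for any $h\in\ker\sigma\cap\D$ and any $\eps>0$, Gelfand theory at $\sigma$ produces $d\in\D$ with $0\le d\le I$, $\sigma(d)=1$, and $\|dh\|<\eps$. This immediately gives $B_\sigma(hy)=0$ for any $y\in\C$ (take the pair $(d,I)$ in the definition of $B_\sigma$), and symmetrically $B_\sigma(yh)=0$. For a generator $xhy$ of $\fI_\sigma$ with $x\in H_\sigma$ and $h\in\ker\sigma\cap\D$, the identity $xhy=(xhx^*)(xy)$ reduces to the previous case: $xhx^*\in\ker\sigma\cap\D$ thanks to the defining property of $H_\sigma$ applied to $x^*$. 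Linearity in $x$, density of $\spn H_\sigma$ in $\C_\sigma$, and continuity of $B_\sigma$ (which is dominated by $\|\cdot\|$) then extend the vanishing to all of $\fI_\sigma$. This reduction via the semigroup trick is where I expect the bookkeeping to be trickiest.

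The quotient-norm formula is then immediate: since $B_\sigma\le\|\cdot\|$ and $B_\sigma$ vanishes on $\fI_\sigma$, $B_\sigma(x)=B_\sigma(x+j)\le\|x+j\|$ for every $j\in\fI_\sigma$; conversely, when $\sigma(d)=\sigma(e)=1$ the difference $x-dxe=(I-d)xe+x(I-e)$ lies in $\fI_\sigma$, so $\inf_j\|x+j\|\le\|dxe\|$. For claim (1), $\rho$ vanishing on $\fI_\sigma$ forces $\rho|_\D=\sigma$; Hahn--Banach supplies a state extension $\tilde\rho$ on $\C$, and uniqueness follows by determining $\tilde\rho(v)$ on each $v\in\N(\C,\D)$ via a three-way case split: if $\sigma(v^*v)=0$ then Cauchy--Schwartz forces $\tilde\rho(v)=0$; if $\sigma(v^*v)>0$ but $\beta_v(\sigma)\ne\sigma$ then Lemma~\ref{vactmod} gives $\tilde\rho(v)=0$; and if $\sigma(v^*v)>0$ with $\beta_v(\sigma)=\sigma$ then $v/\sqrt{\sigma(v^*v)}\in H_\sigma\subseteq\C_\sigma$, so $\tilde\rho(v)$ is determined by $\rho$. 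The compatibility $\tilde\rho\in\fS(\C,\D)$ under the hypothesis $\rho\in\fS(\C_\sigma,\D)$ drops out of the same case analysis. Finally, claim (2) is immediate: for $u\in H_\sigma$, both $u^*u-I$ and $uu^*-I$ lie in $\ker\sigma\cap\D\subseteq\fI_\sigma$, so $u+\fI_\sigma$ is unitary in $\C_\sigma/\fI_\sigma$, and the map is a $*$-semigroup homomorphism by construction.
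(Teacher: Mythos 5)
Your architecture tracks the paper's closely, and several pieces are genuinely well done. Your reverse inequality for the quotient-norm formula, via $x-dxe=(I-d)xe+x(I-e)\in\fI_\sigma$, is a more elementary route than the paper's (which argues through states on $\C_\sigma$ annihilating $\fI_\sigma$), and it is correct. The regularity of $(\C_\sigma,\D)$, the Urysohn observation that $B_\sigma(hy)=B_\sigma(yh)=0$ for $h\in\ker\sigma$, the three-way case split in (1) (Cauchy--Schwartz when $\sigma(v^*v)=0$; Lemma~\ref{vactmod} when $\beta_v(\sigma)\neq\sigma$; normalized membership in $H_\sigma$ otherwise) and claim (2) all match the paper's argument in substance and are fine.

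There is, however, a genuine error at the pivotal step: the identity $xhy=(xhx^*)(xy)$ is false. The right-hand side equals $x(x^*x)hy=(xx^*x)hy$ (since $h$ and $x^*x$ commute in $\D$), and $xx^*x\neq x$ unless $x$ is a partial isometry; the discrepancy $x(I-x^*x)hy$ is again of the form $xh'y$ with $h'\in\ker\sigma$, so absorbing it circles back to the very claim being proved. The reduction you want must go through the seminorm rather than through an algebraic identity: because $x^*\in H_\sigma$ gives $\sigma(xx^*)=1$, the elements $gxx^*$ with $g\in\D$, $\sigma(g)=1$ are admissible left multipliers in the infimum defining $B_\sigma$, whence
$$B_\sigma(xhy)\leq\inf\{\norm{g(xx^*xh)ye}: g,e\in\D,\ \sigma(g)=\sigma(e)=1\}=B_\sigma\bigl((xhx^*)(xy)\bigr),$$
and now $xhx^*\in\D\cap\ker\sigma$ sits on the far left, so your Urysohn argument finishes. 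This is exactly the maneuver in the paper's proof, which writes $B_\sigma(vdy)=B_\sigma(vv^*vdy)=B_\sigma((vdv^*)vy)=|\sigma(vdv^*)|\,B_\sigma(vy)=0$. Everything downstream of this step in your write-up is unaffected once the fix is made.
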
 

\begin{proof}
Since $H_\sigma$ is closed under multiplication and the adjoint map,
we see that $\C_\sigma$ is a \cstaralg.

For $d\in\D$, $d=\lim_{t\rightarrow 0} tI +d$.
But for all sufficiently small $t\neq 0$, $(t+\sigma(d))^{-1}(tI+d)\in
H_\sigma$, so $d$ is a limit of scalar multiples of
elements of $H_\sigma$. So $\D\subseteq \C_\sigma$.  Therefore
$(\C_\sigma,\D)$ is a regular inclusion.

Next, suppose that $x\in\C$ and $d\in\ker\sigma$.  Then
$B_\sigma(xd)=B_\sigma(dx)=0$.   When $x=v\in
H_\sigma$ and $y\in \C_\sigma$, 
$$B_\sigma(vdy)=B_\sigma(vv^*vdy)=B_\sigma(vdv^*vy)= 
|\sigma(vdv^*)|B_\sigma(vy)=0.$$  It follows
that when $x\in\spn H_\sigma$, we have $B_\sigma(xdy)=0$.  Taking
closures we obtain $B_\sigma(xdy)=0$ when $x, y\in \C_\sigma$ and
$d\in\ker\sigma$.  Therefore, $B_\sigma(z)=0$ for every $z\in
\fI_\sigma$.  

This gives $B_\sigma(x)\leq B_\sigma(x+j)+B_\sigma(j)=B_\sigma(x+j) 
\leq \norm{x+j}$ for every
$x\in\C_\sigma$ and $j\in \fI_\sigma$. Thus, $B_\sigma(x)\leq
\dist(x,\fI_\sigma)$.

Notice that if $\rho$ is a state on $\C_\sigma$ which
annihilates $\fI_\sigma$, then $\rho|_\D$ annihilates $\ker\sigma$, so
$\rho|_\D=\sigma$.  Any
extension of $\sigma$ to a state $g$ on $\C$ belongs to
$\Mod(\C,\D)$.  Hence  $|g(x)|\leq B_\sigma(x)$ for every $x\in\C$.  In
particular, $|\rho(x)| \leq B_\sigma(x)$ for every $x\in\C_\sigma$.
So if $x\in\C_\sigma$, we obtain, 
$$\norm{x+\fI_\sigma}=\sup\{|\rho(x)|: \rho \text{ is a state on
}\C_\sigma \text{ and } \rho|_{\fI_\sigma}=0\}\leq B_\sigma(x).$$
Hence $B_\sigma$ gives the quotient norm on $\C_\sigma/\fI_\sigma$.

Turning now to statement $(1)$, 
let  $\rho$ be a state on $\C_\sigma$ which annihilates $\fI_\sigma$,
and suppose for $i=1,2$, that $\tau_i$ are states on $\C$ with
$\tau_i|_{\C_\sigma}=\rho$.   
Let $v\in \N(\C,\D)\setminus \{\lambda u: u\in H_\sigma \text{ and
}\lambda\in\bbC\}$.  

We claim that $\tau_i(v)=0$.  Since $\rho$
annihilates $\fI_\sigma$, we have $\sigma=\rho|_\D=\tau_i|_\D$.   
 Suppose that $\sigma(v^*v)\neq 0$.  By multiplying $v$ by a
 suitable scalar, we may assume that $\sigma(v^*v)=1$.    
Since $v$ is not a scalar multiple of an element of $H_\sigma$, 
we see that $\beta_v(\sigma)\neq \sigma$.  Lemma~\ref{vactmod} shows
that $\tau_i(v)=0$. 
On the other hand, if $\sigma(v^*v)=0$, then $v^*v\in
\fI_\sigma$, so $\tau_i(v)=\lim_{n\rightarrow
  \infty}\tau_i(v(v^*v)^{1/n})=0$.  Thus the claim holds.

    Since $\tau_1(v)=\tau_2(v)$ for
every $v\in H_\sigma$, we have $\tau_1(v)=\tau_2(v)$ for every $v\in
\N(\C,\D)$.  Hence $\tau_1=\tau_2$.   Thus $\rho$ extends uniquely to
a state on
$\C$.  Notice also that if $\rho\in\fS(\C_\sigma,\D)$, then this
argument shows that $\tilde{\rho}\in \fS(\C,\D)$.

To prove statement $(2)$, use the fact that $B_\sigma$ gives the
quotient norm on $\C_\sigma/\fI_\sigma$ and apply 
Lemma~\ref{Lambda1norm} to $u\in
H_\sigma$.
\end{proof}

The following is a corollary of Proposition~\ref{Bsigquot}.
\begin{proposition}\label{Ronegp}  Let $(\C,\D)$ be a regular
  inclusion, $\sigma\in\hat{\D}$ and let $\Lambda$ be a subgroup of
  $\bbT$.  
Then $R_\Lambda$ is an equivalence
  relation on $H_\sigma$.  
Denote the equivalence
  class of $v\in H_\sigma$ by $[v]_\Lambda$.   The product
  $[v]_\Lambda [w]_\Lambda:=[vw]_\Lambda$ is a well-defined product on
  $H_\sigma/R_\Lambda$.  With this product, $[I]_\Lambda$ is the unit
  and for each $v\in H_\sigma$, $[v]_\Lambda^{-1}=[v^*]_\Lambda$.
  Thus $H_\sigma/R_\Lambda$ is a group.

Furthermore, the map $T_\sigma: H_\sigma/R_1\rightarrow
\C_\sigma/\fI_\sigma$ given by $T_\sigma([u]_1)=u+\fI_\sigma$ is a
one-to-one group homomorphism of $H_\sigma/R_1$ into the unitary group of 
$\C_\sigma/\fI_\sigma$, and  $T_\sigma(H_\sigma/\fI_\sigma)$
generates $\C_\sigma/\fI_\sigma$.
\end{proposition}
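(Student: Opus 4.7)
The plan is to reduce nearly everything to the translation of $B_\sigma(x)=0$ into $x\in \fI_\sigma$ provided by Proposition~\ref{Bsigquot}. Throughout, I will write $x\equiv y$ to mean $x-y\in \fI_\sigma$. The key preliminary observation is that for every $u\in H_\sigma$ one has $\sigma(u^*u)=1$ (by definition) and also $\sigma(uu^*)=1$ (apply $\sigma(u^*du)=\sigma(d)$ with $d=uu^*$, which gives $\sigma((u^*u)^2)=\sigma(uu^*)$, and use $\sigma(u^*u)=1$). Hence both $I-u^*u$ and $I-uu^*$ lie in $\ker\sigma\subseteq \fI_\sigma$, so $u^*u\equiv uu^*\equiv I$. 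Since Proposition~\ref{Bsigquot} identifies the kernel of $B_\sigma$ with $\fI_\sigma$, the defining condition $(v,w)\in R_\Lambda$ becomes simply $w^*v\equiv \lambda I$ for some $\lambda\in\Lambda$.

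First I verify that $R_\Lambda$ is an equivalence relation. Reflexivity uses $v^*v\equiv I$ with $\lambda=1\in\Lambda$. Symmetry: if $w^*v\equiv \lambda I$, taking adjoints gives $v^*w\equiv \bar\lambda I$, and $\bar\lambda\in\Lambda$. For transitivity, suppose $w^*v\equiv \lambda I$ and $u^*w\equiv \mu I$. Write
\[
u^*v = u^*ww^*v + u^*(I-ww^*)v.
\]
The second summand lies in $\fI_\sigma$ since $I-ww^*\in\ker\sigma$ and $\fI_\sigma$ is a two-sided ideal, while the first summand satisfies $(u^*w)(w^*v)\equiv \mu\lambda I$ because $\fI_\sigma$ is an ideal. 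Hence $u^*v\equiv \mu\lambda I$ with $\mu\lambda\in\Lambda$. The product $[v]_\Lambda[w]_\Lambda:=[vw]_\Lambda$ is well-defined by the same bookkeeping: if $v^*v'\equiv \lambda I$ and $w^*w'\equiv \mu I$, then $(vw)^*(v'w') = w^*(v^*v')w' \equiv \lambda w^*w'\equiv \lambda\mu I$. Associativity is inherited from $\C$, $[I]_\Lambda$ is clearly the unit, and $[v^*]_\Lambda$ inverts $[v]_\Lambda$ because $v^*v\equiv I$ and $vv^*\equiv I$.

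Next I handle $T_\sigma$. To see it is well-defined, suppose $[u]_1=[u']_1$, so $u^*u'\equiv I$. Multiplying on the left by $u$ and using $uu^*\equiv I$ gives $u'\equiv uu^*u'\equiv u$, so $u+\fI_\sigma=u'+\fI_\sigma$. It is a homomorphism because the product on the quotient is the restriction of the product on $\C_\sigma/\fI_\sigma$, and Proposition~\ref{Bsigquot}(2) already places $u+\fI_\sigma$ in the unitary group. Injectivity is the converse of well-definedness: if $u+\fI_\sigma=v+\fI_\sigma$, then $v^*u\equiv v^*v\equiv I$, so $(u,v)\in R_1$. Finally, $\C_\sigma=\overline{\spn}\, H_\sigma$ passes to the quotient to show that $T_\sigma(H_\sigma/R_1)$ generates $\C_\sigma/\fI_\sigma$.

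There is no serious obstacle; the only point requiring a little care is the transitivity/multiplication step, where one must insert $ww^*$ to bridge the two congruences and then invoke that $\fI_\sigma$ is a \emph{two-sided} ideal (so that multiplying $\fI_\sigma$ by elements of $H_\sigma$ on either side keeps one inside $\fI_\sigma$). All of this is made available by Proposition~\ref{Bsigquot}, which does the real work.
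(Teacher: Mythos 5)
Your proof is correct and follows essentially the same route as the paper: both rest entirely on Proposition~\ref{Bsigquot}'s identification of $B_\sigma$ with the quotient norm on $\C_\sigma/\fI_\sigma$, so that elements of $H_\sigma$ become unitaries in the quotient and all the verifications reduce to congruences mod $\fI_\sigma$. The only (harmless) difference is that you derive $u^*u\equiv uu^*\equiv I$ directly from the definition of $H_\sigma$ and carry out the routine checks explicitly, where the paper instead cites Lemma~\ref{Lambda1norm} to rewrite $(u,v)\in R_\Lambda$ as $u+\fI_\sigma=\lambda v+\fI_\sigma$ and leaves the rest to the reader.
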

\begin{proof}  
Let $u,v\in H_\sigma$.   By Proposition~\ref{Bsigquot} and
Lemma~\ref{Lambda1norm}, $(u,v)\in R_\Lambda$ if and only if there
exists $\lambda\in \Lambda$ such that  $u+\fI_\sigma=\lambda
v+\fI_\sigma$.  Routine arguments now show that $H_\sigma/R_\Lambda$
is a group under the indicated operations.  The final statement
follows from Proposition~\ref{Bsigquot}(2).

\end{proof}

\begin{lemma}\label{maptot} Let $(\C,\D)$ be a regular MASA inclusion,
  $\sigma\in\hat{\D}$, and suppose $u,v\in H_\sigma$ are such that 
$(u,v)\in R_1$.  Then the following
  statements hold.
\begin{enumerate}
 \item If 
  $\rho\in \fS(\C,\D,\sigma)$, then
  $$\rho(v)=\rho(u).$$
  If in addition, $0\neq\rho(v)$ then
  $\rho(v)\in\bbT$.
\item $\sigma\in (\fix\beta_v)^\circ$ if and only if
  $\sigma\in(\fix\beta_u)^\circ$, and when this occurs,
  $\hat{v}(\sigma)=\hat{u}(\sigma)\in\bbT.$
\end{enumerate} 
\end{lemma}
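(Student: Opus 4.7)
The plan is to reduce both claims to properties of the single element $v^*u \in H_\sigma$ (which lies in $H_\sigma$ because the latter is a $*$-semigroup), and then invoke Proposition~\ref{charer}. By definition of $R_1$ together with Lemma~\ref{Lambda1norm}, the hypothesis $(u,v) \in R_1$ is precisely $B_\sigma(I - v^*u) = 0$. Applying Proposition~\ref{charer} to $v^*u$ with $\Lambda = \{1\}$ yields two simultaneous consequences: (i) $f(v^*u) = 1$ for every $f \in \Mod(\C,\D,\sigma)$; and (ii) $\sigma \in (\fix\beta_{v^*u})^\circ$ with $\widehat{v^*u}(\sigma) = 1$. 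These handle statements (1) and (2) respectively.

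For (1), fix $\rho \in \fS(\C,\D,\sigma)$. Since $\fS(\C,\D) \subseteq \Mod(\C,\D)$, consequence (i) gives $\rho(v^*u) = 1$, and $\rho(v^*v) = \rho(u^*u) = 1$ because $v,u \in H_\sigma$. The equality case of Cauchy--Schwarz in $\H_\rho$ then forces $v + L_\rho = \mu(u + L_\rho)$ for some scalar $\mu$; taking the inner product with $u + L_\rho$ yields $\mu = 1$, so $v + L_\rho = u + L_\rho$ and therefore $\rho(v) = \rho(u)$. When $\rho(v) \neq 0$, compatibility gives $|\rho(v)|^2 \in \{0,\rho(v^*v)\} = \{0,1\}$, so $\rho(v) \in \bbT$.

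For the topological half of (2), the key bridge is the composition formula in Proposition~\ref{betacalc}(1). A short calculation using $v^* \in H_\sigma$ (whence $\sigma(vv^*)=1$) together with $u \in H_\sigma$ yields $\sigma(u^*vv^*u) = \sigma(vv^*) = 1$, so in an open neighborhood of $\sigma$ the identity $\beta_{v^*u}(\tau) = \beta_{v^*}(\beta_u(\tau))$ is valid. Intersecting with the neighborhood supplied by consequence (ii), on which $\beta_{v^*u} = \mathrm{id}$, and applying $\beta_v$, yields an open neighborhood $N$ of $\sigma$ on which $\beta_u = \beta_v$. The equivalence $\sigma \in (\fix\beta_v)^\circ \iff \sigma \in (\fix\beta_u)^\circ$ follows immediately.

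Finally, assume $\sigma$ lies in both interiors. Proposition~\ref{Jvee} identifies $J_v \cap J_u \cap J_{v^*u}$ with functions supported in the common interior, so there is a self-adjoint $d$ in this ideal with $\sigma(d) \neq 0$; then $vd, ud, v^*u\,d$ all lie in $\D$. Using that $d$ commutes with $v$, $u$, and $v^*u$ (Lemma~\ref{normintbetav} together with~\eqref{Fuglede}), one computes the algebraic identity $(vd)^*(ud) = v^*u\, d^2$. Applying $\sigma$ and using its multiplicativity on $\D$ yields $\overline{\hat v(\sigma)}\,\hat u(\sigma)\,\sigma(d)^2 = \widehat{v^*u}(\sigma)\,\sigma(d^2) = \sigma(d)^2$. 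The analogous identity $(vd)^*(vd) = v^*v\, d^2$ gives $|\hat v(\sigma)|^2 \sigma(d)^2 = \sigma(v^*v)\sigma(d)^2 = \sigma(d)^2$, so $\hat v(\sigma) \in \bbT$; symmetrically $\hat u(\sigma) \in \bbT$, and combined with the previous identity one concludes $\hat u(\sigma) = \hat v(\sigma)$. The main technical point is the commutation manipulation yielding $(vd)^*(ud) = v^*u\, d^2 \in \D$, which is a careful but routine computation using self-adjointness of $d$ and its membership in each of the three ideals.
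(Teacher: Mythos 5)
Your proof is correct, and for the key part of statement (2) it takes a genuinely different route from the paper. For statement (1) the two arguments are close in spirit: both hinge on extracting $\rho(v^*u)=1$ from $B_\sigma(I-v^*u)=0$ via $|\rho(\cdot)|\leq B_\sigma(\cdot)$ (you route this through Proposition~\ref{charer}, which is fine since $v^*u\in H_\sigma$); you then conclude $\rho(u)=\rho(v)$ by the equality case of Cauchy--Schwarz in $\H_\rho$, whereas the paper uses the modularity identity $\rho(uu^*v)=\rho(u)\rho(u^*v)$ from Proposition~\ref{Dextreme}\eqref{Dext0} --- essentially the same computation in two guises, and yours in fact shows the equality for all of $\Mod(\C,\D,\sigma)$. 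For the equivalence of the two interiors, the paper's ``Thus'' conceals exactly the composition argument you spell out via Proposition~\ref{betacalc}; your version, checking $\sigma(u^*vv^*u)=1$ so that $\beta_{v^*u}=\beta_{v^*}\circ\beta_u$ holds on a neighborhood of $\sigma$, is a welcome expansion. The real divergence is the final claim $\hat v(\sigma)=\hat u(\sigma)\in\bbT$: the paper passes through the pseudo-expectation, choosing $\rho\in\widehat{I(\D)}$ over $\sigma$ and invoking Proposition~\ref{formula} to write $\hat v(\sigma)=\rho(E(v))=\rho(E(u))$, then quoting part (1); you instead work entirely inside $\D$, picking a self-adjoint $d$ supported in $(\fix\beta_v)^\circ\cap(\fix\beta_u)^\circ\cap(\fix\beta_{v^*u})^\circ$ (legitimate by Proposition~\ref{Jvee}) and computing $(vd)^*(ud)=v^*u\,d^2$ and $(vd)^*(vd)=v^*v\,d^2$ using Lemma~\ref{normintbetav} and~\eqref{Fuglede}. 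What your approach buys is a proof that never leaves $\C$ and avoids the injective envelope at this step; what the paper's buys is brevity, since Proposition~\ref{formula} has already done the local bookkeeping. Both are sound; the only point worth making explicit in your write-up is that $\widehat{v^*u}(\sigma)=1$ comes from Proposition~\ref{charer}(4) with $\Lambda=\{1\}$, which you do state.
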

\begin{proof}
  Suppose $\rho(v)\neq 0$.  Since $|\rho(x)|\leq B_\sigma(x)$ for
  every $x\in \C$ and $B_\sigma(I-u^*v)=0$, we have $\rho(u^*v)=1$.
  Therefore, by part~\ref{Dext0} of Proposition~\ref{Dextreme},
$$\rho(u)=\rho(u)\rho(u^*v)=
\rho(uu^*v)=\sigma(uu^*)\rho(v)=\rho(v).$$
Likewise, if $\rho(u)\neq 0$, then $\rho(u)=\rho(v).$
Thus, we have $\rho(u)=\rho(v)$ whenever
$\rho\in\fS(\C,\D,\sigma)$.

Next, when $\rho(v)\neq 0$, the fact that $\rho\in\fS(\C,\D)$ gives
$$|\rho(v)|^2=\rho(v^*v)=\sigma(v^*v)=1,$$
so $\rho(v)\in\bbT$.  This completes the proof of the first statement.

We now turn to the second statement.  Since $(u,v)\in R_1$, 
Proposition~\ref{charer} implies $\sigma\in
(\fix\beta_{v^*u})^\circ$.  Thus $\sigma\in (\fix\beta_v)^\circ$ if and only if
  $\sigma\in(\fix\beta_u)^\circ$.   

Next suppose that $\sigma\in (\fix(\beta_v))^\circ$.  
Let $(I(\D),\iota)$ be an injective envelope for $\D$ and let $E$
be the pseudo-expectation for $\iota$.  Let $h\in J_v$ satisfy
  $\sigma(h)=1$ and let $\rho\in \widehat{I(\D)}$ be such that 
 $\rho\circ\iota=\sigma$.  Then $\rho\circ E\in \fS_s(\C,\D)$, and 
 Proposition~\ref{formula} shows that 
 $\rho\in \supp(\widehat{E(v)}).$  Thus
$$\hat{v}(\sigma)=\sigma(vh)=\rho(E(v))=\rho(E(w))=\sigma(wh)=\hat{w}(\sigma).$$
Since $\rho(E(v))\neq 0$, we have $\hat{v}(\sigma)\in\bbT$ by part
(1).
\end{proof}

\begin{remark*}{Remark}
  Lemma~\ref{maptot} shows that for $\rho\in\fS(\C,\D)$ with
  $\rho|_\D=\sigma$, we have a well-defined map
  $\tilde{\rho}:H_\sigma/R_1\rightarrow\bbT\cup\{0\}$ given by
  $\tilde{\rho}([v])=\rho(v).$
\end{remark*}

\begin{theorem}\label{topgp}
Let $(\C,\D)$ be a regular MASA inclusion, and let
$\sigma\in\hat{\D}$.  The function 
$$d([v],[w]):=B_\sigma(v-w)$$ is a well defined metric on
$H_\sigma/R_1$ and makes $H_\sigma/R_1$ into a $\bbT$-group.  More
specifically, the following statements hold.
\begin{enumerate}
\item 
Let $$U=
\{[v]\in H_\sigma/R_1:
\sigma\in(\fix\beta_v)^\circ\}.$$
Then $U$ is clopen and is
the connected component of the identity in $H_\sigma/R_1$. 

\item  The subgroup $U$   is contained in the center of
$H_\sigma/R_1$. 
\item  The map $[v]\in  U\mapsto \hat{v}(\sigma)$ is 
an isomorphism of $U$ onto $\bbT$. 
\item The quotient of
$H_\sigma/R_1$ by $U$ is 
$H_\sigma/R_\bbT$.
\end{enumerate}
\end{theorem}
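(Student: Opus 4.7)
The plan is to transfer the group and metric structure of $H_\sigma/R_1$ to the $C^*$-algebraic setting using Proposition~\ref{Ronegp}.  Recall that $B_\sigma$ agrees with the quotient norm on $\C_\sigma/\fI_\sigma$ (Proposition~\ref{Bsigquot}) and that $T_\sigma:H_\sigma/R_1\rightarrow \C_\sigma/\fI_\sigma$, $[v]\mapsto v+\fI_\sigma$, is an injective homomorphism into the unitary group of $\C_\sigma/\fI_\sigma$ (Proposition~\ref{Ronegp}).  Thus $d([v],[w])=\norm{T_\sigma([v])-T_\sigma([w])}$, so $d$ is well defined, and injectivity of $T_\sigma$ makes it a metric.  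Since multiplication and inversion are norm continuous on the unitary group of $\C_\sigma/\fI_\sigma$ and $T_\sigma$ is a homomorphism, $H_\sigma/R_1$ becomes a topological group.

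Next I would handle (3) by first showing $U=\{[\lambda I]:\lambda\in\bbT\}$. Given $\lambda\in\bbT$, the element $\lambda I\in H_\sigma\cap\D$ satisfies $\beta_{\lambda I}=\text{id}$, so $[\lambda I]\in U$ with $\widehat{\lambda I}(\sigma)=\lambda$. Conversely, for $[v]\in U$, Lemma~\ref{maptot}(2) combined with Proposition~\ref{rhocircE} (using $\sigma(v^*v)=1$) yields $\hat{v}(\sigma)\in\bbT$; Proposition~\ref{charer} with $\Lambda=\bbT$ then produces $\mu\in\bbT$ with $B_\sigma(\mu I-v)=0$, and comparing the resulting value $\rho(v)=\mu$ with $\rho\circ E(v)=\hat{v}(\sigma)$ for $\rho\in(\dual{\iota})^{-1}(\sigma)\cap\supp(\widehat{E(v)})$ forces $\mu=\hat{v}(\sigma)$, so $[v]=[\hat{v}(\sigma)I]$.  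Thus $\lambda\mapsto[\lambda I]$ is a bijection $\bbT\to U$ with inverse $[v]\mapsto\hat{v}(\sigma)$, proving (3); moreover this map is an isometry, since $d([\lambda I],[\mu I])=B_\sigma((\lambda-\mu)I)=|\lambda-\mu|$, so $U$ is homeomorphic to $\bbT$, in particular compact and connected.

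The main obstacle is proving $U$ is open, and for this I plan to show the stronger statement $d([v],U)\geq 1$ whenever $[v]\notin U$.  Fix such $v\in H_\sigma$; since $v\in H_\sigma$ forces $\sigma\in\fix{\beta_v}$, we are in the boundary case $\sigma\in\fix{\beta_v}\setminus(\fix{\beta_v})^\circ$. The goal is to produce some $\rho\in\widehat{I(\D)}$ lying over $\sigma$ with $\rho(E(v))=0$, for then $\rho\circ E\in\fS_s(\C,\D,\sigma)$ satisfies $\rho\circ E(v)=0$ and $B_\sigma(\lambda I-v)\geq|\rho\circ E(\lambda I-v)|=1$ for every $\lambda\in\bbT$. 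If $\sigma\notin\overline{(\fix{\beta_v})^\circ}$, Proposition~\ref{formula} shows the entire fiber $(\dual{\iota})^{-1}(\sigma)$ lies outside $\overline{\supp(\widehat{E(v)})}$, so every $\rho$ over $\sigma$ works.  If $\sigma$ is a boundary point of $(\fix{\beta_v})^\circ$, I would argue by contradiction: if every $\rho$ over $\sigma$ satisfied $\rho(E(v))\neq 0$, then the compact fiber $(\dual{\iota})^{-1}(\sigma)$ would lie in the open set $\supp(\widehat{E(v)})$; using the closedness of the projective cover map, $\dual{\iota}(\widehat{I(\D)}\setminus\supp(\widehat{E(v)}))$ is closed in $\hat{\D}$ and misses $\sigma$, so its complement $W$ is an open neighborhood of $\sigma$ whose preimage is contained in $\supp(\widehat{E(v)})\subseteq(\dual{\iota})^{-1}(\fix{\beta_v})$; surjectivity of $\dual{\iota}$ then forces $W\subseteq\fix{\beta_v}$, contradicting $\sigma\notin(\fix{\beta_v})^\circ$. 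Combined with the previous paragraph, this shows $U$ is clopen, and hence the connected component of the identity, completing (1).

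Statements (2) and (4) are routine.  Centrality follows because $\lambda I\in\D$ is central in $\C$, so $[\lambda I][v]=[\lambda v]=[v][\lambda I]$. For (4), one checks $[v]U=[w]U$ if and only if $[v^*w]\in U$ if and only if $B_\sigma(\mu I-v^*w)=0$ for some $\mu\in\bbT$ if and only if $(v,w)\in R_\bbT$; hence the natural surjection $H_\sigma/R_1\to H_\sigma/R_\bbT$ descends to an isomorphism $(H_\sigma/R_1)/U\cong H_\sigma/R_\bbT$.
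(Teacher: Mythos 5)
Your proposal is correct, and while the overall skeleton (the isometry $T_\sigma$ for the metric and group structure, the identification of $U$ with $\bbT$, centrality via $[v]=[\hat{v}(\sigma)I]$, and the computation of the quotient) matches the paper, your treatment of the crucial step --- that $U$ is clopen --- is genuinely different. The paper proves openness by a perturbation argument: given $[u]\in U$ and $[v]$ with $d([u],[v])<1/2$, it assumes $\sigma\notin(\fix\beta_v)^\circ$, extracts a net $\sigma_s\to\sigma$ with $\sigma_s\notin\fix\beta_v$, lifts to $\tau_s\in\widehat{I(\D)}$, passes to a convergent subnet, and derives a contradiction from Lemma~\ref{vactmod} and the closedness of $\fS_s(\C,\D)$; closedness of $U$ is then handled ``similarly.'' You instead first prove $U=\{[\lambda I]:\lambda\in\bbT\}$ outright via Proposition~\ref{charer} and Lemma~\ref{maptot}, and then establish the stronger quantitative statement $d([v],U)\geq 1$ for $[v]\notin U$ by exhibiting $\rho\in\widehat{I(\D)}$ over $\sigma$ with $\rho(E(v))=0$; your case analysis (using Proposition~\ref{formula} when $\sigma\notin\overline{(\fix\beta_v)^\circ}$, and the closedness of the projective-cover map $\dual{\iota}|_{\widehat{I(\D)}}$ together with $\supp(\widehat{E(v)})\subseteq(\dual{\iota})^{-1}(\fix\beta_v)$ in the boundary case) is sound, since fibers of the cover are nonempty and compact. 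What your approach buys is a uniform separation of $U$ from its complement, which gives clopenness in one stroke and avoids the subnet extraction; what the paper's approach buys is that it never needs to split into cases on the position of $\sigma$ relative to $\overline{(\fix\beta_v)^\circ}$. Both routes lean on the same underlying machinery (Propositions~\ref{charer} and~\ref{formula}, the structure of $\fS_s(\C,\D)$), so this is a legitimate alternative proof rather than a shortcut that begs the question.
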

\begin{proof}
Let $T_\sigma$ be the isomorphism of $H_\sigma/R_1$ onto a subgroup of
the unitary group of $\C_\sigma/\fI_\sigma$ defined in
Proposition~\ref{Ronegp}.  Then 
$$d([v],[w])=\norm{T_\sigma([v])-T_\sigma([w])}_{\C_\sigma/\fI_\sigma}.$$  
It follows that $d$ is a well-defined metric which makes
$H_\sigma/R_1$ into a topological group.

We now show that $U$ is an open set.
Let $u \in H_\sigma$ be such that $[u]\in U$ and suppose that $v\in
H_\sigma$  satisfies
$d([u],[v])<1/2.$   We will show that $\sigma\in(\fix\beta_v)^\circ$.
To do this we modify the proof of the implication (3) $\Rightarrow$ (4)
in
 Proposition~\ref{charer} slightly.  
Since $B_\sigma(u-v)<1/2$, for every $\rho\in\fS_s(\C,\D)$, we have
$|\rho(u)-\rho(v)|<1/2.$   

Suppose, to obtain a contradiction, that $\sigma\notin (\fix\beta_v)^\circ$.  
Then we may find find a directed set $S$ and a net $(\sigma_s)_{s\in S}$ such that 
 $\sigma_s\notin \fix\beta_v$ for every $s$ and such that
$\sigma_s\rightarrow \sigma.$ 
As usual,
let $(I(\D),\iota)$ be an injective envelope for $\D$ and let $E$
be the pseudo-expectation for $\iota$.
 For each $s$, choose $\tau_s\in
\widehat{I(\D)}$ such that $\tau_s\circ
\iota=\sigma_s$.  Passing to a subnet if necessary, we may
assume that $\tau_s$ converges to $\tau\in \widehat{I(\D)}$.  
Then $\tau\circ E\in \fS_s(\C,\D)$ and $\tau\circ E|_\D=\sigma$.   
Notice that $\tau(E(v))\neq 0$ because $|\tau(E(v))-\tau(E(u))|<1/2$ and
$\tau(E(u))\in\bbT$.   
   Since $\tau_s\circ E|_\D=\sigma_s\notin\fix\beta_v$, Lemma~\ref{vactmod} shows that
   $\tau_s(E(v))=0$ for every $s\in S$.    Then
 \begin{align*}
\tau(E(v))&=\lim_s\tau_s(E(v)) =0,
\end{align*}
contradicting the fact that $\tau(E(v))\neq 0$.
Thus $\sigma\in (\fix\beta_v)^\circ.$
 Therefore $[v]\in U$, so $U$ is an open subset of $H_\sigma/R_1$.

Similarly, the complement of  $U$ is open in
$H_\sigma/R_1$, so $U$ is also closed.

Let $\gamma: U\rightarrow\bbT$ be the map
$\gamma([u])=\hat{u}(\sigma).$ 
Suppose that $[u], [v]\in U$ and $\hat{u}(\sigma)=\hat{v}(\sigma)$.
Then $$d([u],[v])=B_\sigma(u-v)\leq
B_\sigma(u-\hat{u}(\sigma)I)+B_\sigma(\hat{v}(\sigma)I-v)=0,$$ so
$\gamma$ is one-to-one.  Since $\gamma([\lambda
u])=\lambda\gamma([u])$ for any $\lambda\in \bbT$, $\gamma$ is onto.  

Let $\rho\in\fS(\C,\D,\sigma)$.
Since
$$|\tilde{\rho}([v])-\tilde{\rho}([w]) |=|\rho(v-w)|\leq
B_\sigma(v-w)=d([v],[w]),$$ we see that $\tilde{\rho}$ is a continuous
map on $H_\sigma/R_1$.  By Lemma~\ref{maptot}, for $[v]\in U,$
$\tilde{\rho}([v])=\gamma([v])$.  So $\gamma$ is also continuous.
The map $\lambda\in\bbT\mapsto [\lambda I]$ is the inverse of
$\gamma$, and we see that $\gamma$ is a homeomorphism.  In
particular, $U$ is connected, and hence $U$ is the connected component
of the identity in $H_\sigma/R_1$.      
 
To see that $U$ is contained in the center of $H_\sigma/R_1,$ observe
that for $[u]\in U$, we have $[u]=[\gamma([u])I]$, which evidently
belongs to the center of $H_\sigma/R_1.$

Since the connected component of the identity is compact,
$H_\sigma/R_1$ is a locally compact group.  
Finally, $[v]=[w] \mod
U$ if and only if $[v^*w]\in U$.  Therefore,  $[v]=[w] \mod
U$ if and only if 
$B_\sigma(\gamma([v^*w]) I -v^*w)=0$.  Hence the  
quotient of $H_\sigma/R_1$ by $U$ is $H_\sigma/R_\bbT$.

\end{proof}

We now are prepared to exhibit a bijection between $\fS(\C,\D,\sigma)$ 
and  a class of \phom s
on $H_\sigma/R_1$.  We pause for some notation.

Let $q:\C_\sigma\rightarrow \C_\sigma/\fI_\sigma$ be the quotient map.  
Define a $*$-homomorphism $\theta: C_c(H_\sigma/R_1)\rightarrow
\C_\sigma/\fI_\sigma$ by 
$$ \theta(\phi)=\int_{H_\sigma/R_1} \phi(s) T_\sigma(s)\, ds,$$ where $ds$ is
Haar measure on $H_\sigma/R_1$.  Then the  image of
$\theta$ is dense in $\C_\sigma/\fI_\sigma$.

\begin{definition}  We will say that a positive definite
  function $f$ on $H_\sigma/R_1$  
is \textit{dominated by $B_\sigma$} if 
for every $\phi\in C_c(G)$,
$$\left|\int_{H_\sigma/R_1} \phi(t) f(t)\, dt\right| 
\leq \norm{\int_{H_\sigma/R_1} \phi(t) T_\sigma(t)\,
  dt}_{\C_\sigma/\fI_\sigma}.$$
\end{definition}

\begin{theorem}\label{compatstpdf}  Let $(\C,\D)$ be a regular MASA
  inclusion and let $\sigma\in\hat{\D}$.  Let 
$$M_\sigma:=\{f\in\tog(H_\sigma/R_1): f \text{ is dominated by
  $B_\sigma$}\}.$$
For $\tau\in\fS(\C,\D,\sigma)$ 
the map $\tilde{\tau}:H_\sigma/R_1\rightarrow \bbC$ given by
$$\tilde{\tau}([v]_1)=\tau(v)$$ is well-defined, and $\tilde{\tau}\in
M_\sigma$. 
Moreover, the map $\tau\mapsto\tilde{\tau}$ is a bijection between
$\fS(\C,\D,\sigma)$  and $M_\sigma$.  
\end{theorem}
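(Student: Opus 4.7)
My plan is to establish the bijection in three steps: verify that $\tilde\tau \in M_\sigma$ for each $\tau \in \fS(\C,\D,\sigma)$; prove injectivity by a case analysis on $\N(\C,\D)$; and prove surjectivity by reconstructing $\tau$ from $f$ via the quotient $\C_\sigma/\fI_\sigma$ of Proposition~\ref{Bsigquot} and the map $\theta$ of $C_c(H_\sigma/R_1)$ into $\C_\sigma/\fI_\sigma$.

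For $\tilde\tau \in M_\sigma$: well-definedness on $H_\sigma/R_1$ is Lemma~\ref{maptot}(1). Since $\tau \in \Mod(\C,\D,\sigma)$ gives $|\tau(x)| \leq B_\sigma(x)$ for all $x \in \C$, $\tilde\tau$ is Lipschitz in the metric $d$ of Theorem~\ref{topgp}. The identities $\tilde\tau([I]) = 1$, $\tilde\tau([\lambda I]) = \lambda$ (so $\ind(\tilde\tau) = 1$), and $|\tilde\tau([v])|^2 \in \{0, \tau(v^*v)\} = \{0,1\}$ are immediate, and positive-definiteness reduces to
\begin{equation*}
\sum_{i,j} \overline{c_i} c_j \tilde\tau([v_i]^{-1}[v_j]) = \tau\bigl((\textstyle\sum_i c_i v_i)^*(\sum_j c_j v_j)\bigr) \geq 0,
\end{equation*}
so $\tilde\tau \in \tog(H_\sigma/R_1)$. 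For domination, $\tau|_{\C_\sigma}$ descends (by Proposition~\ref{Bsigquot}) to a state $\bar\tau$ on $\C_\sigma/\fI_\sigma$ with $\bar\tau(T_\sigma([v])) = \tilde\tau([v])$; continuity of $\bar\tau$ commutes with the integral defining $\theta(\phi)$, and the state bound $|\bar\tau(y)| \leq \|y\|$ yields the required inequality.

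Injectivity follows by case analysis on $v \in \N(\C,\D)$: Cauchy-Schwartz handles $\sigma(v^*v) = 0$; Lemma~\ref{vactmod} handles $\sigma(v^*v) \neq 0$ with $\beta_v(\sigma) \neq \sigma$; and if $\beta_v(\sigma) = \sigma$ then $v/\sqrt{\sigma(v^*v)} \in H_\sigma$, so agreement of $\tilde\tau_1$ and $\tilde\tau_2$ on $H_\sigma/R_1$ forces $\tau_1(v) = \tau_2(v)$. Regularity of $(\C,\D)$ then gives $\tau_1 = \tau_2$.

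For surjectivity, given $f \in M_\sigma$, domination makes $\ell_0(\theta(\phi)) := \int \phi(t) f(t)\,dt$ a well-defined linear functional of norm $\leq 1$ on $\theta(C_c(H_\sigma/R_1))$, which extends by Hahn-Banach to $\ell \in \dual{(\C_\sigma/\fI_\sigma)}$ with $\|\ell\| \leq 1$. The key computation is $\ell(I) = 1$: normalize Haar measure on the identity component $U$ of $H_\sigma/R_1$ (identified with $\bbT$ by the isomorphism $\gamma$ of Theorem~\ref{topgp}) to have unit mass, and take $\phi_0 \in C_c(H_\sigma/R_1)$ supported on the compact clopen set $U$ with $\phi_0([\lambda I]) = \bar\lambda$; a direct computation gives $\theta(\phi_0) = I + \fI_\sigma$ and $\ell_0(\theta(\phi_0)) = \int_\bbT \bar\lambda \cdot \lambda\,d\lambda = 1$, using $\ind(f) = 1$. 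Thus $\ell$ is a state, so $\rho := \ell \circ q$ (with $q : \C_\sigma \to \C_\sigma/\fI_\sigma$ the quotient map) is a state on $\C_\sigma$ annihilating $\fI_\sigma$, which extends by Hahn-Banach to a state $\tau$ on $\C$. The injectivity case analysis, applied to $\tau$, shows $\tau \in \fS(\C,\D,\sigma)$ and that it is unique. Finally, analogous functions $\phi_{[v]} \in C_c(H_\sigma/R_1)$ supported on the compact clopen coset $[v]U$ with $\phi_{[v]}([v][\lambda I]) = \bar\lambda$ give $\theta(\phi_{[v]}) = T_\sigma([v])$, and by Proposition~\ref{posdef}(1)(a) (applied to $[v]$ and $[\lambda I] \in \supp(f)$), $\ell_0(\theta(\phi_{[v]})) = \int_\bbT \bar\lambda f([v])\lambda\,d\lambda = f([v])$, so $\tilde\tau([v]) = \ell(T_\sigma([v])) = f([v])$. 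The hardest point will be surjectivity, in particular the explicit identifications $\theta(\phi_0) = I + \fI_\sigma$ and $\theta(\phi_{[v]}) = T_\sigma([v])$ coming from the $\bbT$-group structure of Theorem~\ref{topgp}, together with the fact that any state extension of $\rho$ to $\C$ is automatically compatible.
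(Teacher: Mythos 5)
Your proposal is correct and follows essentially the same route as the paper's proof: well-definedness and domination via the quotient $\C_\sigma/\fI_\sigma$ of Proposition~\ref{Bsigquot}, injectivity by showing a compatible extension of $\sigma$ is determined on all of $\N(\C,\D)$ by its values on $H_\sigma$, and surjectivity by building a state on $\C_\sigma/\fI_\sigma$ from $f$ through $\theta$ and recovering $f([v])$ by averaging over the coset $[v]\bbT$. The only (harmless) variations are that you verify positive-definiteness of $\tilde{\tau}$ directly as a Gram-matrix computation rather than via Proposition~\ref{posdef}, and you deduce that $\ell$ is a state from $\norm{\ell}\leq 1=\ell(I)$ rather than from positivity of $\ell_0$.
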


\begin{proof}
If $\tau\in\fS(\C,\D,\sigma)$, then
  $|\tau(x)|\leq B_\sigma(x)$.  Hence $\tau$ annihilates $\fI_\sigma$,
  so that $\tau$ determines a state
  $\tau'$ on $\C_\sigma/\fI_\sigma$ such that
$$\tau|_{\C_\sigma}=\tau'\circ q.$$  Then 
$\tilde{\tau}=\tau'\circ T_\sigma$, so $\tilde{\tau}$ is well-defined.  

Set 
$$G:=\{[v]\in H_\sigma/R_1: \tilde{\tau}([v])\neq 0\}.$$
Proposition~\ref{Dextreme}\eqref{Dext0} implies that $G$ is closed
under products.  For $v\in H_\sigma$ with  $[v]\in G$, we have
$\tau(v)\in \bbT$, and as $\tau$ is a state, 
 $\tau(v^*)=\overline{\tau(v)}$.  Therefore, $[v]^{-1}=[v^*]\in  G$,
 so $G$  is
closed under inverses.  It follows that  $G$ is a $\bbT$-subgroup of
$H_\sigma/R_1$.   Since $\tau\in \fS(\C,\D)$, $|\tau([v])|\in\{0,1\}$
for every $[v] \in H_\sigma/R_1$.  
Proposition~\ref{posdef} implies that $\tilde{\tau}$
is a \phom\  on $H_\sigma/R_1$.  
Since $\tau$ is linear, for $\lambda\in\bbT$ and $v\in H_\sigma$, we have
$\tilde{\tau}([\lambda v]_1)=\tau(\lambda
v)=\lambda\tilde{\tau}([v]_1)$.  So $\ind(\tilde{\tau})=1.$

For $\phi\in C_c(H_\sigma/R_1)$ we have
$$\left|\int_{H_\sigma/R_1} \phi(s) \tilde{\tau}(s)\, ds\right|=
\left|\tau'\left(\int_{H_\sigma/R_1} \phi(s) T_\sigma(s)\, ds\right)\right|
\leq \norm{\int_{H_\sigma/R_1} \phi(s) T_\sigma(s)\,
  ds}_{\C_\sigma/\fI_\sigma}.  $$  Thus, $\tilde{\tau}$ is dominated by
$B_\sigma$.   Therefore, $\tilde{\tau}\in M_\sigma$.

Next we show that the map $\tau\mapsto \tilde{\tau}$ is surjective.
So suppose that $f:H_\sigma/R_1\rightarrow \{0\}\cup \bbT$ is a
\phom\  dominated by $B_\sigma$ and $\ind(f)=1$.  
Then the map $F_0:\theta(C_c(H_\sigma/R_1))\rightarrow \bbC$
given by 
$$F_0\left(\int_{H_\sigma/R_1} \phi(t) T_\sigma(t)\,
  dt\right)=\int_{H_\sigma/R_1} \phi(t) f(t)\,
  dt\qquad (\phi\in C_c(H_\sigma/R_1))$$
extends by continuity to a bounded linear functional $F$ on $\C_\sigma/\fI_\sigma$.
Since $\theta$ is a $*$-homomorphism and $f$ is a positive definite
function, $F$ is a positive linear functional.  Clearly $\norm{F}\leq
1$.   

 We next show that 
 $F(v+\fI_\sigma)=f([v])$ for every $v\in H_\sigma$.  
   Since $H_\sigma/R_1$ is
a $\bbT$-group, the connected component of the identity is $\bbT$. 
Since $f$ is continuous, $\ind(f)=1$  and $f(1)=1$, we have $f(\lambda)=\lambda$ for
every $\lambda\in\bbT$.  Given $v\in H_\sigma$, 
 let $\phi\in C_c(H_\sigma/R_1)$ be the function given by 
$$\phi(t)=\begin{cases}
\overline{\lambda}  &\text{if $t=[\lambda v]_1$ for some $\lambda\in\bbT$}\\
0&\text{otherwise.}\end{cases}$$
Now for $t\in \{[\lambda v]_1: \lambda\in \bbT\}\subseteq H_\sigma/R_1$, we have $T_\sigma(t)=\lambda q(v)$, so 
$$\theta(\phi)=\int_{H_\sigma/R_1} \phi(t)T_\sigma(t)\, dt =\int_\bbT  \,
\overline{\lambda}q(\lambda v)dt=q(v).$$  Thus, 
$$F(q(v))=\int_{H_\sigma/R_1} \phi(t)f(t)\, dt=\int_\bbT
\overline{\lambda}f([\lambda v]_1) \,
dt=f([v]_1).$$    It follows that $\norm{F}=1$, so $F$ is a state on
$\C_\sigma/\fI_\sigma$.  

As $\N(\C_\sigma,\D)\subseteq \N(\C,\D)$, we see that if $w\in
\N(\C_\sigma,\D)$, then $w$ is a scalar multiple of an element of
$H_\sigma$.   Since $|F\circ q(v)|=|f([v]_1)|\in \{0,1\}$ for each
$v\in H_\sigma$, we find  $F\circ q\in\fS(\C_\sigma,\D)$.
Proposition~\ref{Bsigquot} shows that $F\circ q$ extends uniquely to
an element $\tau\in \fS(\C,\D,\sigma)$.   As $\tilde{\tau}=f$, we find that
the map $\tau\mapsto\tilde{\tau}$ is onto.  

To show that $\tau\mapsto \tilde{\tau}$ is one-to-one, suppose $\tau$
and $\tau_1$ belong to $\fS(\C,\D,\sigma)$ and 
$\tilde{\tau}=\tilde{\tau_1}$.   Then $\tau(v)=\tau_1(v)$ for every $v\in
H_\sigma$, so that $\tau|_{\C_\sigma}=\tau_1|_{\C_\sigma}$.
Proposition~\ref{Bsigquot} then shows $\tau=\tau_1$.  

Thus the mapping
$\tau\mapsto \tilde{\tau}$ is indeed a bijection. 
\end{proof}

We conclude this section with a pair of very closely related questions
and two examples.

\begin{remark}{Notation}\label{allB}  Let $(\C,\D)$ be a regular
  inclusion and let $$\bist(\C,\D):=\{x\in \C: B_\sigma(x)=0 \text{
    for all }\sigma\in\hat{\D}\}.$$ 
By Proposition~\ref{Bsigquot}, $\bist(\C,\D)=\{x\in \C: \rho(x^*x)=0
\text{ for all }\rho\in\Mod(\C,\D)\},$ and by
Proposition~\ref{invideal},
 $\bist(\C,\D)$ is an ideal
of $\C$ which satisfies
\begin{equation}\label{bistincl}
\bist(\C,\D)\subseteq \rad(\C,\D).
\end{equation}
\end{remark}

\begin{question}\label{bistradcont}
Let $(\C,\D)$ be a regular
MASA   inclusion.
\begin{enumerate}
\item Let $\sigma\in\hat{\D}$.  For $x\in \C$, is
  $B_\sigma(x)=\sup_{\rho\in \fS(\C,\D)} \rho(x^*x)^{1/2}?$
\item Is $\bist(\C,\D)=\rad(\C,\D)$?  
\end{enumerate}
\end{question}

When $\rad(\C,\D)=(0)$,
the answer to both questions is yes.

\begin{example}\label{disGrp}
This example applies the previous results to a special case of reduced
crossed products to produce a Cartan inclusion $(\C,\D)$ and a pure
state $\tau$ on $\C$ such that $\tau|_\D\in\hat{\D}$, yet
$\tau\notin\fS(\C,\D)$.  

 Let $\Gamma$ be an infinite discrete group, let
$X:=\Gamma\cup\{\infty\}$ be the one-point compactification of
$\Gamma$ and let $\Gamma$ act on $X$ by extending the left regular
representation to $X$:  for $s\in
\Gamma$ and $x\in X$, let 
$$sx:=\begin{cases}
sx&\text{if $x\in \Gamma$;}\\
x&\text{if $x=\infty$.}
\end{cases}$$

We now use the notation from Section~\ref{secCP}: let
$\C=C(X)\rtimes_r\Gamma$ and $\D$ be the cannonical image of $C(X)$ in
$\C$.  We identify $\hat{\D}$ with $X$.  The action of $\Gamma$ on $X$ is topologically free, so
$(\C,\D)$ is a regular MASA inclusion.  Moreover, the conditional
expectation $\coexp:\C\rightarrow
\D$ is the  pseudo-expectation.

Let $\sigma\in \hat{D}$ be the map $\sigma(f)= f(\infty)$.  We claim
that $H_\sigma/R_\bbT=\Gamma$ and $H_\sigma/R_1=\bbT\times \Gamma$.
Observe first that $\fS(\C,\D,\sigma)=\{\sigma\circ\coexp\}$ and 
the map $\theta:\Gamma\rightarrow H_\sigma/R_\bbT$ given by
$\theta(s)= [\emb_s]_\bbT$  is a
group homomorphism.   
Next, suppose $v\in H_\sigma$.  Then there exists $t\in \Gamma$, so 
that $\bbE_t(v)=\bbE(v \emb_{t^{-1}})\neq 0.$  Since both $v$ and
$\emb_{t^{-1}}\in H_\sigma$ we have $v\emb_{t^{-1}}\in H_\sigma$, so
$\sigma(\coexp(v\emb_{t^{-1}}))\neq 0$.  By Proposition~\ref{charer}, 
$(v,\emb_t)\in R_\bbT$.   Thus $\theta$ is
surjective.  Proposition~\ref{charer} also implies $\theta$ is one-to-one: 
if $[\emb_s]_\bbT=[\emb_t]_\bbT$
then $\sigma(\coexp(\emb_{st^{-1}}))\neq 0$, so that
$\coexp(w_{st^{-1}})\neq 0$.  Thus $s=t$.  Therefore
$\theta$ is an isomorphism of $H_\sigma/R_\bbT$ onto $\Gamma$, and we
use $\theta$ to identify $\Gamma$ with $H_\sigma/R_\bbT$.  The map $\Gamma\ni s\mapsto [\emb_s]_1\in H_\sigma/R_1$ 
is a group homomorphism and
also a section for the quotient
map of $H_\sigma/R_1$ onto $H_\sigma/R_\bbT$.  It follows that
$H_\sigma/R_1$ is isomorphic to $\bbT\times \Gamma$.    

Let $\rho=\sigma\circ\coexp$ and let $(\pi_\rho,\H_\rho)$ be the GNS
representation of $\C$ associated to $\rho$.
Proposition~\ref{Bsigquot} implies that
$\fI_\sigma\subseteq \ker\pi_\rho$, so $\pi_\rho$ induces a
representation, again denoted $\pi_\rho$, of $\C_\sigma/\fI_\sigma$ on
$\H_\sigma$.  

 We shall show that the image of
$\C_\sigma/\fI_\sigma$ under $\pi_\rho$ is isomorphic to
$C^*_r(\Gamma)$.  To  do this, first observe that for 
 $v,w\in H_\sigma$,  
  $v+L_\rho=w+L_\rho$ if and only if $(v,w)\in R_1$.  Indeed, since
  $v,w\in H_\sigma$, $\rho(w^*w)=\rho(v^*v)=1$, so
  $\rho((v-w)^*(v-w))= 2-2\Re(\rho(v^*w))$.  Since $|\rho(v^*w)|\in \{0,1\},$
 we get $v+L_\rho=w+L_\rho$ if and only if $\rho(v^*w)=1$, which
by Proposition~\ref{charer}  gives the observation.
This observation and regularity of $\C_\sigma$ implies that 
$$\{\emb_s+L_\rho: s\in \Gamma\}$$ is an orthonormal basis for $\H_\rho$.
Thus there is a unitary operator $U:\ell^2(\Gamma)$
onto $\H_\rho$ which carries the basis element $\delta_s\in
\ell^2(\Gamma)$ to $\emb_s+L_\rho\in \H_\rho$.

Now let $\lambda:  \Gamma\rightarrow \B(\ell^2(\Gamma))$ be the left regular
representation.   For $s,t\in \Gamma$ we have   
$U\lambda(s)\delta_t=\emb_{st}+L_\rho=\pi_\rho(\emb_s)U\delta_t,$ so 
\begin{equation}\label{lrrueq} 
U\lambda(s)=\pi_\rho(\emb_s)U.
\end{equation}
It follows from Proposition~\ref{Ronegp} that the set
$\{\emb_s+\fI_\sigma:s\in\Gamma\}$ generates $\C_\sigma/\fI_\sigma$,
so~\eqref{lrrueq} shows that $\pi_\rho(\C_\sigma/\fI_\sigma)$ is
isomorphic to $C^*_r(\Gamma)$.  

Thus there is a surjective $*$-homomorphism $\Psi:\C_\sigma\rightarrow
C^*_r(\Gamma)$ which annihilates $\fI_\sigma$.  The composition of
$\Psi$ with any pure state $f$ on $C^*_r(\Gamma)$ yields a pure state
on $\C_\sigma$, which in turn may be extended to a pure state
$\tau\in\Mod(\C,\D,\sigma)$.  Apply this process when $\Gamma=\bbF^2$
is the free group on 2-generators $u_1$ and $u_2$.  By
\cite[Theorem~2.6 and Remark~3.4]{PaschkePuEiSuGeFrGp}, there
exists a pure state $f$ on $C^*_r(\bbF^2)$ such that
$|f(u_1)|\notin\{0,1\}$.  It follows that there exists a pure state
$\tau$ on $\C$ such that $\tau|_\D=\sigma$, yet
$\tau\notin\fS(\C,\D)$.
\end{example}

\begin{example}\label{ToeplitzAlg}
Denote by 
   $\{e_n\}_{n\in\bbN}$ the standard orthonormal basis for $\H:=\ell^2(\bbN)$.
Consider the inclusion $(\C,\D)$, where 
 $\C$ is the Toeplitz algebra
  (the \cstaralg\ generated by the unilateral shift $S$ acting on
  $\H$) and 
 $\D\subseteq\bh$ is the \cstaralg\
  generated by $\{S^kS^{*k}: k\geq 0\}$.  Then
  $(\C,\D)$ is a regular MASA inclusion and  $\hat{\D}$ is homeomorphic
  to the one-point compactification of $\bbN$,  $\bbN\cup\{\infty\}$.
  We identify $\hat{\D}$ with this space.

Here the pseudo-expectation is the conditional expectation
  $E:\C\rightarrow \D$ which takes $T\in \C$ to the operator $E(T)$
  which acts on
   basis elements via  $E(T)e_n=\innerprod{Te_n,e_n} e_n$.  

  We shall do the following:
\begin{enumerate}
\item give a description of $\fS(\C,\D)$;
\item show that not every element of $\fS(\C,\D)$ is a pure state of
  $\C$ and identify the pure states in $\fS(\C,\D)$.
\end{enumerate}

The strongly compatible states are easy to identify.
Let $\rho_n$ and $\rho_\infty$ be the states on $\C$ given by
   $\rho_n(X)=\innerprod{Xe_n,e_n}$ and
   $\rho_\infty(X)=\lim_{n\rightarrow \infty}\rho_n(X)$.  Then
   $$\fS_s(\C,\D)=\{\rho_n: n\in \bbN\cup\{\infty\}\}= \{\sigma\circ E:
   \sigma\in\hat{\D}\}.$$
For each $n\in\bbN$, the set $\{n\}$ is clopen in $\hat{\D}$, so
$\rho_n$ is the unique extension of $\rho_n|_\D$ to a state on $\C$.
(This can be proved directly or viewed as a consequence of
Theorem~\ref{denseuep}.)

Thus, to complete a description of $\fS(\C,\D)$, we need only describe
$\fS(\C,\D,\sigma_\infty),$ where
$\sigma_\infty=\rho_\infty|_\D$. 

To do this,  
let $\K=\K(\H)$ be the compact operators and let
  $q:\C\rightarrow \C/K=C(\bbT)$ be the quotient map.  Given
  $z\in\bbT$, we write $\tau_z$ for the state on $\C$ given by
  $\tau_z(T)=q(T)(z)$. 
Also, for $z\in \bbT$, we let $\alpha_z$ be the gauge automorphism on $\C$
determined by $\alpha_z(S)=zS$.  For each $N\in \bbN$, let
$\lambda(N)=\exp(2\pi i/N)$ and define
$\Phi_N:\C\rightarrow\C$ by
$$\Phi_N(T):=\frac{1}{N}\sum_{k=0}^{N-1} \alpha_{\lambda(N)}^k(T).$$
(Note that if  $T=\sum_{k=-p}^p a_kS^k$ is a ``trigonometric
polynomial,'' then 
$\Phi_N(T)=\sum_{k\in N\bbZ} a_kS^k$.) 
  
We claim that 
\begin{equation}\label{cmoin}
\fS(\C,\D,\sigma_\infty)=\{\rho_\infty\}\cup
\{\tau_z\circ\Phi_N: z\in \bbT, N\in \bbN\}.
\end{equation}

Each state of the form
$\tau_z\circ \Phi_1=\tau_z$ is multiplicative on $\C$.  Therefore,
$\{\tau_z\circ\Phi_1: z\in\bbT\}$ is a set of   pure
states and is a subset of  $\fS(\C,\D)$.  Also, we have
$$\rho_\infty=\int_\bbT \tau_z\, dz\dstext{and for $N\geq
  1$,}
\tau_z\circ \Phi_N=\frac{1}{N}\sum_{k=0}^{N-1} \tau_{\lambda(N)^kz},$$
so the only pure states on the right hand side of~\eqref{cmoin} are
those of the form $\tau_z$.  We will show that states of the form
$\tau_z\circ\Phi_N$ are compatible states.   We proceed by first
identifying the elements of $\N(\C,\D)$ with $\sigma_\infty(v^*v)\neq 0$.

Suppose  $v\in\N(\C,\D)$ satisfies  $\sigma_\infty(v^*v)>0.$  
Put $A_v=\{n\in\bbN : \innerprod{v^*v e_n,e_n}\neq 0\}$ and
$B_v=\{n\in \bbN: \innerprod{vv^*e_n,e_n}\neq 0\}$.  Then 
$\beta_v$ induces a bijection $f:A_v\rightarrow B_v$, and there exist
scalars $c_j$ so that  
$$v_{ij}:=\innerprod{ve_j,e_i}=\begin{cases} c_j&\text{if $i=f(j)$;}\\
  0&\text{otherwise.} \end{cases}$$ Moreover, note that $q(v)=cq(S)^m$
for some $c\in \bbC$ and $m\in\bbZ$.  Since $\sigma_\infty(v^*v)\neq
0$, $c\neq 0$, so $v$ is a Fredholm operator.
Let $m$ be the Fredholm index of $v$.  Then $q(v)=\rho_\infty(vS^{-m})q(S)^m$.
Thus $v=S^m d$ for some $d\in \D$ with $\sigma_\infty(d)\neq 0$.  
The fact that each
$\tau_z\circ\Phi_N\in\fS(\C,\D)$, now follows.
It remains to show that we have found all elements of $\fS(\C,\D,\sigma_\infty)$.

We will write $H_\infty$, $B_\infty$, $C_\infty$, and $\fJ_\infty$
rather than the more cumbersome $H_{\sigma_\infty}$,
$B_{\sigma_\infty}$, $\C_{\sigma_\infty}$, and $\fJ_{\sigma_\infty}$.
Then
$$H_\infty=\{S^m d: m\in\bbZ\text{ and } d\in \D,
\sigma_\infty(d)\in\bbT\}.$$
Thus,
 $\fJ_\infty=\K$ and
$\C_\infty=\C$.  By Proposition~\ref{Bsigquot},
$B_\infty$ gives the quotient norm on $\C_\infty/\fJ_\infty=C(\bbT)$.

Next, for $v,w\in H_\infty$, we have $B_\infty(v-w)=0$ if and only if
$\rho_\infty(w^*v)=1$.  In other words, $(v,w)\in R_1$ if and only
if $q(v)=q(w)$.  Therefore, 
$H_\infty/R_1$ is isomorphic to the direct product $\bbT\times \bbZ$.
Observe that $H_\infty/R_\bbT$ is isomorphic to $\bbZ$, and
we obtain the trivial $\bbT$-group extension,
$$1\rightarrow \bbT\rightarrow \bbT\times\bbZ\rightarrow
\bbZ\rightarrow 1.$$
The generators of the subgroups of $\bbZ$ are the non-negative
integers, so the $\bbT$-subgroups of $\bbT\times \bbZ$ are 
$$\{\bbT\times n\bbZ: n\geq 0\}.$$  Let $f$ be a \phom\ of
index 1 on
$H_\infty/R_1$.  Then there exists a non-negative integer $N$ such
that $f$ is a character on $\bbT\times N\bbZ$.   Since $f$ has index
1,  there exists
$\lambda\in \bbZ$ such that for $(z,n)\in \bbT\times \bbZ=H_\infty/R_1,$
$$f(z,n)=\begin{cases}
z\lambda^n&\text{if $n\in N\bbZ$;}\\
0& \text{otherwise.}\end{cases}$$  As $B_\infty$ is the quotient
norm on $\C_\infty$,  $f$ is dominated by $B_\infty$.  Let
$\tau=\tau_z\circ \Phi_N$.  With the notation of 
Theorem~\ref{compatstpdf}, we get $\tilde{\tau}=f$, so  by
Theorem~\ref{compatstpdf},  the compatible state
corresponding to $f$ is $\tau_z\circ \Phi_N.$  This completes the proof
of~\eqref{cmoin}.  

We now identify the topology on $\fS(\C,\D)$.  
Notice that if $M,N$ are positive integers with $N\notin M\bbZ$,
then for any $z\in\bbT$, $\tau_z(\Phi_N(S^N))=\tau_z(S^N)=z^N\neq
0=\tau_z(\Phi_M(S^N)).$  Given distinct positive integers $N,M$,
either $N\notin M\bbZ$ or $M\notin N\bbZ$.  Thus for $N>0$,  
$\{\phi_z\circ\Phi_N: z\in \bbT\}$ is a connected component of
$\fS(\C,\D)$ and  is homeomorphic to $\bbT$.    

We next show that if $G$ is a weak-$*$ open neighborhood of
$\rho_\infty$, then there exists $N\in\bbN$ such that
$\fT_n:=\{\tau_z\circ \Phi_n: z\in \bbZ\}\subseteq
G$ for every $n\geq N$.  To do this, it suffices to show that for
every $a\in\C$ and $\eps>0$, the set $G_{a,\eps}:=\{\phi\in\fS(\C,\D):
|\phi(a)-\rho_\infty(a)|<\eps\}$ contains $\fT_n$ for all sufficiently
large $n$, and this is what we shall do.    First observe that for
every $b\in\C$,
\begin{equation}\label{psiright}
\lim_{n\rightarrow\infty}\norm{\Phi_n(b)-E(b)}=0.\end{equation}  (This
is clear for ``trigonometric polynomials'' in $S$, approximate $b$ in
norm with a trigonometric polynomial 
 to obtain~\eqref{psiright}.)
Fix  $N\in\bbN$  so that $\norm{\Phi_n(a)-E(a)}<\eps$ for
every $n\geq N$.  For any $z\in \bbT$, we have
$\tau_z(E(a))=\rho_\infty(a)$, so we find 
$$|\tau_z(\Phi_n(a))-\rho_\infty(a)|<\eps.$$ So $\fT_n\subseteq
G_{a,\eps}$ for all $n\geq N$.

We conclude that 
$$\fS(\C,\D)=\fS_s(\C,\D)\cup\{\tau_z\circ \Phi_N: z\in \bbT, N\in
\bbN\},$$ which may be viewed as the one-point compactification of
the space $\bbN \cup \left(\bigcup_{N\in\bbN}\bbT\times\{N\}\right) $, with
$\rho_\infty$ corresponding to the point at infinity.  
\end{example}

\section{The Twist of a Regular Inclusion}\label{GpReIn}

Throughout this section, we fix, once and for all, a regular inclusion
$(\C,\D)$ and a closed $\N(\C,\D)$-invariant subset $F\subseteq
\fS(\C,\D)$ such that the restriction map, $f\in F\mapsto f|_\D$, is a
surjection of $F$ onto $\hat{\D}$.  When $(\C,\D)$ is a regular MASA
inclusion, Theorem~\ref{minonto} shows $\fS_s(\C,\D)\subseteq F$.  For
this reason, we have in mind taking $F=\fS_s(\C,\D)$, though other
choices (e.g. $F=\fS(\C,\D)$) may be useful for some purposes.

In this section, we show that associated to this data, there is a twist
$(\Sigma,G)$, which, when $(\C,\D)$ is a \cstar-diagonal (in which
case $F$ is necessarily $\fS_s(\C,\D)$) or a Cartan pair (with
$F=\fS_s(\C,\D)$) gives the twist of the pair as defined by
Kumjian~\cite{KumjianOnC*Di} for \cstar-diagonals, or for Cartan pairs
given by Renault in~\cite{RenaultCaSuC*Al}.  The set $F$ will be used
as the unit space for the \'{e}tale groupoid $G$ associated to the
twist $(\Sigma,G)$.

   Our construction
parallels the constructions by Kumjian and Renault, but with several
differences.  First, in the Renault and Kumjian contexts, a
conditional expectation $E:\C\rightarrow \D$ is present, and since 
$\{\rho\circ E: \rho\in\hat{\D}\}$ is homeomorphic to $\hat{\D}$,
Renault and Kumjian use $\hat{\D}$ as the unit space for the twists
they construct.  
In our context, we need not have a conditional
expectation, so we use the set $F$ as a replacement for $\hat{\D}$.
Next,   
as in the constructions of Kumjian and Renault, we construct a regular
$*$-homomorphism $\theta:(\C,\D)\rightarrow (C^*_r(\Sigma,G),
C(G^{(\circ)}))$, however, $\theta(\C)$ need not equal $C^*_r(\Sigma,G)$,
and the kernel of $\theta$ is not trivial unless the ideal $\K_F=\{x\in \C: f(x^*x)=0
\text{ for all } f\in F\}=(0).$   We note however, that this ideal is
trivial in the cases considered by Kumjian and Renault.

\subsection{Twists and their \cstaralg s}
Before proceeding, it is helpful to recall some generalities on twists
and the (reduced) \cstaralg s associated to them.

\begin{definition}
 The pair $(\Sigma, G)$ is a \textit{twist}
  if $\Sigma$ and $G$ are Hausdorff locally compact topological groupoids, $G$ is an 
  \'{e}tale groupoid and the following hold:
\begin{enumerate}
\item there is a free action of $\bbT$ by homeomorphisms of $\Sigma$ such
that whenever $(\sigma_1,\sigma_2)\in \Sigma^{(2)}$ and
  $z_1,z_2\in\bbT$, we have $(z_1\sigma_1,z_2\sigma_2)\in \Sigma^{(2)}$
  and $(z_1\sigma_1)(z_2\sigma_2)=(z_1z_2)(\sigma_1\sigma_2)$;
\item there is a continuous surjective groupoid homomorphism 
$\gamma:\Sigma\twoheadrightarrow G$ such that for every
$\sigma\in\Sigma$, $\gamma^{-1}(\gamma(\sigma))=\{z\sigma: z\in\bbT\}$;
\item the bundle $(\Sigma,G,\gamma)$ is  locally trivial.
\end{enumerate}

Notice that  $\gamma|_{\unitspace{\Sigma}}: 
\unitspace{\Sigma}\rightarrow \unitspace{G}$ is a homeomorphism of the
unit space of $\Sigma$ onto the unit space of $G$.  We will usually
use this map to 
identify $\unitspace{\Sigma}$ and $\unitspace{G}$.

\end{definition}

Recall that given a twist $\Sigma$ over the \'{e}tale topological
groupoid $G$, one can form the twisted groupoid \cstaralg\ of the pair
$(\Sigma,G)$.  We summarize the construction in our context, for
details, see~\cite[Section~4]{RenaultCaSuC*Al}
and~\cite[Section~2]{KumjianOnC*Di}.  We note that in both
\cite{RenaultCaSuC*Al} and~\cite{KumjianOnC*Di}, there is a blanket
assumption that the \'{e}tale groupoid $G$ is second countable.
However, for what we require here, this hypothesis is not used.  The
reader may also wish to consult Section~3 of~\cite{ExelInSeCoC*Al}.

Let $C_c(\Sigma,G)$ be
the family of all compactly supported continuous complex valued
functions $f$ on $\Sigma$ which are \textit{equivariant}, that is,
which satisfy $f(z\sigma)=zf(\sigma)$ for all  $\sigma\in\Sigma,
z\in\bbT$.  Given $f,g\in C_c(\Sigma,G)$, notice whenever
$\tau,\sigma\in\Sigma$ with $s(\tau)=s(\sigma)$ and $z\in\bbT$, we
have $f(\sigma\tau^{-1})g(\tau)=f(\sigma (z\tau)^{-1})g(z\tau)$.  
For $x\in G$ and $\sigma\in \Sigma$ with $s(x)=s(\sigma)$, let $\tau\in
\gamma^{-1}(x)$. Then 
$$(f\circledast g)(\sigma, x):=f(\sigma\tau^{-1})g(\tau)$$ does not
depend
 on the choice of $\tau\in\gamma^{-1}(x).$
The
product of $f$ with $g$ is defined by
$$(f\star g)(\sigma)=\sum_{\substack{x\in G\\s(x)=s(\sigma)}}
(f\circledast g)(\sigma,x),$$ and the adjoint operation is defined by
$$f^*(\sigma)=\overline{f(\sigma^{-1})}.$$  These operations make
$C_c(\Sigma,G)$ into a $*$-algebra.  

Similarly, notice that for $f\in C_c(\Sigma,G)$, $z\in\bbT$ and
$\sigma\in \Sigma$, $|f(\sigma)|=|f(z\sigma)|$, so for $x\in G$, we
denote by $|f(x)|$ the number $|f(\sigma)|$, where $\sigma\in \gamma^{-1}(x)$. 
In particular, $|f|$ may be viewed as a function on $G$.

One may norm $C_c(\Sigma, G)$ as in~\cite{RenaultCaSuC*Al}
or~\cite{KumjianOnC*Di}.   For convenience, we provide a sketch of an
equivalent, but 
slightly different method.   Given $x\in \unitspace{G}$, let
$\eval_x:C_c(\Sigma,G)\rightarrow \bbC$ by $\eval_x(f)=f(x).$  Then $\eval_x$ is a
positive linear functional in the sense that for each $f\in
C_c(\Sigma, G)$, $\eval_x(f^*\star f)\geq 0$.   Let $\N_x:=\{f\in
C_c(\Sigma,G): \eval_x(f^*\star f)=0\}$ and let $\H_x$ be the
completion of $C_c(\Sigma,G)/\N_x$ with respect to the inner product
$\innerprod{f+N_x,g+N_x}=\eval_x(g^*\star f)$.   

Recall that a \textit{slice} of $G$ is an open set $U\subseteq G$ so
that $r|_U$ and $s|_U$ are one-to-one.
By~\cite[Proposition~3.10]{ExelInSeCoC*Al}, given $f\in
C_c(\Sigma,G)$, there exist $n\in\bbN$ and slices $U_1,\dots, U_{n}$ of
$G$ such that the support of $|f|$ is contained in $\bigcup_{k=1}^{n}
U_k$.  Let $n(f)\geq 0$ be the smallest integer such that there exist
slices $U_1,\dots U_{n(f)}$ with the support of $|f|$ is contained in
$\bigcup_{k=1}^{n(f)}U_k$.     

For $f, g\in C_c(\Sigma,G)$, a calculation shows that 
$$\norm{(f\star g)+\N_x}\leq n(f)\norm{f}_\infty
\norm{g+\N_x}_{\H_x}.$$  Therefore, the map $g+\N_x\mapsto (f\star
g)+\N_x$ extends to a bounded linear operator $\pi_x(f)$ on $\H_x$.
It is easy to see that $\pi_x$ is a $*$-representation of
$C_c(\Sigma,G)$.  Also, if $\pi_x(f)=0$ for every $x\in \unit{G}$,
then $\norm{f+\N_x}_{\H_x}=0$ for each $x\in \unit{G}$.  A calculation
then gives $f=0$.    
Thus, 
$$\norm{f}:=\sup_x\norm{\pi_x(f)}.$$ defines a norm on
$C_c(\Sigma,G)$.  
The \textit{(reduced)
  twisted} \cstaralg, $C^*(\Sigma,G)$, is the completion of
$C_c(\Sigma,G)$ relative to this norm.  Clearly, the representation
$\pi_x$ extend by continuity to a representation, again called
$\pi_x$, of $C^*(\Sigma,G)$.

As observed in the remarks
following~\cite[Proposition~4.1]{RenaultCaSuC*Al}, elements of
$C^*(\Sigma,G)$ may be regarded as equivariant continuous functions on
$\Sigma$, and the formulas defining the product and involution on
$C_c(\Sigma,G)$ remain valid for elements of $C^*(\Sigma,G)$.  Also,
as in~\cite[Proposition~4.1]{RenaultCaSuC*Al},
for $\sigma\in\Sigma$, and $f\in C^*(\Sigma,G)$, 
$$|f(\sigma)|\leq \norm{f}.$$  
\begin{definition}
We shall call the smallest topology on $C^*(\Sigma,G)$ such that for
every $\sigma\in \Sigma$, the point evaluation functional,
$C^*(\Sigma,G)\ni f\mapsto f(\sigma)$ is 
continuous, the \textit{$\unit{G}$-compatible topology} on $C^*(\Sigma,G)$.
Clearly this topology is Hausdorff.
\end{definition}

The open support of $f\in C^*(\Sigma,G)$ is
$\supp(f)=\{\sigma\in\Sigma: f(\sigma)\neq 0\}.$ Then
$C_0(G^{(\circ)})$ may be identified with $$\{f\in C^*(\Sigma,G):
\supp(f)\subseteq G^{(\circ)}\}.$$

In order to remain within the unital context, we now assume that the
unit space of $G$ is compact.  In this case $C^*(\Sigma,G)$ is unital,
and $C(G^{(\circ)})\subseteq C^*(\Sigma,G)$, so that $(C^*(\Sigma,G), C(G^{(\circ)}))$ is an
inclusion.  We wish to show that it is a regular inclusion.

Recall (see~\cite[Section~3]{ExelInSeCoC*Al}) that a
\textit{slice} (or $G$-set) of $G$ is an open subset $S\subseteq G$
such that the restrictions of the range and source maps to $S$ are
one-to-one.   We will say that an element $f\in C^*(\Sigma,G)$ is
\textit{supported in the slice $S$} if $\gamma(\supp(f))\subseteq S$.

If $f\in C^*(\Sigma,G)$ is supported in a slice $U$, then a
computation (see~\cite[Proposition~4.8]{RenaultCaSuC*Al}) shows that
$f\in \N(C^*(\Sigma,G),C(G^{(\circ)}))$, and because the collection
of slices forms a basis for the topology of $G$
(\cite[Proposition~3.5]{ExelInSeCoC*Al}), it follows (as
in~\cite[Corollary~4.9]{RenaultCaSuC*Al}) that $(C^*(\Sigma,G),
C(G^{(\circ)}))$ is a regular inclusion.

\begin{proposition}\label{trivrad4twists} Let $\Sigma$ be a twist
  over the Hausdorff \'{e}tale groupoid $G$.  Assume that the unit space $X$ of
  $G$ is compact.  Then there is a faithful conditional expectation
  $E:C^*(\Sigma,G)\rightarrow C(G^{(\circ)})$, the inclusion $(C^*
  (\Sigma,G),C(G^{(\circ)}))$ is regular.  If in addition,
   is $C(\unit{G})$ is a MASA, then $\rad(C^* (\Sigma,G),C(G^{(\circ)}))=(0)$.
\end{proposition}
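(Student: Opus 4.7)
The plan is to construct the conditional expectation $E:C^*(\Sigma,G)\to C(\unit{G})$ by restriction of functions. Identifying $\unit{\Sigma}$ with $\unit{G}$ via $\gamma$, and viewing $f\in C^*(\Sigma,G)$ as an equivariant continuous function on $\Sigma$, I would set $E(f):=f|_{\unit{G}}$. The inequality $|f(\sigma)|\leq\norm{f}$ shows $E$ lands in $C(\unit{G})$ with $\norm{E}\leq 1$, and $E$ is clearly idempotent, unital, and $C(\unit{G})$-bimodular (the bimodular property is immediate from the convolution formula on the dense subalgebra $C_c(\Sigma,G)$ and extends by continuity). Tomiyama's theorem then identifies $E$ as a completely positive conditional expectation. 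Most of these verifications are standard and can be cited from~\cite[Section~4]{RenaultCaSuC*Al} or~\cite[Section~2]{KumjianOnC*Di}.

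For faithfulness, the key step is to derive the pointwise formula
\begin{equation*}
E(f^*\star f)(x)=\sum_{\substack{y\in G\\s(y)=x}}|f(y)|^2 \qquad(x\in\unit{G}),
\end{equation*}
where $|f|$ is the well-defined function on $G$ given by $|f|(y):=|f(\tau)|$ for any $\tau\in\gamma^{-1}(y)$ (independence from the choice of $\tau$ uses equivariance of $f$). The formula follows from the convolution definition: if $u\in\unit{\Sigma}$ corresponds to $x$ and $\tau\in\gamma^{-1}(y)$ has $s(\tau)=u$, then $u\tau^{-1}=\tau^{-1}$, so $(f^*\circledast f)(u,y)=f^*(\tau^{-1})f(\tau)=|f(\tau)|^2$. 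The sum is dominated by $\norm{f}^2$ and hence converges. If $E(f^*\star f)$ vanishes identically on $\unit{G}$, then $|f(y)|=0$ for every $y\in G$, forcing $f\equiv 0$ on $\Sigma$, i.e.\ $f=0$ in $C^*(\Sigma,G)$.

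Regularity of $(C^*(\Sigma,G),C(\unit{G}))$ is already established in the paragraph preceding the proposition: functions supported in slices normalize $C(\unit{G})$, and because slices form a basis for the topology of $G$, the span of such functions is dense in $C^*(\Sigma,G)$. For the final assertion, assume $C(\unit{G})$ is a MASA; then $(C^*(\Sigma,G),C(\unit{G}))$ is a Cartan inclusion. Composing $E$ with the inclusion $\iota:C(\unit{G})\to I(C(\unit{G}))$ yields a pseudo-expectation, which by the uniqueness clause of Theorem~\ref{uniquecpmap} must coincide with the canonical one. Therefore $\L(C^*(\Sigma,G),C(\unit{G}))=\{x:E(x^*x)=0\}=(0)$ by faithfulness of $E$. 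As noted in Question~\ref{radeqL}, $\fS_s\subseteq \fS$ forces $\rad\subseteq\L$ for any regular MASA inclusion, so $\rad(C^*(\Sigma,G),C(\unit{G}))=(0)$.

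The main obstacle is not any single computation but the careful bookkeeping of the various identifications---$\unit{\Sigma}\cong\unit{G}$, elements of $C^*(\Sigma,G)$ as continuous equivariant functions on $\Sigma$, and the interplay of the convolution with the $\bbT$-action---to make the pointwise formula for $E(f^*\star f)$ rigorous for arbitrary $f\in C^*(\Sigma,G)$, not merely for $f\in C_c(\Sigma,G)$. Once that formula is in hand, each remaining step is a direct application of the structural results developed earlier in the paper.
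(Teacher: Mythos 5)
Your proposal is correct and follows essentially the same route as the paper: the paper simply cites Renault (Proposition~4.3 of \cite{RenaultCaSuC*Al}) for the faithful expectation, notes regularity was observed just before the proposition, and derives triviality of the radical from Proposition~\ref{homobehav} (applied to the identity map), which is exactly the inclusion $\rad\subseteq\L$ you invoke, combined with $\L=(0)$ coming from the uniqueness of the pseudo-expectation and faithfulness of $\iota\circ E$. The only difference is that you spell out the faithfulness computation that the paper delegates to the reference.
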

\begin{remark}{Remark}  The condition that $C(\unit{G})$ is a MASA is
  satisfied when $\unit{G}$ is
second countable and $G$ is essentially principal, that is, when the
interior of the isotropy bundle for $G$ is $\unit{G}$,
see~\cite[Proposition~4.2]{RenaultCaSuC*Al}.  We expect that it is
possible to remove the hypothesis of second countability here, but we
have not verified this.
\end{remark} 
\begin{proof}[Proof of Proposition~\ref{trivrad4twists}]
The existence of the conditional expectation is proved as
in~\cite[Proposition~4.3]{RenaultCaSuC*Al} or
\cite[Proposition~II.4.8]{RenaultGrApC*Al}, and we have already
observed that the inclusion is regular.  

When $C(\unit{G})$ is a MASA in $C^*(\Sigma,G)$,
the triviality of the radical follows from Proposition~\ref{homobehav}.
\end{proof}

\subsection{Compatible Eigenfunctionals and the 
  Twist for $\mathbf{(\C,\D)}$}

We turn next to a discussion of eigenfunctionals, for a certain class of
eigenfunctionals will yield our twist.
Recall (see~\cite{DonsigPittsCoSyBoIs}) that an \textit{eigenfunctional}
is a non-zero element $\phi\in\dual{\C}$ which is an eigenvector for both the
left and right actions of $\D$ on $\dual{\C}$; when this occurs, there
exist unique elements $\rho,\sigma\in\hat{\D}$ so that whenever
$d_1,d_2\in\D$ and $x\in \C$, we have
$\phi(d_1xd_2)=\rho(d_1)\phi(x)\sigma(d_2)$.  We
write
$$s(\phi):=\sigma\dstext{and} r(\phi):=\rho.$$

\begin{definition} A \textit{compatible eigenfunctional} is a
  eigenfunctional $\phi$ such that for every $v\in
 \N(\C,\D)$,
\begin{equation}\label{cedef}
|\phi(v)|^2\in\{0,s(\phi)(v^*v)\}.
\end{equation}
Let $\ce(\C,\D)$ denote the set consisting of the zero functional
together with the set of all compatible eigenfunctionals, and let
$\ceo(\C,\D)$ be the set of compatible eigenfunctionals which have
unit norm.  Equip both $\ce(\C,\D)$ and $\ceo(\C,\D)$ with the
relative $\sigma(\dual{\C},\C)$ topology.

\end{definition}
\begin{remark*}{Remark} Notice that when $\phi$ is an eigenfunctional
  and $v\in \N(\C,\D)$ is such that $\phi(v)\neq 0$, then for every
  $d\in\D$, 
 \begin{equation}\label{sreig}
\frac{s(\phi)(v^*dv)}{s(\phi)(v^*v)}=r(\phi)(d).
\end{equation} Indeed,
$\phi(v)s(\phi)(v^*dv)
=\phi(vv^*dv)=\phi(dvv^*v)=r(\phi)(d)\phi(v)s(\phi)(v^*v)$. 
Thus, taking $d=1$, the condition in~\eqref{cedef} is equivalent to 
\begin{equation}\label{cedefalt} |\phi(v)|^2\in\{0,r(\phi)(vv^*)\}.
\end{equation}
\end{remark*}

We now show that associated with each $\phi\in \ceo(\C,\D)$ is a pair
$f,g\in \fS(\C,\D)$ which extend $r(\phi)$ and $s(\phi)$.  
Note that regularity of the inclusion
$(\C,\D)$ ensures the existence of $v\in \N(\C,\D)$ such that $\phi(v)>0$.
\begin{proposition}\label{cefprop}
Let $\phi\in\ceo(\C,\D)$, and
let $v\in \N(\C,\D)$ satisfy $\phi(v)>0$. 
Define elements $f,g\in\dual{\C}$ by
$$f(x)=\frac{\phi(xv)}{\phi(v)}\dstext{and}g(x)=\frac{\phi(vx)}{\phi(v)}.$$ 
Then the following statements hold.
\begin{enumerate}
\item[i)] $f,g\in \fS(\C,\D)$, $r(\phi)=f|_\D$ and $s(\phi)=g|_\D$.
\item[ii)] For every $x\in
  \C$, $$\phi(x)=\frac{g(v^*x)}{g(v^*v)^{1/2}}=
  \frac{f(xv^*)}{f(vv^*)^{1/2}}.$$
\item[iii)] For every $x\in\C$, $g(v^*xv)=g(v^*v)f(x)$ and $f(vxv^*)=f(vv^*)g(x).$ 
\end{enumerate}
\end{proposition}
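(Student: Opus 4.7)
My plan is to prove the three parts in the order (ii) and (iii) first (as arithmetic identities), then (i), since knowing the identities will pin down what $g$ and $f$ must be if they are states at all.

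The workhorse is the identity
\[
\phi(v)^{2} \;=\; s(\phi)(v^{*}v) \;=\; r(\phi)(vv^{*}),
\]
which follows because $\phi \in \ceo(\C,\D)$ and $\phi(v) > 0$: the two equivalent forms of the compatibility condition, namely \eqref{cedef} and \eqref{cedefalt}, applied to $v$ itself give the two equalities, using that $v^{*}v, vv^{*} \in \D$. Parts (ii) and (iii) are then direct computations that exploit the eigenfunctional identity $\phi(d_{1} x d_{2}) = r(\phi)(d_{1})\phi(x)s(\phi)(d_{2})$ on the $\D$-elements $vv^{*}$ and $v^{*}v$. For instance,
\[
\frac{g(v^{*}x)}{g(v^{*}v)^{1/2}} \;=\; \frac{\phi(vv^{*}x)/\phi(v)}{\phi(v)} \;=\; \frac{r(\phi)(vv^{*})\phi(x)}{\phi(v)^{2}} \;=\; \phi(x),
\]
and the second half of (ii) is obtained symmetrically with $v^{*}v$; for (iii), $g(v^{*}xv) = \phi(vv^{*}xv)/\phi(v) = r(\phi)(vv^{*})\phi(xv)/\phi(v) = \phi(v)^{2}f(x) = g(v^{*}v)f(x)$, and the mirror calculation yields $f(vxv^{*}) = f(vv^{*})g(x)$.

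For (i), the assertions $f(I) = g(I) = 1$ and $f|_{\D} = r(\phi)$, $g|_{\D} = s(\phi)$ are immediate from the eigenfunctional identity applied to $I$ and to $d \in \D$. The substantive task is to prove positivity and the compatibility condition. My strategy for positivity is to realize $g$ as a vector state. The formula from (ii) suggests that the prescription $\tau_{0}(v^{*}x) := \phi(v)\phi(x)$ should define (a restriction of) $g$ on the subspace $v^{*}\C$. I would first verify well-definedness: namely, $v^{*}y = 0 \Longrightarrow \phi(y) = 0$. This follows by extending $\phi$ to a normal functional $\tilde{\phi}$ on $\C^{**}$ and noting that since $r(\phi)(vv^{*}) = \phi(v)^{2} > 0$, the support projection $q$ of $vv^{*}$ in $\D^{**}$ satisfies $r(\tilde\phi)(q) = 1$, so $\tilde\phi$ vanishes on $(1-q)\C^{**}$; the condition $v^{*}y = 0$ forces $qy = 0$, whence $y = (1-q)y$ and $\phi(y) = 0$. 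Once $g$ is identified as a state $\tau$ extending $\sigma := s(\phi)$, the computation $g(x) = \phi(vx)/\phi(v) = \tau(v^{*}vx)/\phi(v)^{2} = \sigma(v^{*}v)\tau(x)/\phi(v)^{2} = \tau(x)$ (using $\D$-modularity of $\tau$) confirms $g = \tau$.

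The main obstacle is that the natural Hahn--Banach extension of $\tau_{0}$ from $v^{*}\C$ to $\C$ is not automatically positive of norm one: the bound $|\tau_{0}(v^{*}x)| \leq \phi(v)\|x\|$ gives no direct control of $\tau_{0}$ in terms of $\|v^{*}x\|$. My preferred workaround is to proceed via the Sakai representation $\tilde\phi = \omega_{\xi,\eta}$ on $\C^{**}$ with $\|\xi\|=\|\eta\|=1$; the equality case of Cauchy--Schwartz applied to $\phi(v) = \langle\pi(v)\xi,\eta\rangle$, together with the saturating identity $|\phi(v)|^{2} = \sigma(v^{*}v)$, forces (after a suitable choice of representing vectors) $\pi(v)\xi = \phi(v)\eta$, so that $g$ coincides with the vector state $\omega_{\xi,\xi}$ and is positive of norm one; the companion identity $\pi(v^{*})\eta = \phi(v)\xi$ simultaneously yields positivity of $f$. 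For the compatibility condition, given $w \in \N(\C,\D)$ we have $vw \in \N(\C,\D)$; if $\phi(vw) = 0$ then $g(w) = 0$, and otherwise $|\phi(vw)|^{2} = s(\phi)(w^{*}v^{*}vw)$, at which point Proposition~\ref{betacalc} applied to the product $vw$ forces $\beta_{w}(s(\phi))(v^{*}v) = s(\phi)(v^{*}v)$ (by analyzing the factorization $\beta_{vw}(s(\phi)) = \beta_{v}(\beta_{w}(s(\phi)))$ together with compatibility applied to normalizers of the form $d_{1}v$ for $d_{1} \in \D$), giving $|g(w)|^{2} = s(\phi)(w^{*}w) = g(w^{*}w)$ as required.
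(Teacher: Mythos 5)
Your handling of (ii), (iii), and the restriction identities $f|_\D=r(\phi)$, $g|_\D=s(\phi)$ is fine and matches what the paper intends (it leaves these as computations), and your compatibility argument for $g$ --- deducing $\beta_w(s(\phi))=s(\phi)$ from $\beta_{vw}(s(\phi))=\beta_v(\beta_w(s(\phi)))=r(\phi)=\beta_v(s(\phi))$ via Proposition~\ref{betacalc} and injectivity of $\beta_v$ --- is sound, though the paper reaches the same conclusion in one line from the identity \eqref{sreig} with $d=w^*w$ (and handles $g$ by symmetry). Where you genuinely diverge is on positivity, and that is also where your write-up has a gap. The paper's argument is elementary: for any $d\in\D$ with $s(\phi)(d)=1$ one has $f(x)=\phi(xvd)/\phi(vd)$, hence $|f(x)|\le \norm{x}\,\norm{vd}/\phi(v)$; since $v^*v$ lies in the abelian algebra $\D$, one may choose $d$ supported near the character $s(\phi)$ so that $\norm{vd}=\norm{d^*v^*vd}^{1/2}$ is as close to $s(\phi)(v^*v)^{1/2}=\phi(v)$ as desired, giving $\norm{f}=1=f(I)$ and hence positivity.

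Your route through $\C^{\#\#}$ hinges on writing $\tilde\phi=\omega_{\xi,\eta}$ with unit vectors that \emph{in addition} satisfy $\omega_{\xi,\xi}|_\D=s(\phi)$ (and $\omega_{\eta,\eta}|_\D=r(\phi)$). Without that extra property, $\norm{\pi(v)\xi}^2=\omega_{\xi,\xi}(v^*v)$ need not equal $s(\phi)(v^*v)=\phi(v)^2$, so the equality case of Cauchy--Schwarz you invoke does not trigger, and the identities $\pi(v)\xi=\phi(v)\eta$ and $\pi(v^*)\eta=\phi(v)\xi$ do not follow. The parenthetical ``after a suitable choice of representing vectors'' is therefore carrying the entire weight of the positivity proof; such vectors do exist (this is essentially how unit eigenfunctionals are analyzed in \cite{DonsigPittsCoSyBoIs}, using the support projections of $s(\phi)$ and $r(\phi)$ in $\ddual{\D}$), but the existence is a nontrivial lemma that you assert rather than prove. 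Either supply that lemma or replace the whole positivity step with the two-line norm estimate above, which avoids the bidual entirely.
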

\begin{proof}  
  The definitions show  $f|_\D=r(\phi)$ and $g|_\D=s(\phi)$.  
  We next claim that $\norm{f}=\norm{g}=1.$
  For any $d\in\D$ with $s(\phi)(d)=1$, replacing $v$ by $vd$ in the
  definition of $f$ does not
  change $f$. Thus,  if $x\in\C$ and $\norm{x}\leq 1$, we have
  $|f(x)|\leq \inf\{\frac{\norm{vd}}{\phi(v)}: d\in \D,
  s(\phi)(d)=1\}=1$ (because $d$ may be chosen so that
  $\norm{vd}=\norm{d^*v^*vd}^{1/2}$ is as close to
  $s(\phi)(v^*v)^{1/2}$ as desired).  This shows
  $\norm{f}=1$. Likewise $\norm{g}=1$.  As $f(1)=g(1)=1$, 
  both $f$ and $g$ are states
  on $\C$.

  If $w\in \N(\C,\D)$ and $f(w)\neq 0$, we have (using~\eqref{sreig}) 
  $$|f(w)|^2=
  \left|\frac{\phi(wv)^2}{\phi(v)}\right|^2=
  \frac{s(\phi)(v^*w^*wv)}{s(\phi)(v^*v)} =r(\phi)(w^*w)=f(w^*w),$$
  and it follows that $f\in \fS(\C,\D)$.  Likewise, $g\in
  \fS(\C,\D)$.

  Statements (ii) and (iii) are  calculations using~\eqref{cedef}
  and~\eqref{cedefalt}  whose verification is left to the reader.  
\end{proof}

\begin{remark*}{Notation} For $v\in\N(\C,\D)$ and $f\in \fS(\C,\D)$
  such that $f(v^*v)>0$, let $[v,f]\in\dual{\C}$ be defined by
  $$[v,f](x):=\frac{f(v^*x)}{f(v^*v)^{1/2}}= 
\innerprod{x+L_f,\frac{v+L_f}{\norm{v+L_f}_{\H_f}}}_{\H_f}.$$  
  (This notation is borrowed from Kumjian~\cite{KumjianOnC*Di}.
  There, Kumjian works in the context of \cstar-diagonals and uses
  states on $\C$ of the form $\sigma\circ E$ with $\sigma\in\hat{\D}$.
  As we assume no conditional expectation here, we replace functionals
  of the form $\sigma\circ E$, with elements from $\fS(\C,\D)$.  See
  also \cite{RenaultCaSuC*Al}.)

\end{remark*}

We have the following.
\begin{lemma}\label{cefform}  If $v\in \N(\C,\D)$ and $f\in\fS(\C,\D)$
  with $f(v^*v)>0$, then $[v,f]\in \ceo(\C,\D)$ and the following statements hold.
\begin{enumerate}
\item[i)]
 $s([v,f])=f|_\D$ and $r([v,f])=\beta_v(f|_\D)$.
\item[ii)] $[v,f]=[w,g]$ if and only if $f=g$ and $f(v^*w)>0$.
\end{enumerate}
\end{lemma}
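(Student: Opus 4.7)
The plan is to verify the eigenfunctional, norm, and compatibility properties of $[v,f]$ directly from the Hilbert space picture provided in the definition, and then to establish (i) and (ii) by computations inside the GNS representation $(\pi_f,\H_f)$.

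First, I would observe that $f\in\fS(\C,\D)\subseteq \Mod(\C,\D)$ is $\D$-modular, so $f(v^*xd)=f(v^*x)f(d)$ for all $x\in \C$ and $d\in\D$; this immediately gives $[v,f](xd)=[v,f](x)\,f(d)$, so $s([v,f])=f|_\D$. For the left action, I would use Proposition~\ref{Dextreme}\eqref{Dext2a}: since $f(v^*v)>0$, $v+L_f$ is an eigenvector for $\pi_f(\D)$ with eigenvalue $\beta_v(f|_\D)$. Passing to adjoints in $\H_f$, $\pi_f(d)^*(v+L_f)=\overline{\beta_v(f|_\D)(d^*)}(v+L_f)$, which translates to $f(v^*d_1x)=\beta_v(f|_\D)(d_1)\,f(v^*x)$, yielding $r([v,f])=\beta_v(f|_\D)$.

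Next I would check $[v,f]$ is compatible. For $w\in \N(\C,\D)$ with $[v,f](w)\neq 0$, the element $v^*w\in\N(\C,\D)$ satisfies $f(v^*w)\neq 0$, so $v$ and $w$ lie in $\Lambda_f$ with $v\sim_f w$, and Proposition~\ref{Dextreme}\eqref{Dext2a0} yields $|f(v^*w)|^2=f(v^*v)f(w^*w)$. Dividing by $f(v^*v)$ gives $|[v,f](w)|^2=f(w^*w)=s([v,f])(w^*w)$, which is \eqref{cedef}. For $\norm{[v,f]}=1$, the GNS description exhibits $[v,f]$ as matrix coefficient of the unit vector $(v+L_f)/\norm{v+L_f}$, so $\norm{[v,f]}\leq 1$; since $[v,f]|_\D=\beta_v(f|_\D)\in\hat{\D}$ has norm one, equality holds, so $[v,f]\in\ceo(\C,\D)$. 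This also gives (i).

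For (ii), the reverse direction is the Hilbert-space Cauchy–Schwarz equality case: if $f=g$ and $f(v^*w)>0$, then $|f(v^*w)|^2=f(v^*v)f(w^*w)$ forces the unit vectors $\xi_v:=(v+L_f)/\norm{v+L_f}$ and $\xi_w:=(w+L_f)/\norm{w+L_f}$ to satisfy $\innerprod{\xi_w,\xi_v}=1$, so $\xi_v=\xi_w$ and the associated functionals $[v,f]$ and $[w,f]$ agree. For the forward direction, assume $[v,f]=[w,g]$. Evaluating both sides at $vz$ for $z\in\C$ and using $\D$-modularity of $f$ on $v^*v\in\D$, plus Proposition~\ref{Dextreme}\eqref{Dext0} on the normalizer $w^*v$ (noting that $[w,g](v)=f(v^*v)^{1/2}>0$ forces $g(w^*v)>0$), collapses the identity to $f(v^*v)^{1/2}f(z)=f(v^*v)^{1/2}g(z)$, so $f=g$; the positivity of $g(w^*v)=f(w^*v)$ then yields $f(v^*w)=\overline{f(w^*v)}>0$.

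I do not anticipate a serious obstacle: the main subtlety is simply keeping track of which normalizer identity (Proposition~\ref{Dextreme}\eqref{Dext0}, \eqref{Dext2a0}, or \eqref{Dext2a}) to invoke at each step, and making sure the equivariance of $f(v^*x)$ in $x$ under both $\D$-actions is extracted from the correct piece of Proposition~\ref{Dextreme}. The one place requiring a moment's care is the forward direction of (ii), where one must first pin down that $g(w^*v)$ is nonzero (and in fact positive) before applying modularity of $g$ through the normalizer $w^*v$.
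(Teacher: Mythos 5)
Your overall route is the same as the paper's: verify the eigenfunctional identities and compatibility by direct computation with the $\D$-modularity of $f$ and Proposition~\ref{Dextreme}, prove the reverse direction of (ii) by the equality case of Cauchy--Schwartz in $\H_f$, and prove the forward direction by evaluating at $vz$ and using Proposition~\ref{Dextreme}\eqref{Dext0} on the normalizer $w^*v$ after first pinning down $g(w^*v)>0$. Those portions are correct and match the paper's argument essentially step for step.

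There is, however, one genuine error: the claim that $[v,f]|_\D=\beta_v(f|_\D)$, which you use to get $\norm{[v,f]}\geq 1$. By $\D$-modularity, $[v,f](d)=f(v^*d)/f(v^*v)^{1/2}=f(v^*)f(d)/f(v^*v)^{1/2}$, so $[v,f]|_\D$ is a scalar multiple of $f|_\D$ (not of $\beta_v(f|_\D)$), and that scalar is $0$ whenever $f(v)=0$ --- which is the typical case, e.g.\ $v=S$ and $f=\rho_\infty$ in the Toeplitz example, where $[S,\rho_\infty]$ vanishes identically on $\D$ even though $[S,\rho_\infty](S)=1$. (Do not confuse $r([v,f])=\beta_v(f|_\D)$, which describes the left eigenvalue, with the restriction $[v,f]|_\D$; an eigenfunctional can restrict to $0$ on $\D$.) So your lower bound for the norm collapses exactly when $f(v)=0$. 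The fix is the infimum argument already used in the proof of Proposition~\ref{cefprop}: for $d\in\D$ with $0\leq d\leq I$ and $f(d)=1$ one has $|[v,f](vd)|=f(v^*v)^{1/2}$ while $\inf_d\norm{vd}=\inf_d\norm{d^*v^*vd}^{1/2}=f(v^*v)^{1/2}$, whence $\norm{[v,f]}\geq 1$. With that repair the proposal is complete.
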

\begin{proof}
  Suppose that $f\in\fS(\C,\D)$, $v\in\N(\C,\D)$ and $f(v^*v)\neq 0$.
  Let $\phi=[v,f]$. A calculation shows that $\phi$ is a norm-one
  eigenfunctional and that statement (i) holds.  

If $w\in \N(\C,\D)$ and $\phi(w)\neq 0$, then
  $$|\phi(w)|^2=\frac{|f(v^*w)|^2}{f(v^*v)}=\frac{f(v^*ww^*v)}{f(v^*v)}=
  \beta_v(s(\phi))(ww^*)=r(\phi)(ww^*),$$ so $\phi$ belongs to
  $\ceo(\C,\D)$ by~\eqref{cedefalt}.  

  Turning now to part (ii), suppose that
   $\phi=[v,f]=[w,g].$ Then we have $s(\phi)=f|_\D=g|_\D$.
  For every $x\in\C$, Proposition~\ref{cefprop} gives
  $$ f(x)=\frac{\phi(vx)}{\phi(v)}\dstext{and}
  g(x)=\frac{\phi(wx)}{\phi(w)}.$$ Since
  $\frac{g(w^*v)}{g(w^*w)^{1/2}}=\phi(v)=f(v^*v)^{1/2}$, we obtain
 $$ g(w^*v)=f(v^*v)^{1/2}g(w^*w)^{1/2}>0.$$  Likewise, $f(v^*w)>0.$   Also,
$$f(x)=\frac{\phi(vx)}{\phi(v)}=\frac{[w,g](vx)}{[v,f](v)}=
\frac{g(w^*vx)}{f(v^*v)^{1/2}g(w^*w)^{1/2}}=
\frac{g(w^*v)g(x)}{g(w^*v)}=g(x),$$ where the fourth equality follows
from  Proposition~\ref{Dextreme}.

Conversely, if $f\in\fS(\C,\D)$ and $v,w\in\N(\C,\D)$ with
$f(v^*w)>0$, Proposition~\ref{Dextreme} shows that
$f(v^*w)^2=f(w^*w)f(v^*v)$, so that in the GNS Hilbert space $\H_f$,
we have $\innerprod{v+L_f,w+L_f}=\norm{v+L_f}\norm{w+L_f}$.  By the
Cauchy-Schwartz inequality, there exists a positive real number $t$ so
that $v+L_f=tw+L_f$.  But then for any $x\in \C$,
$$[v,f](x)=\frac{\innerprod{x+L_f,v+L_f}}{\norm{v+L_f}}=
\frac{\innerprod{x+L_f,tw+L_f}}{\norm{tw+L_f}}=[w,f](x).$$
\end{proof}

Combining Proposition~\ref{cefprop} and Lemma~\ref{cefform} we obtain
the following.
\begin{theorem}\label{ceffromthm}
If $\phi\in \ceo(\C,\D)$, then there exist unique elements $\fs(\phi),
\fr(\phi)\in \fS(\C,\D)$ such that whenever $v\in\N(\C,\D)$ satisfies
$\phi(v)\neq 0$ and $x\in\C$,
$$\phi(vx)=\phi(v)\, \fs(\phi)(x) \dstext{and} \phi(xv)=\fr(\phi)(x)\,
\phi(v).$$
If $v\in \N(\C,\D)$ satisfies $\phi(v)>0$, then 
$\phi=[v,\fs(\phi)].$
Moreover, 
$$\ceo(\C,\D)=\{[v,f]: v\in \N(\C,\D), f\in \fS(\C,\D)
  \text{ and } f(v^*v)\neq 0\}.$$
\end{theorem}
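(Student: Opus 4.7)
The plan is to assemble Theorem~\ref{ceffromthm} from the tools already established in Proposition~\ref{cefprop} and Lemma~\ref{cefform}, with the only real work being to verify the well-definedness of $\fs(\phi)$ and $\fr(\phi)$ (independence from the chosen normalizer).

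First I would handle existence. Since $\phi \neq 0$ and $\spn\,\N(\C,\D)$ is norm-dense in $\C$, there exists $w \in \N(\C,\D)$ with $\phi(w) \neq 0$; choosing $\lambda = \overline{\phi(w)}/|\phi(w)| \in \bbT$ and setting $v_0 = \lambda w$ gives $v_0 \in \N(\C,\D)$ with $\phi(v_0) > 0$. For this $v_0$, define $\fr(\phi), \fs(\phi) \in \dual{\C}$ by $\fr(\phi)(x) = \phi(xv_0)/\phi(v_0)$ and $\fs(\phi)(x) = \phi(v_0 x)/\phi(v_0)$. Proposition~\ref{cefprop}(i) shows both belong to $\fS(\C,\D)$ with $\fr(\phi)|_\D = r(\phi)$ and $\fs(\phi)|_\D = s(\phi)$, and Proposition~\ref{cefprop}(ii) gives $\phi = [v_0, \fs(\phi)]$.

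Next comes independence of $v_0$, which I expect to be the main point. Suppose $v_1 \in \N(\C,\D)$ also satisfies $\phi(v_1) > 0$, and let $f_1(x) := \phi(v_1 x)/\phi(v_1)$. By Proposition~\ref{cefprop}(ii) applied with $v_1$ in place of $v_0$, we again have $\phi = [v_1, f_1]$. Thus $[v_0, \fs(\phi)] = [v_1, f_1]$, and Lemma~\ref{cefform}(ii) forces $\fs(\phi) = f_1$. A symmetric argument handles $\fr(\phi)$. Now for an arbitrary $v \in \N(\C,\D)$ with $\phi(v) \neq 0$, write $v = \mu v'$ where $\mu = \phi(v)/|\phi(v)| \in \bbT$ and $v' = \overline{\mu} v$; then $\phi(v') = |\phi(v)| > 0$, so by what was just shown, $\phi(v'x)/\phi(v') = \fs(\phi)(x)$, and multiplying by $\mu$ gives $\phi(vx) = \phi(v)\fs(\phi)(x)$. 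The analogous argument using $xv'$ yields the formula for $\fr(\phi)$. Uniqueness of $\fs(\phi), \fr(\phi)$ is then immediate: if $f \in \fS(\C,\D)$ satisfies $\phi(vx) = \phi(v)f(x)$ for some (any) $v$ with $\phi(v) \neq 0$, then $f(x) = \phi(vx)/\phi(v) = \fs(\phi)(x)$.

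For the formula $\phi = [v, \fs(\phi)]$ when $\phi(v) > 0$, this is exactly Proposition~\ref{cefprop}(ii) after noting that, with $g = \fs(\phi)$, one has $g(v^*v) = s(\phi)(v^*v) = |\phi(v)|^2$ (the value $0$ is excluded since $\phi(v) > 0$, and the compatibility condition~\eqref{cedef} forces $|\phi(v)|^2 = s(\phi)(v^*v)$).

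Finally, for the set equality: the inclusion $\supseteq$ is precisely the first assertion of Lemma~\ref{cefform}, and the inclusion $\subseteq$ follows by applying the main part of the theorem just proved: given $\phi \in \ceo(\C,\D)$, pick any $v$ with $\phi(v) > 0$ (available by regularity); then $\phi = [v, \fs(\phi)]$ with $\fs(\phi) \in \fS(\C,\D)$ and $\fs(\phi)(v^*v) = |\phi(v)|^2 > 0$, exhibiting $\phi$ in the claimed form. No step here is technically hard; the only subtlety is keeping the argument structured so that the well-definedness of $\fs(\phi)$ genuinely rests on the identification lemma~\ref{cefform}(ii) rather than being assumed implicitly.
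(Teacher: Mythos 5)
Your proposal is correct and follows essentially the same route as the paper: obtain $\phi=[v,f]=[w,g]$ from Proposition~\ref{cefprop}(ii) for two normalizers with $\phi(v),\phi(w)>0$, and invoke Lemma~\ref{cefform}(ii) to force $f=g$, with the range functional handled symmetrically. The extra bookkeeping you supply (the phase rotation reducing $\phi(v)\neq 0$ to $\phi(v)>0$, the uniqueness remark, and the two inclusions for the set equality) is exactly what the paper leaves implicit.
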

\begin{proof}
Suppose $v,w\in\N(\C,\D)$ are such that $\phi(v)>0$ and $\phi(w)>0$.  For
$x\in\C$, set
$$f(x):=\frac{\phi(vx)}{\phi(v)}\dstext{and}
g(x):=\frac{\phi(wx)}{\phi(w)}.$$  
Proposition~\ref{cefprop} shows
that $\phi=[v,f]=[w,g]$.  Lemma~\ref{cefform} yields $f=g$.  
Another application of Proposition~\ref{cefprop} shows that for any
$x\in\C$, 
$$\frac{\phi(xv)}{\phi(v)}=\frac{\phi(xw)}{\phi(w)}.$$ 
Then
taking $\fs(\phi)=f$, and $\fr(\phi)= \frac{\phi(xv)}{\phi(v)}$,  we
obtain the result. 
\end{proof}

Notice that for $\phi\in\ceo(\C,\D)$, we have $\fs(\phi)\in F$ if and
only if $\fr(\phi)\in F$.  
\begin{definition}   Let $\ceoF(\C,\D):=\{\phi\in\ceo(\C,\D):
  \fs(\phi)\in F\}$.  We shall call  $\phi\in\ceoF(\C,\D)$ an
  \textit{$F$-compatible eigenfunctional}.  Notice that 
$$\ceoF(\C,\D)=\{[v,f]: f\in F\text{ and } f(v^*v)\neq 0\}.$$
\end{definition}

With these preparations in hand, we can show that $\ceoF(\C,\D)$ forms
a topological groupoid.  The topology has already been defined, so we
need to define the source and range maps, composition and inverses.

\begin{definition} Given $\phi\in\ceoF(\C,\D)$, let $v\in\N(\C,\D)$ be
  such that $\phi(v)> 0$.  We make the following definitions.
\begin{enumerate} 
 \item We say that $\fs(\phi)$ and $\fr(\phi)$
  are  the
  \textit{source} and \textit{range} of $\phi$ respectively.
  
\item Define the \textit{inverse}, $\phi^{-1}$ by the formula,
$$\phi^{-1}(x):=\overline{\phi(x^*)}.$$

If $\phi\in\ceo(\C,\D)$ and $v\in\N(\C,\D)$ is such that
  $\phi(v)>0$, (so that $\phi=[v,\fs(\phi)]$), then a calculation
  shows that
  $\phi^{-1}=[v^*,\fr(\phi)].$  The fact that $F$ is
  $\N(\C,\D)$-invariant 
ensures that $\phi^{-1}\in \ceoF(\C,\D)$.  Thus, our definition of $\phi^{-1}$ is
  consistent with the definition of inverse in the definition of the
  twist of a \cstardiag\ 
  arising in \cite{KumjianOnC*Di}
  and the twist of a Cartan MASA from \cite{RenaultCaSuC*Al}.

\item For $i=1,2$, let $\phi_i\in\ceoF(\C,\D)$.  We say
  that the pair $(\phi_1,\phi_2)$ is a \textit{composable pair} if
  $\fs(\phi_2)=\fr(\phi_1)$.  As is customary, we write
  $\ceoF(\C,\D)^{(2)}$ for the set of composable pairs.   
 To define the composition, choose $v_i\in
  \N(\C,\D)$ with $\phi_i(v_i)>0$, so that
  $\phi_i=[v_i,\fs(\phi_i)]$.   By Proposition~\ref{cefprop}(iii), we
  have
  $$\fs(\phi_2)(v_2^*v_1^*v_1v_2)=\fr(\phi_2)(v_1^*v_1)\fs(\phi_2)(v_2^*v_2)
  = \fs(\phi_1)(v_1^*v_1)\fs(\phi_2)(v_2^*v_2)> 0,$$ so that
  $[v_1v_2,\fs(\phi_2)]$ is defined.
  The product is then defined to be  
   $\phi_1\phi_2:=[v_1v_2,\fs(\phi_2)]$.

   We show now that this product is well defined.  Suppose that
   $(\phi_1,\phi_2)\in\ceoF(\C,\D)^{(2)}$, $f=\fs(\phi_2),$
   $\fr(\phi_2)=g=\fs(\phi_1),$ and that for $i=1,2$,
   $v_i,w_i\in\N(\C,\D)$ are such that $\phi_1=[v_1,g]=[w_1,g]$ and
   $[v_2,f]=[w_2,f].$ Then using Lemma~\ref{cefform}, we have
   $g(w_1^*v_1)>0$ and $f(v_2^*w_2)>0$, so, as $f\in\fS(\C,\D)$, there
   exists a positive scalar $t$ such that $v_2+L_f=tw_2+L_f$.  Hence,
\begin{align*}
 f((w_1w_2)^*(v_1v_2))&=\innerprod{\pi_f(v_1)(v_2+L_f),\pi_f(w_1)(w_2+L_f)}\\
&= 
t\innerprod{\pi_f(v_1)(w_2+L_f),\pi_f(w_1)(w_2+L_f)}\\
&=tf(w_2^*(w_1^*v_1)w_2)\\
&=tf(w_2^*w_2) \fr(\phi_2)(w_1^*v_1) \\ 
&=
tf(w_2^*w_2) \fs(\phi_1)(w_1^*v_1)\\
&=tf(w_2^*w_2) g(w_1^*v_1)>0.
\end{align*} By Lemma~\ref{cefform},   
   $[v_1v_2,f]=[w_1w_2,f]$, so that the product is well defined.

\item For $\phi\in\ceoF(\C,\D)$, denote the map $\C\ni x\mapsto
  |\phi(x)|$ by $|\phi|$.  Observe that for $\phi,
  \psi\in\ceoF(\C,\D)$, $|\phi|=|\psi|$ if and only if there exists
  $z\in\bbT$ such that $\phi=z\psi$.  Let
$\fRF(\C,\D):=\{|\phi|: \phi\in\ceoF(\C,\D)\}$.  
We now define source and range maps, along with inverse and product maps
on $\fRF(\C,\D)$.

Since a state on $\C$ is
determined by its values on the positive elements of $\C$,  we
identify $f\in\fS(\C,\D)$ with $|f|\in\fRF(\C,\D)$.  Define
$\fs(|\phi|)=\fs(\phi)$ and $\fr(|\phi|)=\fr(\phi)$.  Next we define
inversion in $\fRF(\C,\D)$ by $|\phi|^{-1}=|\phi^{-1}|$, and composable
pairs by
$\fRF(\C,\D)^{(2)}:=\{(|\phi|,|\psi|):(\phi,\psi)\in\ceo(\C,\D)^{(2)}\}$,
and the product by $\fRF(\C,\D)^{(2)}\ni (|\phi|,|\psi|)\mapsto
|\phi\psi|.$
Topologize $\fRF(\C,\D)$ with the topology of point-wise convergence:
$|\phi_\lambda|\rightarrow |\phi|$ if and only if
$|\phi_\lambda|(x)\rightarrow |\phi|(x)$ for every $x\in\C$.  
We call  $\fRF(\C,\D)$ the \textit{spectral
     groupoid over $F$} of $(\C,\D)$.

\item Define an action of $\bbT$ on $\ceoF(\C,\D)$ by $\bbT\times
  \ceoF(\C,\D)\ni (z,\phi)\mapsto z\phi$, where $(z\phi)(x)= \phi(zx)$.
Notice that if $\phi$ is written as 
$\phi=[v,f]$, where $v\in \N(\C,\D)$ and $f\in F$, then
$z\phi=[\overline{z}v,f]$. 

\end{enumerate}
\end{definition}

We have the following fact, whose proof is essentially the same 
as that of~\cite[Proposition~2.3]{DonsigPittsCoSyBoIs} (the continuity
of the range and source maps follows from their definition).
\begin{proposition}\label{loccmpt} The set 
  $\ceoF(\C,\D)\cup\{0\}$ is a weak-$*$ compact subset of
  $\dual{\C}$, and the maps $\fs,\fr:\ceoF(\C,\D)\rightarrow
  \fS(\C,\D)$ are weak-$*$--weak-$*$ continuous.
\end{proposition}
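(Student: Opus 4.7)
The plan is to verify that $\ceoF(\C,\D)\cup\{0\}$ is weak-$*$ closed in the closed unit ball of $\dual{\C}$; compactness then follows from the Banach--Alaoglu theorem, since every element of $\ceoF(\C,\D)$ has norm one. So let $(\phi_\lambda)$ be a net in $\ceoF(\C,\D)$ converging weak-$*$ to some $\phi\in\dual{\C}$, and assume $\phi\neq 0$; the goal is to show $\phi\in\ceoF(\C,\D)$.

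First I would pin down the source and range in $\hat{\D}$. Fix $x_0\in\C$ with $\phi(x_0)\neq 0$. For each $d\in\D$, the eigenfunctional relation $\phi_\lambda(dx_0)=r(\phi_\lambda)(d)\phi_\lambda(x_0)$ together with $\phi_\lambda(x_0)\to \phi(x_0)\neq 0$ yields $r(\phi_\lambda)(d)\to \phi(dx_0)/\phi(x_0)$. Since each $r(\phi_\lambda)$ is multiplicative and unital, so is the limit $\rho:=\lim r(\phi_\lambda)\in\hat{\D}$; the analogous argument using $\phi_\lambda(x_0d)$ produces $\sigma:=\lim s(\phi_\lambda)\in\hat{\D}$. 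Passing to the limit in $\phi_\lambda(d_1xd_2)=r(\phi_\lambda)(d_1)\phi_\lambda(x)s(\phi_\lambda)(d_2)$ shows $\phi$ is an eigenfunctional with $r(\phi)=\rho$ and $s(\phi)=\sigma$. For compatibility, fix $v\in\N(\C,\D)$ with $\phi(v)\neq 0$; then $\phi_\lambda(v)\neq 0$ eventually, so eventually $|\phi_\lambda(v)|^2=s(\phi_\lambda)(v^*v)$, and passing to the limit yields $|\phi(v)|^2=\sigma(v^*v)=s(\phi)(v^*v)$, so $\phi$ is a compatible eigenfunctional.

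Next I would identify $\fs(\phi)$ and verify $\fs(\phi)\in F$. Using the same $v$ and Theorem~\ref{ceffromthm}, for every $x\in\C$ and every sufficiently large $\lambda$,
\[
\fs(\phi_\lambda)(x)=\frac{\phi_\lambda(vx)}{\phi_\lambda(v)}\longrightarrow \frac{\phi(vx)}{\phi(v)}=\fs(\phi)(x).
\]
Thus $\fs(\phi_\lambda)\to \fs(\phi)$ weak-$*$; because $F$ is weak-$*$ closed, $\fs(\phi)\in F$. The analogous computation using $\phi_\lambda(xv)$ shows $\fr(\phi_\lambda)\to \fr(\phi)$. Incidentally, this is exactly what is required for the continuity of $\fs$ and $\fr$ on $\ceoF(\C,\D)$, completing the second assertion of the proposition.

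It remains to show $\norm{\phi}=1$, which I expect to be the only genuinely delicate step. Since the weak-$*$ limit of unit-norm functionals satisfies $\norm{\phi}\leq 1$, I only need the reverse inequality. With $v$ as above, $|\phi(v)|^2=s(\phi)(v^*v)>0$. Mimicking the norming argument in the proof of Proposition~\ref{cefprop}, for any $d\in\D$ with $s(\phi)(d)=1$ the eigenfunctional relation gives $\phi(vd)=\phi(v)$, so $\norm{\phi}\geq |\phi(v)|/\norm{vd}$. Choosing $d\geq 0$ with $s(\phi)(d)=1$ and support concentrated near $s(\phi)\in\hat{\D}$ makes $\norm{vd}^2=\norm{d^*v^*vd}$ arbitrarily close to $s(\phi)(v^*v)=|\phi(v)|^2$, and hence $\norm{\phi}\geq 1$. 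Thus $\phi\in\ceoF(\C,\D)$ and the closedness --- hence compactness --- is established.
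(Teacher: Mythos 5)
Your proof is correct and follows exactly the route the paper intends: the paper gives no argument here, deferring to \cite[Proposition~2.3]{DonsigPittsCoSyBoIs}, whose proof is precisely your ``weak-$*$ closed in the unit ball plus Banach--Alaoglu'' scheme, with the continuity of $\fs$ and $\fr$ falling out of the formula $\fs(\phi)(x)=\phi(vx)/\phi(v)$ as you note. The points genuinely specific to this setting --- persistence of the compatibility condition under weak-$*$ limits, membership of $\fs(\phi)$ in the closed set $F$, and recovery of $\norm{\phi}=1$ via the norming trick from Proposition~\ref{cefprop} --- are all handled correctly.
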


\begin{theorem}\label{cefgroupoid} Let $\ceoF(\C,\D)$ and $\fRF(\C,\D)$ be as above. 
  Then $\ceoF(\C,\D)$ and $\fRF(\C,\D)$ are locally compact Hausdorff
  topological groupoids and $\fRF(\C,\D)$ is an \'{e}tale groupoid.
  Their unit spaces are $\ceoF(\C,\D)^{(\circ)}
  =\fRF(\C,\D)^{(\circ)}=F$.  Moreover, $\ceoF(\C,\D)$ is a locally
  trivial topological twist over $\fRF(\C,\D)$.
\end{theorem}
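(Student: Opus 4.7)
The plan is to verify, in order, (i) the algebraic groupoid axioms, (ii) the topological properties and continuity of the groupoid operations, (iii) \'{e}taleness of $\fRF(\C,\D)$, and (iv) local triviality of the $\bbT$-bundle. The key algebraic formulae $[v,f]=[w,f]$ iff $f(v^*w)>0$, and $[v_1,g_1]\cdot[v_2,g_2]=[v_1v_2,g_2]$, together with Proposition~\ref{loccmpt}, collapse most of the work.

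First I would identify the unit space. For each $f\in F$, the identity normalizer gives $[I,f](x)=f(x)/f(I)^{1/2}=f(x)$, so $F$ embeds into $\ceoF(\C,\D)$ via $f\mapsto [I,f]$, and under this embedding $\fs(f)=\fr(f)=f$; conversely, writing any $\phi\in\ceoF(\C,\D)$ with $\fs(\phi)=\fr(\phi)=\phi$ in the form $[v,f]$ with $\phi(v)>0$ and using Lemma~\ref{cefform}(ii) forces $\phi=[I,f]=f$. Associativity of composition reduces to $[v_1v_2v_3,f]=[v_1(v_2v_3),f]=[(v_1v_2)v_3,f]$, and the identity/inverse axioms follow by calculation in the formula $[v,f]^{-1}=[v^*,\fr([v,f])]$. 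The same identifications carry over to $\fRF(\C,\D)$, which is obtained from $\ceoF(\C,\D)$ by quotienting by the free $\bbT$-action, and whose unit space is again $F$.

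For the topology, Hausdorffness is inherited from the weak-$*$ topology of $\dual{\C}$, and local compactness follows because $\ceoF(\C,\D)$ is an open subset (complement of $\{0\}$) of the weak-$*$ compact set $\ceoF(\C,\D)\cup\{0\}$ of Proposition~\ref{loccmpt}; local compactness of $\fRF(\C,\D)$ then descends through the quotient by the compact group $\bbT$. Continuity of $\fs$ and $\fr$ is Proposition~\ref{loccmpt}, and continuity of inversion is immediate from $\phi^{-1}(x)=\overline{\phi(x^*)}$. The delicate step is continuity of multiplication: given a composable pair $(\phi_0,\psi_0)=([v,g_0],[w,f_0])$, I would choose $v,w\in\N(\C,\D)$ and observe that in a weak-$*$ neighborhood of $(\phi_0,\psi_0)$ (within the set of composable pairs) we have $\phi(v)\neq 0$ and $\psi(w)\neq 0$; then setting $z(\phi):=\phi(v)/|\phi(v)|$ and $w(\psi):=\psi(w)/|\psi(w)|$ we have $\phi=z(\phi)\,[v,\fs(\phi)]$, $\psi=w(\psi)\,[w,\fs(\psi)]$, and therefore $\phi\psi=z(\phi)\,w(\psi)\,[vw,\fs(\psi)]$. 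Each of the factors on the right depends weak-$*$ continuously on $(\phi,\psi)$ (the scalars via point evaluation at $v$ and $w$, and $[vw,\fs(\psi)](x)=\fs(\psi)((vw)^*x)/\fs(\psi)((vw)^*vw)^{1/2}$ via continuity of $\fs$), which yields weak-$*$ continuity of the product.

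For the \'{e}tale structure, I would use the same local parametrization. Fix $|\phi_0|\in\fRF(\C,\D)$ and write $\phi_0=[v,f_0]$ with $f_0(v^*v)>0$; then $U:=\{g\in F:g(v^*v)>0\}$ is an open neighborhood of $f_0$, and $g\mapsto |[v,g]|$ is a continuous bijection of $U$ onto an open neighborhood $V$ of $|\phi_0|$ with $\fs$ as its inverse, hence a homeomorphism. This makes $\fs$ (and symmetrically $\fr$) a local homeomorphism, so $\fRF(\C,\D)$ is \'{e}tale. Finally, local triviality of the twist follows by defining, on the same $V$, the map $V\times\bbT\to \gamma^{-1}(V)$ by $(|[v,g]|,z)\mapsto z\,[v,g]$; it is continuous, and its inverse $\psi\mapsto(|\psi|,\psi(v)/|\psi(v)|)$ is also continuous, giving a trivialization over $V$. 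The main obstacle I anticipate is the weak-$*$ continuity of composition, since it mixes algebraic multiplication in $\C$ with extraction of phase and source data; the reduction to the fixed local representatives $v,w$ is what makes this step tractable.
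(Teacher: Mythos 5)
Your proposal is correct and follows essentially the same route as the paper's proof: continuity of multiplication via local representatives $v,w$ with the phase factors $\phi(v)/|\phi(v)|$ (the paper phrases this with nets and scalars $\xi_\lambda,\eta_\lambda\to 1$), \'{e}taleness and local triviality via the local sections $f\mapsto[v,f]$ over $\{g\in F: g(v^*v)>0\}$, and local compactness drawn from Proposition~\ref{loccmpt}. The only cosmetic differences are that you verify the source map is a local homeomorphism where the paper uses the range map, and you obtain local compactness of $\fRF(\C,\D)$ by quotienting rather than by exhibiting the relatively compact sets $\{\alpha:\alpha(v)>|\phi(v)|/2\}$ directly.
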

\begin{proof}

That inversion on $\ceoF(\C,\D)$ is continuous follows readily from the
definition of inverse map and the weak-$*$ topology.  Suppose
$(\phi_\lambda)_{\lambda\in\Lambda}$ and
$(\psi_\lambda)_{\lambda\in\Lambda}$ are nets in $\ceoF(\C,\D)$ 
converging to $\phi, \psi\in \ceoF(\C,\D)$ respectively, and such that
$(\phi_\lambda,\psi_\lambda)\in \ceoF(\C,\D)^{(2)}$ for all $\lambda$. Since the
 source and range maps are continuous, we find that
 $\fs(\phi)=\lim_\lambda
 \fs(\phi_\lambda)=\lim_\lambda\fr(\psi_\lambda)=\fr(\psi)$, so
 $(\phi,\psi)\in\ceoF(\C,\D)^{(2)}.$  Let
$v,w\in\N(\C,\D)$ be such that $\phi(v)>0$ and $\psi(w)>0$.  There exists
 $\lambda_0$, so that $\lambda\geq \lambda_0$ implies 
 $\phi_\lambda(v)$ and $\psi_\lambda(w)$ are non-zero.  For each
 $\lambda\geq \lambda_0$, there exists scalars
 $\xi_\lambda,\eta_\lambda\in\bbT$ such that
 $\phi_\lambda(v)=\xi_\lambda[v,\fs(\phi_\lambda)]$ and
 $\psi_\lambda=\eta_\lambda[v,\fs(\psi_\lambda)]$.  Since
 $$\lim_\lambda\phi_\lambda(v)=\phi(v)=\lim_\lambda
 [v,\fs(\phi_\lambda)](v)
\dstext{and} 
 \lim_\lambda\psi_\lambda(v)=\psi(v)=\lim_\lambda [v,\fs(\psi_\lambda)](v),$$ we
 conclude that $\lim\eta_\lambda=1=\lim\xi_\lambda$.  So for any $x\in\C$,
\begin{align*}
(\phi\psi)(x)&=\frac{\fs(\psi)((vw)^*x)}{(\fs(\psi)((vw)^*(vw)))^{1/2}}=
\lim_\lambda
\frac{\fs(\psi_\lambda)((vw)^*x)}{(\fs(\psi_\lambda)((vw)^*(vw)))^{1/2}}
=\lim_\lambda [v,\fs(\phi_\lambda)][w,\fs(\psi_\lambda)]\\
&=\lim_\lambda(\phi_\lambda\psi_\lambda)(x),
\end{align*} giving  continuity of multiplication.
Notice that for $\phi\in\ceoF(\C,\D)$, $\fs(\phi)=\phi^{-1}\phi$ and
  $\fr(\phi)=\phi\phi^{-1}$, and $F\subseteq \ceoF(\C,\D)$.
Thus, $\ceoF(\C,\D)$ is a
locally compact Hausdorff topological groupoid with unit space
$F$.

The definitions show that $\fRF(\C,\D)$ is a groupoid.  By
construction, the map $q$ defined by $\phi\mapsto |\phi|$ is continuous and is a
surjective groupoid homomorphism.  The topology on $\fRF(\C,\D)$ is
clearly Hausdorff.  If $\phi\in \ceoF(\C,\D)$, and $v\in\N(\C,\D)$ is
such that $\phi(v)\neq 0$, then $W:=\{\alpha\in\fRF(\C,\D): \alpha(v)>
|\phi(v)|/2\}$ has compact closure so $\fRF(\C,\D)$ is locally compact.
Also, if $\alpha_1, \alpha_2\in W$ and $\fr(\alpha_1)=\fr(\alpha_2)=f$,
then writing $\alpha_i=|\psi_i|$ for $\psi_i\in \ceoF(\C,\D)$, we see
that $\psi_i(v)\neq 0$, so there exist $z_1, z_2\in\bbT$ so that for
$i=1,2$ and every $x\in\C$,
$\psi_i(x)= z_if(xv^*)f(v)^{-1}$.  Hence $\alpha_1=\alpha_2$ showing
that the range map is locally injective.  We already know that the
range map is continuous, so by local compactness, the range map is a
local homeomorphism.
 
Note that convergent nets in $\fRF(\C,\D)$ can be lifted to convergent
nets in $\ceoF(\C,\D)$.  Indeed, if $q(\phi_\lambda)\rightarrow
q(\phi)$ for some net $(\phi_\lambda)$ and $\phi$ in $\ceoF(\C,\D)$,
choose $v\in\N(\C,\D)$ so that $\phi(v)>0$.  Then for large enough
$\lambda$, $\phi_\lambda(v)\neq 0$, and we have
$[\phi_\lambda,\fs(\phi_\lambda)]\rightarrow \phi$.  Also,
$|[\phi_\lambda,\fs(\phi_\lambda)] |\rightarrow |\phi|.$ The fact that
the groupoid operations on $\fRF(\C,\D)$ are continuous now follows easily from the
continuity of the groupoid operations on $\ceoF(\C,\D)$.  Thus
$\fRF(\C,\D)$ is a locally compact Hausdorff \'{e}tale groupoid.

Finally, $q(\phi_1)=q(\phi_2)$ if and only if there exist $z\in\bbT$
so that $\phi_1=z\phi_2$.  Moreover, for each $v\in\N(\C,\D)$, the map
$f\mapsto [v,f]$, where $f\in\{g\in \fS(\C,\D): g(v^*v)>0\}$ is a
continuous section for $q$.  Also, the action of $\bbT$ on
$\ceoF(\C,\D)$. given above makes $\ceoF(\C,\D)$ into a $\bbT$-groupoid.
So $\ceoF(\C,\D)$ is a twist over $\fRF(\C,\D).$

\end{proof}

\begin{remark}{Notation}
We now let 
$$\Sigma=\ceoF(\C,\D)\text{ and } G=\fRF(\C,\D), \dstext{so that}\unit{G}=F.$$  For $a\in\C$, define
$\hat{a}:\ceoF(\C,\D)\rightarrow \bbC$ to be the `Gelfand' map:
for $\phi\in \ceoF(\C,\D)$, $\hat{a}(\phi)=\phi(a)$. 
Then $\hat{a}$ is a continuous equivariant
function on $\ceoF(\C,\D)$.
\end{remark}

 Note that if $w\in\N(\C,\D)$, then
$\hat{w}$ is compactly supported.  Indeed, for
$\phi=[v,f]\in\ceoF(\C,\D)$, $\phi\in\supp(\hat{w})$ if and only if
$[v,f](w)\neq 0$, which occurs exactly when $f(v^*w)\neq 0$.
Proposition~\ref{Dextreme} shows this occurs precisely when
$f(w^*w)\neq 0$.  Hence,
$$\supp{\hat{w}}=\{\phi\in\ceoF(\C,\D): \fs(\phi)(w^*w)\neq 0\},$$ and
it follows that $\hat{w}$ has compact support.  Moreover, $\hat{w}$ is
supported on a slice, so that we find $\hat{w}\in \N(C^*(\Sigma,G),C(G^{(\circ)})).$

Before stating the main result of this section, recall that
Proposition~\ref{invideal} shows that
$\K_F=\{x\in\C:
f(x^*x)=0\text{ for all } f\in F\}$ is an ideal of $\C$ whose
intersection with $\D$ is  trivial.  
\begin{theorem}\label{inctoid}  Let $(\C,\D)$ be a regular inclusion,
  and let $G:=\fRF(\C,\D)$ and $\Sigma:=\ceoF(\C,\D)$.  
The map sending $w\in \N(\C,\D)$  to $\hat{w}\in C^*(\Sigma,G)$ extends
uniquely to
a regular $*$-homomorphism $\theta:(\C,\D)\rightarrow
(C^*(\Sigma,G), C(G^{(\circ)}))$ with $\ker\theta= \K_F$.
Furthermore,  $\theta(\C)$ is dense in $C^*(\Sigma,G)$ in the
$\unit{G}$-compatible topology.  
\end{theorem}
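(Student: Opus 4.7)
The plan is to define $\theta$ on the dense $*$-subalgebra $\spn\,\N(\C,\D)$ via the pointwise formula $\theta(a)(\phi):=\phi(a)$, verify the $*$-algebra homomorphism properties on normalizers, establish a norm bound via the $F$-indexed family of GNS representations, and then extend by continuity. The kernel and density statements will follow from the description of $\H_f$ supplied by Proposition~\ref{compatpure}.

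First I would check that $\hat w\in C_c(\Sigma,G)$ is supported on a slice for every $w\in\N(\C,\D)$, which is already observed in the remarks preceding the theorem.  The $*$-algebra homomorphism property reduces on generators to the identity $\widehat{vw}=\hat v\star\hat w$.  For $\sigma\in\Sigma$, both sides vanish simultaneously iff $\fs(\sigma)(w^*v^*vw)=0$ by Proposition~\ref{Dextreme}; when nonzero, $\sigma$ admits a unique composable factorization $\sigma=\phi_1\phi_2$ with $\phi_2=[w,\fs(\sigma)]$ and $\phi_1=[v,\tilde\beta_w(\fs(\sigma))]$, so the convolution sum collapses to the single term $\phi_1(v)\phi_2(w)$, which equals $\sigma(vw)$ by Proposition~\ref{cefprop}(iii).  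Compatibility with the involution is immediate from the definition $\phi^{-1}(x)=\overline{\phi(x^*)}$.

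For the norm bound, for each $f\in F$ I would identify the GNS representation $(\pi_f,\H_f)$ of $\C$ with the representation $\pi_f$ of $C^*(\Sigma,G)$ attached to the evaluation functional $\eval_f$, using the correspondence between the orthonormal basis $\{v+L_f:v\in T_f\}$ of $\H_f$ from Proposition~\ref{compatpure} and the fiber $s^{-1}(f)\subseteq G$, modulo the $\bbT$-action on $\Sigma$.  Once the intertwining is established on normalizers, it follows that $\|\pi_f(\theta(a))\|_{\B(\H_f)}=\|\pi_f(a)\|_{\B(\H_f)}\le\|a\|$, hence $\|\theta(a)\|_{C^*(\Sigma,G)}\le\|a\|$.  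Extending by continuity gives the $*$-homomorphism $\theta$, which is regular because each $\hat w$ is supported on a slice.  For the kernel equality, if $a\in\K_F$ then Cauchy-Schwarz gives $|[v,f](a)|^2\le f(a^*a)=0$ for every $[v,f]\in\Sigma$, so $a\in\ker\theta$; conversely, if $\theta(a)=0$ then for every $f\in F$ and $v\in\N(\C,\D)$ with $f(v^*v)\neq 0$ one has $\innerprod{a+L_f,v+L_f}_{\H_f}=0$, so Lemma~\ref{precompatProp} forces $a+L_f=0$ and therefore $f(a^*a)=0$.

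Density in the $\unit{G}$-compatible topology reduces to showing that for any distinct $x_1,\dots,x_n\in G$, any lifts $\sigma_1,\dots,\sigma_n\in\Sigma$ with $|\sigma_i|=x_i$ give linearly independent functionals on $\C$.  Standard spectral arguments allow us to group the $\sigma_i$ by common source $f\in F$, since across different sources the functionals separate on $\D$.  Within a single source-fiber, each $\sigma_i$ corresponds via Lemma~\ref{cefform}(ii) to a distinct orthonormal basis vector $(w_i+L_f)/\|w_i+L_f\|\in\H_f$, and linear independence of these basis vectors immediately transfers to linear independence of the functionals $x\mapsto \innerprod{\pi_f(x)(I+L_f),(w_i+L_f)/\|w_i+L_f\|}$.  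The main obstacle I anticipate is the identification of the $\C$-side and twist-side GNS representations in the boundedness step: although the two Hilbert spaces have natural orthonormal bases whose indexing sets biject, verifying that the left actions of $\N(\C,\D)$ intertwine on the nose rather than merely up to phases will require careful bookkeeping with the convolution product against $\bbT$-equivariant functions.
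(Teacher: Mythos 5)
Your proposal follows essentially the same route as the paper: both define the map on $\spn\N(\C,\D)$, identify the GNS space $\H_{\C,f}$ with the twist-side GNS space $\H_{\Sigma,f}$ for each $f\in F$ via the orthonormal basis $\{v+L_f: v\in T\}$ of Proposition~\ref{compatpure}, deduce the norm identity and hence the kernel, and prove density by showing that finite linear combinations of point evaluations cannot annihilate $\theta(\C_0)$. The intertwining issue you flag at the end is resolved in the paper exactly as you anticipate, by the explicit computation $(\hat a\star\hat v)(\phi)=\widehat{av}(\phi)$ on the fiber over $f$, which shows the left actions agree on the nose and not merely up to phases.
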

\begin{remark*}{Remark}  In general, $\theta(\C)$ and $\theta(\D)$ may
  be proper subsets of $C^*(\Sigma,G)$
  and $C(G^{(\circ)})$ respectively.
\end{remark*}

 \begin{proof}  The point is that the norms on
   $\C/\K_F$ and $C^*(\Sigma,G)$ both arise from the left regular
   representation on appropriate spaces.  Here are the details.
 
We have already observed that the map $w\mapsto \hat{w}$ sends
normalizers to normalizers.  Let $\C_0=\spn{\N(\C,\D)}.$  Then for any
$a\in\C_0$, $\hat{a}\in C_c(\Sigma,G)$.  

Let $f\in F=G^{(\circ)}.$ Then $f$ can be regarded as either a state
on $\C$ or as determining a state on $C^*(\Sigma, G)$ via evaluation
at $f$.  We write $f_\C$ when viewing $f$ as a state on $\C$, and
$f_\Sigma$ when viewing $f$ as a state on $C^*(\Sigma, G)$.

Let $(\pi_{\C,f},\H_{\C,f})$ be the GNS representation of $\C$ arising
from $f_\C$, and let $(\pi_{\Sigma,f},\H_{\Sigma,f})$ be the GNS
representation of $C^*(\Sigma,G)$ determined by $f_\Sigma$.   (Writing
$x=f$, the restriction of $\pi_{\Sigma,f}$ to $C_c(\Sigma,G)$ is the
representation $\pi_x$ discussed above when defining the norm on
$\C_c(\Sigma,G)$.)

For typographical reasons, when the particular $f$ is understood, we
will drop the extra $f$ in the notation: 
write $\pi_{\C}$ or $\pi_\Sigma$ instead of $\pi_{\C,f}$  or
$\pi_{\Sigma,f}$.

Now fix $f\in \unit{G}$.  For $a\in \C_0$, we claim that
\begin{equation}\label{un}
\norm{\hat{a}+\N_f}_{\H_{\Sigma}}=\norm{a+L_f}_{\H_{\C}}.
\end{equation}
Let $T\subseteq \Lambda_f$ be chosen so that
$f(w^*w)=1$ for every $w\in T$ and so that $T$ contains exactly one
element from each $\sim_f$ equivalence class.  
Proposition~\ref{cefform} shows that when 
$w_1,w_2\in\Lambda_f$, we have $|[w_1,f]|=|[w_2,f]|$ if and only if
$w_1\sim_f w_2.$   
Proposition~\ref{compatpure} shows that $\{w+L_f:
w\in T\}$ is an orthonormal basis for $\H_{\C}$.
 Writing $\phi=[w,f]$,
we have,
\begin{align*}
\norm{\hat{a}+\N_f}_{\H_{\Sigma}}^2&=\sum_{\substack{|\phi|\in G\\ \fs(|\phi|)=f}}
|\hat{a}(|\phi|)|^2= \sum_{w\in T} |[w,f](a)|^2= \sum_{w\in T}
|f(w^*a)|^2\\ 
&=\sum_{w\in T} |\innerprod{a+L_f,w+L_f}|^2
=\norm{a+L_f}^2_{\H_{\C}}.
\end{align*}
It follows that the map $a+L_f\mapsto \hat{a}+\N_f$ extends to an
isometry $W_f:\H_{\C}\rightarrow\H_{\Sigma}$.

To see that $W_f$ is a unitary operator, fix $\xi\in C_c(\Sigma,G)$. 
Since
$\xi$ is compactly supported, the set $$S_\xi:=\{ |\phi|\in G:
\fs(|\phi|)=f \text{ and } \xi(|\phi|)\neq 0\}$$ is a finite set.  
Let $|\phi_1|,\dots, |\phi_n|$ be the elements of $S_\xi$. 
For $1\leq j\leq n$, we may find $v_j\in
\N(\C,\D)$ such that $|\phi_j|=|[v_j,f]|$.  By
Proposition~\ref{cefform}, we may assume each $v_j$ belongs to the set
$T$. 
Let $z_j=\xi(\phi_j)$ and set 
$a=\sum_{j=1}^n z_jv_j$.   Clearly $a\in\C_0$.   Using the
fact that $f(w^*w)=1$ for each $w\in T$ and the fact that
$f(w_1^*w_2)=0$ for distinct elements $w_1, w_2\in T$, we find that
for 
$1\leq k\leq n$,
$$[v_k,f](a)=z_k.$$ 
Then 
\begin{align*}
\norm{(\hat{a}-\xi)+\N_f}^2_{\H_\Sigma}
&=f_\Sigma\left((\hat{a}-\xi)^*\star (\hat{a}-\xi)\right) =\sum_{j=1}^n
\left|(\hat{a}-\xi)(|\phi_j|)\right|^2\\ 
&=\sum_{j=1}^n
\overline{(\hat{a}(\phi_j) - \xi(\phi_j))} (\hat{a}(\phi_j) -
\xi(\phi_j))  
=\sum_{j=1}^n
\left| ([v_j,f](a) -
\xi(\phi_j))\right|^2 \\
&=\sum_{j=1}^n (z_j-\xi(\phi_j))^2 =0.
\end{align*}
Therefore, $\{\hat{a}+\N_f: a\in \C_0\}=\{\xi +\N_f: \xi\in
C_c(\Sigma,G)\}$.  As this set is dense in $\H_\Sigma$, 
$W_f$ is a unitary operator.

Next, we show that for $a\in \C_0$, we have
\begin{equation}\label{rightnorm}
\pi_{\Sigma}(\hat{a})W_f=W_f\pi_{\C}(a).\end{equation}
To do this, it suffices to show that for each $v\in T$, 
$$\pi_{\Sigma}(\hat{a})W_f(v+L_f)=W_f\pi_{\C}(a)(v+L_f).$$
Letting $\phi=[w_1,f]$ and $y=[w_2,f]$ we find $\phi
y^{-1}=[w_1w_2,\beta_{w_2}(f)]$, and a computation yields,
$$\hat{a}(\phi y^{-1})\hat{v}(y)=\frac{f(w_1^*aw_2)
  f(w_2^*v)}{f(w_1w_1)^{1/2} f(w_2^*w_2)}=\begin{cases}0&\text{if
    $w_2\not\sim_f v$}\\ \ds \frac{f(w_1^*av)}{f(w_1^*w_1)^{1/2}} &\text{if
    $w_2\sim_f v$}\end{cases}  
=\begin{cases}0&\text{if
    $w_2\not\sim_f v$}\\ \widehat{av}(\phi)&\text{if
    $w_2\sim_f v.$}\end{cases}$$
Therefore, 
when $\fs(\phi)=f$, we have
$$(\hat{a}\star\hat{v})(\phi)=\sum_{\substack{|y|\in G\\
    \fs(|y|)=f}}(\hat{a}\circledast\hat{v})(\phi,|y|) 
=\widehat{av}(\phi).$$   So for $|y|\in G$ with $\fs(|y|)=f$,
$|(\hat{a}\star\hat{v}-\widehat{av})(|y|)| =0$.
Then,
$$\hat{a}\star\hat{v}+\N_f=\widehat{av}+\N_f$$ because
\begin{align*}
\norm{((\hat{a}\star\hat{v})-\widehat{av})+\N_f}^2_{\H_\sigma} &= 
\sum_{\substack{|y|\in G\\ s(|y|)=f}}
|(\hat{a}\star\hat{v}-\widehat{av})(|y|)|^2 
=0.
\end{align*}

Hence, 
$$\pi_{\Sigma}(\hat{a})W_f(v+L_f)=\pi_{\Sigma}(\hat{a})(\hat{v}+\N_f)=
\widehat{av}+\N_f 
=W_f(\pi_{\C}(a))(v+L_f),$$ which gives \eqref{rightnorm}.

The definition of the norm on $C^*(\Sigma,G)$ and~\eqref{rightnorm}
imply that for $a,b\in \C_0$,
$\norm{\widehat{ab}-\hat{a}\hat{b}}_{C^*(\Sigma,G)} =0$.  Therefore,
the map $a\in \C_0\mapsto \hat{a}$ is multiplicative and 
$$\norm{\hat{a}}_{C^*(\Sigma,G)}=\sup_{f\in
  \fS(\C,\D)}\norm{\pi_{\C,f}(a)}.$$   The existence of $\theta$ now
follows from continuity, and the fact that $\ker\theta=\K_F$
follows from Proposition~\ref{describerad}.  For $v\in\N(\C,\D)$,
$\theta(v)=\hat{v}$ belongs to $\N(C^*(\Sigma,G), C(\unit{G}))$, so
$\theta$ is a regular $*$-homomorphism.  

Finally, we turn to showing the $\unit{G}$-compatible  
density of $\theta(\C)$ in $C^*(\Sigma,G)$.  
Let $\M\subseteq\dual{C^*(\Sigma,G)}$ 
be the linear span of the evaluation functionals $\xi\mapsto
\xi(\sigma)$ where $\xi\in C^*(\Sigma,G)$ and $\sigma\in \Sigma$.  
Suppose $\mu\in \M$ annihilates $\theta(\C)$.  Then there exists
$n\in\bbN$, scalars $\lambda_1,\dots, \lambda_n$, elements $v_1,\dots,
v_n\in\N(\C,\D)$ and $f_1,\dots, f_n\in F$ such that  for any $\xi\in
C^*(\Sigma,G)$, 
$$\mu(\xi)=\sum_{k=1}^n \lambda_k\xi([v_k,f_k]).$$   Without loss of
generality we may assume that $[v_i,f_i]\neq [v_j,f_j]$ if $i\neq j$.
 Since $\mu$ annilhlates
$\theta(\C)$, for every $a\in \C_0$, 
$$0=\mu(\hat{a})=\sum_{k=1}^n \lambda_k[v_k,f_k](a).$$  Fix $1\leq
j\leq n$, and let $d,e\in \D$ be such that
$\beta_{v_j}(f_j)(d)=f_j(e)=1$.  For  $i\neq j$, since $[v_i,f_i]\neq
[v_j,f_j]$, either $f_j\neq f_i$ or $\beta_{v_i}(f_i)\neq
\beta_{v_j}(f_j)$.  Hence we assume that  $d$ and $e$ have been chosen
so
that if $i\neq j$, then $[v_i,f_i](dv_je)=0$.  Then 
$$\mu(\hat{v_j})=\lambda_jf_j(v_j^*v_j)^{1/2}=0.$$  As
$f_j(v_j^*v_j)\neq 0$, we obtain $\lambda_j=0$.  It follows that
$\mu=0$.  Since the dual of $C^*(\Sigma,G)$ equipped with the
$\unit{G}$-compatible topology is $\M$, we conclude that $\theta(\C)$
is dense in the $\unit{G}$-compatible topology on $C^*(\Sigma,G).$      
This completes the proof.

\end{proof}

\section{Applications}\label{appl}
In this section we give some applications which apply to regular MASA
inclusions with $\L(\C,\D)=(0)$.  
Theorem~\ref{embedcrossedprod} gives a very large class of such
inclusions.

Here is an application of our work to norming algebras.
We begin with a definition.  Recall that  $\N(\C,\D)$ is a closed 
$*$-semigroup containing $\D$.

\begin{definition}
A $*$-subsemigroup $\F\subseteq \N(\C,\D)$ with $\D\subseteq \F$
is \textit{countably generated over $\D$} if there exists a countable
set $F\subseteq \F$ so that the smallest $*$-subsemigroup of
$\N(\C,\D)$ containing
$F\cup \D$ is $\F$.    The set $F$ will be called a
\textit{generating set} for $\F$.  

We will say that the inclusion $(\C,\D)$ is \textit{countably regular}
if  there exists a $*$-subsemigroup $\F\subseteq \N(\C,\D)$ such that
$\F$ is countably 
generated over $\D$ and $\C=\overline{\spn}(\F)$.
\end{definition}

The following result
generalizes~\cite[Lemma~2.15]{PittsNoAlAuCoBoIsOpAl} 
and gives a large
class of norming algebras.   In particular, notice that the result
holds for Cartan inclusions.

\begin{theorem}\label{norming}  Suppose $(\C,\D)$ is a regular MASA
  inclusion such that  $\L(\C,\D)=(0).$ 
Then $\D$ norms $\C$.
\end{theorem}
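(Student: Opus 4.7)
The plan is to adapt the argument of~\cite[Lemma~2.15]{PittsNoAlAuCoBoIsOpAl}, which handled the \cstardiag\ case, by replacing the extension property with the density of unique-extension states supplied by Theorem~\ref{denseuep}. As a preliminary reduction, fix $n \in \bbN$ and a matrix $x \in M_n(\C)$, and choose, by a standard separable elevation, a separable regular MASA sub-inclusion $(\C_0, \D_0)$ with $x_{ij} \in \C_0$ for all $i,j$, with $\D_0 \subseteq \D$, and with $\C_0$ countably regular over $\D_0$. Applying Corollary~\ref{LRegSub} to the inclusion $(\C_0, \D_0) \hookrightarrow (\C, \D)$ gives $\L(\C_0, \D_0) \cap \D_0 = (0)$, which is all that is actually needed; in particular Theorem~\ref{denseuep} applies to $(\C_0, \D_0)$.

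Next, I would invoke Theorem~\ref{denseuep} on $(\C_0, \D_0)$ to produce a dense set $\unistex \subseteq \hat{\D}_0$ of pure states of $\D_0$ with unique state extension to $\C_0$. Given $\epsilon > 0$, a standard pure-state approximation yields a pure state $\tau$ of $M_n(\C_0)$ with $\tau(x^* x) > \|x\|^2 - \epsilon$ whose restriction to $M_n(\D_0)$ is a vector state over some $\sigma \in \unistex$; explicitly, $\tau|_{M_n(\D_0)}(d) = \sum_{i,j} \overline{\alpha_i}\, \alpha_j\, \sigma(d_{ij})$ for a unit vector $(\alpha_1,\dots,\alpha_n) \in \bbC^n$. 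The unique-extension property of $\sigma$ lifts, by a GNS argument for $M_n$, to force $\tau$ to be the only state on $M_n(\C_0)$ restricting to this vector state on $M_n(\D_0)$.

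Finally, I would exploit this rigidity to extract the required row and column. Choosing a net of positive contractions $d_\lambda \in \D_0$ with $\sigma(d_\lambda) = 1$ and with supports shrinking to $\{\sigma\}$, I would set $v_\lambda := (\alpha_1 d_\lambda^{1/2}, \dots, \alpha_n d_\lambda^{1/2})$ and $w_\lambda := (\overline{\alpha_1}\, d_\lambda^{1/2}, \dots, \overline{\alpha_n}\, d_\lambda^{1/2})$, each a row in $\D_0^n$ with $\| \sum_i v_{\lambda,i}^* v_{\lambda,i} \| \leq 1$ and similarly for $w_\lambda$. The rigidity then forces $\| \sum_{i,j} v_{\lambda,i}\, x_{ij}\, w_{\lambda,j} \|$ to converge to $\tau(x^* x)^{1/2}$, witnessing the Pop-Sinclair-Smith norming inequality at level $n$ for $(\C, \D)$, since $\D_0 \subseteq \D$.

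The principal obstacle is the rigidity step: justifying that dense uniqueness in $\unistex$ (rather than uniqueness at every point of $\hat{\D}_0$) suffices to force $\| v_\lambda\, x\, w_\lambda \| \to \tau(x^* x)^{1/2}$. In the \cstardiag\ case this is immediate from the conditional expectation, since the unique extension is simply $\sigma \circ E$. In the present setting I would instead replace this with a direct estimate via the seminorms $B_\sigma$ from Proposition~\ref{Bsigquot}, noting that the quotient $\C_\sigma / \fI_\sigma$ is faithfully represented through the unique extension of $\sigma$, and using a continuity argument as $\sigma$ is approximated by nearby points of $\unistex$.
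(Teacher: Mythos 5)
Your proposal has a decisive gap that is visible already in the reduction step: the hypothesis $\L(\C,\D)=(0)$ is never actually used. You assert that what is ``actually needed'' is $\L(\C_0,\D_0)\cap\D_0=(0)$, but that identity holds for \emph{every} regular MASA inclusion by Theorem~\ref{Eideal}, so an argument resting only on it would prove the theorem with no hypothesis on $\L(\C,\D)$ at all. Moreover, Corollary~\ref{LRegSub} points the wrong way for your purposes: applied to the inclusion map it yields $\C_0\cap\L(\C,\D)\subseteq \L(\C_0,\D_0)$, which is vacuous when $\L(\C,\D)=(0)$ and gives no upper bound on $\L(\C_0,\D_0)$; and if $\D_0\subsetneq\D$ you must separately arrange that $\D_0$ is maximal abelian in $\C_0$ (so that the cited results even apply), which is not automatic. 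In the paper's proof the hypothesis enters at exactly one essential point: after forming $\pi=\bigoplus_{\sigma}\pi_\sigma$ over representatives $\sigma$ of the unique-extension states supplied by Theorem~\ref{denseuep}, one shows $\ker\pi\subseteq\L(\C_\F,\D)\subseteq\L(\C,\D)=(0)$ (using density of $\unistex$ and Hamana regularity), so that $\pi$ is faithful. The subalgebra there is $\C_\F$ with the \emph{same} diagonal $\D$, which is what makes the comparison of left kernels work. Without faithfulness of $\pi$ one only norms $\pi(\C_\F)$, not $\C_\F$.

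The state-theoretic core is also not sound as written. A norm-attaining pure state $\tau$ of $M_n(\C_0)$ need not restrict on $M_n(\D_0)$ to a state of the product form $\sum_{i,j}\overline{\alpha_i}\,\alpha_j\,\sigma(d_{ij})$, and even granting such a form there is no reason the resulting $\sigma$ lies in the dense set $\unistex$; both steps need arguments you do not supply. More seriously, the ``rigidity'' claim that $\norm{\sum_{i,j}v_{\lambda,i}x_{ij}w_{\lambda,j}}\rightarrow\tau(x^*x)^{1/2}$ is essentially the whole content of the theorem, and the proposed repair via $B_\sigma$ and Proposition~\ref{Bsigquot} cannot deliver it: $\C_\sigma=\overline{\spn}\,H_\sigma$ is only the closed span of the normalizers stabilizing $\sigma$, so the quotient $\C_\sigma/\fI_\sigma$ does not faithfully see all of $\C_0$, let alone $M_n(\C_0)$. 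The paper reaches the conclusion by a different mechanism that you should adopt: for $\sigma\in\unistex$, Proposition~\ref{equivMASA} shows $\pi_\sigma(\D)''$ is a MASA in $\B(\H_\sigma)$; representations attached to $\sigma$'s in distinct $\N(\C,\D)$-orbits are disjoint irreducibles, so $\pi(\D)''$ is an atomic MASA, hence locally cyclic, and the Pop--Sinclair--Smith theorem then gives that $\pi(\D)$ norms $\B(\H_\pi)$, hence $\D$ norms $\C_\F$; a final countability argument passes from $\C_\F$ to $\C$.
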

\begin{proof}
Let $\F\subseteq \N(\C,\D)$ be a $*$-subsemigroup which is countably
generated over $\D$ by the (countable) set $F$.  Let $\C_\F\subseteq \C$ be the
\cstar-subalgebra  generated by $\F$.  (Notice that $\C_\F$ is simply
the closed linear span of $\F$.)   Then $(\C_\F,\D)$ is a countably regular MASA
inclusion.

We will show that $\D$ norms $\C_\F$.   
Let $$Y:=\{\sigma\in\hat{\D}:
\sigma \text{ has a unique state extention to $\C_\F$}\}.$$
Theorem~\ref{denseuep} shows that $Y$ is dense in $\hat{\D}$.  
For each element $\sigma\in Y$, let $\sigma'$  denote
the unique extension of $\sigma$ to all of $\C_\F$.  Notice that if
$\rho\in \widehat{I(\D)}$ and $\rho\circ\iota=\sigma$, then 
$\sigma'=\rho\circ E$ because $\sigma'|_\D=\sigma=\rho\circ E|_\D$.  

For $\sigma\in Y$, let $\pi_\sigma$ be the GNS representation for $\sigma'$.
Proposition~\ref{equivMASA} shows that 
$\pi_\sigma(\D)''$ is a MASA in $\B(\H_\sigma)$.   

Define an equivalence relation $R$ on $Y$ by $\sigma_1\sim\sigma_2$ if and
only if there exists $v\in\F$ such that
$\sigma_2=\beta_v(\sigma_1)$.  (Since $\F$ is a $*$-semigroup,
this is an equivalence relation.)  

We claim that if $\pi_{\sigma_1}$
is unitarily equivalent to $\pi_{\sigma_2}$, then
$\sigma_1\sim\sigma_2$.    To see this, we use a modification of the
argument 
in~\cite[Lemma~5.8]{DonsigPittsCoSyBoIs}.
Let $U\in \B(\H_{\sigma_2},\H_{\sigma_1})$ be a unitary operator such
that $$U^*\pi_{\sigma_1} U=\pi_{\sigma_2}.$$  Let $L_{\sigma_i}$ be
the left kernel of $\sigma_i'$.    Since $\pi_{\sigma_i}$ is
irreducible, 
$\C/L_{\sigma_i}=\H_{\sigma_i}$.  Hence  we may find $X\in \C$ such
that $U(I+L_{\sigma_2})=X+L_{\sigma_1}$.  Then for every $x\in \C$,
$$\sigma_2'(x)
=\innerprod{\pi_{\sigma_2}(x)(I+L_{\sigma_2}),(I+L_{\sigma_2})}
=\sigma_1'(X^*xX).$$  
Fix $\rho_i\in \widehat{I(\D)}$ such that
$\rho_i\circ\iota=\sigma_i$.  

The map $\C\ni x\mapsto \sigma_1'(X^*x)$ is a non-zero linear
bounded linear functional on $\C$.  Since $\spn(\F)$ is dense in
$\C$, there exists $v\in \F$ so that $\sigma_1'(X^*v)\neq 0.$  
The
Cauchy-Schwartz inequality for completely positive maps shows that for
any $d\in \D$, 
$$|\sigma_1'(X^*vd)|^2=\rho_1(E(X^*vd)E(d^*v^*X))\leq
\rho_1(E(X^*vdd^*v^*X))=\sigma_1'(X^*vdd^*v^*X)=\sigma_2(vdd^*v^*).$$
When $d\in\D$ and $\sigma_1(d)\neq 0$, we have
$\sigma_1'(X^*vd)=\sigma_1'(X^*v)\sigma_1(d)\neq 0$.  
 Therefore, when $d\in \D$ satisfies $\sigma_1(d)\neq 0$, 
\begin{equation}\label{cscp1}
0< \sigma_2(vdd^*v^*).
\end{equation}
In particular, $\sigma_2(vv^*)\neq 0$.   For any $d\in
\D$, we have
$$\beta_{v^*}(\sigma_2)(d)=\frac{\sigma_2(vdv^*)}{\sigma_2(vv^*)}.$$
If $\beta_{v^*}(\sigma_2)\neq \sigma_1$, then there exists $d\in \D$
with $\sigma_1(dd^*)\neq 0$ and $\beta_{v^*}(dd^*)=0$.  But this is
impossible by ~\eqref{cscp1}.  So 
$\beta_{v^*}(\sigma_2)=\sigma_1$.  Hence $\sigma_1\sim\sigma_2$ as
claimed.

Thus, if $\sigma_1\not\sim\sigma_2$, then $\pi_{\sigma_1}$ and
$\pi_{\sigma_2}$ are disjoint representations (as they are
both irreducible).

Let $\Y\subseteq Y$ be chosen so that $Y$ contains exactly one element
from each equivalence class of $Y$.   Put $$\pi=\bigoplus_{\sigma\in \Y}\pi_\sigma.$$
Then 
\begin{equation}\label{kerpi}
\ker\pi=\bigcap_{\sigma\in \Y}\ker\pi_\sigma=\bigcap_{\sigma\in Y}
\ker\pi_\sigma = \{x\in\C_\F: \sigma'(z^*x^*xz)=0 \text{ for all $\sigma\in
  Y$ and all $ z\in
\C_\F$}\}.
\end{equation}  

We next prove that 
\begin{equation}\label{Ydense}
\L(\C_\F,\D)\supseteq \ker\pi.
\end{equation}  
Suppose to obtain a contradiction, that $x\in \ker\pi$ and  that $E(x^*x)$ is a
non-zero element of $I(\D)$.  Let $$L:=\overline{\{\rho\in \widehat{I(\D)}:
\rho(E(x^*x))> \norm{E(x^*x)}/2\}}.$$   Then $L$ is a clopen set.  By
Lemma~\ref{pcover}, $\iota^*(L)=\{\sigma\in \hat{\D}: \sigma=\rho\circ\iota \text{
  for some }\rho\in L\}$ has non-empty interior.   Since $Y$ is dense
in $\hat{\D}$, we may find $\sigma\in Y$ and $\rho\in L$ such that
$\rho\circ \iota=\sigma$.  Then $\sigma'(x^*x)=\rho(E(x^*x))\neq 0,$ so
$x\notin\ker\pi_\rho$, contradicting~\eqref{kerpi}.  Hence \eqref{Ydense} holds.

Since $\L(\C,\D)\supseteq \L(\C_\F,\D)\supseteq \ker\pi$, we see that $\pi$ is a
faithful representation of $\C_\F$.

Since the representations in the definition of $\pi$ are disjoint and
each $\pi_\sigma(\D)''$ is a MASA in $\B(\H_\sigma)$, 
$\pi(\D)''$ is an atomic MASA in $\B(\H_\pi)$.  Therefore, $\pi(\D)''$ is
locally cyclic (see \cite[p.~173]{PopSinclairSmithNoC*Al}) for
$\B(\H_\pi)$.  By~\cite[Theorem~2.7 and
Lemma~2.3]{PopSinclairSmithNoC*Al} $\pi(\D)$ norms $\B(\H_\pi)$.  But
then $\pi(\D)$ norms $\pi(\C_\F)$.  Since $\pi$ is faithful, $\D$
norms $\C_\F$.

Finally, suppose that $k\in \bbN$ and that  $x=(x_{ij})\in M_n(\C)$.  
For each $n\in\bbN$ and $i,j\in \{1,\dots, k\}$, we may find a finite set
$F_{n,i,j}\subseteq \N(\C,\D)$ so that $\norm{x_{ij}-\sum_{v\in F_{n,i,j}} v}<1/n. $
Let $$F=\cup \{F_{n,i,j}: n\in \bbN, i,j\in \{1,\dots, k\}\}.$$  Then $F$ is
countable.  Let $\F$ be the closed $*$-subsemigroup of $\N(\C,\D)$ 
generated by $F$ and $\D$.  Then for $i,j\in \{1,\dots, k\}$,
$x_{ij}\in \C_\F$.  
Since $\D$ norms
$\C_\F$, we conclude that 
$$\norm{x}_{M_k(\C)}=\norm{x}_{M_k(\C_\F)}=\sup\{\norm{RxC}: R\in M_{1,n}(\D), C\in
M_{n,1}(\D), \norm{R}\leq 1, \norm{C}\leq 1\}.$$ 
Hence $\D$ norms $\C$.

\end{proof}

For any norm closed subalgebra $\A$ of the \cstaralg\ $\C$, 
let $C^*(\A)$ be the \cstar-subalgebra of $\C$ generated by $\A$, and
let $C^*_e(\A)$ be the \cstar-envelope of $\A$. (There are a number of
references which discuss \cstar-envelopes; see  
\cite{BlecherLeMerdyOpAlThMo,EffrosRuanOpSp,PaulsenCoBoMaOpAl}.) 

The following result is a significant
generalization of~\cite[Theorem~4.21]{DonsigPittsCoSyBoIs}.
Theorem~\ref{Vrej} was 
observed by Vrej Zarikian, who has kindly consented to its inclusion
here.

\begin{theorem}\label{Vrej}  Let $\C$ and $\D$ be 
  \cstaralg s, with $\D\subseteq \C$ ($\D$ is not assumed
  abelian). Let $(I(\D),\iota)$ be an injective envelope for $\D$.
Suppose there exists a unique unital completely
  positive map $\Phi:\C\rightarrow I(\D)$ such that $\Phi|_\D=\iota$,
  and 
 assume also that  $\Phi$ is
  faithful.  Let $\A$ be a norm-closed (not necessarily self-adjoint)
  subalgebra of $\C$ such that $\D\subseteq \A\subseteq \C$.  Then the
  \cstar-subalgebra of $\C$ generated by $\A$ is the \cstar-envelope
  of $\A$.
\end{theorem}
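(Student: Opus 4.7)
The plan is to show that the canonical quotient map $q:C^*(\A)\to C^*_e(\A)$ is injective, from which it follows that $C^*(\A)=C^*_e(\A)$. Recall that $C^*_e(\A)=C^*(\A)/J$, where $J$ is the Shilov boundary ideal of $\A$ in $C^*(\A)$, and $q$ is a $*$-homomorphism whose restriction to $\A$ is a complete isometry. Since $\D\subseteq\A$, the restriction $q|_\D$ is a $*$-monomorphism, so $q(\D)$ is a \cstar-subalgebra of $C^*_e(\A)$ $*$-isomorphic to $\D$ via $q|_\D$.

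First I would exploit the injectivity of $I(\D)$ (as an object in the category of operator systems with unital completely positive maps) to produce a unital completely positive map $\Psi:C^*_e(\A)\to I(\D)$ whose restriction to $q(\D)$ equals $\iota\circ(q|_\D)^{-1}$. The composition $\Psi\circ q:C^*(\A)\to I(\D)$ is then a unital completely positive map with $(\Psi\circ q)|_\D=\iota$. A second application of injectivity extends $\Psi\circ q$ to a unital completely positive map $\widetilde{\Phi}:\C\to I(\D)$ satisfying $\widetilde{\Phi}|_\D=\iota$.

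By the hypothesized uniqueness of $\Phi$, we must have $\widetilde{\Phi}=\Phi$, and therefore $\Phi|_{C^*(\A)}=\Psi\circ q$. Since $\Phi$ is faithful, its restriction to $C^*(\A)$ is also faithful. Thus, for any $x\in\ker q$, the fact that $\ker q$ is a two-sided ideal gives $x^*x\in\ker q$, whence $\Phi(x^*x)=(\Psi\circ q)(x^*x)=0$; faithfulness forces $x^*x=0$, so $x=0$. Hence $\ker q=0$, and $C^*(\A)=C^*_e(\A)$ as desired.

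The substantive point — and the heart of the argument — is that uniqueness of the pseudo-expectation is precisely what lets us identify $\Psi\circ q$ with $\Phi|_{C^*(\A)}$. Without uniqueness, the standard injectivity argument produces only a completely positive extension and cannot pin down $\Psi\circ q$; with uniqueness, faithfulness of $\Phi$ is transferred to $\Psi\circ q$, which in turn detects (and annihilates) the Shilov boundary ideal. Neither step requires separability, countable generation, or abelianness of $\D$, so the argument goes through in the generality stated.
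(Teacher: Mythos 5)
Your proof is correct and follows essentially the same route as the paper: two applications of injectivity of $I(\D)$ produce a u.c.p.\ map on $\C$ extending $\iota$ and factoring through $q$ on $C^*(\A)$, uniqueness identifies it with $\Phi$, and faithfulness of $\Phi$ then forces $\ker q=(0)$. Your $\Psi$ and $\widetilde{\Phi}$ are exactly the paper's $\Phi_e$ and $\Delta$.
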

\begin{proof}
 Let
$\theta:\A\rightarrow C^*_e(\A)$ be a unital completely isometric
(unital) homomorphism such that the \cstaralg\ generated by the image of
$\theta$ is $C^*_e(\A)$.    
  Then there exists a
unique $*$-epimorphism $q:C^*(\A)\rightarrow C^*_e(\A)$ such that
$q|_\A=\theta$.  Our task is to show that $q$ is one-to-one.

Since $I(\D)$ is injective in the category of operator systems and
completely contractive maps, there exists a unital completely
contractive map $\Phi_e:C^*_e(\A)\rightarrow I(\D)$ such that
$\Phi_e\circ\theta=\iota.$   
Also, there exists a unital completely contractive map
$\Delta:\C\rightarrow I(\D)$ so that $\Delta|_{C^*(\A)}=\Phi_e\circ
q$.  Then for $d\in\D$, we have $\theta(d)=q(d)$, so 
$\iota(d)=\Phi_e(\theta(d))=\Phi_e(q(d))=\Delta(d)$.  The uniqueness
of $\Phi$ gives $\Delta=\Phi$.  Then if $x\in C^*(\A)$ and $q(x)=0$,
we have $\Phi(q(x^*x))=0$, so $q(x^*x)=0$ by the faithfulness of
$\Phi$.  
Thus $q$ is one-to-one,
and the proof is complete.   
\end{proof}

We now obtain the following generalization
of~\cite[Theorem~2.16]{PittsNoAlAuCoBoIsOpAl}.  While the outline of
the proof is the same as the proof
of~\cite[Theorem~2.16]{PittsNoAlAuCoBoIsOpAl}, the details in
obtaining norming subalgebras are different.  
\begin{theorem}\label{niceiso}  For $i=1,2$, suppose that
  $(\C_i,\D_i)$ are regular MASA inclusions such that
  $\L(\C_i,\D_i)=(0)$  and that $\A_i\subseteq \C_i$ are norm closed
  subalgebras such that $\D_i\subseteq \A_i\subseteq \C_i$.  Let
  $C^*(\A_i)$ be the \cstar-subalgebra of $\C_i$ generated by $\A_i$.
  If
  $u:\A_1\rightarrow \A_2$ is an isometric isomorphism, then $u$ 
  extends uniquely to a $*$-isomorphism of $C^*(\A_1)$ onto
  $C^*(\A_2)$.  
\end{theorem}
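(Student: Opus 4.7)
The plan is to implement, in the virtual Cartan setting, the two-ingredient method used for \cstardiag s in \cite[Theorem~2.16]{PittsNoAlAuCoBoIsOpAl}. Both ingredients are already established earlier in this paper: (a) \emph{norming}---since $\L(\C_i,\D_i) = (0)$, Theorem~\ref{norming} gives that $\D_i$ norms $\C_i$, and restriction of the matrix-norming formula yields that $\D_i$ norms $\A_i$ as well; (b) \emph{$C^*$-envelope}---the hypothesis $\L(\C_i,\D_i) = (0)$ says that the unique pseudo-expectation $E_i\colon \C_i \to I(\D_i)$ from Theorem~\ref{uniquecpmap} is faithful, so Theorem~\ref{Vrej} identifies $C^*(\A_i)$ with the $C^*$-envelope $C^*_e(\A_i)$.

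With these in hand, I would proceed in three steps. First, $u$ is automatically unital (an isometric isomorphism of unital Banach algebras preserves the unit), and the crux is to show that $u(\D_1)=\D_2$ with $u|_{\D_1}\colon \D_1 \to \D_2$ a $*$-isomorphism. This is the main obstacle. For \cstardiag s it is handled in \cite{PittsNoAlAuCoBoIsOpAl} via the coordinate twist; in the present setting one would use the twist $(\Sigma_i, G_i)$ over $F_i = \fS_s(\C_i,\D_i)$ constructed in Section~\ref{GpReIn} together with the regular $*$-monomorphism $\theta_i\colon \C_i \to C^*(\Sigma_i,G_i)$ of Theorem~\ref{inctoid} (injective because $E_i$ is faithful, and hence $\rad(\C_i,\D_i) \subseteq \L(\C_i,\D_i) = (0)$), reducing diagonal preservation to the already-known \cstardiag\ case inside the ambient pair $(C^*(\Sigma_i,G_i), C(\unit{G_i}))$.

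Second, using Step~1 and ingredient (a), I would upgrade $u$ to a complete isometry. For $x \in M_n(\A_1)$, any $R \in M_{1,n}(\D_2)$ and $C \in M_{n,1}(\D_2)$ of norm at most one are of the form $R = u^{(1,n)}(R_0)$ and $C = u^{(n,1)}(C_0)$ for matrices $R_0, C_0$ over $\D_1$ of the same norms (by Step~1, since $u|_{\D_1}$ is a $*$-isomorphism onto $\D_2$), so
\[
\|R\, u^{(n)}(x)\, C\| = \|u(R_0 x C_0)\| = \|R_0 x C_0\|.
\]
Taking the supremum over such $R, C$ and invoking the $\D_i$-norming formula in $M_n(\A_i)$ then gives $\|u^{(n)}(x)\| = \|x\|$.

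Third, ingredient (b) closes the argument: the unital complete isometry $u\colon \A_1 \to \A_2 \subseteq C^*(\A_2) = C^*_e(\A_2)$ extends, by the universal property of $C^*_e(\A_1) = C^*(\A_1)$, to a unique unital $*$-homomorphism $\tilde u\colon C^*(\A_1) \to C^*(\A_2)$; applying the same construction to $u^{-1}$ produces a two-sided inverse, so $\tilde u$ is a $*$-isomorphism, and uniqueness of $\tilde u$ is immediate from the density of $\A_1$ in $C^*(\A_1)$. The hard part remains Step~1: one must extract an intrinsic characterization of $\D_i$ within the non-selfadjoint algebra $\A_i$ from the weaker coordinate data available in a virtual Cartan inclusion, and the machinery of compatible states and the twist from Section~\ref{GpReIn} is the natural tool for doing so.
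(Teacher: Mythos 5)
Your two ingredients (a) and (b) are exactly the ones the paper assembles---Theorem~\ref{norming} for norming, and Theorems~\ref{uniquecpmap} and~\ref{Vrej} for the identification $C^*(\A_i)=C^*_e(\A_i)$---but your Step~1 is a genuine gap, and the paper's proof never goes near it. The claim that $u(\D_1)=\D_2$ is not only unproven in your sketch (you defer it entirely to the twist machinery of Section~\ref{GpReIn} and yourself flag it as ``the hard part''); it is actually \emph{false} under the hypotheses of the theorem. Take $\C_1=\C_2=M_2(\bbC)$, let $\D_1$ be the diagonal MASA, let $\D_2=w\D_1w^*$ for a unitary $w$ with $w\D_1w^*\neq\D_1$, and let $\A_1=\A_2=M_2(\bbC)$ with $u$ the identity map. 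Both $(\C_i,\D_i)$ are \cstardiag s, hence regular MASA inclusions with $\L(\C_i,\D_i)=(0)$, and $u$ is an isometric isomorphism that does extend to a $*$-isomorphism as the theorem asserts---yet $u(\D_1)\neq\D_2$. Since your Step~2 writes every contractive row and column over $\D_2$ as the image under $u$ of one over $\D_1$, it collapses along with Step~1; the whole proposal therefore rests on an intermediate claim that cannot be established.

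The correct route, and the one the paper takes, avoids diagonal preservation altogether: having established (a) and (b), it simply invokes \cite[Corollary~1.5]{PittsNoAlAuCoBoIsOpAl}. The content of that corollary is precisely that an isometric isomorphism between operator algebras, each containing a norming \cstar-subalgebra, is automatically completely isometric \emph{without} any hypothesis that the isomorphism carries one norming subalgebra onto the other; combined with $C^*(\A_i)=C^*_e(\A_i)$, the universal property of the \cstar-envelope then produces the unique extension (and applying the same to $u^{-1}$ shows it is a $*$-isomorphism, as in your Step~3). So the fix is not to repair Step~1 but to delete it, and to replace your Step~2 with the argument of the cited corollary, which works with rows and columns over $\D_2$ on the range side only and never transports them back into $\D_1$.
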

\begin{proof} 
Theorem~\ref{norming} implies that $\D_i$ norms $\C_i$.  
Taken together, Theorem~\ref{uniquecpmap} and Theorem~\ref{Vrej} imply
that $C^*(\A_i)$ is the \cstar-envelope of $\A_i$.   
Finally, an application of \cite[Corollary~1.5]{PittsNoAlAuCoBoIsOpAl}
completes the proof.
\end{proof}

\bibliographystyle{amsplain} 

\begin{thebibliography}{10}

\bibitem{AkemannShermanCoExOnMASAs}
Charles Akemann and David Sherman, \emph{Conditional expectations onto maximal
  abelian $*$-subalgebras}, arXiv:0906.1831v2 [math.OA].

\bibitem{AndersonExReReStC*Al}
Joel Anderson, \emph{Extensions, restrictions, and representations of states on
  {$C\sp{\ast} $}-algebras}, Trans. Amer. Math. Soc. \textbf{249} (1979),
  no.~2, 303--329. \MR{525675 (80k:46069)}

\bibitem{ArchboldBunceGregsonExStC*AlII}
R.~J. Archbold, J.~W. Bunce, and K.~D. Gregson, \emph{Extensions of states of
  {$C\sp{\ast} $}-algebras. {II}}, Proc. Roy. Soc. Edinburgh Sect. A
  \textbf{92} (1982), no.~1-2, 113--122. \MR{84a:46134}

\bibitem{ArchangelskiiToAlStExDiSeGr}
A.~V. Arhangel{\cprime}skii, \emph{On topological and algebraic structure of
  extremally disconnected semitopological groups}, Comment. Math. Univ.
  Carolin. \textbf{41} (2000), no.~4, 803--810. \MR{1800164 (2001k:54063)}

\bibitem{BlecherLeMerdyOpAlThMo}
David~P. Blecher and Christian Le~Merdy, \emph{Operator algebras and their
  modules---an operator space approach}, London Mathematical Society
  Monographs. New Series, vol.~30, The Clarendon Press Oxford University Press,
  Oxford, 2004, Oxford Science Publications. \MR{2111973}

\bibitem{BrownOzawaC*AlFiDiAp}
Nathanial~P. Brown and Narutaka Ozawa, \emph{{$C\sp *$}-algebras and
  finite-dimensional approximations}, Graduate Studies in Mathematics, vol.~88,
  American Mathematical Society, Providence, RI, 2008. \MR{2391387}

\bibitem{DavidsonPowerIsAuHo}
K.~R. Davidson and S.~C. Power, \emph{Isometric automorphisms and homology for
  nonselfadjoint operator algebras}, Quart. J. Math. Oxford Ser. (2)
  \textbf{42} (1991), no.~167, 271--292. \MR{92h:47058}

\bibitem{DixmierCeEsCoSt}
J.~Dixmier, \emph{Sur certains espaces consid\'er\'es par {M}. {H}. {S}tone},
  Summa Brasil. Math. \textbf{2} (1951), 151--182. \MR{0048787 (14,69e)}

\bibitem{DonsigHudsonKatsoulisAlIsLiAl}
A.~P. Donsig, T.~D. Hudson, and E.~G. Katsoulis, \emph{Algebraic isomorphisms
  of limit algebras}, Trans. Amer. Math. Soc. \textbf{353} (2001), no.~3,
  1169--1182 (electronic). \MR{1804417 (2001k:47103)}

\bibitem{DonsigPittsCoSyBoIs}
Allan~P. Donsig and David~R. Pitts, \emph{Coordinate systems and bounded
  isomorphisms}, J. Operator Theory \textbf{59} (2008), no.~2, 359--416.

\bibitem{EffrosRuanOpSp}
Edward~G. Effros and Zhong-Jin Ruan, \emph{Operator spaces}, London
  Mathematical Society Monographs. New Series, vol.~23, The Clarendon Press
  Oxford University Press, New York, 2000. \MR{1793753 (2002a:46082)}

\bibitem{ExelInSeCoC*Al}
Ruy Exel, \emph{Inverse semigroups and combinatorial {$C\sp \ast$}-algebras},
  Bull. Braz. Math. Soc. (N.S.) \textbf{39} (2008), no.~2, 191--313.
  \MR{2419901 (2009b:46115)}

\bibitem{FeldmanMooreErEqReII}
Jacob Feldman and Calvin~C. Moore, \emph{Ergodic equivalence relations,
  cohomology, and von {N}eumann algebras. {II}}, Trans. Amer. Math. Soc.
  \textbf{234} (1977), no.~2, 325--359. \MR{58 \#28261b}

\bibitem{FrolikMaExDiSp}
Z.~Frol{\'{\i}}k, \emph{Maps of extremally disconnected spaces, theory of
  types, and applications}, General topology and its relations to modern
  analysis and algebra, III (Proc. Conf., Kanpur, 1968), Academia, Prague,
  1971, pp.~131--142. \MR{0295305 (45 \#4373)}

\bibitem{GleasonPrToSp}
Andrew~M. Gleason, \emph{Projective topological spaces}, Illinois J. Math.
  \textbf{2} (1958), 482--489. \MR{0121775 (22 \#12509)}

\bibitem{GonshorInHuC*AlII}
Harry Gonshor, \emph{Injective hulls of {$C\sp*$} algebras. {II}}, Proc. Amer.
  Math. Soc. \textbf{24} (1970), 486--491. \MR{0287318 (44 \#4525)}

\bibitem{HadwinPaulsenInPrAnTo}
Don Hadwin and Vern~I. Paulsen, \emph{Injectivity and projectivity in analysis
  and topology}, Sci.\ China Math. \textbf{54} (2011), no.~11, 2347--2359.

\bibitem{HamanaInEnC*Al}
Masamichi Hamana, \emph{Injective envelopes of {$C\sp{\ast} $}-algebras}, J.
  Math. Soc. Japan \textbf{31} (1979), no.~1, 181--197. \MR{80g:46048}

\bibitem{HamanaInEnOpSy}
\bysame, \emph{Injective envelopes of operator systems}, Publ. Res. Inst. Math.
  Sci. \textbf{15} (1979), no.~3, 773--785. \MR{566081 (81h:46071)}

\bibitem{HamanaReEmCStAlMoCoCStAl}
\bysame, \emph{Regular embeddings of {$C\sp{\ast} $}-algebras in monotone
  complete {$C\sp{\ast} $}-algebras}, J. Math. Soc. Japan \textbf{33} (1981),
  no.~1, 159--183. \MR{597486 (82i:46089)}

\bibitem{HopenwasserPetersPowerSuGrC*Al}
Alan Hopenwasser, Justin~R. Peters, and Stephen~C. Power, \emph{Subalgebras of
  graph {$C^*$}-algebras}, New York J. Math. \textbf{11} (2005), 351--386
  (electronic). \MR{2188247 (2007b:47189)}

\bibitem{KumjianOnC*Di}
Alexander Kumjian, \emph{On {$C\sp \ast$}-diagonals}, Canad. J. Math.
  \textbf{38} (1986), no.~4, 969--1008. \MR{88a:46060}

\bibitem{MuhlyQiuSolelCoNuSpSuOpAl}
Paul~S. Muhly, Chao~Xin Qiu, and Baruch Solel, \emph{Coordinates, nuclearity
  and spectral subspaces in operator algebras}, J. Operator Theory \textbf{26}
  (1991), no.~2, 313--332. \MR{94i:46075}

\bibitem{MuhlySolelOnTrSuGrC*Al}
Paul~S. Muhly and Baruch Solel, \emph{On triangular subalgebras of groupoid
  ${C}\sp *$-algebras}, Israel J. Math. \textbf{71} (1990), no.~3, 257--273.
  \MR{92m:46089}

\bibitem{MuhlySaitoSolelCoTrOpAl}
{P. S. Muhly, K. Saito} and B.~Solel, \emph{Coordinates for triangular operator
  algebras}, Ann. Math. \textbf{127} (1988), 245--278. \MR{0932297 (89h:46088)}

\bibitem{PaschkePuEiSuGeFrGp}
William~L. Paschke, \emph{Pure eigenstates for the sum of generators of the
  free group}, Pacific J. Math. \textbf{197} (2001), no.~1, 151--171.
  \MR{1810213 (2001m:46122)}

\bibitem{PaulsenCoBoMaOpAl}
Vern Paulsen, \emph{Completely bounded maps and operator algebras}, Cambridge
  Studies in Advanced Mathematics, vol.~78, Cambridge University Press,
  Cambridge, 2002. \MR{1976867 (2004c:46118)}

\bibitem{PittsNoAlAuCoBoIsOpAl}
David~R. Pitts, \emph{Norming algebras and automatic complete boundedness of
  isomorphisms of operator algebras}, Proc. Amer. Math. Soc. \textbf{136}
  (2008), no.~5, 1757--1768. \MR{2373606}

\bibitem{PopSinclairSmithNoC*Al}
Florin Pop, Allan~M. Sinclair, and Roger~R. Smith, \emph{Norming {$C\sp
  \ast$}-algebras by {$C\sp \ast$}-subalgebras}, J. Funct. Anal. \textbf{175}
  (2000), no.~1, 168--196. \MR{2001h:46105}

\bibitem{RenaultGrApC*Al}
J.~Renault, \emph{A groupoid approach to {$C^*$}-algebras}, Lecture Notes in
  Mathematics, no. 793, Springer-Verlag, New York, 1980.

\bibitem{RenaultCaSuC*Al}
Jean Renault, \emph{Cartan subalgebras in {$C\sp *$}-algebras}, Irish Math.
  Soc. Bull. (2008), no.~61, 29--63. \MR{2460017}

\bibitem{SvenssonTomiyamaCoCXCStCrPr}
Christian Svensson and Jun Tomiyama, \emph{On the commutant of {$C(X)$} in
  {$C^*$}-crossed products by {$\Bbb Z$} and their representations}, J. Funct.
  Anal. \textbf{256} (2009), no.~7, 2367--2386. \MR{2498769 (2010f:46105)}

\bibitem{TakesakiThOpAlI}
Masamichi Takesaki, \emph{Theory of operator algebras. {I}}, Springer-Verlag,
  New York, 1979. \MR{81e:46038}

\bibitem{WilliardGeTo}
Stephen Willard, \emph{General topology}, Addison-Wesley Publishing Co.,
  Reading, Mass.-London-Don Mills, Ont., 1970. \MR{0264581 (41 \#9173)}

\end{thebibliography}
\def\cprime{$'$}
\providecommand{\bysame}{\leavevmode\hbox to3em{\hrulefill}\thinspace}
\providecommand{\MR}{\relax\ifhmode\unskip\space\fi MR }
\providecommand{\MRhref}[2]{%
  \href{http://www.ams.org/mathscinet-getitem?mr=#1}{#2}
}
\providecommand{\href}[2]{#2}

\end{document}